\numberwithin{equation}{section}
\newtheorem{theorem}{Theorem}[section]
\newtheorem{lemma}[theorem]{Lemma}
\newtheorem{definition}[theorem]{Definition}
\newtheorem{corollary}[theorem]{Corollary}
\newtheorem{claim}[theorem]{Claim}
\newtheorem{proposition}[theorem]{Proposition}
\newtheorem{example}[theorem]{Example}
\newtheorem{remark}[theorem]{Remark}
\title[Third order nonlocal wave equations]{The Calder\'on problem for third order nonlocal wave equations with time-dependent nonlinearities and potentials}
\author[S. Fu]{Song-Ren Fu}
\address{School of Mathematics and Statistics, Northwestern Polytechnical University, Xi'an 710129, People's Republic of China.}
\email{songrenfu@nwpu.edu.cn}
\author[Y. Yu]{Yongyi Yu}
\address{School of Mathematics, Jilin University, Changchun 130012, China}
\email{yuyy122@jlu.edu.cn}
\author[P. Zimmermann]{Philipp Zimmermann}
\address{Corresponding author: Departament de Matem\`atiques i Inform\`atica, Universitat de Barcelona, Barcelona, Spain}
\email{philipp.zimmermann@ub.edu}
\keywords{Fractional Laplacian, nonlinear acoustics, third order nonlocal wave equations, Calder\'on problem, Runge approximation.}
\subjclass[2020]{Primary 35R30; secondary 26A33, 42B37.}
\newcommand{\R}{{\mathbb R}}
\newcommand{\Z}{{\mathbb Z}}
\newcommand{\N}{{\mathbb N}}
\newcommand{\eps}{\varepsilon}
\newcommand{\tempered}{\mathscr{S}^{\prime}}
\newcommand{\fourier}{\mathcal{F}}
\newcommand{\ifourier}{\mathcal{F}^{-1}}
\newcommand{\distr}{\mathscr{D}^{\prime}}
\DeclareMathOperator{\Div}{div} 
\DeclareMathOperator{\loc}{loc} 
\newcommand{\weak}{\rightharpoonup}
\newcommand{\weakstar}{\overset{\ast}{\rightharpoonup}}
\def\R{\mathbb{R}}
\def\<{\langle}
\def\>{\rangle}
\def\Om{\Omega}
\def\var{\varphi}
\def\det{\mbox{det\,}}
\begin{document}

	\begin{abstract}
    In this article, we study the Calder\'on problem for nonlocal generalizations of the semilinear Moore--Gibson--Thompson (MGT) equation and the Jordan--Moore--Gibson--Thompson (JMGT) equation of Westervelt-type. These partial differential equations are third order wave equations that appear in nonlinear acoustics, describe the propagation of high-intensity sound waves and exhibit finite speed of propagation. For semilinear MGT equations with nonlinearity $g$ and potential $q$, we show the following uniqueness properties of the Dirichlet to Neumann (DN) map $\Lambda_{q,g}$:
    \begin{enumerate}[(i)]
        \item If $g$ is a polynomial-type nonlinearity whose $m$-th order derivative is bounded, then $\Lambda_{q,g}$ uniquely determines $q$ and $(\partial^{\ell}_\tau g(x,t,0))_{2\leq \ell \leq m}$.
        \item If $g$ is a polyhomogeneous nonlinearity of finite order $L$, then $\Lambda_{q,g}$ uniquely determines $q$ and $g$.
    \end{enumerate}
    The uniqueness proof for polynomial-type nonlinearities is based on a higher order linearization scheme, while the proof for polyhomogeneous nonlinearities only uses a first order linearization. Finally, we demonstrate that a first linearization suffices to uniquely determine Westervelt-type nonlinearities from the related DN maps. We also remark that all the unknowns, which we wish to recover from the DN data, are allowed to depend on time.
    \end{abstract}
\maketitle    
	\tableofcontents
    
	\section{Introduction}
    \label{sec: intro}
In the present article, we study the Calder\'on problem for nonlocal generalizations of some partial differential equations (PDEs) that appear in nonlinear acoustics. For the purpose of readers' convenience in reviewing nonlinear acoustic, we introduce in the next subsection some background of the related classical acoustic models.

\subsection{Background of acoustic waves}    
\label{subsec: background acoustic waves}

In physical terms, nonlinear acoustics describe the propagation of large-amplitude sound waves in fluids or solids and often fall into the class of higher order nonlinear damped wave equations. A major difficulty in these models is the fact that the large amplitudes of the waves prohibit the usage of linearization methods. To understand particular aspects of sound propagation, as dissipation or shock formation, different models have been proposed. As discussed in the scientific literature, some subclasses of these models can be put into a hierarchical order. For example, the widely studied Westervelt and Kuznetsov equation can be obtained via certain limit processes from the more general \emph{Brunnhuber--Jordan--Kuznetsov (BJK) equation} that reads
    \begin{equation}
    \label{eq: BJK equation}
    \begin{split}
        &(\partial_t^3-(\beta+\nu \Lambda)\Delta \partial_t^2+\beta\nu \Lambda \Delta^2\partial_t-c^2\Delta\partial_t+\beta c^2\Delta^2)\psi\\
        &=\partial_t^2\bigg(\frac{B}{2A c^2}(\partial_t \psi)^2+|\nabla \psi|^2\bigg).
    \end{split}
    \end{equation}
    Here, we put $\beta=a(1+B/A)$ and the parameters have the following physical interpretation:\\
    
\begin{tabular}{ c l }
$\psi$ & velocity (or acoustic) potential   \\ 
$c$ & speed of sound  \\  
$\nu$ & kinematic viscosity \\
$\Lambda$ & measures ratio of bulk viscosity $\mu_B$ and dynamic viscosity $\mu$\\
$a=\nu/\text{Pr}$  & thermal conductivity, $\text{Pr}$ denotes the Prandtl number\\
$B/A$ & parameter of nonlinearity
\end{tabular}\\

We point out that $-\nabla \psi$ corresponds to the irrotational part in the Helmholtz decomposition of the (relative) particle velocity field $v$, whereas the incompressible part is the rotation of the vector field $\mathcal{A}$. 

In the limit $a\to 0$, while keeping $\nu$ constant, the PDE \eqref{eq: BJK equation} converges to the \emph{Kuznetsov equation}
\begin{equation}
\label{eq: Kuznetsov}
    (\partial_t^2-\nu\Lambda\Delta \partial_t-c^2\Delta)\psi=\partial_t\left(\frac{B}{2A c^2}(\partial_t\psi)^2+|\nabla\psi|^2\right)
\end{equation}
(see \cite[Sections 2 and 4]{kaltenbacher2018fundamental}). This PDE has been first reported in \cite{Kuznetsov} and we refer the interested reader also to \cite{jordan2004analytical,dekkers2020models}. Furthermore, the PDE \eqref{eq: BJK equation} reduces to the \emph{Westervelt equation}
\begin{equation}
\label{eq: Westervelt}
    (\partial_t^2-\nu\Lambda\Delta \partial_t-c^2\Delta)\psi=\frac{1}{2c^2}\left(2+\frac{B}{A}\right)\partial_t(\partial_t\psi)^2,
\end{equation}
 when one takes the limit $a\to 0$ and neglects local nonlinear effects (see \cite[Section 4]{kaltenbacher2018fundamental}). The latter approximation is often physically justified when the propagation distance is large compared to the wavelength. In those cases, one may assume that
 \begin{equation}
 \label{small wavelength approx}
    |\nabla \psi|^2\approx \frac{1}{c^2}(\partial_t\psi)^2,
 \end{equation}
 which explains the constant on the right hand side of \eqref{eq: Westervelt}. Next, we want to express the Westervelt equation in terms of the acoustic pressure 
 \begin{equation}
 \label{eq: acoustic pressure}
    p=\rho\partial_t \psi.
 \end{equation}
 If we formally differentiate \eqref{eq: Westervelt} in time and use the relation \eqref{eq: acoustic pressure}, then we find
 \begin{equation}
 \label{eq: Westervelt pressure formulation}
     (\partial_t^2-\nu\Lambda\Delta \partial_t-c^2\Delta)p= \partial_t^2(\beta p^2),
 \end{equation}
 where we put
 \begin{equation}
     \beta=\frac{1}{4\rho c^2}\left(2+\frac{B}{A}\right).
 \end{equation}

 The BJK equation \eqref{eq: BJK equation} can be derived from the conservation of mass, momentum, energy and a thermodynamical equation of state, which relates the mass density $\rho$, the acoustic pressure $p$ and the temperature $T$ (see, e.g., \cite[Appendix]{kaltenbacher2018fundamental}). On the one hand, the conservation laws for mass and momentum are nothing more than the continuity equation and the compressible Navier--Stokes equations with uniform viscosities. On the other hand, the conservation law for the energy is used in the following form:
\begin{equation}
\label{eq: energy balance eq}
    \rho(\partial_t E+v\cdot \nabla E)+p\nabla \cdot v=a\Delta T+(\mu_B-\frac{2}{3}\mu)(\nabla\cdot v)^2+\frac{\mu}{2}|\nabla v+(\nabla v)^T|^2.
\end{equation}
Here, $E$ denotes the internal energy per unit mass and henceforth it is assumed that the heat flux $q$ obeys \emph{Fourier's law} of heat conduction,
\begin{equation}
\label{eq: Fourier's law}
    q=-a\nabla T.
\end{equation}
It is a well-known fact that this can lead to an infinite signal speed paradox in waves (see, for example, \cite{chandrasekharaiah1986thermoelasticity}). To retain a finite speed of propagation, other relations have been proposed, such as \emph{Maxwell--Cattaneo law}
\begin{equation}
\label{eq: Maxwell Cattaneo}
    \tau\partial_t q+q=-a\nabla T,
\end{equation}
where $\tau$ is a small constant that describes the relaxation time of the heat flux. We remark that, physically speaking, \eqref{eq: Maxwell Cattaneo} models a time lag between the temperature gradient $\nabla T$ and the heat flux. Using this constitutive law in the derivation of the energy balance, one gets \emph{Jordan--Moore--Gibson--Thompson (JMGT) equation}
\begin{equation}
\label{eq: JMGT eq}
(\tau\partial_t^3+\partial_t^2-b\Delta\partial_t-c^2\Delta)\psi=\partial_t\left(\frac{1}{c^2}\frac{B}{2A}(\partial_t\psi)^2+|\nabla\psi|^2\right),
\end{equation}
where $b=\delta+\tau c^2$ and $\delta$ is the diffusity of sound. Due to the similarity of \eqref{eq: JMGT eq} to \eqref{eq: BJK equation}, we also say \eqref{eq: JMGT eq} is of \emph{Kuznetsov-type}. If one neglects local nonlinear effects, then one finds the following \emph{Westervelt-type JMGT equations} 
\begin{equation}
\label{eq: westervelt JMGT eq}
(\tau\partial_t^3+\partial_t^2-b\Delta\partial_t-c^2\Delta)\psi=\partial_t(\kappa(\partial_t\psi)^2)
\end{equation}
and
 \begin{equation}
 \label{eq: Westervelt JMGT eq pressure formulation}
     (\tau\partial_t^3+\partial_t^2-b\Delta \partial_t-c^2\Delta)p= \partial_t^2(\beta p^2),
 \end{equation}
where we set $\kappa=2\rho\beta$ (cf.~\eqref{eq: Westervelt} and \eqref{eq: Westervelt pressure formulation}). Furthermore, if one neglects all quadratic terms in \eqref{eq: JMGT eq}, then one ends up with the so-called \emph{Moore--Gibson--Thompson (MGT) equation}
\begin{equation}
\label{eq: MGT eq}
(\tau\partial_t^3+\partial_t^2-b\Delta\partial_t-c^2\Delta)\psi=0.
\end{equation}
In \cite{Kalten}, it has been shown that in the singular limit $\tau \to 0$ one obtains from \eqref{eq: JMGT eq} the Kuznetsov equation \eqref{eq: Kuznetsov}.

Physically, the (J)MGT equations describe the propagation of high-intensity sound waves that have a wide range of applications, for example, thermotherapy, ultrasound cleaning and sonochemistry, see, e.g., \cite{OVA,MKHL}. We also refer to the monographs \cite{BOECM,OVSI} offering a more physical or mathematical point of view. In recent years, many researchers have studied the well-posedness, stability, controllability and inverse problems for the linear MGT and the nonlinear JMGT equation (see, for example, \cite{CWH,SRPFYY,Kalten,BKILR,BKVN,LiuT,CLSZ} and the references therein).

The MGT equation \eqref{eq: MGT eq} differs from the classical second-order (in time) wave equation in several fundamental aspects, including the analysis of well-posedness, long-time dynamics, and controllability. More precisely, this equation exhibits a rich variety of dynamical behaviors that crucially depend on the physical parameters appearing in the model. For instance, the diffusivity parameter $b$ has a decisive impact on the stability of solutions to the MGT equation, and well-posedness may even fail in the simplest case $b=0$; see \cite[Theorem 1.1]{BKILR}. Moreover, the construction of geometric optics solutions, which play a central role in inverse problems for local MGT equations, is considerably more involved due to the presence of higher-order time derivatives. By contrast, for second-order wave equations, the presence of structural/viscous damping does not affect well-posedness. 

The controllability properties of the (local) MGT equation, in comparison with those of the classical wave equation, are significantly more delicate. In particular, it was shown in \cite{lizama2023boundary} that even in one spatial dimension the MGT equation is neither exactly nor null controllable by a boundary-supported control. This poor controllability is closely related to the presence of the damping term $(-\Delta)\partial_t u$, which, in the local setting, generates accumulation points in the spectrum; see \cite{CLSZ}. As a consequence, if a Carleman estimate were to hold for the MGT equation, it would imply boundary controllability, leading to a contradiction with the aforementioned negative controllability results (see \cite{arancibia2020inverse} for further discussion). Moreover, to the best of our knowledge, there are no unique continuation property results in the literature for local MGT equations from solutions vanishing either on the boundary or on a subset of the interior domain. By contrast, due to the nonlocal nature of the fractional Laplacian, unique continuation holds for the nonlocal MGT equation (see, e.g., \eqref{eq: control PDE}).
	
Apart from existing works on the above (J)MGT equations, some works considered time fractional modifications of the (J)MGT equation (see, e.g. \cite{EACLM,BKVN0,BKVN1,CLMZ,MM}). Furthermore, the article \cite{CLMZ} considers the controllability of the following \emph{nonlocal MGT equation}
    \begin{equation}
    \label{eq: control PDE}
    \begin{cases}
    (\tau\partial_t^3+\alpha\partial_t^2+b(-\Delta)^s\partial_t+c^2(-\Delta)^s)\psi=0, &{\rm in}\ \Omega_T, \\
    \psi=g\chi_{\mathcal O} &{\rm in}\ (\Omega_e)_T,\\
    \psi(0)=\psi_0, \partial_t\psi(0)=\psi_1, \partial_t^2\psi(0)=\psi_2 &{\rm in}\ \Omega,
    \end{cases}
    \end{equation}
where $\Omega\subset\R^n$ is a bounded domain with exterior $\Omega_e\vcentcolon = \mathbb R^n\backslash\overline{\Omega}$, $T>0$ a finite time horizon, $\mathcal O\subset \Omega_e$ a control set and $g\in H^1(0,T;L^2(\mathcal O))$ a control function. Furthermore, $(-\Delta)^s$ denotes the \emph{fractional Laplacian} of order $s>0$ that we discuss in more detail in Section \ref{sec: preliminaries}. Moreover, here and throughout this article, if $A\subset\R^n$ is any set and $t>0$, then we write $\chi_A$ for its characteristic function and $A_t$ for the associated space time cylinder $A\times (0,t)$. 
Under some conditions on the parameters, an approximate controllability property of the nonlocal MGT equation has been established. 
    
We point out that nonlocal wave equations arise in peridynamics, which studies the dynamics of materials with discontinuities, and we refer to \cite{YMP,SAS,ZLB} for a detailed account of this theory.  It is worthwhile to mention that nonlocal operators, like the fractional Laplacian or more general integro-differential operators, appear in various branches of science, including geometry, financial mathematics, fluid dynamics and biology (cf., e.g.,~\cite{CS-extension-problem-fractional-laplacian,SI-regularity-obstacle-problem,colombo2020generalized,krieger2017small,caselli2023yau} and the references therein).

	\subsection{Nonlocal MGT and JMGT equations}
	
    The damped nonlocal wave equations studied in this article are of the form
    \begin{equation}
    \label{eq: main nonlocal PDE}
	\begin{cases}
		(\partial_t^3+\alpha \partial_t^2+b(-\Delta)^s\partial_t+c(-\Delta)^s+q)u+G(u,\partial_t u,\partial_t^2u)=0 & {\rm in}\ \Omega_T, \\
		u=\varphi & {\rm in}\ (\Omega_e)_T,\\
		u(0)=\partial_tu(0)=\partial_t^2u(0)=0 &  {\rm in}\ \Omega,
	\end{cases}
    \end{equation}
    which represents a nonlocal generalization of the equations \eqref{eq: JMGT eq}--\eqref{eq: MGT eq}. Here the linear coefficient $q$ denotes some potential and $G$ is a suitable nonlinearity that will be specified below. For simplicity, we have rescaled the quantities appearing in the PDEs \eqref{eq: JMGT eq}--\eqref{eq: MGT eq} such that $\tau=1$. Moreover, throughout this work, we assume for the sake of generality that the constants in \eqref{eq: main nonlocal PDE} obey
    \[
        b>0\text{ and }\alpha,c\in\R,
    \]
    where $c$ corresponds to $c^2$ in the MGT equations from the previous paragraph. Note that the fractional Laplacian in the PDE \eqref{eq: main nonlocal PDE} leads to infinite speed of propagation, whereas the speed of propagation for solutions to the MGT equations \eqref{eq: JMGT eq}--\eqref{eq: MGT eq} is finite. This is one motivation for studying nonlocal generalizations of the MGT equations and not of the BJK equation \eqref{eq: BJK equation} as in the former case the infinite speed of propagation is entirely from the fractional Laplacian.
    
    In particular, we study the \emph{semilinear nonlocal MGT equation}
    \begin{equation}
        \label{sysmgt}
        \begin{cases}
		(\partial_t^3+\alpha \partial_t^2+b(-\Delta)^s\partial_t+c(-\Delta)^s+q)u+g(u)=0 & {\rm in}\ \Omega_T, \\
		u=\varphi & {\rm in}\ (\Omega_e)_T,\\
		u(0)=\partial_tu(0)=\partial_t^2u(0)=0 &  {\rm in}\ \Omega,
	\end{cases}
    \end{equation}
	and the \emph{Westerverlt-type nonlocal JMGT equations}
    \begin{equation}
        \label{sysmgtWtype}
	\begin{cases}
		( \partial_t^3+\alpha \partial_t^2+b(-\Delta)^s\partial_t+c(-\Delta)^s)u=\mathcal{F}(u,\partial_tu,\partial_t^2u) & {\rm in}\ \Omega_T, \\
		u=\varphi& {\rm in} \ (\Omega_e)_T,\\
		u(0)=\partial_tu(0)=\partial_t^2u(0)=0 & {\rm in}\ \Omega,
	\end{cases}
    \end{equation}
	where $\mathcal{F}$ is given by
    \begin{equation}
    \label{eq: Westervelt nonlinearities}
    \mathcal{F}(u,\partial_tu,\partial_t^2u)=\partial_t(\kappa(\partial_tu)^2)\text{ or }\mathcal{F}(u,\partial_tu,\partial_t^2u)=\partial_t^2(\beta u^2)
    \end{equation}
    corresponding to the nonlinearity appearing in the potential formulation \eqref{eq: westervelt JMGT eq} or the pressure formulation \eqref{eq: Westervelt JMGT eq pressure formulation} of the Westervelt equation, respectively.
	
	For the moment, let us assume that the PDEs \eqref{sysmgt} and \eqref{sysmgtWtype} are well-posed for all sufficiently nice exterior conditions $\varphi$. 
    Then, we may (formally) introduce the \emph{Dirichlet to Neumann (DN) map} $\Lambda_{q,g}$ related to \eqref{sysmgt} by
\begin{equation}\label{DtN0}
\begin{split}
	\Lambda_{q,g}&\colon C_c^\infty((\Omega_e)_T)\to L^2(0,T;H^{-s}(\Omega_e)),\\
    &\varphi\mapsto(b(-\Delta)^s\partial_tu+c(-\Delta)^su)|_{(\Omega_e)_T},
\end{split}
    \end{equation} 
where $u$ denotes the solution to \eqref{sysmgt} with exterior value $\varphi$. The precise definitions of the fractional Laplacian $(-\Delta)^s$ and the $L^2$-based Sobolev spaces
will be given in Section 2.1.
Similarly, for the PDE \eqref{sysmgtWtype}, the corresponding DN maps $\Lambda_\beta,\Lambda_\kappa$ are given by
\begin{equation}
\label{DtN3 intro}
\begin{split}
\Lambda_\beta\varphi&=(b(-\Delta)^s\partial_tu+c(-\Delta)^su)|_{(\Omega_e)_T},\\
\Lambda_\kappa\varphi&=(b(-\Delta)^s\partial_tu+c(-\Delta)^su)|_{(\Omega_e)_T}
\end{split}
\end{equation} 
with the same notation as above. 
These DN maps are rigorously introduced in Definition \ref{def: DN maps}. The nonlocal acoustic models, together with the associated DN maps, considered in this article are collected in Table~\ref{table1}. Therein, we use the following notation:
\begin{equation}
\label{eq: third order nonlocal wave operator}
L_{\alpha,b,c}\vcentcolon = \partial_t^3+\alpha\partial_t^2+b(-\Delta)^s \partial_t +c(-\Delta)^s.
\end{equation}
\begin{table}[ht]
\centering
\caption{Mathematical models, nonlocal equations, and DN maps}
\label{table1}
\begin{tabular}{c @{\quad} c @{\quad} c}
\toprule
\textbf{Mathematical model} 
& \textbf{Nonlocal equation} 
& \textbf{DN map} \\
\midrule
\makecell[c]{Semilinear nonlocal MGT equation\\ with polynomial-type nonlinearity}
& \makecell[c]{$(L_{\alpha,b,c}+q)u + g(u)=0$,\\ $g(x,t,\tau)$ satisfying \ref{A1}--\ref{A3}}
& $\Lambda_{q,g}\varphi$ \\
\midrule
\makecell[c]{Semilinear nonlocal MGT equation\\ with polyhomogeneous nonlinearity}
& \makecell[c]{$(L_{\alpha,b,c}+q)u + g(u)=0,$\\
$g(x,t,\tau)=\sum_{k=1}^L \alpha_k(x,t) |\tau|^{r_k}\tau$}
& $\Lambda_{q,g}\varphi$ \\
\midrule
\makecell[c]{Westervelt-type nonlocal\\ MGT equations}
& \makecell[c]{$L_{\alpha,b,c}u = \partial_t\!\big(\kappa(\partial_t u)^2\big)$\\
$L_{\alpha,b,c}u = \partial_t^2\!\big(\beta u^2\big)$}
& \makecell[c]{$\Lambda_\kappa\varphi$\\
$\Lambda_\beta\varphi$} \\
\bottomrule
\end{tabular}
\end{table}
Motivated by recent developments on the Calderón problem for nonlinear nonlocal wave equations--see, for instance, \cite{KMSK,LTZ1,LTZ2,PZ1}, we investigate in this article the Calderón problem for semilinear nonlocal MGT equations and for Westervelt-type JMGT equations. A detailed account of the related literature is provided in Section~\ref{sec: related literature}. Specifically, we aim to answer the following questions.
	\begin{enumerate}[\bf({IP-}1)]
	    \item\label{IP1} Does $\Lambda_{q,g}$ uniquely determine the potential $q$ and what information can be extracted about the nonlinearity $g$?
        \item\label{IP2} Does $\Lambda_\beta$ (resp. $\Lambda_\kappa$) uniquely determine $\beta$ (resp. $\kappa$)?
	\end{enumerate}

   \begin{remark}
    In the main theorems below (see Section~\ref{sec: assumptions}), we consider Questions~\ref{IP1} and~\ref{IP2} in the partial data setting. 
    For instance, in Question~\ref{IP1} this means that, for any measurement sets $W_1, W_2 \subset \Omega_e$, 
    we ask whether the knowledge of
    \[
        \left. \Lambda_{q,g}\varphi \right|_{(W_2)_T}\quad \text{for all admissible}\quad \varphi \in C_c^\infty\big((W_1)_T\big)
    \]
    uniquely determines the potential $q$ and the nonlinearity $g$.

Thus, throughout this article, we do not assume access to the full Dirichlet-to-Neumann map $\Lambda_{q,g}$, but only to its restriction to exterior Dirichlet data supported in $(W_1)_T$ and Neumann data taken in $(W_2)_T$.
    \end{remark}

Let us note that, in the inverse problem \ref{IP1}, one may also consider the measurement map
\begin{equation}
\widetilde{\Lambda}_{q,g}\varphi \vcentcolon= (-\Delta)^s u_\varphi \big|_{(\Omega_e)_T},
\end{equation}
in place of the DN map $\Lambda_{q,g}$. These two quantities are connected through the ODE relation
\begin{equation}
\label{DN-new}
    \Lambda_{q,g}\varphi = b\partial_t\big(\widetilde{\Lambda}_{q,g}\varphi\big) + c \widetilde{\Lambda}_{q,g}\varphi,
\end{equation}
for fixed $\varphi\in C_c^{\infty}((\Omega_e)_T)$. Consequently, if $(q_j,g_j)$, $j=1,2$, denote two sets of unknown potentials and nonlinearities, then
\begin{equation}
    \label{eq: equivalence DN maps}
    \widetilde{\Lambda}_{q_1,g_1}\varphi=\widetilde{\Lambda}_{q_2,g_2}\varphi \,\Leftrightarrow\, \Lambda_{q_1,g_1}\varphi=\Lambda_{q_2,g_2}\varphi.
\end{equation}
The implication $``\Rightarrow"$ follows directly from \eqref{DN-new}. For the reverse implication, note that $\Lambda_{\varphi}=\Lambda_{q_1,g_1}\varphi-\Lambda_{q_2,g_2}\varphi$ satisfies
\begin{equation}
\label{eq: equivalence of measurement operators}
    \begin{cases}
        \partial_t\Lambda+\frac{c}{b}\Lambda=0 & \text{ in }(\Omega_e)_T,\\
        \Lambda(0)=0 & \text{ in }\Omega_e.
    \end{cases}
\end{equation}
Here, we use that $\varphi\in C_c^{\infty}((\Omega_e)_T)$ implies $\widetilde\Lambda_{q,g}\varphi|_{t=0}=(-\Delta)^su_\varphi|_{t=0}=0$. By the fundamental theorem for ODEs, it follows that $\Lambda=0$ in $(\Omega_e)_T$, completing the argument. 

In the present article, despite the equivalence \eqref{eq: equivalence DN maps}, we choose to work with the DN map $\Lambda_{q,g}$ \eqref{DtN0}, as it is the quantity naturally associated with the nonlocal MGT equation and leads to more concise integration-by-parts identities (see Lemma~\ref{integral}).
An analogous remark applies to \ref{IP2} and the DN maps $\Lambda_{\beta}$ and $\Lambda_{\kappa}$.

	\subsection{Assumptions and main results}
    \label{sec: assumptions}

    In this paragraph, we specify the smoothness and decay properties which the potential $q$, the nonlinearity $g$ and the Westervelt parameters $(\beta,\kappa)$, appearing in \eqref{sysmgt}--\eqref{sysmgtWtype}, shall satisfy throughout this work. These are very similar to those imposed in the articles \cite{PZ1,LTZ1,LTZ2}. We illustrate the assumptions with concrete examples and present our main findings for the questions \ref{IP1}--\ref{IP2}.

	We suppose that the potential $q$ and the Westervelt parameters $(\beta,\kappa)$ satisfy
    \begin{equation}
    \label{eq: regularity pot westervelt par}
        (q,\beta,\kappa)\in L^\infty(0,T;L^p(\Omega))\times W^{2,\infty}(0,T;L^{\infty}(\Omega))\times W^{1,\infty}(0,T;L^{\infty}(\Omega)),
    \end{equation}
    where the integrability exponent $2\leq p\leq \infty$ obeys
    \begin{equation}
        \label{conp}
	\begin{cases}
		\frac ns\leq p\le\infty, & \text{ if } 2s<n,\\
		2<p\leq \infty, & \text{ if }\ 2s=n,\\
		2\leq p\leq \infty, & \text{ if } 2s> n.
	\end{cases}
    \end{equation}	
	Furthermore, we assume that the nonlinearity $g\colon \Omega_T\times \mathbb R\to \mathbb{R}$ satisfies:
    \begin{enumerate}[({A.}1)]
        \item\label{A1} There exists $M\in\N_{\geq 2}$ such that $g$ together with all partial derivatives $\partial^{\ell}_\tau g$, $1\leq \ell\leq M$, are Carath\'eodory functions.
        \item\label{A2} The partial derivative $\partial_\tau g$ satisfies the growth condition\footnote{We write the symbol $\lesssim$ to signify that the inequality holds up to a positive constant.}
        \begin{equation}
        \label{eq: growth first der}
            |\partial_{\tau} g(x,t,\tau)|\lesssim |\tau|^\gamma+|\tau|^r
        \end{equation} 
        for a.e.~$(x,t)\in\Omega_T$ and all $\tau\in\mathbb R$, where the exponents fulfill the conditions
    \begin{equation}
        \label{conr}
	0<\gamma\leq r\text{ and }\begin{cases}
		0< r\leq \frac{2s}{n-2s}, & \text{ when}\ 2s<n,\\
		0<r<\infty, & \text{ when } 2s\geq  n.
	\end{cases}
    \end{equation}
    \item\label{A3} In the range $2s\geq n$, we suppose that the partial derivatives $(\partial_{\tau}^{\ell}g)_{2\leq \ell\leq M}$ satisfy
          \begin{equation}
        \label{condg super}
            |\partial_{\tau}^{\ell} g(x,t,\tau)|\lesssim 1+|\tau|^k
        \end{equation}
        for a.e.~$(x,t)\in\Omega_T$, all $\tau\in\mathbb R$ and some $k\geq 0$. In the range $2s<n$, we replace the constraint \eqref{condg super} by the requirement that
          \begin{equation}
        \label{condg}
            |\partial_{\tau}^{\ell} g(x,t,\tau)|\lesssim \begin{cases}
                1+|\tau|^k,\text{ when }2\leq \ell\leq m-1,\\
                1,\text{ when }\ell=m
            \end{cases}
        \end{equation}
        for a.e.~$(x,t)\in\Omega_T$, all $\tau\in\mathbb R$ and some parameters $2\leq m\leq M$, $k\geq 0$ satisfying the conditions 
        \[  
            m\leq \frac{n}{n-2s},\text{ or equivalently }1-\frac{1}{m}\leq\frac{2s}{n},
        \]
        and
        \[
            k\leq \min\left(\frac{n}{(n-2s)m},\frac{2s}{n-2s}\right).
        \]
    \end{enumerate}
Let us emphasize that the conditions \ref{A1}–\ref{A2} are required for the well-posedness theory of the semilinear nonlocal MGT equation \eqref{sysmgt}, whereas \ref{A3} plays a crucial role in the uniqueness proof for the inverse problem \ref{IP1}.

To illustrate the scope of our results, we now present several example nonlinearities $g$ that satisfy assumptions \ref{A1}–\ref{A3}. One of our main results, Theorem~\ref{thmIP1}, ensures that each of these nonlinearities can be uniquely recovered from the DN map $\Lambda_{0,g}$, or more generally from $\Lambda_{q,g}$ for sufficiently regular potentials $q$. For clarity, we formulate our examples in two space dimensions, although they extend straightforwardly to higher dimensions.
	\begin{example}[Polynomial-type nonlinearities]\label{exl1}
		Let $n=2$ and suppose that $s=\frac{\mathfrak{m}-1}{\mathfrak{m}}\in (0,1)$ for some $\mathfrak{m}\in\N_{\geq 2}$. Then, we have
        \begin{equation}
        \label{eq: endpoint rational exponents}
           \frac{n}{n-2s}=\frac{1}{1-s}=\mathfrak{m}\text{ and }\frac{2s}{n-2s}=\frac{s}{1-s}=\mathfrak{m}-1.
        \end{equation}
     \begin{enumerate}[{(E.1.}1)]
         \item\label{monomials} We may check that monomials of the form
        \begin{equation}
            g(x,t,\tau)=\alpha(x,t)\tau^{R+1},
        \end{equation}
        where $1\leq R\leq \min(\mathfrak{m-1},\sqrt{\mathfrak{m}+1})$ is an integer and $\alpha\in L^{\infty}(\Omega_T)$. This corresponds to the case $1\leq \gamma=r=R\leq \mathfrak{m}-1=\frac{2s}{n-2s}$, $2\leq m=R+1\leq \mathfrak{m}=\frac{n}{n-2s}$ and $0\leq k=R-1\leq \min(\mathfrak{m}/(R+1),\mathfrak{m}-1)$. Note that $R\leq \sqrt{\mathfrak{m}+1}$ guarantees $R-1\leq \mathfrak{m}/(R+1)$.
        \item\label{polynomials} If $\beta\vcentcolon =\mathfrak{m}/\mathfrak{r}\in\N$ for some integer $1\leq \mathfrak{r}\leq \mathfrak{m}-1$, then we may choose all polynomials of the form
		\begin{equation}
		g(x,t,\tau)=\sum_{j=1}^{\beta}\alpha_{j}(x,t)\tau^{1+ j R}, 
		\end{equation}
         where $1\leq R\leq \min(\mathfrak{m}-1,\sqrt{\mathfrak{m}+1})/\beta$ is an integer and $(\alpha_j)_{1\leq j\leq \mathfrak{m}}\subset L^\infty(\Omega_T)$. This corresponds to the case $1\leq \gamma=R$, $r=\beta R\leq \mathfrak{m}-1=\frac{2s}{n-2s}$, $1+\beta\leq m=1+\beta R\leq \mathfrak{m}=\frac{n}{n-2s}$ and $0\leq k=\beta R-1\leq \min(\mathfrak{m}/(\beta R+1),\mathfrak{m}-1)$. Condition $k\leq \mathfrak{m}/(\beta R+1)$ follows again from $R\leq \sqrt{\mathfrak{m}+1}/\beta$.
         \item\label{Gaussian and super Gaussian} We can also multiply the examples in \ref{monomials}--\ref{polynomials} by a Gaussian function $e^{-\tau^2}$ or a super-Gaussian function $e^{-\tau^{2 \mathcal{K}}}$, where $\mathcal{K}\geq 2$ is an integer. The latter appear in the modeling of high-intensity light beams \cite{parent1992propagation} and the exponent $\mathcal{K}$ determines how fast the intensity decays away from the center of the beam.
     \end{enumerate}
	\end{example}


Next, we present our findings concerning question \ref{IP1}. In Section~\ref{sec: proof of theorem 1.5}, we show that the method of higher-order linearization allows one to determine the potential and the derivatives of the polynomial-type nonlinearity $g$ at $\tau=0$ up to order $m$. More specifically, we prove the following result.

    \begin{theorem}[Unique determination of $q$ and $\{\partial_\tau^{\ell}g(0)\}_{\ell=1}^m$]
    \label{thmIP1}
		Let $\Om\subset\R^n$ be a bounded Lipschitz domain, $T>0$ and $s\in \R_+\setminus\N$. 
        Suppose that the potentials $q_1,q_2\in L^{\infty}(0,T;L^p(\Omega))$ are real valued, $p$ satisfies the condition \eqref{conp} and $q_2$ is time-reversal invariant. Furthermore, let the nonlinearities $g_1,g_2$ obey the assumptions \ref{A1}--\ref{A3} with the same constants $M,\gamma,r,m$ and $k$. If $W_1,W_2\subset\Omega_e$ are two nonempty open sets such that
        \begin{equation}
        \label{eq: equality DN map main thm}
   \left.\Lambda_{q_1,g_1}\varphi\right|_{(W_2)_T}=
   \left.\Lambda_{q_2,g_2}\varphi\right|_{(W_2)_T}
        \end{equation}
		for all sufficiently small exterior values $\varphi\in C_c^{\infty}((W_1)_T)$, then we have
        \begin{equation}
        \label{eq: conclusion main thm 1}
            q_1=q_2\text{ and }\partial_{\tau}^{\ell}g_1(0)=\partial_{\tau}^{\ell}g_2(0)\text{ for all }1\leq \ell\leq m
        \end{equation}
        a.e. in $\Omega_T$.
	\end{theorem}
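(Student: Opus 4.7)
The plan is a higher order linearization scheme. For exterior data $\varphi = \sum_{j=1}^{\ell} \varepsilon_j \varphi_j$ with $\varphi_j \in C_c^{\infty}((W_1)_T)$, $1 \le \ell \le m$, and small $\varepsilon_j \in \R$, denote by $u_\varepsilon^{(i)}$ the solution of \eqref{sysmgt} with potential $q_i$ and nonlinearity $g_i$. The growth bound \ref{A2} gives $\partial_\tau g_i(x,t,0)=0$, and one has $g_i(x,t,0)=0$ (implicit in the setup and consistent with the polynomial-type examples of Example~\ref{exl1}), so that $u_\varepsilon^{(i)}\big|_{\varepsilon = 0}\equiv 0$. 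Consequently, the first linearizations $v_j^{(i)} \vcentcolon = \partial_{\varepsilon_j} u_\varepsilon^{(i)}\big|_{\varepsilon=0}$ are independent of the nonlinearity and solve the linear nonlocal MGT equation
\begin{equation}
(\partial_t^3 + \alpha \partial_t^2 + b(-\Delta)^s \partial_t + c(-\Delta)^s + q_i) v_j^{(i)} = 0 \text{ in } \Omega_T
\end{equation}
with exterior value $\varphi_j$ and zero initial data.

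The first step is to recover the potential. Differentiating \eqref{eq: equality DN map main thm} once at $\varepsilon = 0$ reduces the question to showing that, for the linear nonlocal MGT equation, the equality of the DN maps associated to $q_1$ and $q_2$ on $(W_2)_T$ for all $\varphi_1\in C_c^\infty((W_1)_T)$ forces $q_1=q_2$. Pairing the difference $v_1^{(1)} - v_1^{(2)}$ against a solution $z$ of the time-reversed adjoint equation with exterior datum supported in $W_2$ yields an integral identity of the form
\begin{equation}
\int_{\Omega_T}(q_1-q_2)\,v_1\,z\,dx\,dt = 0.
\end{equation}
The hypothesis that $q_2$ is time-reversal invariant is used precisely to recast the adjoint problem (which has opposite signs on the third and first time derivatives) as a standard forward equation after the substitution $t\mapsto T-t$. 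A Runge approximation argument, powered by the strong unique continuation of $(-\Delta)^s$ for $s\in \R_+\setminus\N$, renders the products $v_1 z$ dense in $L^1(\Omega_T)$, whence $q_1=q_2$.

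The second step is an induction on $\ell = 2,\dots,m$. Assuming $\partial_\tau^{j} g_1(\cdot,0) = \partial_\tau^{j} g_2(\cdot,0)$ for $1\le j\le\ell-1$ and $q_1 = q_2$, apply $\partial_{\varepsilon_1}\cdots\partial_{\varepsilon_\ell}$ at $\varepsilon=0$ to both sides of \eqref{eq: equality DN map main thm}. A Fa\`a di Bruno expansion of $g_i(u_\varepsilon^{(i)})$, together with the induction hypothesis, shows that $w \vcentcolon = \partial_{\varepsilon_1}\cdots\partial_{\varepsilon_\ell}(u_\varepsilon^{(1)}-u_\varepsilon^{(2)})\big|_{\varepsilon=0}$ solves the linear nonlocal MGT equation with common potential $q_1$ and source $\ell!\,[\partial_\tau^\ell g_1(\cdot,0)-\partial_\tau^\ell g_2(\cdot,0)]\,v_1\cdots v_\ell$, with vanishing initial data and Cauchy data that vanish on $(W_2)_T$. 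Testing against a solution $z$ of the adjoint linear equation with exterior value in $W_2$ produces
\begin{equation}
\int_{\Omega_T}\bigl(\partial_\tau^\ell g_1(x,t,0)-\partial_\tau^\ell g_2(x,t,0)\bigr)\,v_1(x,t)\cdots v_\ell(x,t)\,z(x,t)\,dx\,dt = 0,
\end{equation}
and a multifold Runge density argument then upgrades this to $\partial_\tau^\ell g_1(\cdot,0)=\partial_\tau^\ell g_2(\cdot,0)$ a.e.\ in $\Omega_T$.

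The principal obstacle is to justify the $\ell$-linear differentiability of the solution map $\varepsilon\mapsto u_\varepsilon$ in norms strong enough that the multilinear source $v_1\cdots v_\ell$ lies in the dual of the energy space of the linear nonlocal MGT equation, so that the preceding integral identities are rigorous. The integrability conditions \eqref{conp}, \eqref{conr} and \eqref{condg} are calibrated exactly so that the resulting chain of Sobolev embeddings and H\"older inequalities closes on the products at each order. The borderline case $\ell=m$, in which $\partial_\tau^m g$ is only assumed bounded, must be handled at the $L^\infty$ endpoint rather than by absorbing further Lebesgue exponents into the embedding; this is the origin of the constraint $m\le n/(n-2s)$ in \ref{A3}.
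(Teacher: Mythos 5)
Your proposal is essentially the same higher-order linearization scheme that the paper implements in Sections~4 and~5: first-order linearization to recover $q$ via an Alessandrini-type integral identity plus Runge approximation, then an inductive higher-order linearization using Fa\`a di Bruno together with Runge approximation to extract $\partial_\tau^\ell g(\cdot,0)$ for $2\le\ell\le m$. The role of the time-reversal invariance of $q_2$ is correctly identified, and you correctly spot that the main technical burden lies in justifying the $\ell$-fold differentiability of $\varepsilon\mapsto u_\varepsilon$ in the energy space.

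A few points worth tightening. First, with the exterior datum $\varphi=\sum_{j=1}^\ell\varepsilon_j\varphi_j$ and the mixed derivative $\partial_{\varepsilon_1}\cdots\partial_{\varepsilon_\ell}$, Fa\`a di Bruno produces \emph{exactly one} term per partition; the singleton partition contributes with coefficient $1$, not $\ell!$. (The factorial would appear only if a single small parameter were used and $\partial_\varepsilon^\ell$ taken.) Second, your statement that the induction hypothesis reduces $w:=\partial_{\varepsilon_1}\cdots\partial_{\varepsilon_\ell}(u_\varepsilon^{(1)}-u_\varepsilon^{(2)})\big|_{\varepsilon=0}$ to a problem with source $[\partial_\tau^\ell g_1(\cdot,0)-\partial_\tau^\ell g_2(\cdot,0)]v_1\cdots v_\ell$ actually requires the auxiliary induction claim that $\partial^{M}_{\varepsilon_{k_1}\cdots\varepsilon_{k_M}}u_1^\varepsilon=\partial^{M}_{\varepsilon_{k_1}\cdots\varepsilon_{k_M}}u_2^\varepsilon$ for $M<\ell$ (so that the mixed blocks in the Fa\`a di Bruno sum cancel). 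This follows from $q_1=q_2$, the previous equalities $\partial_\tau^j g_1(0)=\partial_\tau^j g_2(0)$, and uniqueness for the linearized equations, but it is the part of the argument that must be said explicitly; the paper obtains it directly from the linearized DN map equality and the UCP of $(-\Delta)^s$, which yields the pointwise identity $w_\varepsilon=0$ for all small $\varepsilon$, rather than only the duality statement you get by testing against an adjoint solution $z$. Both routes reach the same integral identity, but the UCP route bypasses one layer of Runge approximation. Finally, the phrase that the products $v_1 z$ are dense in $L^1(\Omega_T)$ should be made precise: the Runge approximation of Proposition~\ref{Runge} lets one replace each $v_j|_{\Omega_T}$ by an arbitrary $\psi_j\in C_c^\infty(\Omega_T)$ in $L^2(\Omega_T)$, and then one passes to the limit in the integral one factor at a time, with H\"older/Sobolev bounds governed by \eqref{condg} and \eqref{conp} to keep the remaining factors under control — which is exactly where the constraints $m\le n/(n-2s)$ and $k\le \min(n/((n-2s)m),2s/(n-2s))$ enter.
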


Apart from polynomial-type nonlinearities, as illustrated in Example~\ref{exl1}, we also address question \ref{IP1} for polyhomogeneous nonlinearities of finite order $L\in \N_{\geq 1}$. That is, we consider nonlinearities $g$ satisfying the following structural assumption:
\begin{enumerate}[({A.}4)]
    \item\label{A4} The nonlinearity $g$ has the form
\begin{equation}
    \label{power}
g(x,t,\tau)=\sum_{k=1}^{L}\alpha_k(x,t)|\tau|^{r_k}\tau,
\end{equation}
where $\alpha_1,\ldots,\alpha_L\in L^{\infty}(\Omega_T)$, $0<r_1<\ldots<r_L$ and $0<r_L\leq 1$ satisfy the growth condition \eqref{conr}. 
\end{enumerate} 

In Section~\ref{sec: polyhom}, we employ the method of first-order linearization to obtain the following uniqueness result for polyhomogeneous nonlinearities $g$ of finite order $L$.

    \begin{theorem}[Unique determination of $q$ and $g$ (or $\{\alpha_k\}_{k=1}^L$)]
    \label{thm: polyhom nonlinearities}
        Let $\Om\subset\R^n$ be a bounded Lipschitz domain, $T>0$, $s\in \R_+\setminus\N$ and assume that $W_1,W_2\subset\Omega_e$ are two nonempty open sets. Furthermore, 
        Suppose that the potentials $q_1,q_2\in L^{\infty}(0,T;L^p(\Omega))$ satisfy the assumption of Theorem  \ref{thmIP1} and we have given two polyhomogeneous nonlinearities $g_1,g_2$ of order $L$, as defined in \ref{A4}. If the related DN maps $\Lambda_{q_j,g_j}$, $j=1,2$, obey
         \begin{equation}
        \label{eq: equal DN maps poly}
\left.\Lambda_{q_1,g_1}\right|_{(W_2)_T}\varphi=\left.\Lambda_{q_2,g_2}\varphi\right|_{(W_2)_T}
        \end{equation}
		for all sufficiently small exterior values $\varphi\in C_c^{\infty}((W_1)_T)$,
        then it holds
        \begin{equation}
          q_1=q_2\text{ and }   g_1=g_2\text{ for a.e. }(x,t)\in\Omega_T\text{ and all }\tau\in\R.
        \end{equation}
    \end{theorem}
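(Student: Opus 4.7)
The plan is to use a first order linearization combined with a scaling argument that exploits the power structure in \eqref{power}. Since every $r_k$ is strictly positive, $\partial_\tau g_j(x,t,0)$ vanishes identically, so the linearization of \eqref{sysmgt} at the zero solution is insensitive to the nonlinearity. For $\epsilon>0$ small and any $\varphi\in C_c^\infty((W_1)_T)$, let $u_j^\epsilon$ denote the solution of \eqref{sysmgt} for the pair $(q_j,g_j)$ with exterior datum $\epsilon\varphi$, and set $v_j\vcentcolon=\partial_\epsilon u_j^\epsilon|_{\epsilon=0}$. Then $v_j$ solves the linear nonlocal MGT equation with potential $q_j$ and exterior value $\varphi$, and differentiating \eqref{eq: equal DN maps poly} at $\epsilon=0$ shows that the two linear DN maps coincide on $(W_2)_T$. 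Combining an Alessandrini identity with the Runge approximation property for the linear MGT equation (established earlier in the paper, the time-reversal invariance of $q_2$ ensuring well-posedness of the adjoint problem) yields $q_1=q_2=:q$ almost everywhere in $\Omega_T$.

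Writing $w^\epsilon\vcentcolon=u_1^\epsilon-u_2^\epsilon$, we see that $w^\epsilon$ vanishes in $(\Omega_e)_T$, its nonlocal Neumann data vanish on $(W_2)_T$ by \eqref{eq: equal DN maps poly}, and $w^\epsilon$ satisfies the linear MGT equation with potential $q$ and source $g_2(u_2^\epsilon)-g_1(u_1^\epsilon)$. Testing against a solution $\phi$ of the adjoint linear problem with exterior datum $\psi\in C_c^\infty((W_2)_T)$, the boundary contributions cancel and we obtain the Alessandrini identity
\begin{equation*}
\int_{\Omega_T}\bigl(g_1(u_1^\epsilon)-g_2(u_2^\epsilon)\bigr)\phi\,dx\,dt=0.
\end{equation*}
Decomposing this integrand as $[g_1(u_1^\epsilon)-g_1(u_2^\epsilon)]+[(g_1-g_2)(u_2^\epsilon)]$ and using a well-posedness estimate of the form $u_j^\epsilon=\epsilon v_0+O(\epsilon^{1+r_1})$ in a suitable norm (where $v_0$ is the linear solution with datum $\varphi$), the first bracket is of order $\epsilon^{1+2r_1}$ while the second supplies the leading order $\epsilon^{1+r_1}(\alpha_1^{(1)}-\alpha_1^{(2)})|v_0|^{r_1}v_0$. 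Dividing by $\epsilon^{1+r_1}$ and letting $\epsilon\to 0^+$ yields
\begin{equation*}
\int_{\Omega_T}(\alpha_1^{(1)}-\alpha_1^{(2)})|v_0|^{r_1}v_0\,\phi\,dx\,dt=0.
\end{equation*}
Runge approximation provides dense families of such $v_0$ and $\phi$ in $L^2(\Omega_T)$ as $\varphi$ and $\psi$ range over their test classes; fixing $v_0$ and varying $\phi$ gives $(\alpha_1^{(1)}-\alpha_1^{(2)})|v_0|^{r_1}v_0=0$ in $\Omega_T$, and then varying $v_0$ forces $\alpha_1^{(1)}=\alpha_1^{(2)}$. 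Iterating the scaling with $\epsilon^{-(1+r_2)}$, the $k=1$ contribution now cancels and we recover $\alpha_2^{(1)}=\alpha_2^{(2)}$; a finite induction up to $k=L$ concludes $g_1=g_2$.

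The hardest step, I expect, is the rigorous justification of the asymptotic expansion $u_j^\epsilon=\epsilon v_0+O(\epsilon^{1+r_1})$ in a topology strong enough to pass the non-Lipschitz map $\tau\mapsto|\tau|^{r_k}\tau$ (with $r_k<1$) through the limit. This calls for a careful stability estimate for \eqref{sysmgt} tailored to polyhomogeneous nonlinearities, and it is here that the restriction $r_L\leq 1$ in \ref{A4} plays an essential role.
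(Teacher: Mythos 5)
Your proposal is correct in spirit but takes a noticeably different route from the paper at two places, and it glosses over a detail that the paper handles in a cleverer way.

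First, after concluding that the exterior data and the nonlocal Neumann data of $w^\varepsilon=u_1^\varepsilon-u_2^\varepsilon$ both vanish on open exterior sets, the natural next move is the UCP for $(-\Delta)^s$ applied to $b\partial_t w^\varepsilon+cw^\varepsilon$ (exactly as in the second-order linearization discussion in Section~\ref{sec: second order linearization}): this gives $u_1^\varepsilon=u_2^\varepsilon$ \emph{pointwise}, so that the source term $g_1(u_1^\varepsilon)-g_2(u_2^\varepsilon)$ is identically zero, not merely orthogonal to Runge-dense test functions. You instead build an Alessandrini identity, which is valid but weaker than what you already have in hand; your bracket $g_1(u_1^\varepsilon)-g_1(u_2^\varepsilon)$ is exactly zero and does not need the $O(\varepsilon^{1+2r_1})$ bound. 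The paper's route is thus simpler here, and your estimate of that bracket is redundant work.

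Second, and more substantively, your iteration recovers $\alpha_k^{(1)}=\alpha_k^{(2)}$ by dividing by $\varepsilon^{1+r_k}$ and passing the non-Lipschitz Nemytskii operator through a limit at \emph{each} of the $L$ stages. The paper only invokes Nemytskii continuity (Lemma~\ref{lemma: Nemytskii operators}) once, for the last coefficient $\alpha_L$. For $k<L$ the paper uses a different algebraic device: starting from the pointwise identity, it takes $(r_k+1)$-th roots and writes $1=(1-\varepsilon^{-1}u_\varepsilon)+\varepsilon^{-1}u_\varepsilon$, obtaining a bound of the form
\begin{equation*}
\int_{\Omega_T}|\alpha_k^{(1)}-\alpha_k^{(2)}|^{\frac{1}{r_k+1}}\,dxdt \lesssim \|\chi_\Omega-\varepsilon^{-1}u_\varepsilon\|_{L^2(\Omega_T)}+\sum_{k'>k}\varepsilon^{\frac{r_{k'}-r_k}{r_k+1}},
\end{equation*}
which is then sent to zero by the $L^2$ Runge approximation of $\chi_\Omega$ and the smallness of $u_\varepsilon$. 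This avoids any limit passage through $|\cdot|^{r_k}\cdot$ for $k<L$ and sidesteps the exponent bookkeeping (pairing $L^{q/(r_k+1)}$ convergence of the Nemytskii image against $\phi$ in the dual Lebesgue space) that your version quietly requires; the bookkeeping does close using the constraints \eqref{conr} and \eqref{eq: restriction on exp poly}, but it is not automatic, and you do not spell it out.

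Finally, your diagnosis of where $r_L\le 1$ matters is not quite right. Claim~\ref{first linearization poly}, which underlies the asymptotics $u_j^\varepsilon=\varepsilon v_j+O(\varepsilon^{1+r_1})$, does not need $r_L\le 1$; it only uses the growth bound \eqref{conr} and Theorem~\ref{wellposednessv}. The restriction $r_L\le 1$ enters so that the Nemytskii operator $v\mapsto|v|^{r_L}v$ maps $L^2(\Omega_T)$ into $L^{2/(r_L+1)}(\Omega_T)\supseteq L^1$, which is exactly what lets the paper work with the $L^2(\Omega_T)$ Runge approximation (Proposition~\ref{Runge}) rather than a stronger $L^2(0,T;\widetilde H^s(\Omega))$ one, as emphasized in Remark~1.8.

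In short: the structure (first-order linearization, Runge, downward induction on the homogeneity order) matches the paper, but the engine differs — you use an integral Alessandrini identity where a pointwise identity is available via UCP, and you use Nemytskii limit passages at every step where the paper uses a one-line pointwise algebraic trick for all but the last coefficient. Both engines can be made to run; the paper's requires fewer delicate limit arguments.
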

\begin{remark}
    We note that the time-reversal invariance assumption on the potential $q_2$ in Theorems~\ref{thmIP1} and~\ref{thm: polyhom nonlinearities} is imposed in order to apply the integral identity \eqref{eq: integral identity potential} (see Lemma~\ref{integral}) for the linear nonlocal MGT equation, which is a key step in establishing the unique determination of the potential. However, this assumption is in fact not essential and can be removed by a straightforward modification of the argument in \cite[Section~4.2]{PZ1}.
\end{remark}

The next result resolves the question \ref{IP2}.
\begin{theorem}[Unique determination of $\kappa$ or $\beta$]
\label{thmIP2}
Let $\Om\subset\R^n$ be a bounded Lipschitz domain, $T>0$, and 
suppose that $s\in \R_+\setminus\N$ satisfies $s>n/2$, and  $(\beta_j,\kappa_j)\in W^{2,\infty}(0,T;L^\infty(\Omega))\times W^{1,\infty}(0,T;L^\infty(\Omega))$ for $j=1,2$. Let $W_1,W_2\subset\Omega_e$ be two nonempty open sets.
\begin{enumerate}[(i)]
\item\label{Westervelt} If there holds
\begin{equation}
\label{eq: equal DN Westervelt 1}
\left.\Lambda_{\beta_1}\varphi\right|_{(W_2)_T}=
\left.\Lambda_{\beta_2}\varphi\right|_{(W_2)_T}
\end{equation}
for all sufficiently small exterior values $\varphi\in C_c^{\infty}((W_1)_T)$, then we have $\beta_1=\beta_2$ in $\Omega_T$.
\item\label{Westervelt 2} If there holds
\begin{equation}
\label{eq: equal DN Westervelt 2}
\left.\Lambda_{\kappa_1}\varphi\right|_{(W_2)_T}=\left.\Lambda_{\kappa_2}\varphi\right|_{(W_2)_T}
\end{equation}
for all sufficiently small exterior values $\varphi\in C_c^{\infty}((W_1)_T)$, then we have $\kappa_1=\kappa_2$ in $\Omega_T$.
\end{enumerate} 
\end{theorem}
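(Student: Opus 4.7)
The plan is to combine a second-order expansion of the nonlinear solution map $\varphi\mapsto u^\varphi$ with the $L^2(\Omega_T)$ Runge approximation (Proposition~\ref{Runge}) for the underlying third-order linear operator $L:=\partial_t^3+\alpha\partial_t^2+b(-\Delta)^s\partial_t+c(-\Delta)^s$. By the well-posedness established earlier in the paper, the solution map is smooth at small exterior data and admits the expansion $u^{\varepsilon\varphi}=\varepsilon u_1+\varepsilon^2 u_2+O(\varepsilon^3)$, where $u_1$ solves $Lu_1=0$ in $\Omega_T$ with exterior value $\varphi$ and zero initial conditions, independently of $\beta,\kappa$. At order $\varepsilon^2$, the function $u_2^{(j)}$ for model $j=1,2$ satisfies, with zero exterior data and vanishing initial conditions,
\begin{equation*}
L u_2^{(j)}=\partial_t^2(\beta_j u_1^2)\text{ in case (i)},\qquad L u_2^{(j)}=\partial_t(\kappa_j(\partial_t u_1)^2)\text{ in case (ii)}.
\end{equation*}
Equality of the DN maps at order $\varepsilon^2$ then forces $v:=u_2^{(1)}-u_2^{(2)}$ to solve the corresponding PDE with right-hand side $\partial_t^2((\beta_1-\beta_2)u_1^2)$ resp.\ $\partial_t((\kappa_1-\kappa_2)(\partial_t u_1)^2)$, together with $v\equiv 0$ in $(\Omega_e)_T$, vanishing initial data at $t=0$, and vanishing nonlocal Neumann trace $b(-\Delta)^s\partial_t v+c(-\Delta)^s v=0$ on $(W_2)_T$.

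Next I pair $v$ with a solution $w$ of the adjoint problem $L^*w=-\partial_t^3 w+\alpha\partial_t^2 w-b(-\Delta)^s\partial_t w+c(-\Delta)^s w=0$ in $\Omega_T$, with exterior data $\psi\in C_c^\infty((W_2)_T)$ and vanishing terminal data $w(T)=\partial_t w(T)=\partial_t^2 w(T)=0$. The self-adjointness of $(-\Delta)^s$ on $\mathbb R^n$ combined with three integrations by parts in time eliminates every boundary contribution: the $t=0$ terms vanish by the initial data of $v$ and $u_1$, the $t=T$ terms by the terminal conditions on $w$, and the nonlocal exterior surface integral by the fact that $w|_{(\Omega_e)_T}$ is supported in $(W_2)_T$, where the Neumann trace of $v$ is zero. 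Shifting the remaining time derivatives onto $w$ produces the scalar identities
\begin{equation*}
\int_{\Omega_T}(\beta_1-\beta_2)u_1^2\,\partial_t^2 w\,dx\,dt=0,\qquad \int_{\Omega_T}(\kappa_1-\kappa_2)(\partial_t u_1)^2\,\partial_t w\,dx\,dt=0,
\end{equation*}
and a standard polarization (replacing $u_1$ by $u_{1,1}+u_{1,2}$ and extracting cross terms) upgrades these to bilinear identities featuring $u_{1,1}u_{1,2}$ resp.\ $\partial_t u_{1,1}\,\partial_t u_{1,2}$ in place of the squares.

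To conclude, I apply the $L^2(\Omega_T)$ Runge approximation to both $L$ (with sources in $W_1$) and $L^*$ (with sources in $W_2$), using the Sobolev embedding $\widetilde{H}^s(\Omega)\hookrightarrow L^\infty(\Omega)$, valid precisely because $s>n/2$, to ensure that the triple products inside the integrals are pointwise bounded and all integrals converge absolutely. A useful observation is that $\partial_t$ maps admissible adjoint solutions to admissible adjoint solutions: differentiating $L^*w=0$ commutes with $\partial_t$, $\partial_t\psi\in C_c^\infty((W_2)_T)$ whenever $\psi$ is, and the terminal condition $\partial_t^3 w(T)=0$ is automatic from $L^*w=0$ evaluated at $t=T$. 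Consequently, the families $\{u_{1,1}u_{1,2}\,\partial_t^2 w\}$ and $\{\partial_t u_{1,1}\,\partial_t u_{1,2}\,\partial_t w\}$ appearing in the two bilinear identities are dense enough in $L^1(\Omega_T)$ to force $\beta_1=\beta_2$ and $\kappa_1=\kappa_2$ a.e.\ in $\Omega_T$. The main obstacle I expect is precisely this density step: one must check that time-differentiated adjoint solutions still span (weakly) all of $L^1(\Omega_T)$ once multiplied against products of forward solutions. The regularity assumptions $\beta_j\in W^{2,\infty}(0,T;L^\infty(\Omega))$ and $\kappa_j\in W^{1,\infty}(0,T;L^\infty(\Omega))$ match the exact number of time derivatives being moved in the two cases, ensuring that no additional boundary contributions spoil the argument.
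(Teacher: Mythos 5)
The paper follows a genuinely simpler route than you do. Rather than performing a second-order linearization and pairing with adjoint solutions, the paper uses the \emph{full} equality of the nonlinear DN maps together with the UCP of the fractional Laplacian to conclude directly that the two nonlinear solutions coincide, $u^\eps_{1,\varphi}=u^\eps_{2,\varphi}=:u^\eps_\varphi$. Since this one function then solves both PDEs, the nonlinearities must agree as source terms: $\partial_t^2\bigl((\beta_1-\beta_2)(u^\eps_\varphi)^2\bigr)=0$. With $u^\eps_\varphi\in W^{2,\infty}(0,T;L^2)$ and zero initial data this yields the \emph{pointwise} identity $(\beta_1-\beta_2)(u^\eps_\varphi)^2=0$; a single first-order asymptotic expansion $u^\eps_\varphi\approx\eps v$ (Claim~\ref{first linearization poly}-type estimate) then replaces $u^\eps_\varphi$ by the linear solution $v$, and polarization plus $L^2$ Runge approximation finishes the proof. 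Your route -- pass to the $\eps^2$ coefficient, write an Alessandrini-type integral identity against adjoint solutions, then argue density -- is workable in principle but longer, and requires establishing a genuine second-order expansion of the nonlinear solution map for the Westervelt nonlinearities (which the paper never needs and does not prove).

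There is a concrete gap in your density step. The observation that $\partial_t$ maps admissible adjoint solutions to admissible adjoint solutions is correct but does not give what you need: it does not make the set $\{\partial_t^2 w\}$ (or $\{\partial_t w\}$) span a dense set of targets, since $w\mapsto\partial_t w$ is far from surjective on the Runge set and the time derivative is not continuous on $L^2$. The way to close the gap, which is also what the paper does for the $\kappa$ case, is to first Runge-approximate the two forward factors by \emph{compactly supported} test functions $\psi_1,\psi_2\in C_c^\infty(\Omega_T)$ (each approximation is justified because the remaining two factors lie in $L^\infty\cdot L^2\subset L^2$, using the Morrey embedding from $s>n/2$), then integrate by parts in $t$ to move $\partial_t^2$ (resp.\ $\partial_t$) off $w$ and onto $(\beta_1-\beta_2)\psi_1\psi_2$, using $\beta_j\in W^{2,\infty}(0,T;L^\infty)$ (resp.\ $\kappa_j\in W^{1,\infty}$) and the compact time support of the $\psi_i$ to kill boundary terms, and only \emph{then} Runge-approximate $w$ in $L^2(\Omega_T)$. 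This gives $\partial_t^2[(\beta_1-\beta_2)\psi_1\psi_2]=0$ a.e., which (integrating twice with zero data) forces $(\beta_1-\beta_2)\psi_1\psi_2=0$ and hence $\beta_1=\beta_2$. As written, your appeal to "$\partial_t$ preserves admissible adjoint solutions" does not substitute for this argument.
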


\begin{remark}
    We note that the smallness condition imposed on the Dirichlet data $\varphi\in C_c^{\infty}((W_1)_T)$ in Theorems~\ref{thmIP1}, \ref{thm: polyhom nonlinearities}, and \ref{thmIP2} is dictated by the well-posedness theory for the corresponding forward problems \eqref{sysmgt} and \eqref{sysmgtWtype}. This issue is discussed in more detail in Section~\ref{sec: well-posedness forward}; see in particular Remarks~\ref{rem: dependence of energy constant}, \ref{rem: smallness in semilinear} and \ref{rem: smallness in westervelt}. Roughly speaking, the smallness of the exterior data is required in order to carry out the fixed-point arguments.
\end{remark}

For the reader’s convenience, we summarize in Table~\ref{tableII} the inverse problems considered in this article and, in particular, highlight the parameters recovered in Theorems~1.3, 1.4, and~1.6.

{\small
\begin{table}[h]
\centering
\small
\caption{Nonlocal equations and associated inverse problems}
\label{tableII}
\begin{tabular}{c @{\quad} c  @{\quad} c}
\toprule
\textbf{Nonlocal equation} 
& \textbf{Recovered parameters} 
& \textbf{Measurement data} \\
\midrule
\makecell[c]{$(L_{\alpha,b,c}+q)u + g(u)=0$,\\ $g(x,t,\tau)$ satisfying \ref{A1}--\ref{A3}}
& $q,\{\partial_\tau^{\ell} g(0)\}_{\ell=1,\ldots,m}$
& $\Lambda_{q,g}\varphi \big|_{(W_2)_T}$ \\
\midrule
 \makecell[c]{$(L_{\alpha,b,c}+q)u + g(u)=0,$\\
$g(x,t,\tau)=\sum_{k=1}^L \alpha_k(x,t) |\tau|^{r_k}\tau$}
& $q,\, \{\alpha_k\}_{k=1,\ldots,L}$
& $\Lambda_{q,g}\varphi \big|_{(W_2)_T}$ \\
\midrule
$L_{\alpha,b,c}u = \partial_t\!\big(\kappa(\partial_t u)^2\big)$
& $\kappa$
& $\Lambda_{\kappa}\varphi \big|_{(W_2)_T}$ \\
\midrule
$L_{\alpha,b,c}u = \partial_t^2\!\big(\beta u^2\big)$
& $\beta$
& $\Lambda_{\beta}\varphi \big|_{(W_2)_T}$ \\
\bottomrule
\end{tabular}\\
\vspace{0.3cm}
\parbox{0.9\linewidth}{\scriptsize
\textit{Legend.} $L_{\alpha,b,c}$ is the third order nonlocal wave operator defined in equation \eqref{eq: third order nonlocal wave operator}; $W_1,W_2\subset\Omega_e$ are measurement sets;
$\varphi \in C_c^\infty((W_1)_T)$ denotes the exterior Dirichlet datum supported in $(W_1)_T$;
$\Lambda_{q,g},\Lambda_{\kappa}$, and $\Lambda_{\beta}$ denote the DN maps.}
\end{table}}

Finally, we point out that the methods employed in this article to solve the inverse problems for the nonlocal (J)MGT equations, summarized in Table~\ref{tableII}, differ substantially from those used for inverse problems associated with local PDEs; in particular, from those developed for \emph{local} (J)MGT equations, which were studied, for example, by the first two authors in \cite{SRPFYY,fu2026partial}.

Indeed, our arguments rely heavily on the \emph{unique continuation principle (UCP)} for the fractional Laplacian $(-\Delta)^s$, a property that holds only for $s\in\R_+\setminus\N$. The UCP for the fractional Laplacian can be phrased as follows (see, for instance, \cite{TGMS,KRZ}): 
\vspace{0.2cm}
\begin{center}\emph{If $V\subset\R^n$ is a nonempty open set and $u\in H^r(\mathbb R^n)$, $r\in\mathbb R$, satisfies\\ $u=(-\Delta)^su=0$ in $V$, then $u = 0$ in $\mathbb R^n$.}
\end{center}
\vspace{0.2cm}
Let us emphasize that this property cannot hold for integer orders $s\in\N$ and stands in sharp contrast to the finite speed of propagation exhibited by the (J)MGT equation, as discussed in Section~\ref{subsec: background acoustic waves}.

In contrast to the nonlocal inverse problems treated in the present work, the resolution of inverse problems associated with \emph{local} (J)MGT equations relies on the construction of geometric optics solutions and on a reduction to an injective geometric inverse problem; see \cite{SRPFYY,fu2026partial}. We also refer the interested reader to \cite{kian2019recovery,belishev1987approach,kumar2025holderstabilityestimatesdetermination,bellassoued2022stable,buchgeim1981uniqueness,rakesh1988uniqueness,isakov1991inverse,eskin2006new,eskin2007new,cristofol2015determining,kian2019determination,kurylev2018inverse,lassas2017determination,lassas2018inverse,lassas2022uniqueness} and the references therein, which address inverse problems associated with other hyperbolic equations. These works present additional classical techniques in the theory of inverse problems for hyperbolic equations, including the boundary control (BC) method and the use of Carleman estimates.

\subsection{Comparison of our main results with the literature}
\label{sec: related literature}

In the following sections, we first provide a brief overview of inverse problems associated with elliptic and parabolic equations, then discuss inverse problems for nonlocal wave equations. Finally, we compare our main results with the existing literature.

\subsubsection{Inverse problems for nonlocal elliptic and parabolic equations}
\label{subsec: elliptic nonlocal IP}

In the influential article \cite{TGMS}, the authors studied the so-called Calder\'on problem for the \emph{fractional Schrödinger equation}
\begin{equation}
\label{eq: frac schroed}
    ((-\Delta)^s + q)u = 0 \quad \text{in } \Omega,
\end{equation}
where $\Omega \subset \mathbb{R}^n$ is a bounded Lipschitz domain, $0 < s < 1$, and $q$ is a bounded potential. In particular, they established the \emph{unique continuation property (UCP)} of $(-\Delta)^s$ for functions in $H^r(\mathbb{R}^n)$, $r\in\mathbb{R}$, and, by a Hahn--Banach argument, the $L^2(\Omega)$ Runge approximation property for solutions of \eqref{eq: frac schroed}. Combining this approximation with an Alessandrini-type identity, the authors proved that the potential $q$ can be uniquely recovered from the DN map $\Lambda_q$. Later, a $\widetilde{H}^s(\Omega)$ Runge approximation was obtained in \cite{ARMS}, allowing the recovery of certain singular potentials (Sobolev multipliers). These uniqueness and approximation results were generalized in \cite{JRPZ} to unbounded domains, while \cite{KRZ} studied the UCP in the more general scale of Bessel potential spaces $H^{t,p}(\mathbb{R}^n)$.

Moreover,
\cite[Proposition~5.1]{TRSU} establishes a measurable unique continuation property (MUCP) for the fractional Schrödinger equation \eqref{eq: frac schroed} when $q \in L^\infty(\Omega)$ and $1/4 \le s < 1$: if $u \in H^s(\mathbb{R}^n)$ solves \eqref{eq: frac schroed} and vanishes on a set of positive measure, then $u$ must vanish identically. Furthermore, according to \cite[Remark~5.6]{TRSU}, this MUCP extends to the full range $0 < s < 1$ provided that $q \in C^1(\overline{\Omega})$ (see also \cite{fall2014unique}). In contrast to the UCP, which depends only on $(-\Delta)^s$, the MUCP fundamentally relies on the fractional Schrödinger equation.

In the regime $1/4 \le s < 1$, the MUCP yields a single-measurement uniqueness result for the inverse problem. Indeed, since MUCP ensures that the solution $u$ of \eqref{eq: frac schroed} with nonzero exterior data satisfies $u \neq 0$ a.e.\ in $\Omega$, we may write
\begin{equation}
\label{eq: reconstruction formula schroedinger}
    q(x) = -\frac{(-\Delta)^s u(x)}{u(x)} 
\end{equation}
for a.e.\ $x \in \Omega$. By \cite[Theorem~2]{TRSU} and the linearity of $(-\Delta)^s$, the right-hand side is uniquely determined by $\Lambda_q \varphi$ for a single nonzero exterior datum $\varphi$.

For $0 < s < 1/4$, one can still recover continuous potentials, even though the MUCP is unavailable for continuous potentials in this range. Using the UCP for $(-\Delta)^s$, one shows that for any $x_0 \in \Omega$ there exists a sequence $(x_k) \subset \Omega$ with $x_k \to x_0$ and $u(x_k) \neq 0$ for all $k$. By continuity of $q$, this yields
\begin{equation}
\label{eq: limit reconstruction formula schroedinger}
    q(x_0)
    = \lim_{k\to\infty} q(x_k)
    = -\lim_{k\to\infty} \frac{(-\Delta)^s u(x_k)}{u(x_k)}.
\end{equation}
Again, by \cite[Theorem~2]{TRSU}, the right-hand side is determined by $\Lambda_q \varphi$ for a single nonzero exterior datum $\varphi$. Consequently, for $0 < s < 1/4$, any continuous potential $q \in C(\overline{\Omega})$ can still be uniquely recovered from a single measurement.

MUCP results also extend beyond the fractional Schrödinger operator. In \cite[Proposition~7.9]{KRZ}, the last author together with M. Kar and J. Railo proved a MUCP for the fractional $p$-biharmonic operator
\[
    (-\Delta)^s_p u \vcentcolon= (-\Delta)^{s/2}\big(|(-\Delta)^{s/2} u|^{p-2} (-\Delta)^{s/2} u\big),
\]
showing that, under suitable assumptions on $(s,p)$, if a sufficiently regular function $u$ satisfies
\[
    (-\Delta)^s_p u = 0 \quad \text{in } \Omega,
    \qquad
    (-\Delta)^{s/2}u = 0 \quad \text{in } \Omega',
\]
where $\Omega' \subset \Omega$ has positive measure, then $u\equiv 0$. This yields a MUCP for a highly nonlinear and nonlocal operator.

As in the fractional Schrödinger case, this MUCP implies a single-measurement uniqueness result for the corresponding inverse problem. Owing to the nonlinear structure of $(-\Delta)^s_p$, the argument requires a natural \emph{monotonicity assumption} on the coefficients, which is standard in nonlinear inverse problems. Under this assumption, uniformly elliptic bounded coefficients can be uniquely determined without any continuity requirements, whereas continuity is needed when uniqueness is obtained instead via the UCP for $(-\Delta)^s_p$ (see \cite[Theorems~2.4--2.5]{KRZ}).

Taken together, these works demonstrate that the MUCP and the UCP are powerful tools for establishing single-measurement uniqueness results in inverse problems for both linear and nonlinear nonlocal PDEs. This motivates the broader program of extending the MUCP to other, possibly nonlinear, nonlocal equations--such as semilinear fractional Schrödinger equations--where such results remain largely open.
We also refer the interested reader to \cite{JRPZ}, which analyzes the role of UCP in nonlocal inverse problems and, in particular, removes the boundedness assumption on the domain $\Omega$ in the fractional Calderón problem studied in \cite{TGMS} and related works.

In Section~\ref{sec: concluding remarks}, we discuss the problem of uniquely determining potentials and nonlinearities from fewer measurements than those required in our main theorems (Theorems~\ref{thmIP1}, \ref{thm: polyhom nonlinearities}, and \ref{thmIP2}).

For further studies on inverse problems for linear nonlocal PDEs, we refer the interested reader to 
\cite{MYLA,CKJG,CRTZ,10.4310/jdg/1757353909,TRSU,LiLi,YHGNPZ,LRZ-nonlocal-diffusion,ARMS,JRPZ} 
and the references therein, where the underlying PDEs are of elliptic or parabolic type.
In addition, the determination of nonlinearities in nonlocal PDEs has recently become a very active area of research. 
For instance, the works \cite{LaLin,KMSK,PZJN} derived unique determination results for semilinearities in elliptic and parabolic nonlocal PDEs.

In general, the determination of nonlinear terms in PDEs dates back at least to the works \cite{VI,VI1} of Isakov, where inverse problems for nonlinear elliptic and parabolic equations were studied using first- and higher-order linearization techniques. Notably, \cite{VI1} appears to be the first work to employ a higher-order linearization method, which has since become a powerful tool in the analysis of inverse problems for nonlinear PDEs. Over time, these techniques have been successfully applied to a variety of models, including elliptic nonlocal equations.

\subsubsection{Inverse problems for nonlocal wave equations}
\label{subsec: hyperbolic nonlocal IP}
In the recent work \cite{LTZ1}, the last author together with Y.-H.~Lin and T.~Tyni established a Runge approximation in 
$L^2(0,T;\widetilde H^s(\Omega))$ for solutions of linear nonlocal wave equations
\[
    (\partial_t^2 + (-\Delta)^s+q)u = 0 \quad \text{in } \Omega_T,
\]
thereby extending the uniqueness results for Calder\'on problems of linear and nonlinear nonlocal wave equations obtained in their earlier works \cite{KLW,LTZ2}. 
This result provides an affirmative answer to a question raised in \cite{PZ1}, where the Calder\'on problem for viscous nonlocal wave equations
\[
    (\partial_t^2 + (-\Delta)^s \partial_t + (-\Delta)^s)u + f(u) = 0 \quad \text{in } \Omega_T
\]
with linear and nonlinear perturbations was investigated. A key insight of \cite{LTZ1} is the extension of the theory of very weak solutions for classical wave equations to the nonlocal setting. 
Using this approach, the authors recovered two classes of polyhomogeneous nonlinearities from the associated DN map.

Furthermore, the work \cite{PZ2} generalizes the Runge approximation of \cite{LTZ1} to damped nonlocal wave equations
\[
    (\partial_t^2 + \gamma \partial_t + (-\Delta)^s+q)u = 0 \quad \text{in } \Omega_T,
\]
and establishes uniqueness for the simultaneous recovery of H\"older continuous damping coefficients $\gamma$ together with either time-dependent potentials $q$ or homogeneous nonlinearities $f$.

We also mention the work \cite{LLZY}, which studies the recovery of a Westervelt-type nonlinearity in a fractionally damped wave equation from the \emph{source-to-solution map}. More precisely, they consider the PDE
\begin{equation}
\label{eq: fractionally damped}
    \partial_t^2(u - \vartheta u^2) + (-\Delta_{\Omega})^s \partial_t u - \Delta_{\Omega} u = F,
\end{equation}
where $\Delta_{\Omega}$ denotes the Dirichlet Laplacian on $\Omega$ and $(-\Delta_{\Omega})^s$ is its spectral fractional power. In addition, the article \cite{KMSK} studies the recovery of semilinearities in one-dimensional nonlocal wave equations (and in nonlocal parabolic equations) from DN data using the higher-order linearization method.

\subsubsection{Discussion of main results}

In this section, we compare the main results obtained in Theorems~\ref{thmIP1}, \ref{thm: polyhom nonlinearities}, and \ref{thmIP2} with the existing literature. 

\medskip
\noindent
\textit{Determination of polynomial-type nonlinearities (Theorem~{\rm \ref{thmIP1}}):}
All results in this manuscript concern inverse problems without restrictions on the number of measurements. For example, in Theorem~\ref{thmIP1}, we assume the knowledge of $\Lambda_{q,g}\varphi\big|_{(W_2)_T}$
for all (sufficiently small) exterior conditions $\varphi \in C^\infty_c((W_1)_T)$.  
In analogy with the single–measurement results for elliptic nonlocal PDEs discussed in Section~\ref{subsec: elliptic nonlocal IP} (see \cite{TRSU,KRZ}), the works \cite{KMSK,PZJN,li2023inverse} investigate the unique recovery of essentially polyhomogeneous nonlinearities from \emph{finite-dimensional} measurement data.  
For instance, \cite[Theorem~1.4]{KMSK} proves that it suffices to measure
\begin{equation}
\label{eq: finite dim meas}
    \bigl\{ \Lambda^{1D}_{0,g}(\varepsilon_1 \varphi_1 + \cdots + \varepsilon_m \varphi_m )\big|_{(W_2)_T} 
    \,;\, \varepsilon_1,\dots,\varepsilon_m \in \mathbb{R} \text{ close to } 0 \bigr\},
\end{equation}
where $\Lambda^{1D}_{0,g}$ is the DN map for the one-dimensional, supercritical nonlocal wave equation with time-independent semilinearity $g$, and $\varphi_1,\ldots,\varphi_m \in C_c^\infty((W_1)_T)$ are fixed nontrivial exterior conditions.  
From these finite-dimensional measurements, one can uniquely determine partial derivatives of $g$ up to order $m$ at $\tau = 0$.

To clarify the differences with our assumptions, we list the essential properties imposed in \cite{KMSK} on the Carathéodory function $g \colon \Omega_T \times \mathbb{R} \to \mathbb{R}$:
\begin{enumerate}[{(P}1)]
    \item\label{P1} For each $(x,t)\in \Omega_T$, the map $\tau\mapsto g(x,t,\tau)$ belongs to $C^{m+1}((-\delta,\delta))$ for some $\delta>0$ and $m \in \mathbb{N}$.
    \item\label{P2} $g(x,t,0)=0$ for all $(x,t)\in\Omega_T$.
    \item\label{P3} For $k=1,\ldots,m+1$, define  
    \[
        \Phi_k(\varepsilon) \vcentcolon = 
        \sup_{|\tau|\le \varepsilon}\sup_{(x,t)\in\Omega_T} 
        |\partial_\tau^k g(x,t,\tau)|, \qquad 0<\varepsilon\le \delta.
    \]
    Then it is assumed that
    \begin{equation}
    \label{eq: vanishing at origin}
        \Phi_1(\varepsilon) \to 0 \quad \text{as }\varepsilon \to 0,
    \end{equation}
    and for every $2 \le k \le m+1$ there exists $M_k>0$ such that $\Phi_k(\varepsilon) \le M_k$ for all $0 < \varepsilon \le \delta$.
    \item\label{P4} For $k=1,\dots,m$, the functions $\partial_\tau^k g(x,0)$ lie in $C(\overline{\Omega})$ and are independent of $t$.
\end{enumerate}
Moreover, \cite[Theorem~1.3]{KMSK} shows that the same information about the time-dependent nonlinearity $g(x,t,\tau)$ can be recovered under assumptions \ref{P1}–\ref{P3} \emph{when} no restriction on the number of measurements, analogous to Theorem~\ref{thmIP1}, are imposed.

Note that choosing $\varphi_1=\cdots=\varphi_m$ in \eqref{eq: finite dim meas} yields that $\{\partial_\tau^k g(x,0)\}_{k=0}^m$ can be determined from a single one-dimensional measurement under the additional condition \ref{P4}.  
In particular, if $g(x,\tau)$ is analytic in $\tau$, then $g$ can be fully recovered from one-dimensional measurements.

We now highlight several differences between \cite{KMSK} and Theorem~\ref{thmIP1}:
\begin{enumerate}[{(D}1)]
    \item\label{D1} \textit{Criticality.}  
    Our results in Theorems~\ref{thmIP1} and~\ref{thm: polyhom nonlinearities} impose no restriction on the space dimension $n$ or the fractional order $s \in \mathbb{R}_+ \setminus \mathbb{N}$.  
    In contrast, \cite{KMSK} focuses exclusively on the case $s\in (0,1)$ and the supercritical 1D regime, where the Sobolev embedding $H^s(\mathbb{R}) \hookrightarrow C^{0,\alpha}(\mathbb{R})$ plays a crucial role, allowing $g$ to be controlled only near the origin.
    \item\label{D2} \textit{Linear perturbations.}  
    Condition \eqref{eq: vanishing at origin} implies that $g$ contains no linear term in $\tau$.  
    Our Theorems~\ref{thmIP1} and~\ref{thm: polyhom nonlinearities} allow for the presence of linear perturbations.
    \item\label{D3} \textit{Time dependence and low regularity.}  
    A major focus of the present work is the recovery of \emph{time-dependent, low-regularity} potentials and nonlinearities.  
    For example, we assume only $q \in L^\infty(0,T;L^p(\Omega))$, with $p$ satisfying \eqref{conp}, and  
    \[
        \partial_\tau^\ell g(x,t,\tau) \in L^\infty(\Omega_T), \qquad 1\le \ell \le m,\,\tau\in\R,
    \]
    together with appropriate growth conditions in $\tau$ (see \ref{A3}). Additionally, in \cite{KMSK} the authors assume that $\tau\mapsto g(x,\tau)$ is $C^{m+1}$ regular in a neighborhood of $\tau=0$ to recover $\{\partial_{\tau}^k g(0)\}_{0\leq k\leq m}$, whereas we only require $\tau\mapsto g(x,t,\tau)$ to be of class $C^m$ (see \ref{P1} and \ref{A1}).
\end{enumerate}
In view of \ref{D3}, we omit the detailed results and proofs for the finite-dimensional measurement problems from the main body of this article; the interested reader can find a comprehensive discussion in Section~\ref{IP-one dimensional measurement}.

\medskip
\noindent
\textit{Determination of polyhomogeneous nonlinearities (Theorem~{\rm\ref{thm: polyhom nonlinearities}}):}
Assumption \ref{A4}, and hence Theorem~\ref{thm: polyhom nonlinearities}, requires the restriction $r_L \le 1$.  
This condition ensures that the proof relies only on the $L^2(\Omega_T)$ Runge approximation property (Proposition~\ref{Runge}).  
The same restriction appears in \cite{LTZ2}, but it was recently removed in \cite{LTZ1} by establishing Runge approximation in the smaller space $L^2(0,T;\widetilde H^s(\Omega))$.  
For brevity, we confine ourselves to the $L^2(\Omega_T)$ result here; the improved Runge approximation will be developed in forthcoming work.

Another difference from \cite[Section~5.2]{LTZ1} is that our polyhomogeneous nonlinearities involve only a \emph{finite} number of homogeneous terms.  
This allows us to iteratively reduce the order of the nonlinearity, thereby reducing the $L$ unknown coefficients $\alpha_1,\dots,\alpha_L$ to a single unknown $\alpha_L$.  
The coefficient $\alpha_L$ is then recovered using the Runge approximation theorem, which has been carried out in \cite[Section~4.2]{LTZ2} for second-order nonlocal wave equation.  
We remark that Theorem~\ref{thm: polyhom nonlinearities} remains valid for polyhomogeneous nonlinearities of infinite order, as follows from the approach in \cite[Section~5.2]{LTZ1}. Interestingly, this approach relies solely on the UCP for the fractional Laplacian, an asymptotic analysis of solutions for small exterior data $\eps\varphi$, and the Runge approximation theorem.

\medskip
\noindent
\textit{Determination of Westervelt-type nonlinearities (Theorem~{\rm\ref{thmIP2}}):}
As noted in Section~\ref{subsec: hyperbolic nonlocal IP}, the work \cite{LLZY} investigates the recovery of Westervelt-type nonlinearities in a fractionally damped wave equation, using second-order linearization and the UCP for the spectral fractional Laplacian to recover smooth coefficients $\vartheta \in C^\infty(\overline{\Omega_T})$.  
Our corresponding result (statement \ref{Westervelt} in Theorem~\ref{thmIP2}) requires only that $\beta$ belongs to $W^{2,\infty}(0,T;L^\infty(\Omega))$. As in the case of polyhomogeneous nonlinearities, our proof does not involve explicit differentiation of the PDE with respect to a small parameter; instead, we rely on an asymptotic expansion for small exterior conditions $\varepsilon\varphi$.

In conclusion, our work builds upon the linearization approach developed in earlier works on nonlocal wave equations (see, e.g., \cite{PZJN,KMSK,PZ1,LTZ1,LTZ2,PZ2}). While these techniques are classical (see, e.g., \cite{VI1,VI}), our contribution is to extend them to the third-order, time-dependent MGT and JMGT models studied here, together with low regularity coefficients. In particular, we emphasize that our work does not contain any restriction on the dimension $n$ or the fractional exponent $s$. Moreover, such models have not previously been treated in Calderón-type inverse problems.

\subsection{Organization of the article}
The rest of this article is organized as follows. In Section \ref{sec: preliminaries}, we recall several properties of the fractional Laplacian and introduce some fundamental functions spaces. In Section \ref{sec: well-posedness forward}, we establish well-posedness results for the linear and nonlinear nonlocal MGT equations. Section \ref{sec: recovery of lin perturb} discusses the unique determination of linear perturbations. The unique recovery of polynomial-type nonlinearities, polyhomogeneous nonlinearities and Westervelt-type nonlinearities is discussed in Sections~\ref{sec: proof of theorem 1.5}, \ref{sec: polyhom} and \ref{sec: recovery beta and kappa}, respectively. Finally, in Section \ref{sec: concluding remarks} we make some concluding remarks on the results obtained in this article and in Appendix \ref{appendix: parabolic regularization} we present the method of parabolic regularization as well as an integration by parts formula for solutions to third order wave equations.

\subsection*{Notation} 
Throughout this work, $\Omega \subset \mathbb{R}^n$ denotes a bounded Lipschitz domain and 
$\Omega_e := \mathbb{R}^n \setminus \overline{\Omega}$ its exterior. 
We let $W_1, W_2 \subset \Omega_e$ be nonempty open measurement sets. 
For any $T>0$ and any set $A \subset \mathbb{R}^n$, we write $A_T := A \times (0,T)$.
We use the notation $M_1 \lesssim M_2$ to indicate that there exists a constant $C>0$ such that $M_1 \leq C M_2$.

The operators $\mathcal{F}$ and $\mathcal{F}^{-1}$ denote the Fourier transform and its inverse, respectively.
The fractional Laplacian and the Bessel potential operator of order $s$ are defined by
\[
(-\Delta)^s u = \mathcal{F}^{-1} \big( |\xi|^{2s} \mathcal Fu \big),
\qquad
\langle D \rangle^s u = \mathcal{F}^{-1} \big( (1+|\xi|^2)^{s/2} \mathcal Fu \big).
\]
The $L^2$-based fractional Sobolev spaces are given by
\[
H^s(\mathbb{R}^n) = \{ u \in L^2(\mathbb{R}^n) \,;\, \langle D \rangle^s u \in L^2(\mathbb{R}^n) \},
\qquad
\widetilde{H}^s(\Omega) = \overline{C_c^\infty(\Omega)}^{\, H^s(\mathbb{R}^n)}
\]
for any $s\geq 0$. In particular, for notational convenience, we set $\widetilde{L}^2(\Omega)=\widetilde{H}^0(\Omega)$.

Finally, the Dirichlet-to-Neumann maps associated with the semilinear nonlocal MGT equation \eqref{sysmgt} and the
Westervelt-type nonlocal MGT equations \eqref{sysmgtWtype} are denoted by $\Lambda_{q,g}$, $\Lambda_\kappa$, and $\Lambda_\beta$,
respectively.

\section{Preliminaries}\label{sec: preliminaries}

In this section we collect some basic material about the fractional Laplacian and the function spaces relevant for our study.
    
\subsection{Fractional Sobolev spaces}
\label{subsec:frac-space}
Let us start by recalling the definition of the $L^2$ based fractional Sobolev spaces and the fractional Laplacian $(-\Delta)^s$. For any $s\in \R$, we denote by $H^s(\R^n)$ the \emph{fractional Sobolev space} of order $s$, which can be characterized as 
\[
H^s(\R^n)\vcentcolon = \{u\in\tempered(\R^n)\,;\,\langle D\rangle^s u\in L^2(\R^n)\}.
\]
Here, $\tempered(\R^n)$ stands for the space of tempered distributions and $\langle D\rangle^s$ is the Bessel potential operator of order $s$, which is the Fourier multiplier with symbol $\langle \xi\rangle^s=(1+|\xi|^2)^{s/2}$. The scale $H^s(\R^n)$, $s\in\R$, are Hilbert spaces, when we endow them with the natural inner product induced by the norm
\begin{equation}
\label{def: hs norm}
\|u\|_{H^s(\R^n)}\vcentcolon = \|\langle D\rangle^s u\|_{L^2(\R^n)}.
\end{equation}
The \emph{fractional Laplacian} $(-\Delta)^s$, $s\geq 0$, can be defined for $u\in\tempered(\R^n)$ via
\begin{equation}
\label{eq: frac lap}
(-\Delta)^s u=\ifourier(|\xi|^{2s}\fourier u),
\end{equation}
whenever the right hand side makes sense. Clearly, formula \eqref{eq: frac lap} shows that the fractional Laplacian $(-\Delta)^{s/2}$ is the homogeneous counterpart of the Bessel potential operator $\langle D\rangle^s$. By a simple estimate and Plancherel's theorem, we see that an equivalent norm on $H^s(\R^n)$ is given by
\begin{equation}
\label{eq: equivalent norm Hs}
\|u\|^{\ast}_{H^s(\R^n)}=\Big(\|u\|^2_{L^2(\R^n)}+\|(-\Delta)^{s/2}u\|^2_{L^2(\R^n)}\Big)^{1/2}.
\end{equation}

Next, let us introduce some useful local versions of the global fractional Sobolev spaces $H^s(\R^n)$. For any open set $\Omega\subset\R^n$ and $s\in \R$, we introduce the Hilbert spaces
\[
H^s(\Omega) \vcentcolon = \{U|_{\Omega}\,;\, U\in H^s(\R^n)\}.
\]


As usual, we endow $H^s(\Omega)$ with the corresponding quotient norm. 
Furthermore, if $\Omega$ is a bounded Lipschitz domain, then the dual space of $\widetilde{H}^s(\Omega)$ can be identified with $H^{-s}(\Omega)$.
    
The most important properties of the fractional Laplacian are collected in the next proposition. 
\begin{proposition}[Properties of fractional Laplacian]
    \label{prop: basic facts on frac Lap}
        Let $0\leq t\leq s$ and $r\in\R$. Then the following assertions hold:
        \begin{enumerate}[(i)]
            \item\label{mapping properties} The fractional Laplacian $(-\Delta)^s$ is a continuous linear map from $H^r(\R^n)$ to $H^{r-2s}(\R^n)$.
            \item\label{Poincare inequality} For any bounded domain $\Omega\subset\R^n$, there exists a constant $C>0$ only depending on $\Omega,n,t$ and $s$ such that 
            \begin{equation}
            \label{eq: poincare inequality}
                \|(-\Delta)^{t/2}u\|_{L^2(\R^n)}\leq C \|(-\Delta)^{s/2}u\|_{L^2(\R^n)}
            \end{equation}
            for all $u\in \widetilde{H}^s(\Omega)$.
            \item\label{UCP} Let $s\notin \N_0$ and suppose that $u\in H^r(\R^n)$ satisfies $u=(-\Delta)^s u=0$ in an open set $V\subset\R^n$, then $u=0$ in $\R^n$.
        \end{enumerate}
    \end{proposition}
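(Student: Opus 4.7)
\textbf{Proof plan for Proposition \ref{prop: basic facts on frac Lap}.}

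For part \eqref{mapping properties}, the plan is to read off the mapping property directly from the Fourier definition \eqref{eq: frac lap}. By Plancherel,
\begin{equation*}
\|(-\Delta)^s u\|_{H^{r-2s}(\R^n)}^2=\int_{\R^n}\langle\xi\rangle^{2(r-2s)}|\xi|^{4s}|\widehat u(\xi)|^2\,\der\xi,
\end{equation*}
and since $|\xi|^{4s}\leq\langle\xi\rangle^{4s}$, the integrand is bounded by $\langle\xi\rangle^{2r}|\widehat u(\xi)|^2$, which gives $\|(-\Delta)^s u\|_{H^{r-2s}(\R^n)}\leq\|u\|_{H^r(\R^n)}$. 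This step is essentially a one-liner and carries no difficulty.

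For part \eqref{Poincare inequality}, I would split the proof into two steps. First, I would establish the base case $t=0$, i.e.\ the fractional Poincar\'e inequality $\|u\|_{L^2(\R^n)}\leq C\|(-\Delta)^{s/2}u\|_{L^2(\R^n)}$ for $u\in\widetilde H^s(\Omega)$. By density it suffices to argue for $u\in C_c^\infty(\Omega)$, and one can combine the fractional Sobolev/Hardy--Littlewood--Sobolev inequality (yielding $\|u\|_{L^{q}(\R^n)}\lesssim\|(-\Delta)^{s/2}u\|_{L^2(\R^n)}$ for a suitable $q\geq 2$, with a pure $L^p$--$L^q$ Riesz potential bound when $2s<n$ and the Sobolev embedding into continuous functions plus compactness of $\overline\Omega$ when $2s\geq n$) with H\"older's inequality applied on the bounded set $\Omega$ to dominate $\|u\|_{L^2(\Omega)}=\|u\|_{L^2(\R^n)}$. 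Second, I would upgrade to arbitrary $t\in(0,s]$ by interpolating on the Fourier side: writing
\begin{equation*}
\|(-\Delta)^{t/2}u\|_{L^2(\R^n)}^2=\int_{\R^n}|\xi|^{2t}|\widehat u(\xi)|^2\,\der\xi
\end{equation*}
and using $|\xi|^{2t}\leq|\xi|^{2s\cdot t/s}\cdot 1^{1-t/s}$ with H\"older's inequality (conjugate exponents $s/t$ and $s/(s-t)$) yields $\|(-\Delta)^{t/2}u\|_{L^2}\leq\|u\|_{L^2}^{1-t/s}\|(-\Delta)^{s/2}u\|_{L^2}^{t/s}$; combining this with the base case settles \eqref{eq: poincare inequality}.

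For part \eqref{UCP}, I would invoke the now-classical strong UCP for the fractional Laplacian, originally established for $s\in(0,1)$ in \cite{TGMS} via the Caffarelli--Silvestre extension and a Carleman/Almgren-type frequency-function estimate, and subsequently extended to all $s\in\R_+\setminus\N$ (by use of higher-order extensions of Yang/Case--Chang type, as carried out, e.g., in the references mentioned in Subsection~1.3 for fractional powers larger than one). Since the hypothesis $u\in H^r(\R^n)$ is exactly the setting in which this UCP is formulated, the statement follows by direct citation. The main obstacle, and the only substantive one in the whole proposition, is precisely this step: for $s\in(0,1)$ the extension machinery is standard, but for non-integer $s>1$ one has to appeal to a higher-order extension whose associated frequency function still enjoys monotonicity; rather than reproving this, I would cite the existing literature and only verify that the hypotheses match.
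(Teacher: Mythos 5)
Parts~(i) and~(iii) of your plan are in line with the paper, which for this proposition only offers short remarks and references rather than a detailed proof: for~(i) the paper likewise reads the continuity off Plancherel, and for~(iii) it cites the UCP literature (the paper's remark emphasizes M.~Riesz's Kelvin-transform and moment-problem argument where you emphasize the Caffarelli--Silvestre extension and Carleman estimates, but both are standard references and neither of you reproves the result).

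The real issue is in part~(ii). Your interpolation $\|(-\Delta)^{t/2}u\|_{L^2}\leq\|u\|_{L^2}^{1-t/s}\|(-\Delta)^{s/2}u\|_{L^2}^{t/s}$ is correct, and the base case $t=0$ closes in the subcritical regime $2s<n$ via the Sobolev inequality $\|u\|_{L^{2n/(n-2s)}(\R^n)}\lesssim\|(-\Delta)^{s/2}u\|_{L^2(\R^n)}$ followed by H\"older on the bounded set $\Omega$. But your base case for $2s\geq n$ does not close as written: the embedding $H^s(\R^n)\hookrightarrow L^\infty(\R^n)$ controls $\|u\|_{L^\infty}$ only by the full inhomogeneous norm $\|u\|_{H^s}$, so the chain $\|u\|_{L^2(\Omega)}\leq|\Omega|^{1/2}\|u\|_{L^\infty}\lesssim|\Omega|^{1/2}\bigl(\|u\|_{L^2}+\|(-\Delta)^{s/2}u\|_{L^2}\bigr)$ is circular and cannot be absorbed for an arbitrary bounded domain. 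The fix is already in your toolkit: when $2s\geq n$, pick any $0<\sigma<n/2\ (\leq s)$, note $\widetilde{H}^s(\Omega)\subset\widetilde{H}^\sigma(\Omega)$, apply the subcritical Poincar\'e bound $\|u\|_{L^2}\lesssim\|(-\Delta)^{\sigma/2}u\|_{L^2}$, then your Gagliardo--Nirenberg step $\|(-\Delta)^{\sigma/2}u\|_{L^2}\leq\|u\|_{L^2}^{1-\sigma/s}\|(-\Delta)^{s/2}u\|_{L^2}^{\sigma/s}$, and divide through by $\|u\|_{L^2}^{1-\sigma/s}$. For comparison, the paper itself suggests a single Fourier-side argument ``similar to the one of~(i)'', pointing to its reference for the fractional Poincar\'e inequality; that argument exploits the compact support of $u$ to control the low-frequency region of $\widehat{u}$ directly, covers all $s>0$ at once, and even applies to domains bounded in only one direction, a generality that the Sobolev-embedding route does not give.
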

    The property \ref{mapping properties} follows from Plancherel's theorem and \eqref{def: hs norm}--\eqref{eq: frac lap}. Furthermore, using Mikhlin's multiplier theorem, the assertion \ref{mapping properties} can be extended to the broader scale of \emph{Bessel potential spaces} $H^{s,p}(\R^n)$ ($s\in\R$, $1<p<\infty$), which consists of all tempered distributions $u$ such that $\langle D\rangle^s u\in L^p(\R^n)$. These spaces are closely related to the \emph{Slobodeckij spaces} $W^{s,p}(\R^n)$. For $1<p<\infty$ and $s=k+\sigma$ with $k\in\N_0$ and $0<\sigma<1$, the Slobodeckij space $W^{s,p}(\R^n)$ consists of all functions $u\colon\R^n\to\R$ in the Sobolev space $W^{k,p}(\R^n)$ whose Gagliardo seminorm
    \[
        [\nabla^{k}u]_{W^{\sigma,p}(\R^n)}=\left(\int_{\R^{2n}}\frac{|\nabla^{k}u (x) -\nabla^{k}u (y)|^p}{|x-y|^{n+\sigma p}}\right)^{1/p}
    \]
    is finite. In the case $p=2$ and $s\geq 0$, all of the above spaces coincide, that is, $H^s(\R^n)=H^{s,2}(\R^n)=W^{s,2}(\R^n)$. For a more detailed discussion of these spaces, and in particular their relationship to the \emph{Triebel--Lizorkin spaces} $F^s_{p,q}(\R^n)$, we refer the interested reader to the classical books \cite{Triebel-Theory-of-function-spaces,Interpolation-spaces-Bergh-Lofstrom} and to the articles \cite{JRPZ,RZ-low-reg,CRTZ} as well as the references therein. The estimate \eqref{eq: poincare inequality} is called \emph{(fractional) Poincar\'e inequality}. Similarly as in the classical case, the Poincar\'e inequality remains valid when $\Omega\subset\R^n$ is only bounded in one direction and also holds in the $L^p$ setting (see \cite{JRPZ}). The proof is similar to the one of \ref{mapping properties}. The property \ref{UCP} is known as the \emph{unique continuity property (UCP)} of the fractional Laplacian and the first proof of this result goes back to M. Riesz \cite[Section 11 and 26]{riesz1988integrales}. His argument was based on the Kelvin transform and uniqueness for the moment problem. We also refer the interested reader to \cite{TGMS,ruland2015unique} and \cite{KRZ} for a generalization to the $L^p$ setting. An immediate consequence of the Poincar\'e inequality and \eqref{eq: equivalent norm Hs} is that 
    \[
     \|u\|_{\widetilde{H}^s(\Omega )}\vcentcolon =\|(-\Delta)^{s/2}u\|_{L^2(\R^n)}
    \]
    defines an equivalent norm on $\widetilde{H}^s(\Omega)$, which will be used in the well-posedness theory below.
    
    Finally, let us recall the following useful estimate (see \cite{Ozawa}).
	\begin{proposition}\label{critical}
		For any $1<p<\infty$ there exists a constant $C>0$ only depending on $p$ and $n$ such that 
		\begin{equation}
		\|u\|_{L^q(\R^n)}\leq C q^{1-1/p}\|(-\Delta)^{\frac{n}{2p}}u\|_{L^p(\R^n)}^{1-p/q}\|u\|_{L^p(\R^n)}^{p/q} 
		\end{equation}
		for all $p\leq q<\infty$.
	\end{proposition}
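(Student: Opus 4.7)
The inequality is a critical Gagliardo--Nirenberg estimate in the spirit of Ozawa, whose essential feature is the explicit $q^{1-1/p}$ growth of the constant as $q\to\infty$. My plan is to proceed by a Littlewood--Paley frequency split followed by an optimised choice of threshold, as in the standard Gagliardo--Nirenberg argument.

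First I would fix a dyadic decomposition $u=\sum_{j\in\Z}P_j u$ with $P_j$ localising to frequencies $|\xi|\sim 2^j$, and for a parameter $N\in\Z$ (to be chosen) write $u=u_{\leq N}+u_{>N}$. For the low-frequency piece, Bernstein's inequality yields $\|u_{\leq N}\|_{L^q(\R^n)}\lesssim 2^{Nn(1/p-1/q)}\|u\|_{L^p(\R^n)}$. For the high-frequency piece, set $f=(-\Delta)^{n/(2p)}u$; on frequencies $|\xi|\sim 2^j$ the multiplier $|\xi|^{-n/p}$ has size $\sim 2^{-jn/p}$, while Bernstein again upgrades $L^p\to L^q$ at cost $2^{jn(1/p-1/q)}$, so each block satisfies $\|P_j u\|_{L^q(\R^n)}\lesssim 2^{-jn/q}\|P_j f\|_{L^p(\R^n)}$. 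Balancing the two regimes through the choice $2^{Nn/p}\sim\|f\|_{L^p}/\|u\|_{L^p}$ will then produce the product form $\|f\|_{L^p}^{1-p/q}\|u\|_{L^p}^{p/q}$, provided the high-frequency summation is handled sharply.

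The delicate step is summing the blocks $\|P_j u\|_{L^q}$ over $j>N$: a direct $\ell^1$ triangle inequality gives only $q\cdot 2^{-Nn/q}\|f\|_{L^p}$, losing a full factor of $q$ compared to the target. To recover the sharp $q^{1-1/p}$ one applies a vector-valued H\"older inequality in the dyadic index with conjugate exponent $p'$, together with the Littlewood--Paley square-function characterisation $\|(\sum_j|P_j f|^2)^{1/2}\|_{L^p(\R^n)}\sim\|f\|_{L^p(\R^n)}$; the geometric sum $\sum_{j>N}2^{-jnp'/q}$ scales like $(q/(np'))\,2^{-Nnp'/q}$, and after taking a $1/p'$ power it contributes only $(q/n)^{1/p'}=(q/n)^{1-1/p}$. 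This precise constant tracking is the main obstacle. An equivalent route to the same estimate, which directly explains the $q^{1-1/p}$ exponent, is to first establish Moser--Trudinger type exponential integrability in the critical Bessel potential space $H^{n/p,p}(\R^n)$ and then read off the $L^q$ bounds by Taylor-expanding $\exp(c|u|^{p'})$ term by term; in that formulation the exponent $1-1/p=1/p'$ arises as the reciprocal of the Trudinger exponent.
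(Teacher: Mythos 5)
First, the paper does not supply a proof of Proposition~\ref{critical}; it quotes the statement from the cited reference \cite{Ozawa}, so you are proposing an independent argument, not reproducing the paper's own. Your Littlewood--Paley scheme is essentially correct for $p\geq 2$, but it has a genuine gap for $1<p<2$, a range the statement also covers. After the block-wise estimate $\|u_{>N}\|_{L^q}\leq\sum_{j>N}\|P_j u\|_{L^q}\lesssim\sum_{j>N}2^{-jn/q}\|P_jf\|_{L^p}$, a H\"older inequality in the dyadic index with conjugate exponents $(s',s)$ produces $q^{1/s'}$ from the geometric sum and requires the remaining factor $\bigl(\sum_j\|P_jf\|_{L^p}^s\bigr)^{1/s}$ to be controlled by $\|f\|_{L^p}$; this holds if and only if $s\geq\max(p,2)$ (it is the embedding $L^p(\R^n)\hookrightarrow\dot{B}^0_{p,s}$ into the homogeneous Besov scale). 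Your choice $s=p$, $s'=p'$ is what yields the target $q^{1-1/p}$, but it is legitimate only for $p\geq 2$: the square-function characterisation controls the $\ell^2$-sum of $\|P_jf\|_{L^p}$, and passing from $\ell^2$ to $\ell^p$ pointwise requires $p\geq 2$. For $1<p<2$ one is forced to take $s=2$, which gives only $q^{1/2}$, strictly larger than the target $q^{1-1/p}$, and no other H\"older split inside this block-by-block scheme can improve on it.

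A remedy that works uniformly for $1<p<\infty$ is to estimate the high-frequency piece at one stroke by Young's convolution inequality, rather than block by block. Writing $u_{>N}=K_N*f$ with $K_N$ the smooth high-frequency truncation at scale $2^N$ of the Riesz kernel $c_{n,p}|x|^{-n/p'}$, one has $K_N(x)\sim|x|^{-n/p'}$ for $|x|\lesssim 2^{-N}$ with rapid decay beyond, hence $K_N\in L^r(\R^n)$ for every $r<p'$ with $\|K_N\|_{L^r}\approx\bigl(q/(np')\bigr)^{1/r}2^{-Nn/q}$ at the Young exponent $\tfrac{1}{r}=\tfrac{1}{p'}+\tfrac{1}{q}$; Young's inequality then gives $\|u_{>N}\|_{L^q}\lesssim q^{1-1/p}\,2^{-Nn/q}\|f\|_{L^p}$, and your threshold balance completes the proof. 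The loss in your version is exactly the price of trading this convolution estimate for a triangle inequality followed by H\"older. Finally, the ``equivalent route'' via Moser--Trudinger is circular as proposed: the $\R^n$ Trudinger-type inequality in the critical Bessel-potential space is normally deduced \emph{from} Ozawa's bound by Taylor-expanding the exponential, so it cannot serve as an input without an independent proof of its own. (Incidentally, this paper only ever applies Proposition~\ref{critical} with $p=2$, inside Lemma~\ref{lemma: potential} for the critical case $2s=n$, where your argument is fine; the issue is only with the general statement.)
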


    \subsection{Time dependent function spaces}

    As in the present article we study third order nonlocal wave equations, we also need some standard time-dependent function spaces, which we introduce next. If $(a,b)\subset\R$ and $X$ is a Banach space, then we denote by $C^k([a,b]\,;X)$, $L^p(a,b\,;X)$, with $k\in\N_0\cup\{\infty\}$ and $1\leq p\leq \infty$, the space of $k-$times continuously differentiable functions and the space of measurable functions $u\colon (a,b)\to X$ such that $t\mapsto \|u(t)\|_X\in L^p([a,b])$. For $k\in\N_0$ and $1\leq p\leq\infty$, they are Banach spaces when endowed with the following norms
	\begin{equation}
		\label{eq: Bochner spaces}
		\begin{split}
            \|u\|_{L^p(a,b\,;X)}&\vcentcolon = \left(\int_a^b\|u(t)\|_{X}^p\,dt\right)^{1/p}<\infty,\\
            \|u\|_{L^{\infty}(a,b;X)}&\vcentcolon = \sup_{a<t<b}\|u(t)\|_{X},\\
            \|u\|_{C^k([a,b];X)}&\vcentcolon = \sup_{0\leq \ell\leq k}\|\partial_t^{\ell}u\|_{L^{\infty}(a,b;X)}.
		\end{split}
	\end{equation} 
	
	Moreover, when $u\in L^1_{\loc}(a,b\,;X)$ and $X$ is a space of functions defined in a subset of $\R^n$, like $L^2(\Omega)$ or $H^s(\R^n)$, then we identify $u$ with a function $u(x,t)$ and $u(t)$ denotes the function $x\mapsto u(x,t)$ for almost all $t$. Furthermore, when $X$ is a Banach space over a domain $\Omega\subset\R^n$, then we identify the (vectorial) distributional derivative $\frac{du}{dt}\in \distr((a,b)\,;X)$ with the (scalar) distributional derivative $\partial_tu\in \distr(\Omega\times (a,b))$. Here $ \distr((a,b)\,;X)$ stands for the class of $X$ valued distributions on $(a,b)$ and $\distr(\Omega\times (a,b))$ are the scalar valued distributions on $\Omega\times (a,b)$.

    Finally, for any $k\in\N_0$ and $1\leq p\leq \infty$, we introduce the generalized Sobolev space $W^{k,p}(a,b;X)$ inductively by
    \[
        W^{k,p}(a,b;X)\vcentcolon =\{ u\in W^{k-1,p}(a,b;X)\,;\, \partial_t u\in W^{k-1,p}(a,b;X) \}.
    \]
    Here, we use the convention $W^{0,p}(a,b;X)=L^p(a,b;X)$. 
    The spaces $W^{k,p}(a,b;X)$ are Banach spaces under the norm
    \[
        \|u\|_{W^{k,p}(a,b;X)}\vcentcolon = \left(\sum_{\ell=0}^k \|\partial_t^{\ell} u\|_{L^p(a,b;X)}^p\right)^{1/p}.
    \]
    Furthermore, in the special case $p=2$ we denote the Hilbert spaces $W^{k,2}(a,b;X)$ by $H^k(a,b;X)$.

	\section{Well-posedness results for linear and nonlinear forward problems}
    \label{sec: well-posedness forward}

In this section, we establish well-posedness results for various third order nonlocal wave equations. Section~\ref{subsec: linear MGT} deals with the existence of solutions to the MGT equation and, in particular, it provides all the estimates needed to tackle the existence of solutions to the semilinear nonlocal MGT equations and nonlocal JMGT equations of Westervelt-type in Sections \ref{sec: Well-posedness semilinear problem} and \ref{subsec: well-posedness westervelt}, respectively.
	
	\subsection{Well-posedness of the linear nonlocal MGT equation}
    \label{subsec: linear MGT}
    
	In a first step, we prove the unique solvability of the following homogeneous, nonlocal MGT equation
	\begin{equation}
    \label{linear MGT without ext cord}
	\begin{cases}
		(\partial_t^3+\alpha\partial_t^2+b(-\Delta)^s\partial_t+c(-\Delta)^s+q)u=F\ & {\rm in}\ \Omega_T,\\
		u=0\ & {\rm in}\ (\Omega_e)_T, \\
		u(0)=u_0, \partial_t u(0)=u_1, \partial_t^2 u(0)=u_2\ & {\rm in}\ \Omega,
	\end{cases}
	\end{equation}
	where $(u_0,u_1,u_2)\in \widetilde{H}^s(\Omega)\times \widetilde{H}^s(\Omega)\times \widetilde{L}^2(\Omega)$ and $F\in L^2(0,T;\widetilde{L}^2(\Omega))$. Afterwards, in a second step, we will extend this existence result to nonzero exterior conditions. 
    
    Before discussing the well-posedness result for the above linear nonlocal MGT equation \eqref{linear MGT without ext cord}, we need some auxiliary lemmas. First, let us introduce the following space
    \begin{equation}
        H^1_T(0,T;X)\vcentcolon = \{u\in H^1(0,T;X)\,;\,u(T)=0\}
    \end{equation}
    for any Banach space $X$. Then, one may show the following lemma (cf.,~e.g.,~\cite{PZ-KK}).
    \begin{lemma}
    \label{lemma: density result}
        Let $\Omega\subset\R^n$ be a bounded Lipschitz domain and $s>0$. Then $C_c^{\infty}(\Omega\times [0,T))$ is dense in the Banach space
        \[
            X^{0,1}_{s,0,T}(\Omega_T)\vcentcolon = L^2(0,T;\widetilde{H}^s(\Omega))\cap H^1_T(0,T;\widetilde{L}^2(\Omega)),
        \]
        which is endowed with the norm
        \[
            \|u\|_{X^{0,1}_{s,0}(\Omega_T)}\vcentcolon = \max\Big(\|u\|_{L^2(0,T;\widetilde{H}^s(\Omega))},\|u\|_{H^1(0,T;\widetilde{L}^2(\Omega))}\Big).
        \]
        Furthermore, the set
        \[
            \mathcal{D}_T \vcentcolon =\left\{\sum_{k=1}^N \eta_k w_k\,;\,\eta_k \in C_c^{\infty}([0,T)),\,1\leq k\leq N,\,N\in\N\right\}
        \]
        is dense in $X^{0,1}_{s,0,T}(\Omega_T)$, where $(w_j)_{j\in\N}$ is any orthonormal basis of $\widetilde{H}^s(\Omega)$.
    \end{lemma}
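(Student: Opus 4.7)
The plan is to prove both density statements sequentially. First, I will establish density of $C_c^\infty(\Omega \times [0,T))$ by a combination of extension, translation-mollification in time, and a spectral decomposition in space. The density of $\mathcal{D}_T$ for \emph{any} orthonormal basis of $\widetilde{H}^s(\Omega)$ then follows by expanding smooth test elements in that basis and truncating.

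Given $u \in X^{0,1}_{s,0,T}(\Omega_T)$, the continuous embedding $H^1(0,T;\widetilde{L}^2(\Omega)) \hookrightarrow C([0,T];\widetilde{L}^2(\Omega))$ ensures the trace $u(T)=0$ is attained, so extending $u$ by zero to $(0,T+1)$ yields $\tilde u \in L^2(0,T+1;\widetilde{H}^s(\Omega)) \cap H^1(0,T+1;\widetilde{L}^2(\Omega))$ with no Dirac mass at $t=T$ in the distributional time-derivative. Translating backwards, $u_\varepsilon(t) \vcentcolon = \tilde u(t+\varepsilon)$ for small $\varepsilon>0$, and convolving in time with a standard mollifier $\rho_\delta$ for $\delta < \varepsilon/2$ produces a function $u_{\varepsilon,\delta}$ that is smooth in $t$, has $t$-support inside $[0,T-\varepsilon/2]$, and converges to $u$ in $X^{0,1}_{s,0,T}$ as $\varepsilon,\delta \to 0^+$ by the standard continuity of translations and convergence of mollifications in Bochner spaces.

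For the spatial smoothing I would exploit the spectral basis $(\phi_j)_{j\in\N} \subset \widetilde{H}^s(\Omega)$ associated with the quadratic form $((-\Delta)^{s/2}\,\cdot\,,(-\Delta)^{s/2}\,\cdot\,)_{L^2(\R^n)}$ on $\widetilde{H}^s(\Omega)$, normalized to be $\widetilde{L}^2(\Omega)$-orthonormal with eigenvalues $\lambda_j \to \infty$. Its existence follows from the fractional Poincar\'e inequality and the Rellich-type compact embedding $\widetilde{H}^s(\Omega) \hookrightarrow \widetilde{L}^2(\Omega)$ for bounded $\Omega$. The crucial feature is that $(\phi_j)$ is \emph{simultaneously} orthogonal in $\widetilde{H}^s$ and $\widetilde{L}^2$: writing $u_{\varepsilon,\delta}(t) = \sum_j c_j(t)\phi_j$ with $c_j(t) \vcentcolon = (u_{\varepsilon,\delta}(t),\phi_j)_{\widetilde{L}^2}$, one obtains Parseval-type identities $\|u_{\varepsilon,\delta}(t)\|_{\widetilde{L}^2}^2 = \sum_j |c_j(t)|^2$ and $\|u_{\varepsilon,\delta}(t)\|_{\widetilde{H}^s}^2 \asymp \sum_j \lambda_j |c_j(t)|^2$, so the truncations $w_N(t) \vcentcolon = \sum_{j=1}^N c_j(t)\phi_j$ converge to $u_{\varepsilon,\delta}$ in $L^2(0,T;\widetilde{H}^s)$ and, after termwise differentiation, in $H^1(0,T;\widetilde{L}^2)$. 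Each $c_j$ inherits $C_c^\infty([0,T))$-regularity from the smoothness and compact $t$-support of $u_{\varepsilon,\delta}$, and replacing each $\phi_j$ by a $\widetilde{H}^s$-approximant $\psi_j^{(m)} \in C_c^\infty(\Omega)$ (possible by the very definition of $\widetilde{H}^s(\Omega)$) yields elements $\sum_{j=1}^N c_j(t)\psi_j^{(m)}(x) \in C_c^\infty(\Omega \times [0,T))$ that approximate $u_{\varepsilon,\delta}$; a diagonal extraction closes the first claim.

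For the second claim and an arbitrary orthonormal basis $(w_j)$ of $\widetilde{H}^s(\Omega)$, I would take a smooth approximant $\varphi \in C_c^\infty(\Omega \times [0,T))$, expand $\varphi(t) = \sum_j \alpha_j(t)w_j$ with $\alpha_j(t) \vcentcolon = (\varphi(t),w_j)_{\widetilde{H}^s}$, note that $\alpha_j \in C_c^\infty([0,T))$ from the smoothness and compact $t$-support of $\varphi$ in $\widetilde{H}^s$, and use Parseval together with the continuous embedding $\widetilde{H}^s \hookrightarrow \widetilde{L}^2$ to conclude that the truncation $\sum_{j=1}^N \alpha_j(t)w_j \in \mathcal{D}_T$ converges to $\varphi$ in $X^{0,1}_{s,0,T}$. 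The main obstacle, in my assessment, is the third step: one must control the $L^2(\widetilde{H}^s)$ and $H^1(\widetilde{L}^2)$ norms \emph{simultaneously} in the spectral truncation, and it is precisely the dual orthogonality of the basis $(\phi_j)$ in the two spaces that yields uncoupled tail estimates that vanish together.
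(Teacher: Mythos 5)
Your argument is correct, but it takes a more constructive route than the paper. The paper proceeds softly: it observes via the trace theorem that $X^{0,1}_{s,0,T}(\Omega_T)$ is a closed subspace of the Banach space $X^{0,1}_{s,0}(\Omega_T)$, cites a prior reference for the density of $C_c^{\infty}([0,T);\widetilde{H}^s(\Omega))$ in $X^{0,1}_{s,0,T}(\Omega_T)$ (using essentially the time cutoff and mollification you perform explicitly), and then exploits the continuous embedding $H^1(0,T;\widetilde{H}^s(\Omega))\hookrightarrow X^{0,1}_{s,0}(\Omega_T)$: since $C_c^{\infty}(\Omega\times[0,T))$ and $\mathcal{D}_T$ are dense in $H^1(0,T;\widetilde{H}^s(\Omega))$ by classical results, approximation in this single stronger topology automatically controls both constituent norms of $X^{0,1}_{s,0,T}$, and no separate spatial argument is needed. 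You instead construct the spatial approximation from scratch using the Dirichlet spectral basis of $(-\Delta)^s$, relying on its simultaneous orthogonality in $\widetilde{L}^2$ and $\widetilde{H}^s$ to decouple the two tail estimates, and then pass to $C_c^\infty(\Omega)$ by norm-density. Both routes are valid; yours is self-contained but heavier than necessary. In fact, the dual orthogonality of $(\phi_j)$ that you flag as essential is not: once time smoothness into $\widetilde{H}^s$ is obtained, you could expand in \emph{any} orthonormal basis of $\widetilde{H}^s(\Omega)$ exactly as in your final paragraph, and the embedding $\widetilde{H}^s(\Omega)\hookrightarrow\widetilde{L}^2(\Omega)$ alone bounds the $H^1(0,T;\widetilde{L}^2)$ tail by the $\widetilde{H}^s$ tail; this is precisely the observation the paper's embedding argument encodes, and it handles the first claim as well, making the spectral basis dispensable.
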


    \begin{proof}
        One may first observe by the trace theorem that $X^{0,1}_{s,0,T}(\Omega_T)$ is a closed subspace of 
        \[
            X^{0,1}_{s,0}(\Omega_T)\vcentcolon = L^2(0,T;\widetilde{H}^s(\Omega))\cap H^1(0,T;\widetilde{L}^2(\Omega)),
        \]
        which carries the norm $\|\cdot\|_{X^{0,1}_{s,0}(\Omega_T)}$. As $X^{0,1}_{s,0}(\Omega_T)$ is a Banach space by \cite[Lemma 2.3.1]{Interpolation-spaces-Bergh-Lofstrom}, we deduce that $X^{0,1}_{s,0,T}(\Omega_T)$ is a Banach space. For the density assertion, we may recall that by the use of cutoff functions and mollification in time one can show that $C_c^{\infty}([0,T);\widetilde{H}^s(\Omega))$ is dense in $X^{0,1}_{s,0,T}(\Omega_T)$ (see \cite[Proposition B.1]{PZ-YL-porous}). Now, by classical results, it follows that $C_c^{\infty}(\Omega\times [0,T))$ is dense in $H^1(0,T;\widetilde{H}^s(\Omega))$ and thus, in particular, any $u\in C_c^{\infty}([0,T);\widetilde{H}^s(\Omega))$ can be approximated in $X^{0,1}_{s,0}(\Omega_T)$ by a sequence $(u_n)_{n\in\N}\subset C_c^{\infty}(\Omega\times [0,T))$. Hence, we have established the first density result.

        For the second density assertion, it is enough to recall that by classical results the set $\mathcal{D}_T$ is dense in $H^1(0,T;\widetilde{H}^s(\Omega))$ and so in particular we can approximate any $u\in C_c^{\infty}([0,T);\widetilde{H}^s(\Omega))$ by a sequence $(u_N)_{N\in\N}\subset \mathcal{D}_T$ in the topology of $H^1(0,T;\widetilde{H}^s(\Omega))$ and thus in particular in $X^{0,1}_{s,0,T}(\Omega_T)$.
        This finishes the proof.
    \end{proof}

    Furthermore, let us recall the following lemma, whose proof relies on Sobolev's inequality and Proposition \ref{critical}.
    \begin{lemma}[{\cite[Theorem 3.1]{PZ1}}]
    \label{lemma: potential}
        Let $\Omega\subset\R^n$ be a bounded Lipschitz domain, $T>0$ and $s>0$. Suppose that $q\in L^{\infty}(0,T;L^p(\Omega))$ is real-valued and $2\leq p\leq \infty$ satisfies \eqref{conp}, then the multiplication map $\widetilde{H}^s(\Omega)\ni u\mapsto q(t)u\in \widetilde{L}^2(\Omega)$ is continuous for a.e.~$0<t<T$ and there holds
        \begin{equation}
        \label{eq: estimate potential}
            \|q(t)u\|_{L^2(\Omega)}\leq \|q(t)\|_{L^p(\Omega)}\|u\|_{\widetilde{H}^s(\Omega)}
        \end{equation}
        for all $u\in \widetilde{H}^s(\Omega)$.
    \end{lemma}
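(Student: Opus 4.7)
The plan is to combine Hölder's inequality with an appropriate Sobolev-type embedding and to treat the three regimes of \eqref{conp} separately. Because $u\in\widetilde H^s(\Omega)$ is supported in $\overline\Omega$, the product $q(t)u$ is supported there as well, so a pointwise (in $t$) norm bound of the form $\|q(t)u\|_{L^2(\Omega)}\lesssim\|q(t)\|_{L^p(\Omega)}\|u\|_{\widetilde H^s(\Omega)}$ automatically gives both the continuity of the multiplication map from $\widetilde H^s(\Omega)$ into $\widetilde L^2(\Omega)$ and the stated inequality.

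First I would introduce the Hölder conjugate $r$ defined by $\tfrac{1}{2}=\tfrac{1}{p}+\tfrac{1}{r}$, with the convention $r=2$ when $p=\infty$, and apply Hölder's inequality to get
\[
    \|q(t)u\|_{L^2(\Omega)}\le \|q(t)\|_{L^p(\Omega)}\,\|u\|_{L^r(\Omega)}
\]
for a.e.\ $t\in(0,T)$. The problem thus reduces to bounding $\|u\|_{L^r(\Omega)}$ by $\|u\|_{\widetilde H^s(\Omega)}$ in each admissible regime. In the subcritical range $2s<n$, the hypothesis $p\ge n/s$ is equivalent to $r\le 2n/(n-2s)$, so the standard Sobolev embedding $H^s(\R^n)\hookrightarrow L^{2n/(n-2s)}(\R^n)$ combined with Hölder on the bounded set $\Omega$ gives the bound. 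In the supercritical range $2s>n$, the embedding $H^s(\R^n)\hookrightarrow L^\infty(\R^n)$ together with the finiteness of $|\Omega|$ handles every $r\in[2,\infty]$ at once.

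The subtle step, and the one I expect to be the main obstacle, is the critical case $2s=n$, where $H^s(\R^n)$ just fails to embed in $L^\infty(\R^n)$ and the elementary Sobolev chain is unavailable. This is precisely the reason Proposition~\ref{critical} has been recalled: applied with exponent $p=2$ and any $2\le r<\infty$, it yields
\[
    \|u\|_{L^r(\R^n)}\lesssim r^{1/2}\|(-\Delta)^{n/4}u\|_{L^2(\R^n)}^{1-2/r}\|u\|_{L^2(\R^n)}^{2/r},
\]
and since $s=n/2$ one has $(-\Delta)^{n/4}u=(-\Delta)^{s/2}u$; the fractional Poincaré inequality from Proposition~\ref{prop: basic facts on frac Lap}\ref{Poincare inequality} then controls $\|u\|_{L^2(\R^n)}$ by $\|(-\Delta)^{s/2}u\|_{L^2(\R^n)}$, so the right-hand side is dominated by $\|u\|_{\widetilde H^s(\Omega)}$. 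Collecting the three cases produces the required multiplication estimate, from which the continuity assertion follows immediately.
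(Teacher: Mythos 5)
Your proof is correct and follows exactly the route the paper indicates (Hölder, Sobolev embedding in the subcritical and supercritical cases, and Ozawa's estimate from Proposition~\ref{critical} to rescue the critical case $2s=n$, with Poincar\'e to pass from the full $H^s$-norm to $\|(-\Delta)^{s/2}u\|_{L^2}$). The only cosmetic caveat is that the embeddings carry constants, so the displayed inequality should really be read with an implicit constant ($\lesssim$ rather than $\leq$ with constant one), but this matches the usage elsewhere in the paper and does not affect the argument.
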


 Throughout this work, we use the following definition of weak solutions to the nonlocal MGT equation \eqref{linear MGT without ext cord}.
	\begin{definition}
    \label{def: weak sol}
		Let $\Omega\subset\R^n$ be a bounded Lipschitz domain, $T>0$ and $s>0$. Suppose that $q\in L^{\infty}(0,T;L^p(\Omega))$ is real-valued and $2\leq p\leq \infty$ satisfies \eqref{conp}. For given initial data $(u_0,u_1,u_2)\in \widetilde{H}^s(\Omega)\times \widetilde{H}^s(\Omega)\times \widetilde{L}^2(\Omega)$ and the source term $F\in L^2(0,T;\widetilde{L}^2(\Omega))$, we say that $u\in W^{1,\infty}(0,T;\widetilde{H}^s(\Omega))\cap W^{2,\infty}(0,T;\widetilde{L}^2(\Omega))$ is a \emph{(weak) solution} to \eqref{linear MGT without ext cord} if
        \begin{enumerate}[(i)]
            \item $u(0)=u_0$, $\partial_t u(0)=u_1$  in $\Omega$
            \item and for any $\varphi\in C_c^{\infty}(\Omega\times [0,T))$ one has
            \begin{equation}
            \label{weakint}
		      \begin{split}
		          -\int_{\Omega_T}(\partial_t^2 u)\partial_t\varphi\,dxdt+\int_0^T\langle L_{\alpha,b,c,q}u,\varphi\rangle\,dt=\int_{0}^T\langle F,\varphi\rangle \,dt+\int_{\Omega}u_2\varphi(0)\,dx,
		      \end{split}
		      \end{equation}
              where we introduced the operator
              \[
                L_{\alpha,b,c,q}\vcentcolon =  \alpha\partial_t^2+b(-\Delta)^s\partial_t+c(-\Delta)^s+q
              \]
              and $\langle \cdot,\cdot\rangle$ denotes throughout the whole article the natural (spatial) duality pairing.
        \end{enumerate}
	\end{definition}
    
	The well-posedness result for the nonlocal MGT equation \eqref{linear MGT without ext cord} reads as follows.
	\begin{theorem}[Homogeneous MGT equations]
    \label{wellposednessv}
		Let $\Omega\subset\R^n$ be a bounded Lipschitz domain, $T>0$ and $s>0$. Suppose that $q\in L^{\infty}(0,T;L^p(\Omega))$ is real-valued and $2\leq p\leq \infty$ satisfies \eqref{conp}. Then for all
        \[
            (u_0,u_1,u_2,F)\in \widetilde{H}^s(\Omega)\times \widetilde{H}^s(\Omega)\times \widetilde{L}^2(\Omega)\times L^2(0,T;\widetilde{L}^2(\Omega)),
        \]
        there exists a unique solution 
        \begin{equation}
        \label{eq: regularity weak sol}
            u\in W^{1,\infty}(0,T;\widetilde{H}^s(\Omega))\cap W^{2,\infty}(0,T;\widetilde{L}^2(\Omega))\cap H^3(0,T;H^{-s}(\Omega))
        \end{equation}
        to problem \eqref{linear MGT without ext cord}. Moreover, it satisfies the energy estimate
		\begin{equation}
        \label{energyest}
        \begin{split}
		 &\|\partial_t^2 u\|_{L^{\infty}(0,T;L^2(\Omega))}+\|u\|_{W^{1,\infty}(0,T;\widetilde H^s(\Omega))} \\
		 &\lesssim  \|(-\Delta)^{s/2}u_0\|_{L^2(\R^n)}+\|(-\Delta)^{s/2}u_1\|_{L^2(\R^n)}+\|u_2\|_{L^2(\Omega)}+\|F\|_{L^2(\Omega_T)}. 
        \end{split}
		\end{equation}
	\end{theorem}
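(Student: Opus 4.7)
The plan is to construct solutions via a Galerkin approximation in the spatial variable, close the scheme with an energy inequality for a carefully augmented Lyapunov functional, and pass to the weak-$\ast$ limit. The main obstacle is that $\alpha, c$ and $q$ carry no sign, so the energy one reads off the equation by testing with $\partial_t^2 u$ is indefinite and has to be stabilized by an auxiliary $\|(-\Delta)^{s/2} u\|^2$ term before Gr\"onwall can be applied.

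First I would pick an orthogonal basis $(w_j)_{j\in\N}$ of $\widetilde H^s(\Omega)$ that is also orthonormal in $\widetilde L^2(\Omega)$ (e.g.\ eigenfunctions of the spectral Dirichlet fractional Laplacian), set $V_N := \mathrm{span}(w_1,\ldots,w_N)$, and seek $u_N(t) = \sum_{j=1}^N g_j^N(t)\, w_j$ solving the Galerkin projection of \eqref{linear MGT without ext cord} with initial data equal to the orthogonal projections of $(u_0,u_1,u_2)$ onto $V_N$. By Lemma~\ref{lemma: potential} the potential gives rise to a bounded family of operators on $V_N$ with $L^\infty(0,T)$ temporal coefficients, so the resulting linear third-order ODE system in $(g_j^N)$ has a unique solution on $[0,T]$.

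The core of the proof is the following energy identity. Testing the Galerkin equation with $\partial_t^2 u_N$, three of the four linear terms become perfect time derivatives once one uses
\begin{equation*}
\langle (-\Delta)^{s/2} u_N,(-\Delta)^{s/2}\partial_t^2 u_N\rangle = \tfrac{d}{dt}\langle (-\Delta)^{s/2} u_N,(-\Delta)^{s/2}\partial_t u_N\rangle - \|(-\Delta)^{s/2}\partial_t u_N\|^2_{L^2(\R^n)},
\end{equation*}
producing the Lyapunov functional
\begin{equation*}
E_N(t) := \tfrac{1}{2}\|\partial_t^2 u_N\|_{L^2(\Omega)}^2 + \tfrac{b}{2}\|(-\Delta)^{s/2}\partial_t u_N\|_{L^2(\R^n)}^2 + c\,\langle (-\Delta)^{s/2} u_N,(-\Delta)^{s/2}\partial_t u_N\rangle.
\end{equation*}
Because $c$ has arbitrary sign I augment this by $M\|(-\Delta)^{s/2} u_N\|_{L^2(\R^n)}^2$ with $M>c^2/b$ so that Young's inequality makes $\widetilde E_N := E_N + M\|(-\Delta)^{s/2} u_N\|^2$ equivalent to the sum of the three squared norms entering \eqref{energyest}. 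The leftover linear contributions $-\alpha\|\partial_t^2 u_N\|^2$, $c\|(-\Delta)^{s/2}\partial_t u_N\|^2$ and $2M\langle (-\Delta)^{s/2} u_N,(-\Delta)^{s/2}\partial_t u_N\rangle$ are each bounded by $C\widetilde E_N(t)$; the potential term is handled via Lemma~\ref{lemma: potential} by $|\langle qu_N,\partial_t^2 u_N\rangle|\le \|q(t)\|_{L^p(\Omega)}\|u_N\|_{\widetilde H^s(\Omega)}\|\partial_t^2 u_N\|_{L^2(\Omega)}$; and the source splits as $|\langle F,\partial_t^2 u_N\rangle|\le \tfrac{1}{2}\|F\|_{L^2(\Omega)}^2 + \tfrac{1}{2}\|\partial_t^2 u_N\|_{L^2(\Omega)}^2$. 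This yields a differential inequality of the form $\widetilde E_N'(t) \le C\widetilde E_N(t) + C\|F(t)\|_{L^2(\Omega)}^2$, and Gr\"onwall delivers the bound \eqref{energyest} uniformly in $N$.

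Finally, the uniform estimate lets me extract a subsequence converging weakly-$\ast$ in $W^{1,\infty}(0,T;\widetilde H^s(\Omega)) \cap W^{2,\infty}(0,T;\widetilde L^2(\Omega))$. Testing the Galerkin identity against functions in the dense class $\mathcal D_T$ from Lemma~\ref{lemma: density result} and passing to the limit shows the limit $u$ is a weak solution in the sense of Definition~\ref{def: weak sol}, with the initial traces inherited from the strong convergence of the projected initial data in $\widetilde H^s(\Omega)$ and $\widetilde L^2(\Omega)$. The same energy identity applied to the difference of two solutions, which solves the homogeneous problem with zero data, yields uniqueness by Gr\"onwall. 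The extra $H^3(0,T;H^{-s}(\Omega))$ component of the regularity \eqref{eq: regularity weak sol} is then read off the PDE itself, since $\partial_t^3 u = F - \alpha\partial_t^2 u - b(-\Delta)^s\partial_t u - c(-\Delta)^s u - qu$ belongs to $L^2(0,T;H^{-s}(\Omega))$ by Proposition~\ref{prop: basic facts on frac Lap}\eqref{mapping properties} and Lemma~\ref{lemma: potential}.
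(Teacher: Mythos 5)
Your Galerkin construction, the Lyapunov-functional energy estimate, and the weak-$\ast$ compactness argument all go through, and the energy step (augment the indefinite functional by $M\|(-\Delta)^{s/2}u_N\|^2$, $M>c^2/b$, to make it coercive) is a clean variant of what the paper does: the paper instead controls $\|(-\Delta)^{s/2}u_m(t)\|^2$ by writing $(-\Delta)^{s/2}u_m(t)=(-\Delta)^{s/2}u_0^m+\int_0^t(-\Delta)^{s/2}\partial_t u_m\,d\tau$ and absorbing the cross term with a small Young parameter; both routes produce the same differential inequality and \eqref{energyest}. The paper also does not need the basis to be simultaneously orthonormal in $\widetilde L^2(\Omega)$ (it explicitly works with a generic orthonormal basis of $\widetilde H^s(\Omega)$ so that Remark~\ref{remark: on unbounded case} applies), and it solves the $L^\infty$-in-time ODE system via Banach fixed point rather than appealing to a tacit Carath\'eodory argument; these are cosmetic differences.

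The genuine gap is the last sentence of your uniqueness step. You cannot test the equation for the difference of two weak solutions by $\partial_t^2 u$: a weak solution in the sense of Definition~\ref{def: weak sol} only lies in $W^{1,\infty}(0,T;\widetilde H^s(\Omega))\cap W^{2,\infty}(0,T;\widetilde L^2(\Omega))$, so $\partial_t^2 u$ is merely $L^\infty$ in time with values in $\widetilde L^2(\Omega)$, while the natural test class for the third-order term $\partial_t^3 u\in L^2(0,T;H^{-s}(\Omega))$ is $L^2(0,T;\widetilde H^s(\Omega))$. The pairing $\langle\partial_t^3 u,\partial_t^2 u\rangle$ and the chain-rule identity $\frac{d}{dt}\tfrac12\|\partial_t^2 u\|_{L^2}^2=\langle\partial_t^3 u,\partial_t^2 u\rangle$ are simply not available at this regularity level; they hold for the Galerkin approximants $u_m\in C^2([0,T];\widetilde H^s_m)$ but do not survive the weak-$\ast$ limit. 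This is exactly why the paper's Step~4 resorts to the Ladyzhenskaya device: for fixed $t_0$ it tests the equation with $\psi(t)=\int_t^{t_0}(\tau-t)u(\tau)\,d\tau$, a function lying in $H^1_T(0,T;\widetilde H^s(\Omega))$, so every pairing is legitimate. After the integrations by parts one obtains control of $\|u(t_0)\|_{L^2(\Omega)}^2+b\|(-\Delta)^{s/2}\Phi(t_0)\|_{L^2}^2$ with $\Phi(t)=\int_0^t u$, closes a Gr\"onwall loop on a short interval $[0,T_0]$, and iterates. You would need to replace your one-line uniqueness claim by an argument of this type, or else first upgrade the regularity of an arbitrary weak solution to $H^2(0,T;\widetilde H^s(\Omega))$ (which is not asserted and is false in general without the parabolic regularization term).
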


    \begin{remark}
    \label{remark: on unbounded case}
        The proof below shows that the boundedness of $\Omega$ is not essential and could be replaced by the weaker condition that $\Omega$ is only bounded in one direction (see \cite[Theorem 3.6]{LRZ-nonlocal-diffusion} and \cite[Theorem 2.2]{JRPZ}).
    \end{remark}

 \begin{remark}
    \label{rem: dependence of energy constant}
        We emphasize that the constant appearing in the energy estimate \eqref{energyest}, which we denote by $C_0$, depends monotonically and continuously on the norm $\|q\|_{L^{\infty}(0,T;L^p(\Omega))}$; see Lemma~\ref{lemma: potential}, estimates \eqref{eq: first energy estimate for existence}–\eqref{eq: fourth energy estimate for existence}, and Gronwall’s inequality \cite[p.~559]{DautrayLionsVol5}. In addition, $C_0$ depends on $n,s,\Omega$ through the fractional Poincar\'e constant (see \cite[Theorem 2.2]{JRPZ}), as well as on the parameters $\alpha,b,c$, and the finite time horizon $T$. 
    \end{remark}

    \begin{proof}
	The proof is divided into four steps. In the first three steps we demonstrate the existence of a solution to \eqref{linear MGT without ext cord} and in the last step we show that the constructed solution is unique.
	
	{\bf Step 1:} \textit{Approximate problem.} Let us recall that since $\widetilde{H}^s(\Omega)$ is a separable Hilbert space, there exists an orthonormal basis $(w_m)_{m\in\N}$ of $\widetilde{H}^s(\Omega)$ and hence the finite-dimensional subspaces 
		\begin{equation}
        \label{eq: Galerkin approx space}
		\widetilde{H}^s_m\vcentcolon =\text{span}\{w_1,\ldots,w_m\},
		\end{equation}
		for $m\in\N$, constitute a Galerkin approximation of $\widetilde{H}^s(\Omega)$ (see \cite[Chapter XVIII, \S 2]{DautrayLionsVol5}). If $\Omega\subset\R^n$ is bounded, as assumed in this paper, then one can take, for example, $(w_m)_{m\in\N}$ to be the eigenfunctions of the fractional Laplacian $(-\Delta)^s$ with homogeneous Dirichlet conditions (see \cite[Lemma 3.5]{LTZ1}), but this is not essential for our argument, and for the sake of generality as well as justifying Remark \ref{remark: on unbounded case} we stay in this abstract setting. Using that the embedding $\widetilde{H}^s(\Omega)\hookrightarrow \widetilde{L}^2(\Omega)$ is dense, we see that the finite-dimensional subspace $\widetilde{H}^s_m$, $m\in\N$, form a Galerkin approximation of $\widetilde{L}^2(\Omega)$ too. Hence, there exist sequences $(u^m_{j})_{m\in\N}\subset\widetilde{H}^s(\Omega)$, $0\leq j\leq 2$, satisfying
        \begin{equation}
        \label{eq: convergence intial cond}
            u^m_{j}\in\widetilde{H}^s_m \text{ for }0\leq j\leq 2\text{ and }\begin{cases}
                u^m_0\to u_0\text{ in }\widetilde{H}^s(\Omega),\\
                 u^m_1\to u_1\text{ in }\widetilde{H}^s(\Omega),\\
                  u^m_2\to u_2\text{ in }\widetilde{L}^2(\Omega)
            \end{cases}
        \end{equation}
        as $m\to\infty$. Now, we want to find functions
        \[
            u_m=\sum_{j=1}^m c_m^jw_j\in\widetilde{H}^s_m
        \]
        solving the \emph{approximate problem}
        \begin{equation}
        \label{eq: approximate problem}
        \begin{cases}
            \langle \partial_t^3 u_m(t),w_j\rangle_{L^2(\Omega)}+\langle L_{\alpha,b,c,q}u_m(t),w_j\rangle=\langle F(t),w_j\rangle_{L^2(\Omega)}&\text{ for }0<t<T,\\
            u_m(0)=u^m_0,\,\partial_t u_m(0)=u^m_1,\,\partial_t^2 u_m(0)=u^m_2
        \end{cases}
        \end{equation}
        for all $1\leq j\leq m$. Thus, the approximate problem is the (formal) projection of the original problem \eqref{linear MGT without ext cord} onto the subspaces $\widetilde{H}^s_m$. Next, let us introduce the quantities
        \begin{equation}
        \label{eq: matrix elements}
        \begin{split}
            A^m_{ij}&=\langle w_i,w_j\rangle_{L^2(\Omega)}, \, B^m_{ij}=\langle (-\Delta)^{s/2} w_i,(-\Delta)^{s/2}w_j\rangle_{L^2(\R^n)}\\
            Q^m_{ij}&=\langle q(t)w_i,w_j\rangle_{L^2(\Omega)},\,F^m_{j}=\langle F(t),w_j\rangle_{L^2(\Omega)}
        \end{split}
        \end{equation}
        for $1\leq i,j\leq m$. Using \eqref{eq: matrix elements}, the problem \eqref{eq: approximate problem} can be equivalently formulated as
        \begin{equation}
        \label{eq: approximate problem 2}
             \begin{cases}
            A_m\partial_t^3 c_m+\alpha A_m\partial_t^2 c_m+bB_m\partial_t c_m+cB_m c_m+Q_m c_m=F_m &\text{ for }0<t<T,\\
            c_m(0)=d_{0,m},\,\partial_t c_m(0)=d_{1,m},\,\partial_t^2 c_m(0)=d_{2,m},
        \end{cases}
        \end{equation}
        where we set $c_m=(c_m^1,\ldots,c_m^m)$, $F_m=(F^m_1,\ldots,F^m_m)$, $A_m=(A^m_{ij})_{1\leq i,j\leq m}$, $B_m=(B^m_{ij})_{1\leq i,j\leq m}$, $Q_m=(Q^m_{ij})_{1\leq i,j\leq m}$,
        \begin{equation}
        \label{eq: intial conditions expansion}
            u^m_j=\sum_{k=1}^m d_{j,m}^k w_k
        \end{equation}
        and $d_{j,m}=(d_{j,m}^1,\ldots,d_{j,m}^m)$ for $0\leq j\leq 2$. By the linear independence of $(w_j)_{j\in\N}$, we know that $\det A_m\neq 0$, which ensures the invertibility the matrices $A_m$. Hence, \eqref{eq: approximate problem 2} is equivalent to
        \begin{equation}
        \label{eq: approximate problem 3}
             \begin{cases}
            \partial_t^3 c_m+\alpha \partial_t^2 c_m+b\widetilde{B}_m\partial_t c_m+c\widetilde{B}_m c_m+\widetilde{Q}_m c_m=\widetilde{F}_m &\text{ for }0<t<T,\\
            c_m(0)=d_{0,m},\,\partial_t c_m(0)=d_{1,m},\,\partial_t^2 c_m(0)=d_{2,m},
        \end{cases}
        \end{equation}
        where $\widetilde{B}_m=A_m^{-1}B_m$, $\widetilde{Q}_m=A_m^{-1}Q_m$ and $\widetilde{F}_m=A_m^{-1}F_m$. Finally, defining 
        \begin{equation}
        \label{eq: final coefficients}
        \begin{split}
            \mathcal{B}_m &=\begin{pmatrix}
               0 & 1 &  0\\
                0 & 0 & 1\\
                -(c\widetilde{B}_m +\widetilde{Q}_m) & -b\widetilde{B}_m & -\alpha
           \end{pmatrix},\\
           \mathcal{F}_m &=(0,0,\widetilde{F}_m)\text{ and } D_m=(d_{0,m},d_{1,m},d_{2,m}),
        \end{split}
        \end{equation}
        we see that $C_m=(c_m,\partial_t c_m,\partial_t^2 c_m)$ solves the first order problem
         \begin{equation}
        \label{eq: approximate problem 4}
             \begin{cases}
            \partial_t C_m=\mathcal{B}_mC_m+\mathcal{F}_m &\text{ for }0<t<T,\\
            C_m(0)=D_m.
        \end{cases}
        \end{equation}
        Before proceeding, let us note that as $q\in L^{\infty}(0,T;L^p(\Omega))$ we only have $\mathcal{B}_m\in L^{\infty}(0,T;\R^{3m\times 3m})$. Thus, we cannot directly apply the Picard--Lindel\"of theorem to construct a unique solution, but the Banach fixed point theorem still allows to prove the existence of a unique solution $C_m$. To show this, let us note that $C_m\in C([0,T];\R^{3m})\cap H^1(0,T;\R^{3m})$ solves \eqref{eq: approximate problem 4} if and only if $C_m$ is a fixed point of the map $\Phi\colon C([0,T];\R^{3m})\to  C([0,T];\R^{3m})$ defined by
        \begin{equation}\label{eq: contraction map}
            \Phi(C_m)(t)=D_m+\int_0^t \mathcal{B}_m C_m\,d\tau+\int_0^t \mathcal{F}_m\,d\tau.
        \end{equation}
        Next, let us fix some $0<T_0\leq T$ and observe that for all $C_1,C_2\in C([0,T_0];\R^{3m})$, one easily estimates
       \begin{equation}
        \begin{split}
             \|\Phi(C_1)-\Phi(C_2)\|_{L^{\infty}(0,T_0;\R^{3m})}&\leq T_0\|\mathcal{B}_m\|_{L^{\infty}(0,T;\R^{3m\times 3m})}\|C_1-C_2\|_{L^{\infty}(0,T_0;\R^{3m})}.
        \end{split}
       \end{equation}
        Hence, if $T_0$ is sufficiently small, then $\Phi$ is a strict contraction from $C([0,T_0];\R^{3m})$ to itself. Therefore, the Banach fixed point theorem guarantees the existence of a unique solution $C_m\in C([0,T_0];\R^{3m})\cap H^1(0,T_0;\R^{3m})$ on $[0,T_0]$. Iterating this argument, we may deduce the existence of a unique solution $C_m\in C([0,T];\R^{3m})\cap H^1(0,T;\R^{3m})$ on $[0,T]$. Hence, we have established that for each $m\in\N$ there exists a unique solution 
        \begin{equation}
        \label{eq: regularity of coeff cm}
            c_m\in C^2([0,T];\R^m)\cap H^3(0,T;\R^m)
        \end{equation}
        of \eqref{eq: approximate problem 2} and the corresponding functions $u_m$, defined via \eqref{eq: intial conditions expansion}, solve \eqref{eq: approximate problem} and satisfy
        \begin{equation}
        \label{eq: regularity of coeff um}
            u_m\in C^2([0,T];\widetilde{H}^s_m)\cap H^3(0,T;\widetilde{H}^s_m).
        \end{equation}
	
	{\bf Step 2:} \textit{Energy estimate.} To derive suitable energy estimates, we multiply \eqref{eq: approximate problem} by $\partial_t^2 c_m^j\in C([0,T])$ (see \eqref{eq: regularity of coeff cm}) and sum the resulting identity from $j=1$ to $j=m$. We conclude that 
    \[
        \langle \partial_t^3 u_m(\tau),\partial_t^2 u_m(\tau)\rangle_{L^2(\Omega)}+\langle L_{\alpha,b,c,q}u_m(\tau),\partial_t^2 u_m(\tau)\rangle=\langle F(\tau),\partial_t^2 u_m(\tau)\rangle_{L^2(\Omega)}
    \]
    for all $0<\tau<T$. Thus, an integration over $[0,t]\subset [0,T]$, applying the integration by parts formula and using Lemma \ref{lemma: potential}, we deduce
    \begin{equation}
    \label{eq: first energy estimate for existence}
        \begin{split}
            &\frac{1}{2}\|\partial_t^2 u_m(t)\|_{L^2(\Omega)}^2+\frac{b}{2}\|(-\Delta)^{s/2}\partial_t u_m(t)\|_{L^2(\R^n)}^2\\
            &=\frac{1}{2}\|u_2^m\|_{L^2(\Omega)}^2+\frac{b}{2}\|(-\Delta)^{s/2}u^m_1\|_{L^2(\R^n)}^2+c\langle (-\Delta)^{s/2}u_0^m,(-\Delta)^{s/2}u_1^m\rangle_{L^2(\R^n)}\\
            &\quad -\alpha\int_0^t\|\partial_t^2 u_m\|_{L^2(\Omega)}^2\,d\tau-\int_0^t\langle qu_m,\partial_t^2 u_m\rangle_{L^2(\Omega)}\,d\tau\\
            &\quad +c\int_{0}^t\|(-\Delta)^{s/2}\partial_t u_m\|_{L^2(\R^n)}^2\,d\tau-c\langle (-\Delta)^{s/2}u_m(t),(-\Delta)^{s/2}\partial_t u_m(t)\rangle_{L^2(\R^n)}\\
            &\quad +\int_0^t\langle F,\partial_t^2 u_m\rangle_{L^2(\Omega)}\,d\tau\\
            &\leq C(\|(-\Delta)^{s/2}u_0^m\|_{L^2(\R^n)}^2+\|(-\Delta)^{s/2}u_1^m\|_{L^2(\R^n)}^2+\|u_2^m\|_{L^2(\Omega)}^2+\|F\|_{L^2(0,T;L^2(\Omega))}^2)\\
            &\quad +C\int_0^t(\|(-\Delta)^{s/2}u_m\|_{L^2(\R^n)}^2+\|(-\Delta)^{s/2}\partial_t u_m\|_{L^2(\R^n)}^2+\|\partial_t^2 u_m\|_{L^2(\Omega)}^2)\,d\tau\\
            &\quad - c\langle (-\Delta)^{s/2}u_m(t),(-\Delta)^{s/2}\partial_t u_m(t)\rangle_{L^2(\R^n)}.
        \end{split}
    \end{equation}
    In the first equality we used the fundamental theorem of calculus and
    \[
    \begin{split}
          \langle (-\Delta)^{s/2}u_m,(-\Delta)^{s/2}\partial_t^2 u_m\rangle_{L^2(\R^n)}&=\partial_t  \langle (-\Delta)^{s/2}u_m,(-\Delta)^{s/2}\partial_t u_m\rangle_{L^2(\R^n)}\\
          &\quad -\|(-\Delta)^{s/2}\partial_t u_m\|_{L^2(\R^n)}^2.
    \end{split}
    \]
    Next note that by the fundamental theorem of calculus and Jensen's inequality, the term in the last line of \eqref{eq: first energy estimate for existence} can be estimated as
    \begin{equation}
    \label{eq: estimate product}
        \begin{split}
            &-c\langle (-\Delta)^{s/2}u_m(t),(-\Delta)^{s/2}\partial_t u_m(t)\rangle_{L^2(\R^n)}\\
            &\leq C\|(-\Delta)^{s/2}u_m(t)\|_{L^2(\R^n)}^2+\eta\|(-\Delta)^{s/2}\partial_t u_m(t)\|_{L^2(\R^n)}^2\\
            &\leq C\|(-\Delta)^{s/2}u_0^m\|_{L^2(\R^n)}^2+C\left\|\int_0^t(-\Delta)^{s/2}\partial_t u_m\,d\tau\right\|_{L^2(\R^n)}^2+\eta\|(-\Delta)^{s/2}\partial_t u_m(t)\|_{L^2(\R^n)}^2\\
            &\leq C\|(-\Delta)^{s/2}u_0^m\|_{L^2(\R^n)}^2+C\int_0^t \|(-\Delta)^{s/2}\partial_t u_m\|_{L^2(\R^n)}^2\,d\tau+\eta\|(-\Delta)^{s/2}\partial_t u_m(t)\|_{L^2(\R^n)}^2
        \end{split}
    \end{equation}
    for any $\eta>0$, where we used
    \begin{equation}
    \label{eq: fund thm of calc existence}
        (-\Delta)^{s/2}u_m(t)=(-\Delta)^{s/2}u^m_0+\int_0^t(-\Delta)^{s/2}\partial_t u_m\,d\tau.
    \end{equation}
    This is permissible as $u_m\in C^1([0,T];\widetilde{H}^s_m)$ (see \eqref{eq: regularity of coeff um}). Combining this with \eqref{eq: first energy estimate for existence} and choosing $\eta=b/4$, we deduce the energy estimate
    \begin{equation}
    \label{eq: second energy estimate for existence}
        \begin{split}
             &\frac{1}{2}\|\partial_t^2 u_m(t)\|_{L^2(\Omega)}^2+\frac{b}{4}\|(-\Delta)^{s/2}\partial_t u_m(t)\|_{L^2(\R^n)}^2\\
             &\leq C(\|(-\Delta)^{s/2}u_0^m\|_{L^2(\R^n)}^2+\|(-\Delta)^{s/2}u_1^m\|_{L^2(\R^n)}^2+\|u_2^m\|_{L^2(\Omega)}^2+\|F\|_{L^2(0,T;L^2(\Omega))}^2)\\
             &\quad +C\int_0^t(\|(-\Delta)^{s/2}u_m\|_{L^2(\R^n)}^2+\|(-\Delta)^{s/2}\partial_t u_m\|_{L^2(\R^n)}^2+\|\partial_t^2 u_m\|_{L^2(\Omega)}^2)\,d\tau.
        \end{split}
    \end{equation}
    By using the estimate for $\|(-\Delta)^{s/2}u_m(t)\|_{L^2(\R^n)}^2$ contained in \eqref{eq: estimate product}, this implies
    \begin{equation}
        \label{eq: third energy estimate for existence}
        \begin{split}
             &\|\partial_t^2 u_m(t)\|_{L^2(\Omega)}^2+\|(-\Delta)^{s/2}\partial_t u_m(t)\|_{L^2(\R^n)}^2+\|(-\Delta)^{s/2} u_m(t)\|_{L^2(\R^n)}^2\\
             &\leq C(\|(-\Delta)^{s/2}u_0^m\|_{L^2(\R^n)}^2+\|(-\Delta)^{s/2}u_1^m\|_{L^2(\R^n)}^2+\|u_2^m\|_{L^2(\Omega)}^2+\|F\|_{L^2(0,T;L^2(\Omega))}^2)\\
             &\quad +C\int_0^t(\|(-\Delta)^{s/2}u_m\|_{L^2(\R^n)}^2+\|(-\Delta)^{s/2}\partial_t u_m\|_{L^2(\R^n)}^2+\|\partial_t^2 u_m\|_{L^2(\Omega)}^2)\,d\tau.
        \end{split}
    \end{equation}
    Taking into account the convergence results in \eqref{eq: convergence intial cond}, we may deduce
    \begin{equation}
        \label{eq: fourth energy estimate for existence}
        \begin{split}
             &\|\partial_t^2 u_m(t)\|_{L^2(\Omega)}^2+\|(-\Delta)^{s/2}\partial_t u_m(t)\|_{L^2(\R^n)}^2+\|(-\Delta)^{s/2} u_m(t)\|_{L^2(\R^n)}^2\\
             &\leq C(\|(-\Delta)^{s/2}u_0\|_{L^2(\R^n)}^2+\|(-\Delta)^{s/2}u_1\|_{L^2(\R^n)}^2+\|u_2\|_{L^2(\Omega)}^2+\|F\|_{L^2(0,T;L^2(\Omega))}^2)\\
             &\quad +C\int_0^t(\|(-\Delta)^{s/2}u_m\|_{L^2(\R^n)}^2+\|(-\Delta)^{s/2}\partial_t u_m\|_{L^2(\R^n)}^2+\|\partial_t^2 u_m\|_{L^2(\Omega)}^2)\,d\tau
        \end{split}
    \end{equation}
    for sufficiently large $m\in\N$, where the constant $C>0$ is independent of $m$.
    Thus, Gronwall's inequality \cite[p.~559]{DautrayLionsVol5} ensures that
    \begin{equation}
    \label{eq: fifth energy estimate for existence}
        \begin{split}
            &\|\partial_t^2 u_m(t)\|_{L^2(\Omega)}^2+\|(-\Delta)^{s/2}\partial_t u_m(t)\|_{L^2(\R^n)}^2+\|(-\Delta)^{s/2} u_m(t)\|_{L^2(\R^n)}^2\\
            &\leq C(\|(-\Delta)^{s/2}u_0\|_{L^2(\R^n)}^2+\|(-\Delta)^{s/2}u_1\|_{L^2(\R^n)}^2+\|u_2\|_{L^2(\Omega)}^2+\|F\|_{L^2(0,T;L^2(\Omega))}^2)
        \end{split}
    \end{equation}
    for all $0\leq t\leq T$ and sufficiently large $m\in\N$. \\
	
	{\bf Step 3.} \textit{Passing to the limit.} From the estimate \eqref{eq: fifth energy estimate for existence}, we deduce the existence of a function
    \begin{equation}
    \label{eq: limit function u}
        u\in W^{1,\infty}(0,T;\widetilde{H}^s(\Omega))\cap W^{2,\infty}(0,T;\widetilde{L}^2(\Omega))
    \end{equation}
    and up to extraction of a subsequence we have the convergence results
    \begin{equation}
    \label{eq: weak star convergence of galerkin}
        \begin{cases}
            u_m\weakstar u & \text{ in }L^{\infty}(0,T;\widetilde{H}^s(\Omega)),\\
        \partial_t u_m\weakstar \partial_t u  & \text{ in }L^{\infty}(0,T;\widetilde{H}^s(\Omega)),\\
        \partial_t^2 u_m\weakstar \partial_t^2 u &  \text{ in }L^{\infty}(0,T;\widetilde{L}^2(\Omega))
        \end{cases}
    \end{equation}
    as $m\to \infty$. From these convergence results and the trace theorem, we may deduce that
    \begin{equation}
    \label{eq: first two intial conditions}
        \begin{split}
            u(0)=u_0\text{ and }\partial_t u(0)=u_1.
        \end{split}
    \end{equation}
    More concretely, the trace theorem guarantees that for any Banach space $X$ the map $H^1(0,T; X)\ni u\mapsto u(0)\in X$ is continuous and thus using that \eqref{eq: weak star convergence of galerkin} implies
    \begin{equation}
    \label{eq: weak convergence of galerkin}
        \begin{cases}
            u_m\weak u & \text{ in }H^1(0,T;\widetilde{H}^s(\Omega)),\\
        \partial_t u_m\weak \partial_t u & \text{ in }H^1(0,T;\widetilde{L}^2(\Omega))
        \end{cases}
    \end{equation}
    as $m\to\infty$. We deduce that
    \[
        \begin{cases}
            u^m_0=u_m(0)\weak u(0) & \text{ in }\widetilde{H}^s(\Omega),\\
            u^m_1=\partial_t u_m(0)\weak \partial_t u(0) & \text{ in }\widetilde{L}^2(\Omega)
        \end{cases}
    \]
    as $m\to \infty$. Hence, \eqref{eq: convergence intial cond} ensures \eqref{eq: first two intial conditions}.
    Next, we multiply \eqref{eq: approximate problem} by $\eta_j\in C_c^{\infty}([0,T))$, $j\in\N$, integrate over $[0,T]$ and apply the integration by parts formula to obtain
    \[
    \begin{split}
		         & -\int_{\Omega_T}(\partial_t^2 u_m)\partial_t\eta_j w_j\,dxdt+\int_0^T\langle L_{\alpha,b,c,q}u_m,\eta_jw_j\rangle\,dt\\
                  &=\int_{0}^T\langle F,\eta_j w_j\rangle \,dt+\int_{\Omega}u^m_2\eta_j(0)w_j\,dx
		      \end{split}
    \]
    for all $0\leq j\leq m$. By \eqref{eq: weak star convergence of galerkin} and Lemma \ref{lemma: potential}, we can pass to the limit $m\to\infty$ and obtain
    \[
    \begin{split}
		        & -\int_{\Omega_T}(\partial_t^2 u)\partial_t\eta_j w_j\,dxdt+\int_0^T\langle L_{\alpha,b,c,q}u,\eta_jw_j\rangle\,dt\\
                  &=\int_{0}^T\langle F,\eta_j w_j\rangle \,dt+\int_{\Omega}u_2\eta_j(0)w_j\,dx
		      \end{split}
    \]
    for all $j\in\N$. So, we can apply the second density result in Lemma \ref{lemma: density result} and the trace theorem to conclude that we have
    \begin{equation}
    \label{eq: identity pde}
    \begin{split}
		          -\int_{\Omega_T}(\partial_t^2 u)\partial_t\varphi\,dxdt+\int_0^T\langle L_{\alpha,b,c,q}u,\varphi\rangle\,dt=\int_{0}^T\langle F,\varphi\rangle \,dt+\int_{\Omega}u_2\varphi(0)\,dx
		      \end{split}
    \end{equation}
    for any $\varphi\in X^{0,1}_{s,0,T}(\Omega_T)$. Hence, \eqref{eq: identity pde} holds, in particular, for any $\varphi\in C_c^{\infty}(\Omega\times [0,T))$, which is nothing else than the identity \eqref{weakint}. Thus, $u$ is a weak solution in the sense of Definition \ref{def: weak sol}. Next, note that the asserted regularity $\partial_t^3 u\in L^2(0,T;H^{-s}(\Omega))$ is an immediate consequence of \eqref{eq: identity pde}. Finally, the energy estimate \eqref{energyest} follows from \eqref{eq: fifth energy estimate for existence}, the weak convergence results \eqref{eq: weak star convergence of galerkin} and the fact that for any Banach space $X$ the norm $\|\cdot\|_{L^{\infty}(0,T;X)}$ is weak-$\ast$ lower semicontinuous. \\

	{\bf Step 4.} \textit{Uniqueness of solutions.} In light of Definition \ref{def: weak sol}, it is enough to show that any solution $u\in W^{1,\infty}(0,T;\widetilde{H}^s(\Omega))\cap W^{2,\infty}(0,T;\widetilde{L}^2(\Omega))$ of 
    \begin{equation}
    \label{uniqueness}
    \begin{cases}
		(\partial_t^3+\alpha\partial_t^2+b(-\Delta)^s\partial_t+c(-\Delta)^s+q)u=0\ & {\rm in}\ \Omega_T,\\
		u=0\ & {\rm in}\ (\Omega_e)_T, \\
		u(0)=0, \partial_t u(0)=0, \partial_t^2 u(0)=0\ & {\rm in}\ \Omega,
	\end{cases}
    \end{equation}
    needs to be identically zero. First, we show the uniqueness of solutions for the case $\alpha=q=0$ and then explain how to modify the argument to deduce the uniqueness in the general case.
    
    The proof is similar to the one for second order wave equations, whose proof goes back to O.~A.~Ladyzhenskaya  \cite{Ladyzhenskaya}. To this end, let us consider for fixed $0<t_0<T$ the functions
    \begin{equation}
    \label{eq: def of phi}
        \varphi(t)\vcentcolon =\begin{cases}
            \int_t^{t_0} u(\tau)\,d\tau&\text{ for }0\leq t\leq t_0,\\
            0&\text{ for }t_0\leq t\leq T
        \end{cases}
    \end{equation}
    and
    \begin{equation}
    \label{eq: def of psi}
        \psi(t)\vcentcolon =\begin{cases}
            \int_t^{t_0}(\tau-t) u(\tau)\,d\tau&\text{ for }0\leq t\leq t_0,\\
            0&\text{ for }t_0\leq t\leq T.
        \end{cases}
    \end{equation}
    Furthermore, note that Fubini's theorem yields
    \begin{equation}
    \label{eq: equivalent expression psi}
        \begin{split}
            \int_{t}^{t_0} \varphi(\tau)\,d\tau &=\int_{t}^{t_0}\int_{\tau}^{t_0}u(\rho)\,d\rho\,d\tau\\
            &=\int_0^{t_0}\int_0^{t_0}\chi_{[t,t_0]}(\tau)\chi_{[\tau,t_0]}(\rho)u(\rho)\,d\rho\,d\tau\\
        &=\int_0^{t_0}\int_0^{t_0}\chi_{[t,t_0]}(\tau)\chi_{[0,\rho]}(\tau)u(\rho)\,d\tau\,d\rho=\psi(t)
        \end{split}
    \end{equation}
    for all $0\leq t\leq t_0$. The functions $\varphi$ and $\psi$ have the following properties (cf.~e.g.~\cite{PZ-KK}):
    \begin{enumerate}[(i)]
        \item\label{phi} $\varphi\in H^1_T(0,T;\widetilde{H}^s(\Omega))\cap C([0,T];\widetilde{H}^s(\Omega))$ with
      \begin{equation}
      \label{eq: derivative of phi}
           \partial_t\varphi=-u\chi_{[0,t_0]}
      \end{equation}
        \item\label{psi} and $\psi,\partial_t \psi\in H^1_T(0,T;\widetilde{H}^s(\Omega))\cap C([0,T];\widetilde{H}^s(\Omega))$ with
        \begin{equation}
        \label{eq: derivatives of psi}
        \psi(0)=\int_0^{t_0}\varphi(\tau)\,d\tau\text{ and }\begin{cases}
            \partial_t \psi=-\varphi,\\
            \partial_t^2 \psi =\chi_{[0,t_0]}u.
        \end{cases}
        \end{equation}
    \end{enumerate}
    Next, let us observe that by Definition \ref{def: weak sol} and Lemma \ref{lemma: density result}, we can use $\psi$ as a test function, which leads to
    \begin{equation}
    \label{eq: first identity for uniqueness}
    \begin{split}
        &-\int_{\Omega_T}(\partial_t^2 u)\partial_t \psi\,dxdt+b\int_0^T \langle (-\Delta)^{s/2} \partial_t u,(-\Delta)^{s/2}\psi\rangle_{L^2(\R^n)}\,dt\\
        &+c\int_0^T \langle (-\Delta)^{s/2} u,(-\Delta)^{s/2}\psi\rangle_{L^2(\R^n)}\,dt=0.
    \end{split}
    \end{equation}
	Next, observe that by the regularity assumptions on $u$, $u(0)=\partial_t u(0)=0$, \eqref{eq: def of phi}--\eqref{eq: def of psi}, \ref{phi}--\ref{psi} and the integration by parts formula, we may compute
    \begin{equation}
    \label{eq: highest order term}
        \begin{split}
            -\int_{\Omega_T}(\partial_t^2 u)\partial_t \psi\,dxdt&=\int_0^{t_0}\langle \partial_t^2 u,\varphi\rangle\,dt\\
            &=-\int_0^{t_0}\langle \partial_t u,\partial_t \varphi\rangle\,dt-\langle \partial_t u(0),\varphi(0)\rangle+\langle \partial_t u(t_0),\varphi(t_0)\rangle\\
            &=\int_0^{t_0}\langle \partial_t u,u\rangle \,dt=\int_0^{t_0}\partial_t \frac{\|u\|_{L^2(\Omega)}^2}{2}\,dt\\
            &=\frac{\|u(t_0)\|_{L^2(\Omega)}^2-\|u(0)\|_{L^2(\Omega)}^2}{2}=\frac{\|u(t_0)\|_{L^2(\Omega)}^2}{2},
        \end{split}
    \end{equation}
    \begin{equation}
    \label{eq: viscosity term}
        \begin{split}
            &\int_0^T \langle (-\Delta)^{s/2} \partial_t u,(-\Delta)^{s/2}\psi\rangle_{L^2(\R^n)}\,dt\\
            &=-\int_0^{t_0}\langle (-\Delta)^{s/2}u,(-\Delta)^{s/2}\partial_t \psi\rangle_{L^2(\R^n)}\,dt\\
            &\quad +\langle (-\Delta)^{s/2}u(t_0),(-\Delta)^{s/2}\psi(t_0)\rangle_{L^2(\R^n)}-\langle (-\Delta)^{s/2}u(0),(-\Delta)^{s/2}\psi(0)\rangle_{L^2(\R^n)}\\
            &=\int_0^{t_0}\langle (-\Delta)^{s/2}u,(-\Delta)^{s/2}\varphi\rangle_{L^2(\R^n)}\,dt=-\int_0^{t_0}\langle (-\Delta)^{s/2}\partial_t\varphi,(-\Delta)^{s/2}\varphi\rangle_{L^2(\R^n)}\,dt\\
            &=-\int_0^{t_0}\partial_t \frac{\|(-\Delta)^{s/2}\varphi\|_{L^2(\R^n)}^2}{2}\,dt\\
            &=\frac{\|(-\Delta)^{s/2}\varphi(0)\|_{L^2(\R^n)}^2-\|(-\Delta)^{s/2}\varphi(t_0)\|_{L^2(\R^n)}^2}{2}=\frac{\|(-\Delta)^{s/2}\varphi(0)\|_{L^2(\R^n)}^2}{2}
        \end{split}
    \end{equation}
    and
    \begin{equation}
    \label{eq: spatial term}
        \begin{split}
            &\int_0^T \langle (-\Delta)^{s/2} u,(-\Delta)^{s/2}\psi\rangle_{L^2(\R^n)}\,dt=-\int_0^{t_0}\langle (-\Delta)^{s/2}\partial_t\varphi,(-\Delta)^{s/2}\psi\rangle_{L^2(\R^n)}\,dt\\
            &=\int_0^{t_0}\langle (-\Delta)^{s/2}\varphi,(-\Delta)^{s/2}\partial_t \psi\rangle_{L^2(\R^n)} \,dt-\langle (-\Delta)^{s/2}\varphi(t_0),(-\Delta)^{s/2}\psi(t_0)\rangle_{L^2(\R^n)}\\
            &\quad +\langle (-\Delta)^{s/2}\varphi(0),(-\Delta)^{s/2}\psi(0)\rangle_{L^2(\R^n)}\\
            &=-\int_0^{t_0}\| (-\Delta)^{s/2}\varphi\|^2_{L^2(\R^n)}\,dt +\langle (-\Delta)^{s/2}\varphi(0),(-\Delta)^{s/2}\psi(0)\rangle_{L^2(\R^n)}\\
            &=-\int_0^{t_0}\| (-\Delta)^{s/2}\varphi\|^2_{L^2(\R^n)}\,dt +\int_0^{t_0}\langle (-\Delta)^{s/2}\varphi(0),(-\Delta)^{s/2}\varphi\rangle_{L^2(\R^n)}\,dt.
        \end{split}
    \end{equation}
    Inserting \eqref{eq: highest order term}, \eqref{eq: viscosity term} and \eqref{eq: spatial term} into \eqref{eq: first identity for uniqueness}, we deduce the estimate
    \begin{equation}
    \label{eq: first estimate uniqueness}
        \begin{split}
            &\|u(t_0)\|_{L^2(\Omega)}^2+b\|(-\Delta)^{s/2}\varphi(0)\|_{L^2(\R^n)}^2\\
            &=2c\int_0^{t_0}\| (-\Delta)^{s/2}\varphi\|^2_{L^2(\R^n)}\,dt -2c\int_0^{t_0}\langle (-\Delta)^{s/2}\varphi(0),(-\Delta)^{s/2}\varphi\rangle_{L^2(\R^n)}\,dt\\
            &\leq C\int_0^{t_0}\| (-\Delta)^{s/2}\varphi\|^2_{L^2(\R^n)}\,dt +\eta \|(-\Delta)^{s/2}\varphi(0)\|_{L^2(\R^n)}^2
        \end{split}
    \end{equation}
    for any $\eta>0$, where the constant $C>0$ only depends on $c$, $T>0$ and $\eta$. Choosing $\eta\leq b/2$, we can absorb the last term on the left hand side and arrive at the bound
    \begin{equation}
    \label{eq: second estimate uniqueness}
        \|u(t_0)\|_{L^2(\Omega)}^2+b\|(-\Delta)^{s/2}\varphi(0)\|_{L^2(\R^n)}^2\leq C\int_0^{t_0}\| (-\Delta)^{s/2}\varphi\|^2_{L^2(\R^n)}\,dt
    \end{equation}
    for some $C>0$ depending on $T,b,c$. Next, let us introduce the function
    \begin{equation}
    \label{eq: def of Phi}
        \Phi(t)\vcentcolon =\int_0^t u(\tau)\,d\tau.
    \end{equation}
    Then, using $\Phi(t_0)=\varphi(0)$ and $\varphi(\tau)=\Phi(t_0)-\Phi(\tau)$ for $0\leq \tau\leq t_0$, we get
    \begin{equation}
    \label{eq: final absorbing uniqueness}
        \begin{split}
            &\|u(t_0)\|_{L^2(\Omega)}^2+b\|(-\Delta)^{s/2}\Phi(t_0)\|_{L^2(\R^n)}^2\\
            &\leq C\int_0^{t_0}\|(-\Delta)^{s/2}(\Phi(t_0)-\Phi(t))\|_{L^2(\R^n)}^2\,dt\\
            &\leq C\int_0^{t_0}\| (-\Delta)^{s/2}\Phi\|^2_{L^2(\R^n)}\,dt +CT_0\|(-\Delta)^{s/2}\Phi(t_0)\|_{L^2(\R^n)}^2
        \end{split}
    \end{equation}
    for all $0<t_0\leq T_0$ and some $C>0$ depending only on $T,b,c$, where $0<T_0\leq T$ is some fixed time. If we choose $T_0>0$ so that $CT_0\leq b/2$, then we may absorb the last term on the left hand side and obtain
    \[
        \|u(t_0)\|_{L^2(\Omega)}^2+b\|(-\Delta)^{s/2}\Phi(t_0)\|_{L^2(\R^n)}^2\leq C\int_0^{t_0}\| (-\Delta)^{s/2}\Phi\|^2_{L^2(\R^n)}\,dt.
    \]
    Thus, from Gronwall's inequality it follows that $u=\Phi=0$ on $[0,T_0]$. As the time horizon $T_0>0$ only depends on $T,b,c$, we get after finitely many iterations that $u=0$ on $[0,T]$.
    
    Finally, let us consider the problem of uniqueness for $\alpha\neq 0$ and $q\neq 0$.
    \begin{enumerate}[(a)]
        \item\label{nonzero alpha} The term $\alpha\partial_t^2$ does not cause any problems as we may calculate
        \[
            \begin{split}
                \int_{\Omega_T}(\partial_t^2 u)\psi\,dxdt&=\left.\langle \partial_t u,\psi\rangle_{L^2(\Omega)}\right|_{t=0}^{t=t_0}-\int_{\Omega_{t_0}}(\partial_t u)\partial_t\psi\,dxdt\\
        &=\int_{\Omega_{t_0}}(\partial_t u)\varphi\,dxdt=\left.\langle u,\varphi\rangle_{L^2(\Omega)}\right|_{t=0}^{t=t_0}-\int_{\Omega_{t_0}}u\partial_t\varphi\,dxdt\\
                &=\int_0^{t_0}\|u\|_{L^2(\Omega)}^2\,dt.
            \end{split}
        \]
        Due to the presence of the term $\|u(t_0)\|_{L^2(\Omega)}^2$ on the left hand side of the estimate \eqref{eq: first estimate uniqueness}, the term $\alpha\int_{\Omega_T}(\partial_t^2 u)\psi\,dxdt$ can be put on the right hand side of \eqref{eq: first estimate uniqueness} and does not cause any problems in the subsequent Gronwall argument.
        \item On the other hand for the potential term, we use Lemma \ref{lemma: potential} and the representation $\psi(t)=\int_t^{t_0}\varphi(\tau)\,d\tau$, $0\leq t\leq t_0$, to estimate
        \begin{equation}
        \label{eq: first estimate potential for uniqueness}
        \begin{split}
            &\left|\int_{\Omega_T}q u\psi\,dxdt\right|\leq C\int_0^{t_0}\|q\|_{L^p(\Omega)}\|u\|_{L^2(\Omega)}\|(-\Delta)^{s/2}\psi\|_{L^2(\R^n)}\\
            &\leq C\left(\int_0^{t_0}\|q\|_{L^{p}(\Omega)}^2 \|u\|_{L^2(\Omega)}^2\,dt+\int_0^{t_0}\left\|\int_t^{t_0}(-\Delta)^{s/2}\varphi(\tau)\,d\tau\right\|_{L^2(\R^n)}^2\,dt\right).
        \end{split}
        \end{equation}
        Furthermore, by Jensen's inequality we have
        \[
        \begin{split}
            \left\|\int_t^{t_0}(-\Delta)^{s/2}\varphi(\tau)\,d\tau\right\|_{L^2(\R^n)}^2&\leq \left(\int_t^{t_0}\|(-\Delta)^{s/2}\varphi\|_{L^2(\R^n)}\,d\tau\right)^2\\
            &\leq (t_0-t)\int_t^{t_0}\|(-\Delta)^{s/2}\varphi(\tau)\|_{L^2(\R^n)}^2\,d\tau\\
            &\leq T\int_0^{t_0}\|(-\Delta)^{s/2}\varphi(\tau)\|_{L^2(\R^n)}^2\,d\tau.
        \end{split}
        \]
        Combining this with \eqref{eq: first estimate potential for uniqueness}, we deduce
        \[
        \begin{split}
            &\left|\int_{\Omega_T}q u\psi\,dxdt\right|\leq C\int_0^{t_0}\|q\|_{L^p(\Omega)}\|u\|_{L^2(\Omega)}\|(-\Delta)^{s/2}\psi\|_{L^2(\R^n)}\\
            &\leq C\left(\int_0^{t_0}\|q\|_{L^{p}(\Omega)}^2 \|u\|_{L^2(\Omega)}^2\,dt+\int_0^{t_0}\|(-\Delta)^{s/2}\varphi\|_{L^2(\R^n)}^2\,dt\right).
        \end{split}
        \]
        By the very same argument as in \ref{nonzero alpha}, the term $\int_{\Omega_T}q u\psi\,dxdt$ does not lead to any difficulties in the above Gronwall argument. 
    \end{enumerate}
    This finishes the proof of Theorem \ref{wellposednessv}.
    \end{proof}

    Next, we want to discuss the solvability of the problem 
    \begin{equation}
		    \label{linear MGT with ext cord}
		\begin{cases}
			(\partial_t^3+\alpha\partial_t^2+b(-\Delta)^s\partial_t+c(-\Delta)^s+q)u=F\ &  {\rm in}\ \Omega_T,\\
			u=\varphi\ & {\rm in}\ (\Omega_e)_T, \\
			u(0)=u_0, \partial_t u(0)=u_1, \partial_t^2 u(0)=u_2\ & {\rm in}\ \Omega,
		\end{cases}
		\end{equation} 
        where the initial conditions
        \begin{equation}
        \label{eq: nonhom intial}
            (u_0,u_1,u_2)\in H^s(\R^n)\times H^s(\R^n)\times L^2(\R^n)
        \end{equation}
        and the exterior condition 
        \begin{equation}
        \label{eq: ext cond}
            \varphi\in W^{1,\infty}(0,T;H^{2s}(\R^n))\cap W^{2,\infty}(0,T;L^2(\R^n))\cap H^{3}(0,T;L^2(\R^n))
        \end{equation}
        need to satisfy a suitable compatibility condition, namely,
        \begin{equation}
        \label{eq: compatibility}
            \varphi(0)=u_0,\,\partial_t\varphi(0)=u_1,\,\partial_t^2 \varphi(0)=u_2\text{ a.e. in }\Omega_e.
        \end{equation}
        In the remainder of this article, we use the following notion of solutions for the non-homogeneous nonlocal MGT equation \eqref{linear MGT with ext cord}.
        \begin{definition}
        \label{eq: solutions to non-homogeneous MGT with ext cond}
            Let $\Omega\subset\R^n$ be a bounded Lipschitz domain, $T>0$ and $s>0$. Suppose that $q\in L^{\infty}(0,T;L^p(\Omega))$ is real-valued and $2\leq p\leq \infty$ satisfies \eqref{conp}. Let the initial data $(u_0,u_1,u_2)$ and the exterior condition $\varphi$ obey the conditions \eqref{eq: nonhom intial}, \eqref{eq: ext cond} and \eqref{eq: compatibility} and suppose that $F\in L^2(0,T;\widetilde{L}^2(\Omega))$. Then we say that $u\in W^{1,\infty}(0,T;H^s(\R^n))\cap W^{2,\infty}(0,T;L^2(\R^n))$ is a \emph{(weak) solution} to \eqref{linear MGT with ext cord} if
        \begin{enumerate}[(I)]
            \item\label{initial cond with ext cond} $u(0)=u_0$, $\partial_t u(0)=u_1$ in $\Omega$,
            \item\label{sol attains exterior cond} $u(t)=\varphi(t)$ in $\Omega_e$ for a.e.~$0<t<T$
            \item\label{PDE satisfies by sol with ext cond} and there holds
            \begin{equation}
            \label{weakint with exterior cond}
		      \begin{split}
		          -\int_{\Omega_T}(\partial_t^2 u)\partial_t\psi\,dxdt+\int_0^T\langle L_{\alpha,b,c,q}u,\psi\rangle\,dt=\int_{0}^T\langle F,\psi\rangle \,dt+\int_{\Omega}u_2\psi(0)\,dx
		      \end{split}
		      \end{equation}
              for any $\psi\in C_c^{\infty}(\Omega\times [0,T))$.
        \end{enumerate}
        \end{definition}
        We have the following well-posedness result for non-homogeneous MGT equations.
	\begin{corollary}[Non-homogeneous MGT equations]
    \label{wellu}
		Let $\Omega\subset\R^n$ be a bounded Lipschitz domain, $T>0$ and $s>0$. Suppose that $q\in L^{\infty}(0,T;L^p(\Omega))$ is real-valued and $2\leq p\leq \infty$ satisfies \eqref{conp}. Then for each initial datum $(u_0,u_1,u_2)$ and exterior condition $\varphi$, having properties \eqref{eq: nonhom intial}, \eqref{eq: ext cond} and \eqref{eq: compatibility}, and the source term $F\in L^2(0,T;\widetilde{L}^2(\Omega))$,
        there exists a unique solution 
        \begin{equation}
        \label{eq: regularity of sol u to linear MGT with ext cond}
            u\in W^{1,\infty}(0,T;H^s(\R^n))\cap W^{2,\infty}(0,T;L^2(\R^n))
        \end{equation}
        to \eqref{linear MGT with ext cord}. Furthermore, it satisfies the energy estimate
		\begin{equation}
         \label{energyu}
		    \begin{split}
		        	 &\|u-\varphi\|_{L^{\infty}(0,T;\widetilde{L}^2(\Omega))}+\|u-\varphi\|_{W^{1,\infty}(0,T;\widetilde{H}^s(\Omega))}\\
		 &\lesssim \|(-\Delta)^{s/2}(u_0-\varphi(0))\|_{L^2(\R^n)}+\|(-\Delta)^{s/2}(u_1-\partial_t\varphi(0))\|_{L^2(\R^n)}\\
         &\quad +\|u_2-\partial_t^2\varphi(0)\|_{L^2(\Omega)}+\|F- (\partial_t^3+ L_{\alpha,b,c,q})\varphi\|_{L^2(\Omega_T)}.
		    \end{split}
		\end{equation}
	\end{corollary}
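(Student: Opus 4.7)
The plan is to reduce the inhomogeneous problem \eqref{linear MGT with ext cord} to the homogeneous problem \eqref{linear MGT without ext cord} covered by Theorem~\ref{wellposednessv} via the substitution $v\vcentcolon=u-\varphi$. Writing
\[
\widetilde{F}\vcentcolon=F-(\partial_t^3+L_{\alpha,b,c,q})\varphi|_{\Omega_T},\qquad v_j\vcentcolon=u_j-\partial_t^j\varphi(0)\quad(j=0,1,2),
\]
the function $u$ solves \eqref{linear MGT with ext cord} in the sense of Definition~\ref{eq: solutions to non-homogeneous MGT with ext cond} if and only if $v$ solves, in the sense of Definition~\ref{def: weak sol}, the homogeneous problem with source $\widetilde{F}$ and initial data $(v_0,v_1,v_2)$.

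The first step is to verify that the new data meets the hypotheses of Theorem~\ref{wellposednessv}. By the compatibility condition \eqref{eq: compatibility}, the functions $v_0,v_1\in H^s(\R^n)$ and $v_2\in L^2(\R^n)$ vanish a.e.~in $\Omega_e$; the standard characterization of $\widetilde{H}^s(\Omega)$ on a bounded Lipschitz domain as the subspace of $H^s(\R^n)$ of functions supported in $\overline{\Omega}$ (valid away from half-integer $s$, with an additional mollification/density argument in the exceptional cases) then yields $v_0,v_1\in\widetilde{H}^s(\Omega)$ and $v_2\in\widetilde{L}^2(\Omega)$. For the source term, the mapping property of $(-\Delta)^s\colon H^{2s}(\R^n)\to L^2(\R^n)$ (Proposition~\ref{prop: basic facts on frac Lap}\ref{mapping properties}), the regularity \eqref{eq: ext cond} of $\varphi$, and Lemma~\ref{lemma: potential} applied to $q\varphi|_{\Omega_T}$ together show that $(\partial_t^3+L_{\alpha,b,c,q})\varphi\in L^2(0,T;L^2(\R^n))$, so that $\widetilde{F}\in L^2(0,T;\widetilde{L}^2(\Omega))$.

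With the data in the right classes, Theorem~\ref{wellposednessv} produces a unique $v\in W^{1,\infty}(0,T;\widetilde{H}^s(\Omega))\cap W^{2,\infty}(0,T;\widetilde{L}^2(\Omega))$, and $u\vcentcolon=v+\varphi$ delivers a solution to \eqref{linear MGT with ext cord} with the regularity \eqref{eq: regularity of sol u to linear MGT with ext cond}. Uniqueness is immediate: any two solutions of \eqref{linear MGT with ext cord} differ by a solution of the homogeneous problem with vanishing data, which must be zero by Theorem~\ref{wellposednessv}. For the energy estimate, the bound \eqref{energyest} applied to $v$ controls $\|\partial_t^2 v\|_{L^\infty(0,T;L^2(\Omega))}$ and $\|v\|_{W^{1,\infty}(0,T;\widetilde{H}^s(\Omega))}$ in terms of the right-hand side of \eqref{energyu}; since $u-\varphi=v$, the fractional Poincar\'e inequality (Proposition~\ref{prop: basic facts on frac Lap}\ref{Poincare inequality}) then dominates $\|u-\varphi\|_{L^\infty(0,T;\widetilde{L}^2(\Omega))}$ by $\|v\|_{L^\infty(0,T;\widetilde{H}^s(\Omega))}$, producing \eqref{energyu}.

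The main subtlety is the identification of $v_0,v_1,v_2$ as elements of the extension-by-zero spaces: the compatibility assumption \eqref{eq: compatibility} only provides vanishing a.e.~on $\Omega_e$, which is sufficient to place these functions in the appropriate $\widetilde{\cdot}$ spaces for generic $s$, but half-integer values of $s$ require a separate density/approximation argument. Everything else is a direct application of the linear theory already established in Theorem~\ref{wellposednessv} and the preliminaries.
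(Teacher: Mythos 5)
Your proposal is correct and follows essentially the same route as the paper's proof: substitute $v=u-\varphi$, check that the shifted data $(v_0,v_1,v_2,\widetilde{F})$ land in $\widetilde{H}^s(\Omega)\times\widetilde{H}^s(\Omega)\times\widetilde{L}^2(\Omega)\times L^2(0,T;\widetilde{L}^2(\Omega))$, invoke Theorem~\ref{wellposednessv}, and transfer back. Two small remarks: you are more explicit than the paper about (i) the fractional Poincar\'e inequality being needed to pass from the $\widetilde{H}^s$ control of $v$ given by \eqref{energyest} to the $\widetilde{L}^2$ control appearing on the left of \eqref{energyu}, and (ii) the characterization of $\widetilde{H}^s(\Omega)$ as the subspace of $H^s(\R^n)$ supported in $\overline{\Omega}$ potentially failing at half-integer $s$ — a genuine subtlety the paper glosses over, but a non-issue for the flow of the argument.
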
	

    The proof is similar to \cite{PZ1} or \cite{LTZ1} and we only give it for completeness.
    
	\begin{proof}
	    We first assert that $u\in  W^{1,\infty}(0,T;H^s(\R^n))\cap W^{2,\infty}(0,T;L^2(\R^n))$ solves \eqref{linear MGT with ext cord} in the sense of Definition  \ref{eq: solutions to non-homogeneous MGT with ext cond} if and only if $v=u-\varphi \in  W^{1,\infty}(0,T;\widetilde{H}^s(\Omega))\cap W^{2,\infty}(0,T;\widetilde{L}^2(\Omega))$ solves 
        \begin{equation}
		    \label{transformed problem}
		\begin{cases}
			(\partial_t^3+\alpha\partial_t^2+b(-\Delta)^s\partial_t+c(-\Delta)^s+q)v=\widetilde{F}\ &  {\rm in}\ \Omega_T,\\
			v=0 & {\rm in}\ (\Omega_e)_T, \\
			v(0)=v_0, \partial_t v(0)=v_1, \partial_t^2 v(0)=v_2\ & {\rm in}\ \Omega
		\end{cases}
		\end{equation} 
        in the sense of Definition \ref{def: weak sol}, where
        \[
           \widetilde{F}= F-(\partial_t^3+\chi_{\Omega} L_{\alpha,b,c,q})\varphi,\,v_0=u_0-\varphi(0),\,v_1=u_1-\partial_t \varphi(0),\,v_2=u_2-\partial_t^2 \varphi(0).
        \]
        To see this claim we make the following observations:
        \begin{enumerate}[(a)]
            \item The assumptions \eqref{eq: ext cond} on the exterior condition $\varphi$, the Sobolev embedding $H^1(0,T;X)\hookrightarrow L^{\infty}(0,T;X)$ for any Banach space $X$ and the Lipschitz continuity of $\partial\Omega$ guarantee that 
            \begin{equation}
            \label{eq: regularity of sol and sol without ext}
               \begin{split}
                   &u\in  W^{1,\infty}(0,T;H^s(\R^n))\cap W^{2,\infty}(0,T;L^2(\R^n))\text{ satisfies \ref{sol attains exterior cond} }\,\Leftrightarrow\\
                   & v\in W^{1,\infty}(0,T;\widetilde{H}^s(\Omega))\cap W^{2,\infty}(0,T;\widetilde{L}^2(\Omega)).
               \end{split} 
            \end{equation}
            Furthermore, \eqref{eq: ext cond} implies
            \begin{equation}
            \label{eq: regularity of source from phi}
                (\partial_t^3+\chi_{\Omega} L_{\alpha,b,c,q})\varphi\in L^2(0,T;\widetilde{L}^2(\Omega)).
            \end{equation}
            \item By the trace theorem, we know that $u$ satisfies \ref{initial cond with ext cond} if and only if $v(0)=u_0-\varphi(0)=v_0$ and $\partial_t v(0)=u_1-\partial_t \varphi(0)=v_1$.
            \item Finally, we have
            \[
                -\int_{\Omega_T}(\partial_t^2 \varphi)\partial_t\psi\,dxdt=\int_{\Omega_T}(\partial_t^3\varphi)\psi\,dxdt+\int_\Omega \partial_t^2 \varphi(0)\psi(0)\,dx
            \]
            for any $\psi\in C_c^{\infty}(\Omega\times [0,T))$ and so $u$ satisfies \eqref{weakint with exterior cond} if and only if $v$ satisfies \eqref{weakint} with $F\vcentcolon =\widetilde{F}$, $u_2\vcentcolon = v_2$.
        \end{enumerate}
        Now, by the above equivalence it is enough to show that $v$ uniquely exists, but this follows from Theorem \ref{wellposednessv} and hence we have already established the unique solvability of \eqref{linear MGT with ext cord}. The energy estimate \eqref{energyu} is nothing else than the energy estimate for the homogeneous problem \eqref{transformed problem} (see \eqref{energyest}).
	\end{proof}
	
	\subsection{Well-posedness of semilinear nonlocal MGT equations} 
    \label{sec: Well-posedness semilinear problem}
    
	In a next step, we consider the following semilinear nonlocal wave equation
    \begin{equation}
    \label{nonlinearg}
        \begin{cases}
            (\partial_t^3+\alpha\partial_t^2+b(-\Delta)^s\partial_t+c(-\Delta)^s+q)u+f(u)=F\ & {\rm in}\ \Omega_T,\\
		u=0\ & {\rm in}\ (\Omega_e)_T, \\
		u(0)=u_0,\,\partial_tu(0)=u_1,\,\partial_t^2u(0)=u_2\ & {\rm in}\ \Omega,
        \end{cases}
    \end{equation}
    where $(u_0,u_1,u_2)\in \widetilde{H}^s(\Omega)\times \widetilde{H}^s(\Omega)\times\widetilde{L}^2(\Omega)$, $F\in L^2(0,T;\widetilde{L}^2(\Omega))$ and $f(u)$ denotes the \emph{Nemytskii operator} induced by a \emph{Carath\'eodory function} $f\colon \Omega_T\times \R\to\R$, that is 
    \[
        f(u)(x,t)=f(x,t,u(x,t))
    \]
    for a.e.~$(x,t)\in\Omega_T$. Later on, we will impose certain differentiability and growth conditions, but let us recall here that $f$ is Carath\'eodory function, if
		\begin{enumerate}[(a)]
			\item $\tau\mapsto f(x,t,\tau)$ is continuous for a.e.~$(x,t)\in \Omega_T$
			\item and $(x,t)\mapsto f(x,t,\tau)$ is measurable for all $\tau \in\R$.
		\end{enumerate}
    In the sequel, we will make use of the following simple lemma.
    \begin{lemma}[Nemytskii operators]
    \label{lemma: Nemytskii operators}
        Let $\Omega\subset\R^n$ be a bounded domain, $T>0$ and $1\leq p,q<\infty$. Moreover, assume that $f\colon \Omega_T\times \R\to\R$ is a Carath\'eodory function satisfying
			\begin{equation}
            \label{eq: weak estimate on partial f}
            \left|f(x,t,\tau)\right|\leq a+b|\tau|^{\theta}
        \end{equation}
		for all $\tau\in\R$, a.e.~$(x,t)\in\Omega_T$ and some constants $0<\theta<\infty$, $a\geq 0$, $b>0$.
        If $0<\theta<\min(p,q)$, then the Nemytskii operator $u\mapsto f(u)$ is continuous from $L^q(0,T;L^p(\Omega))$ to $L^{q/\theta}(0,T;L^{p/\theta}(\Omega))$. Furthermore, we have the estimate
        \begin{equation}
        \label{eq: estimate Nemytskii operator}
            \|f(u)\|_{L^{q/\theta}(0,T;L^{p/\theta}(\Omega))}\lesssim a+b\|u\|^{\theta}_{L^q(0,T;L^p(\Omega))}
        \end{equation}
        for any $u\in L^q(0,T;L^p(\Omega))$.
    \end{lemma}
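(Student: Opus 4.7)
The plan is to establish the a priori bound first by two successive applications of Minkowski's inequality, and then to obtain the continuity statement via a subsequence and dominated-convergence argument in the mixed-norm space $L^q(0,T;L^p(\Omega))$. The assumption $\theta<\min(p,q)$ ensures that both $p/\theta\ge 1$ and $q/\theta\ge 1$, which is exactly what makes Minkowski applicable in the spatial and temporal variable respectively.

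For the estimate, the pointwise bound $|f(u)(x,t)|\le a+b|u(x,t)|^\theta$ together with Minkowski's inequality in $L^{p/\theta}(\Omega)$ gives
\[
   \|f(u)(\cdot,t)\|_{L^{p/\theta}(\Omega)}\le a|\Omega|^{\theta/p}+b\|u(\cdot,t)\|_{L^p(\Omega)}^\theta,
\]
and a further Minkowski step in $L^{q/\theta}(0,T)$ yields the desired estimate $\|f(u)\|_{L^{q/\theta}(0,T;L^{p/\theta}(\Omega))}\lesssim a+b\|u\|_{L^q(0,T;L^p(\Omega))}^\theta$, where the implicit constant depends only on $|\Omega|$ and $T$.

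For continuity, assume $u_n\to u$ in $L^q(0,T;L^p(\Omega))$. Since it suffices to show that every subsequence admits a further subsequence along which $f(u_n)\to f(u)$ in the target space, I would extract, via a Fischer--Riesz style argument, a subsequence (still denoted $u_n$) with $\|u_n-u\|_{L^q(0,T;L^p(\Omega))}\le 2^{-n}$ and set $G:=\sum_n |u_n-u|$. Minkowski in $L^q(0,T;L^p(\Omega))$ then guarantees $G\in L^q(0,T;L^p(\Omega))$, so $u_n\to u$ pointwise a.e.\ on $\Omega_T$ and $|u_n|\le |u|+G$ a.e.; by the Carath\'eodory property, $f(u_n)\to f(u)$ pointwise a.e.

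To turn this into norm convergence, I would apply dominated convergence twice. The growth bound combined with $|u_n|\le|u|+G$ yields the pointwise majorization $|f(u_n)-f(u)|^{p/\theta}\lesssim 1+|u|^p+G^p$, whose $x$-integral is finite for a.e.\ fixed $t$; dominated convergence in $L^{p/\theta}(\Omega)$ therefore gives $\|f(u_n(\cdot,t))-f(u(\cdot,t))\|_{L^{p/\theta}(\Omega)}\to 0$ for a.e.\ $t$. A second dominated-convergence step in $L^{q/\theta}(0,T)$, now with the $t$-dominant $1+\|u(\cdot,t)\|_{L^p(\Omega)}^q+\|G(\cdot,t)\|_{L^p(\Omega)}^q\in L^1(0,T)$, closes the argument. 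The only mildly subtle point, and what I expect to be the main obstacle in an otherwise routine proof, is that the Fischer--Riesz trick must produce a majorant lying in the mixed-norm space $L^q(0,T;L^p(\Omega))$ itself rather than only in some weaker Lebesgue space on $\Omega_T$; this is precisely what enables the two-step dominated convergence to be carried out separately in the spatial and temporal variables.
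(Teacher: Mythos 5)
Your proof is correct and follows essentially the same route as the paper's: extract a subsequence converging almost everywhere together with a mixed-norm dominant (you build $G$ explicitly via Fischer--Riesz; the paper invokes a modification of Brezis, Theorem~4.9), then conclude by dominated convergence. The only cosmetic difference is that you run dominated convergence in two nested steps (first in $L^{p/\theta}(\Omega)$ for a.e.~$t$, then in $L^{q/\theta}(0,T)$), whereas the paper applies it directly in the mixed space $L^{q/\theta}(0,T;L^{p/\theta}(\Omega))$ -- the substance is the same.
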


    \begin{remark}
    \label{remark: more general caratheodory fcts}
        It is immediate that Lemma \ref{lemma: Nemytskii operators} can also be applied to Carath\'eodory functions $f\colon\Omega_T\times\R\to\R$ satisfying an estimate of the form
        \[
            |f(x,t,\tau)|\lesssim a+b|\tau|^{\Theta}+c|\tau|^{\theta}
        \]
        for some constants $0\leq \Theta<\theta< \min(p,q)$, $a,b\geq 0$ and $c>0$.
    \end{remark}
    
    \begin{proof}
        Throughout the proof, we will use that for all $1\leq r,s<\infty$ one has $u\in L^r(0,T;L^s(\Omega))$ if and only if $u\colon \Omega_T\to \R$ is measurable and $\|u\|_{L^r(0,T;L^s(\Omega))}<\infty$ (see \cite[Proposition 1.8.1]{droniou2001integration}). It is a known fact that the measurability of $u\colon\Omega_T\to\R$ implies the measurability of the composition $f(u)\colon\Omega_T\to\R$. Using \eqref{eq: weak estimate on partial f} and $0<\theta<\min(p,q)$, we easily get
        \begin{equation}
        \label{eq: boundedness Nemytskii}
        \begin{split}
            \|f(u)\|_{L^{q/\theta}(0,T;L^{p/\theta}(\Omega))}&\lesssim \|a+b|u|^{\theta}\|_{L^{q/\theta}(0,T;L^{p/\theta}(\Omega))}\\
            &\lesssim a+b\|u\|^{\theta}_{L^q(0,T;L^p(\Omega))}<\infty.
        \end{split}
        \end{equation}
        Hence, the remark at the beginning of the proof ensures $f(u)\in L^{q/\theta}(0,T;L^{p/\theta}(\Omega))$. Thus, it remains to show the continuity of $f$. For this purpose, assume that $(u_k)_{k\in\N}\subset L^q(0,T;L^p(\Omega))$ converges to some $u\in L^q(0,T;L^p(\Omega))$. Up to extracting a subsequence, we can assume that
        \begin{equation}
        \label{eq: a.e. convergence}
            u_k\to u\text{ for a.e.~}(x,t)\in\Omega_T
        \end{equation}
        as $k\to\infty$. Next, we recall that there exists $h\in L^{q}(0,T;L^p(\Omega))$ such that
        \begin{equation}
        \label{eq: uniform bound}
            |u_k(x,t)|\leq h(x,t)
        \end{equation}
        for all $k\in\N$ and a.e.~$(x,t)\in\Omega_T$. This follows from a slight modification of the arguments for \cite[Theorem 4.9]{Brezis}. Hence, \eqref{eq: weak estimate on partial f}, \eqref{eq: a.e. convergence}, \eqref{eq: uniform bound} and the continuity of $f$ in $\tau$ ensure that
        \[
        \begin{split}
            f(u_k)&\to f(u)\text{ a.e.~in }\Omega_T\text{ as }k\to\infty,\\
            |f(u_k)|&\leq a+|h|^{\theta}\in L^{q/\theta}(0,T;L^{p/\theta}(\Omega)).
        \end{split}
        \]
        Therefore, we can apply Lebesgue's dominated convergence theorem in the mixed Lebesgue space $L^{q/\theta}(0,T;L^{p/\theta}(\Omega))$ to conclude that
        \[
            f(u_k)\to f(u)\text{ in }L^{q/\theta}(0,T;L^{p/\theta}(\Omega))
        \]
        as $k\to\infty$. By standard arguments this also holds for the full sequence $(u_k)_{k\in\N}$ and thus the Nemytskii operator $u\mapsto f(u)$ is continuous. This finishes the proof.
    \end{proof}
    
    We have the following well-posedness result for the semilinear problem \eqref{nonlinearg}. 

    \begin{theorem}[Homogeneous, semilinear MGT equations]
    \label{wellnonlinear}
        Let $\Omega\subset\R^n$ be a bounded Lipschitz domain, $T>0$ and $s>0$. Suppose that $q\in L^{\infty}(0,T;L^p(\Omega))$ is real-valued with $2\leq p\leq \infty$ satisfying \eqref{conp} and let the nonlinearity $f\colon \Omega_T\times \R\to\R$ obey the following conditions:
        \begin{enumerate}[(i)]
            \item\label{prop f} $f$ is a Carath\'eodory function with $f(0)=0$.
            \item\label{prop f growth} $\partial_\tau f$ is a Carath\'eodory function that obeys the growth condition
			\begin{equation}
				\label{eq: bound on derivative}
				\left|\partial_\tau f(x,t,\tau)\right|\lesssim |\tau|^{\gamma}+|\tau|^r
			\end{equation}
			for a.e.~$(x,t)\in\Omega_T$, all $\tau\in\R$ and some exponents $0<\gamma\leq r<\infty$, where $r$ satisfies \eqref{conr}.
        \end{enumerate}
        Moreover, let $C_0>0$ stand for the constant appearing in the energy estimate \eqref{energyest}.
        Under the above assumptions, we can find a constant $\delta_0>0$ such that for all $0<\delta\leq \delta_0$ and all $(u_0,u_1,u_2,F)\in \widetilde{H}^s(\Omega)\times \widetilde{H}^s(\Omega)\times\widetilde{L}^2(\Omega)\times L^2(0,T;\widetilde{L}^2(\Omega))$ satisfying the \emph{smallness condition}
        \begin{equation}
        \label{eq: smallness on source F}
           \|(-\Delta)^{s/2}u_0\|_{L^2(\R^n)}+\|(-\Delta)^{s/2}u_1\|_{L^2(\R^n)}+\|u_2\|_{L^2(\Omega)}+\|F\|_{L^2(\Omega_T)}\leq \frac{\delta}{2C_0},
        \end{equation}
        there exists a unique (small) solution 
        \[
           u_\delta\in W^{1,\infty}(0,T;\widetilde{H}^s(\Omega))\cap W^{2,\infty}(0,T;\widetilde{L}^2(\Omega))\cap H^3(0,T;H^{-s}(\Omega))
        \]
        of the problem \eqref{nonlinearg}. This unique solution fulfills the energy estimate
        \begin{equation}
        \label{eq: energy estimate nonlinear eq well-posedness}
            \begin{split}
		 &\|\partial_t^2 u_\delta\|_{L^{\infty}(0,T;L^2(\Omega))}+\|u_\delta\|_{W^{1,\infty}(0,T;\widetilde H^s(\Omega))} \\
		 &\lesssim  \|u_0\|_{\widetilde{H}^s(\Omega)}+\|u_1\|_{\widetilde{H}^s(\Omega)}+\|u_2\|_{L^2(\Omega)}+\|F\|_{L^2(\Omega_T)}\\
         &\quad+\|u_\delta\|^{\gamma+1}_{L^\infty(0,T;\widetilde{H}^s(\Omega))}+\|u_\delta\|^{r+1}_{L^\infty(0,T;\widetilde{H}^s(\Omega))}. 
        \end{split}
        \end{equation}
        Finally, if $0<\delta_1\leq \delta_2\leq \delta_0$ and $(u_0,u_1,u_2,F)\in \widetilde{H}^s(\Omega)\times \widetilde{H}^s(\Omega)\times\widetilde{L}^2(\Omega)\times L^2(0,T;\widetilde{L}^2(\Omega))$ satisfy \eqref{eq: smallness on source F} with $\delta=\delta_1$, then $u_{\delta_1}=u_{\delta_2}$.
    \end{theorem}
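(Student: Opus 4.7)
The plan is to construct $u_\delta$ via Banach's fixed-point theorem on a small closed ball of
\begin{equation*}
X \vcentcolon = W^{1,\infty}(0,T;\widetilde{H}^s(\Omega)) \cap W^{2,\infty}(0,T;\widetilde{L}^2(\Omega)),
\end{equation*}
endowed with the norm $\|u\|_X \vcentcolon = \|u\|_{W^{1,\infty}(0,T;\widetilde{H}^s(\Omega))} + \|\partial_t^2 u\|_{L^\infty(0,T;\widetilde{L}^2(\Omega))}$. For small $v \in X$, define $\mathcal{T}(v) \vcentcolon = u$ to be the unique weak solution of the linear nonlocal MGT equation \eqref{linear MGT without ext cord} with the prescribed data $(u_0,u_1,u_2)$ and source $F - f(v)$, which exists by Theorem~\ref{wellposednessv}; fixed points of $\mathcal{T}$ are exactly solutions of \eqref{nonlinearg}.

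The crucial ingredient is an estimate on the Nemytskii operator $v \mapsto f(v)$ in $L^2(\Omega_T)$. Since $f(x,t,0)=0$ by \ref{prop f} and $\partial_\tau f$ satisfies \eqref{eq: bound on derivative}, integration in $\tau$ yields the pointwise bound $|f(x,t,\tau)| \lesssim |\tau|^{\gamma+1} + |\tau|^{r+1}$. The growth restriction \eqref{conr} is tailored precisely so that the fractional Sobolev embedding $\widetilde{H}^s(\Omega) \hookrightarrow L^{2(r+1)}(\Omega)$ holds when $2s<n$ (since then $2(r+1)\leq 2n/(n-2s)$); for $2s \geq n$ the analogous embedding for any finite exponent follows from Proposition~\ref{critical}. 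Together these give
\begin{equation*}
\|f(v)\|_{L^2(\Omega_T)} \lesssim T^{1/2}\bigl(\|v\|_{L^\infty(0,T;\widetilde{H}^s(\Omega))}^{\gamma+1} + \|v\|_{L^\infty(0,T;\widetilde{H}^s(\Omega))}^{r+1}\bigr).
\end{equation*}
Combined with the linear energy estimate \eqref{energyest} and the smallness hypothesis \eqref{eq: smallness on source F}, this shows that $\mathcal{T}$ maps the closed ball $B_\delta \vcentcolon = \{v \in X : \|v\|_X \leq \delta\}$ into itself, as soon as $\delta_0$ is chosen small enough.

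For the contraction property, the mean value theorem together with \eqref{eq: bound on derivative} gives
\begin{equation*}
|f(v_1) - f(v_2)| \lesssim \bigl((|v_1|+|v_2|)^\gamma + (|v_1|+|v_2|)^r\bigr)\,|v_1 - v_2|,
\end{equation*}
and H\"older's inequality with exponents $(2(r+1)/r,\,2(r+1))$ (and analogously for the $\gamma$ term) followed by the same Sobolev embedding produces
\begin{equation*}
\|f(v_1) - f(v_2)\|_{L^2(\Omega_T)} \lesssim T^{1/2}\bigl((\|v_1\|_X + \|v_2\|_X)^\gamma + (\|v_1\|_X + \|v_2\|_X)^r\bigr)\,\|v_1 - v_2\|_X.
\end{equation*}
Because $\mathcal{T}(v_1) - \mathcal{T}(v_2)$ solves the linear MGT equation with zero initial data and source $f(v_2) - f(v_1)$, feeding this back into \eqref{energyest} produces a Lipschitz constant of order $\delta^\gamma + \delta^r$ on $B_\delta$, which is strictly less than $1$ after shrinking $\delta_0$ once more. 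Banach's fixed-point theorem then delivers a unique $u_\delta \in B_\delta$ satisfying \eqref{nonlinearg}, and the energy estimate \eqref{eq: energy estimate nonlinear eq well-posedness} is immediate from the Nemytskii bound above. The residual regularity $u_\delta \in H^3(0,T;H^{-s}(\Omega))$ is read directly off the equation via Proposition~\ref{prop: basic facts on frac Lap}\ref{mapping properties} and Lemma~\ref{lemma: potential}, while the $\delta$-consistency is the standard observation that, for $\delta_1 \leq \delta_2 \leq \delta_0$, $u_{\delta_1} \in B_{\delta_1} \subset B_{\delta_2}$ is a fixed point of $\mathcal{T}$ on $B_{\delta_2}$ and hence coincides with $u_{\delta_2}$. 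The main technical obstacle I foresee is uniformly controlling the Sobolev embedding constants across the three regimes $2s<n$, $2s=n$, $2s>n$, especially at the critical endpoint $r = 2s/(n-2s)$ and in the borderline case $2s=n$, where one must rely on the non-standard bound of Proposition~\ref{critical}.
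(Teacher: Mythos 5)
Your proposal is correct and follows essentially the same route as the paper: a Banach fixed-point argument on the ball $X_\delta=\{v\in X:\|v\|_X\le\delta\}$, with the Nemytskii bound $\|f(v)\|_{L^2(\Omega_T)}\lesssim\|v\|^{\gamma+1}_{L^\infty(0,T;\widetilde H^s)}+\|v\|^{r+1}_{L^\infty(0,T;\widetilde H^s)}$ obtained via Sobolev/Ozawa embeddings in the three regimes, a Lipschitz estimate for $f(v_1)-f(v_2)$ via integrating $\partial_\tau f$ (the paper cites the identical inequality from \cite[eq.~(3.26)]{LTZ2} where you give the Hölder exponents directly), and $\delta$-consistency from uniqueness of the fixed point together with reading off $\partial_t^3 u_\delta\in L^2(0,T;H^{-s}(\Omega))$ from the equation.
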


    \begin{remark}
        \label{rem: smallness in semilinear}
        We emphasize that the constant $\delta_0>0$ depends on the constant appearing in the estimate \eqref{eq: bound on derivative}, as well as on the parameters $\alpha,b,c$, on $n,\gamma,r,s,\Omega$ through the fractional Sobolev and Poincar\'e constants, and on the finite time horizon $T$. In contrast to well-posedness theory of the linear MGT equation, we also make use of the embedding $L^{r_1}(\Omega)\hookrightarrow L^{r_2}(\Omega)$ for $r_1\geq r_2$ (see, e.g., \eqref{eq: hoelder semilinear}). Consequently, the boundedness of $\Omega$ is essential and $\delta_0$ depends through this estimate on $|\Omega|$ as well. In particular, we refer the reader to Remark~\ref{rem: dependence of energy constant} and the condition \eqref{eq: choice of delta 0 st contraction}, where $C_1>0$ is the constant in \eqref{L2} and $C_2>0$ the constant in \eqref{eq: estimate 2 for contraction prop final}.
    \end{remark}

    \begin{proof}
    We divide the proof into three steps. In Step 1 we collect some basic estimates, which are used in Step 2 to demonstrate the existence of a solution. Eventually in Step 3 we show that the constructed solution is unique.\\
        
        \noindent \textit{Step 1.} In this step we establish some a priori estimates (cf.,~e.g., \cite[Theorem 3.8]{PZ1}) that are used in Step 2 to prove the contraction property of the solution map. First of all, for any $\varphi\in L^{\infty}(0,T;H^s(\R^n))$, we have
        \begin{equation}
	|f(x,t,\tau)|=\bigg|\int_0^{\tau}\partial_\tau f(x,t,s)ds\bigg|\lesssim |\tau|^{\gamma+1}+|\tau|^{r+1}
	\end{equation}
	which yields 
	\begin{equation}  
	\|f(\var)(t)\|_{L^2(\Omega)}\lesssim \|\var(t)\|^{\gamma+1}_{L^{2(\gamma+1)}(\Omega)}+\|\var(t)\|^{r+1}_{L^{2(r+1)}(\Omega)}. 
	\end{equation}  
	In the range $2s>n$, the Sobolev embedding $H^s(\R^n)\hookrightarrow L^\infty(\R^n)$ guarantees the estimate
	\begin{equation}  
	\|\var(t)\|^{\gamma+1}_{L^{2(\gamma+1)}(\Omega)}+\|\var(t)\|^{r+1}_{L^{2(r+1)}(\Omega)}\lesssim \|\var(t)\|^{\gamma+1}_{H^s(\R^n)}+\|\var(t)\|^{r+1}_{H^s(\R^n)}.  
	\end{equation}  
	In the critical case $n=2s$, we use $2(r+1)\ge 2(\gamma+1)> 2$ and Proposition \ref{critical} to get
	\begin{equation}  
	\|\var(t)\|^{\gamma+1}_{L^{2(\gamma+1)}(\Omega)}\lesssim\|(-\Delta)^{s/2}\var(t)\|^\gamma_{L^2(\R^n)}\|\var(t)\|_{L^2(\R^n)}
	\end{equation}  
    and
	\begin{equation}  
	\|\var(t)\|^{r+1}_{L^{2(r+1)}(\Omega)}\lesssim\|(-\Delta)^{s/2}\var(t)\|^r_{L^2(\R^n)}\|\var(t)\|_{L^2(\R^n)}.
	\end{equation}  
	In the range $2s<n$, we use $1<\gamma+1\leq r+1\leq \frac{n}{n-2s}$ and Sobolev's inequality to obtain
	\begin{equation}  
    \label{eq: hoelder semilinear}
	\|\var(t)\|_{L^{2(\gamma+1)}(\Omega)}\lesssim\|\var(t)\|_{L^{2(r+1)}(\Omega)}\lesssim \|\var(t)\|_{L^{\frac{2n}{n-2s}}(\Omega)}\lesssim \|(-\Delta)^{s/2}\var(t)\|_{L^2(\R^n)}.  
	\end{equation}  
    Therefore, in all cases, we have
	\begin{equation}  
    \label{L2}
	\|f(\var)\|_{L^2(\Omega_T)}\lesssim\|\var\|^{\gamma+1}_{L^\infty(0,T;H^s(\R^n))}+\|\var\|^{r+1}_{L^\infty(0,T;H^s(\R^n))}.
	\end{equation}  

        \noindent \textit{Step 2.} Next, we define
        \begin{equation}
        \label{eq: solution space X delta}
            X_\delta\vcentcolon = \{u\in W^{1,\infty}(0,T;\widetilde{H}^s(\Omega))\cap W^{2,\infty}(0,T;\widetilde{L}^2(\Omega))\,;\,\|u\|_X\leq \delta\},
        \end{equation}
        with
        \begin{equation}
        \label{eq: norm X delta}
            \|u\|_X=\|u\|_{W^{1,\infty}(0,T;\widetilde{H}^s(\Omega))}+\|\partial_t^2 u\|_{L^{\infty}(0,T;\widetilde{L}^2(\Omega))},
        \end{equation}
        for any $0<\delta\leq \delta_0$, where $\delta_0>0$ is a given constant that will be fixed later. By standard arguments, one sees that 
        \[
            X\vcentcolon =W^{1,\infty}(0,T;\widetilde{H}^s(\Omega))\cap W^{2,\infty}(0,T;\widetilde{L}^2(\Omega))
        \]
        endowed with $\|\cdot\|_X$ is a Banach space (see \cite[Lemma 2.3.1]{Interpolation-spaces-Bergh-Lofstrom}) and hence $X_\delta$, $0<\delta\leq \delta_0$, are complete metric spaces.

        Next, from \eqref{L2} and Theorem \ref{wellposednessv}, we deduce that for all
        \[
        (v,u_0,u_1,u_2,F)\in X_\delta \times \widetilde{H}^s(\Omega)\times\widetilde{H}^s(\Omega)\times \widetilde{L}^2(\Omega)\times L^2(0,T;\widetilde{L}^2(\Omega)),
        \]
        there exists a unique solution $u_\delta\in X$ of
        \begin{equation}
       \label{eq: PDE for solution map nonlinear}
        \begin{cases}
            (\partial_t^3+\alpha\partial_t^2+b(-\Delta)^s\partial_t+c(-\Delta)^s+q)u=F-f(v)\ & {\rm in}\ \Omega_T,\\
		u=0\ & {\rm in}\ (\Omega_e)_T, \\
		u(0)=u_0,\,\partial_tu(0)=u_1,\,\partial_t^2u(0)=u_2\ & {\rm in}\ \Omega.
        \end{cases}
    \end{equation}
        Furthermore, $u_\delta$ satisfies the energy estimate
        \begin{equation}
        \label{eq: energy estimate for nonlinear problem}
        \begin{split}
             \|u_\delta\|_X&\leq C_0\big( \|(u_0,u_1,u_2,F)\|_{\pi}+\|f(v)\|_{L^2(\Omega_T)}\big),
        \end{split}
        \end{equation}
        where $\|\cdot\|_{\pi}$ is the natural norm on the product space $\widetilde{H}^s(\Omega)\times\widetilde{H}^s(\Omega)\times \widetilde{L}^2(\Omega)\times L^2(0,T;\widetilde{L}^2(\Omega))$.
        Thus, we can introduce the solution map $S_\delta \colon X_\delta\to X$ that sends each $v\in X_\delta$ to its unique solution $u_\delta\in X$ of \eqref{eq: PDE for solution map nonlinear}, which is provided by Theorem~\ref{wellposednessv}. Now, we show that $S_\delta (X_\delta)\subset X_\delta$, when $0<\delta\leq \delta_0$ and $\delta_0$ is sufficiently small. Noticing that \eqref{eq: smallness on source F} implies 
        \[
            \|(u_0,u_1,u_2,F)\|_{\pi}\leq \frac{\delta}{2C_0},
        \]
         we deduce from \eqref{eq: energy estimate for nonlinear problem}, \eqref{L2} and Poincar\'e's inequality that for any $v\in X_\delta$ we have
        \begin{equation}
        \label{eq: first appearance C1}
            \begin{split}
                \|S_\delta(v)\|_X&\leq \delta/2+C_1\big(\|v\|^{\gamma+1}_{L^\infty(0,T;\widetilde{H}^s(\Omega))}+\|v\|^{r+1}_{L^\infty(0,T;\widetilde{H}^s(\Omega))}\big)\\
                &\leq \delta/2+C_1(\delta^\gamma+\delta^r)\delta\\
                &\leq \delta/2+C_1(\delta_0^\gamma+\delta_0^r)\delta
            \end{split}
        \end{equation}
        for some $C_1$ independent of $\delta$ and $v$. Hence, choosing $\delta_0>0$ so that 
        \[
            \delta_0^\gamma+\delta_0^r\leq 1/(2C_1),
        \]
        we deduce that $\|S_\delta(v)\|_X\leq \delta$ and $S_\delta$ map $X_\delta$ to itself. In the next step, we demonstrate that the maps $S_\delta$, $0<\delta\leq \delta_0$, are strict contractions for all sufficiently small $\delta_0>0$. To this end, let us choose $v^j\in X_\delta$, $j=1,2$, for some fixed $0<\delta\leq \delta_0$ and set $u^j_\delta\vcentcolon = S_\delta(v^j)\in X_\delta$. Then $u\vcentcolon = u^1_\delta-u^2_\delta\in X$ solves
        \begin{equation}
            \label{eq: PDE for contraction of solution map nonlinear}
        \begin{cases}
            (\partial_t^3+\alpha\partial_t^2+b(-\Delta)^s\partial_t+c(-\Delta)^s+q)u=-(f(v^1)-f(v^2))\ & {\rm in}\ \Omega_T,\\
		u=0\ & {\rm in}\ (\Omega_e)_T, \\
		u(0)=0,\,\partial_tu(0)=0,\,\partial_t^2u(0)=0\ & {\rm in}\ \Omega
        \end{cases}
        \end{equation}
        and Theorem \ref{wellposednessv} ensures that
        \begin{equation}
        \label{eq: first estimate for contraction}
            \|u^1_\delta-u^2_\delta\|_X\leq C_0\|f(v^1)-f(v^2)\|_{L^2(\Omega_T)}.
        \end{equation}
        We notice that 
        \begin{equation}
        \label{eq: estimate of f for difference}
            \begin{split}
                |f(x,t,s_1)-f(x,t,s_2)|&=\left|\int_{s_1}^{s_2}\partial_\tau f(x,t,\tau)\,d\tau\right|\\
                &\leq C(|s_1|^\gamma+|s_2|^\gamma +|s_1|^r+|s_2|^r)|s_1-s_2|
            \end{split}
        \end{equation}
        for a.e.~$(x,t)\in \Omega_T$ and all $s_1,s_2\in\R$ implies
        \begin{equation}
        \label{eq: difference of nonlinearities}
        \begin{split}
            \|f(v^1)-f(v^2)\|_{L^{\infty}(0,T;L^2(\Omega))}&\leq C\|(|v^1|^\gamma+|v^2|^\gamma)|v^1-v^2|\|_{L^{\infty}(0,T;L^2(\Omega))}\\
            &\quad +C\|(|v^1|^r+|v^2|^r)|v^1-v^2|\|_{L^{\infty}(0,T;L^2(\Omega))}.
        \end{split}
        \end{equation}
        Next, we recall the inequality
        \begin{equation}
        \label{eq: estimate 2 for contraction prop}
        \begin{split}
            &\|(|v^1(t)|^\rho+|v^2(t)|^\rho)|v^1(t)-v^2(t)|\|_{L^2(\Omega)}\\
            &\leq C\big(\|v^1(t)\|_{\widetilde{H}^s(\Omega)}^{\rho}+\|v^2(t)\|_{\widetilde{H}^s(\Omega)}^{\rho}\big)\|v^1(t)-v^2(t)\|_{\widetilde{H}^s(\Omega)},
        \end{split}
        \end{equation}
        which has been established in \cite[eq.~(3.26)]{LTZ2} and holds for all exponents $0<\rho<\infty$ satisfying the constraints in \eqref{conr}. Thus, inserting \eqref{eq: estimate 2 for contraction prop} with $\rho=\gamma, r$ into \eqref{eq: difference of nonlinearities} and using $v^j\in X_\delta$ with $0<\delta\leq \delta_0$, we get
        \begin{equation}
        \label{eq: estimate 2 for contraction prop final}
        \begin{split}
            &\|f(v^1)-f(v^2)\|_{L^{\infty}(0,T;L^2(\Omega))}\\
            &\leq C\big(\|v^1\|_{L^{\infty}(0,T;\widetilde{H}^s(\Omega))}^{\gamma}+\|v^2\|_{L^{\infty}(0,T;\widetilde{H}^s(\Omega))}^{\gamma}\big)\|v^1-v^2\|_{L^{\infty}(0,T;\widetilde{H}^s(\Omega))}\\
            &\quad +C\big(\|v^1\|_{L^{\infty}(0,T;\widetilde{H}^s(\Omega))}^{r}+\|v^2\|_{L^{\infty}(0,T;\widetilde{H}^s(\Omega))}^{r}\big)\|v^1-v^2\|_{L^{\infty}(0,T;\widetilde{H}^s(\Omega))}\\
            &\leq C_2(\delta_0^\gamma+\delta_0^{r})\|v^1-v^2\|_X.
        \end{split}
        \end{equation}
        Now, we choose $\delta_0>0$ so that 
        \begin{equation}
        \label{eq: choice of delta 0 st contraction}
            \delta_0^\gamma+\delta_0^r\leq \min(1/(2C_1),1/(2 C_0C_2 T^{1/2})).
        \end{equation}
        Hence, \eqref{eq: first estimate for contraction}, \eqref{eq: estimate 2 for contraction prop final} and \eqref{eq: choice of delta 0 st contraction} guarantee that $S_\delta$ is a strict contraction for all $0<\delta\leq \delta_0$. Therefore, we may invoke Banach's fixed point theorem to deduce the existence of a unique solution $u_\delta\in X_\delta$ of \eqref{eq: PDE for solution map nonlinear}. The estimate \eqref{eq: energy estimate nonlinear eq well-posedness} is a direct consequence of \eqref{eq: energy estimate for nonlinear problem}, \eqref{L2} and Poincar\'e's inequality.\\

        \noindent \textit{Step 3.} Finally, let us prove that $u_{\delta_1}=u_{\delta_2}$ for all $0<\delta_1\leq \delta_2\leq \delta_0$, whenever $(u_0,u_1,u_2,F)\in \widetilde{H}^s(\Omega)\times \widetilde{H}^s(\Omega)\times\widetilde{L}^2(\Omega)\times L^2(0,T;\widetilde{L}^2(\Omega))$ satisfies \eqref{eq: smallness on source F} with $\delta=\delta_1$. Here, $u_{\delta_j}$ stands for the unique fixed point of $S_{\delta_j}$, which is defined as a map from $X_{\delta_j}$ to itself (cf.~Step 2). First, observe that we have $X_{\delta_1}\subset X_{\delta_2}$ for all $\delta_1\leq \delta_2$ and by Theorem \ref{wellposednessv} we have
        $S_{\delta_2}|_{X_{\delta_1}}=S_{\delta_1}$. Hence, $u_{\delta_1}\in X_{\delta_2}$ is a fixed point of $S_{\delta_2}$ as well. Therefore, by the uniqueness of the fixed point $u_{\delta_2}\in X_{\delta_2}$ of $S_{\delta_2}$ it follows that $u_{\delta_1}=u_{\delta_2}$.
    \end{proof}

    Using a similar argument as in Section \ref{subsec: linear MGT}, we can derive the unique solvability of the following non-homogeneous, semilinear nonlocal MGT equation
    \begin{equation}
    \label{nonlinearg with ext cond}
        \begin{cases}
            (\partial_t^3+\alpha\partial_t^2+b(-\Delta)^s\partial_t+c(-\Delta)^s+q)u+f(u)=F & \text{ in }\Omega_T,\\
		u=\varphi & \text{ in }(\Omega_e)_T, \\
		u(0)=u_0,\,\partial_tu(0)=u_1,\,\partial_t^2u(0)=u_2 & \text{ in }  \Omega.
        \end{cases}
    \end{equation}
    More precisely, repeating the arguments in the proof of Corollary \ref{wellu} shows the following well-posedness result, which for simplicity only deals with exterior conditions supported in $(\Omega_e)_T$.
    \begin{proposition}[Non-homogeneous, semilinear MGT equations]
    \label{prop: nonlinearg with ext cond}
        Let $\Omega\subset\R^n$ be a bounded Lipschitz domain, $T>0$ and $s>0$. For all potentials $q\colon\Omega_T\to\R$ and nonlinearities $f\colon \Omega_T\times \R\to\R$ obeying the conditions of Theorem \ref{wellnonlinear}, we can find a constant $\delta_0>0$ with the following property: If we have given
        \[
            (u_0,u_1,u_2,F)\in H^s(\R^n)\times H^s(\R^n)\times L^2(\R^n)\times L^2(0,T;\widetilde{L}^2(\Omega))
        \]
        and
        \[
            \varphi\in W^{1,\infty}(0,T;\widetilde{H}^{2s}(\Omega_e))\cap W^{2,\infty}(0,T;L^2(\R^n))\cap H^{3}(0,T;L^2(\R^n))
        \]
        satisfying \eqref{eq: compatibility} and the smallness condition
        \begin{equation}
        \label{eq: smallness on source F with ext cond}
        \begin{split}
           &\|u_0-\varphi(0)\|_{\widetilde{H}^s(\Omega)}+\|u_1-\partial_t\varphi(0)\|_{\widetilde{H}^s(\Omega)}+\|u_2-\partial_t^2\varphi(0)\|_{L^2(\Omega)}\\
           &\quad+\|F-b(-\Delta)^s\partial_t\varphi-c(-\Delta)^s\varphi\|_{L^2(\Omega_T)}\leq \frac{\delta}{2C_0},
           \end{split}
        \end{equation}
        for some $0<\delta\leq \delta_0$ and the constant $C_0$ appearing in the energy estimate \eqref{energyest}, then problem \eqref{nonlinearg with ext cond} has a unique (small) solution 
        \[
           u_\delta\in W^{1,\infty}(0,T;H^s(\R^n))\cap W^{2,\infty}(0,T;L^2(\R^n)).
        \]
        In fact, the solution $u_\delta$ is given by $u_\delta=v_\delta+\varphi$, where $v_\delta \in X_\delta$ is the unique solution of \eqref{nonlinearg} with initial conditions $u_0-\varphi(0), u_1-\partial_t\varphi(0),u_2-\partial_t^2\varphi(0)$ and the source term $F-\chi_\Omega (b(-\Delta)^s\partial_t\varphi+c(-\Delta)^s\varphi)$ (see Theorem \ref{wellnonlinear}).
    \end{proposition}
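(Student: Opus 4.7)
The plan is to reduce Proposition \ref{prop: nonlinearg with ext cond} to Theorem \ref{wellnonlinear} by the substitution $v = u - \varphi$, exactly mimicking the argument used in Corollary \ref{wellu} for the linear case. The crucial simplification here is the support assumption $\varphi \in W^{1,\infty}(0,T;\widetilde{H}^{2s}(\Omega_e))$, which implies $\varphi(t) = 0$ a.e.~in $\Omega$ for a.e.~$t \in (0,T)$. Consequently, the local terms $\partial_t^3\varphi$, $\alpha\partial_t^2\varphi$, and $q\varphi$ all vanish in $\Omega_T$, and the Nemytskii composition satisfies $f(v+\varphi) = f(v)$ a.e.~in $\Omega_T$. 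In particular, the only surviving contributions from $\varphi$ in the interior equation are the nonlocal terms $b(-\Delta)^s\partial_t\varphi$ and $c(-\Delta)^s\varphi$.

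First, we verify the equivalence: $u \in W^{1,\infty}(0,T;H^s(\R^n)) \cap W^{2,\infty}(0,T;L^2(\R^n))$ solves \eqref{nonlinearg with ext cond} if and only if $v = u - \varphi$ lies in $W^{1,\infty}(0,T;\widetilde H^s(\Omega)) \cap W^{2,\infty}(0,T;\widetilde L^2(\Omega))$ and solves \eqref{nonlinearg} with modified initial data $v_j = u_j - \partial_t^j\varphi(0)$, $j=0,1,2$, and modified source term
\[
\widetilde F = F - \chi_\Omega\bigl(b(-\Delta)^s\partial_t\varphi + c(-\Delta)^s\varphi\bigr).
\]
As in the proof of Corollary \ref{wellu}, the equivalence of spatial regularity and exterior values follows from the trace theorem and the Lipschitz regularity of $\partial\Omega$, the matching of initial values uses the compatibility condition \eqref{eq: compatibility} (which in particular ensures $v_j = 0$ in $\Omega_e$, hence $v_j \in \widetilde H^s(\Omega)$ or $\widetilde L^2(\Omega)$ respectively), and the equivalence of the weak formulations boils down to the identity $f(u) = f(v)$ in $\Omega_T$ together with the vanishing of the local $\varphi$-terms in $\Omega$.

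Next, we check that $\widetilde F \in L^2(0,T;\widetilde L^2(\Omega))$: the mapping property of Proposition~\ref{prop: basic facts on frac Lap}\ref{mapping properties} applied to $\varphi \in W^{1,\infty}(0,T;H^{2s}(\R^n))$ yields $(-\Delta)^s\varphi,\,(-\Delta)^s\partial_t\varphi \in L^\infty(0,T;L^2(\R^n))$, so multiplication by $\chi_\Omega$ produces an element of $L^\infty(0,T;\widetilde L^2(\Omega)) \hookrightarrow L^2(0,T;\widetilde L^2(\Omega))$. Using the equivalent norm $\|(-\Delta)^{s/2}\cdot\|_{L^2(\R^n)}$ on $\widetilde H^s(\Omega)$ granted by Proposition \ref{prop: basic facts on frac Lap}\ref{Poincare inequality}, the smallness hypothesis \eqref{eq: smallness on source F with ext cond} translates (up to an absolute constant) into the smallness condition \eqref{eq: smallness on source F} of Theorem \ref{wellnonlinear} for the transformed data $(v_0,v_1,v_2,\widetilde F)$.

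With these preparations, we apply Theorem~\ref{wellnonlinear} with the constant $\delta_0>0$ provided there (shrinking it if necessary to absorb the equivalence-of-norms constant) to obtain a unique small solution $v_\delta \in X_\delta$ of the transformed problem \eqref{nonlinearg}, and then set $u_\delta := v_\delta + \varphi$. The energy estimate for $u_\delta$ is inherited from \eqref{eq: energy estimate nonlinear eq well-posedness} applied to $v_\delta$, and uniqueness in the small-data class follows from uniqueness of the fixed point $v_\delta$ of the contraction built in Theorem~\ref{wellnonlinear}. The only genuinely new point compared to Corollary~\ref{wellu} is the handling of the nonlinearity, which is precisely why the support restriction $\varphi \in \widetilde H^{2s}(\Omega_e)$ (rather than $\varphi \in H^{2s}(\R^n)$ as in the linear case) is imposed in the statement.
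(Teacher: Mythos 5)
Your proposal is correct and follows essentially the same route as the paper: the paper reduces Proposition \ref{prop: nonlinearg with ext cond} to Theorem \ref{wellnonlinear} via the substitution $v=u-\varphi$, "repeating the arguments in the proof of Corollary \ref{wellu}," which is exactly what you do, including the key observation that $\varphi\in\widetilde H^{2s}(\Omega_e)$ forces $f(v+\varphi)=f(v)$ and kills the local $\varphi$-terms in $\Omega_T$. One small remark: the paper defines $\|u\|_{\widetilde H^s(\Omega)}:=\|(-\Delta)^{s/2}u\|_{L^2(\R^n)}$ as the chosen norm, so the two smallness conditions agree verbatim and no shrinking of $\delta_0$ to absorb an equivalence-of-norms constant is needed.
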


    \subsection{Well-posedness of nonlocal JMGT equations of Westervelt-type}
    \label{subsec: well-posedness westervelt}
    
    In this section, we investigate the existence of solutions to the PDE
    \begin{equation}
        \label{eq: sysmgtWtype well-posedness}
	\begin{cases}
		(\partial_t^3+\alpha \partial_t^2+b(-\Delta)^s\partial_t+c(-\Delta)^s)u=
        F+\mathcal{F}(u,\partial_tu,\partial_t^2u) & {\rm in}\ \Omega_T, \\
		u=\varphi,& {\rm in} \ (\Omega_e)_T,\\
        u(0)=u_0,\,\partial_tu(0)=u_1,\,\partial_t^2u(0)=u_2  & {\rm in}\ \Omega,
	\end{cases}
    \end{equation}
	where $\mathcal{F}$ is one of the two Westervelt-type nonlinearities discussed in Section \ref{sec: intro}, that is,
    \begin{equation}
    \label{eq: nonlinearity F}
        \mathcal{F}(u,\partial_tu,\partial_t^2u)=\partial_t^2(\beta u^2)\text{ or } \mathcal{F}(u,\partial_tu,\partial_t^2u)=\partial_t(\kappa(\partial_tu)^2)
    \end{equation}
   in which we assume that the Westervelt parameters $(\beta,\kappa)$ satisfy
   \begin{equation}
   \label{eq: Westervelt regularity}
        (\beta,\kappa)\in W^{2,\infty}(0,T;L^\infty(\Omega))\times W^{1,\infty}(0,T;L^\infty(\Omega)).
   \end{equation}
   Let us recall that we only consider nonlocal JMGT equations in the supercritical range $2s>n$ and therefore we have at our disposal the so-called Morrey embedding
   \begin{equation}
    \label{eq: Morrey embedding}
        H^s(\R^n)\hookrightarrow C^{s-n/2}_{*}(\R^n)\hookrightarrow L^{\infty}(\R^n),
   \end{equation}
    where $C^{\rho}_*(\R^n)$, $\rho>0$, are the classical Zygmund spaces that coincide with the H\"older spaces $C^{\alpha}(\R^n)$ for $\alpha\notin \Z$ (see, e.g., \cite{Triebel-Theory-of-function-spaces}). 
    
    Then, using \eqref{eq: Westervelt regularity} and \eqref{eq: Morrey embedding}, one can see that for all $v\in X_\delta$ we have
    \begin{equation}
    \label{eq: expansion beta}
    \begin{split}
        \partial_t^2(\beta v^2)&=\partial_t((\partial_t\beta)v^2+2\beta (\partial_t v)v)\\
        &=(\partial_t^2\beta)v^2+4(\partial_t\beta)(\partial_t v)v+2\beta(\partial_t^2 v)v+2\beta(\partial_t v)^2
    \end{split}
    \end{equation}
    for a.e.~$(x,t)\in \Omega_T$. From this expansion, we may also observe that we need to work in the supercritical range to apply our well-posedness theory for MGT equations. In fact, $(\partial_t^2 v)v\in L^2(\Omega_T)$ and $v\in X_\delta$ force us to have $v\in L^2(0,T;L^{\infty}(\Omega))$.  In turn, equation \eqref{eq: expansion beta} guarantees that
	\begin{equation}  
    \label{eq: estimate beta}
    \begin{split}
	\|\partial_t^2(\beta v^2)\|_{L^{2}(\Omega_T)} &\lesssim \|\beta\|_{W^{2,\infty}(0,T;L^\infty(\Omega))}\|v\|^2_{W^{1,\infty}(0,T;L^{\infty}(\Omega))}\\
    &\quad+\|\beta\|_{L^{\infty}(0,T;L^{\infty}(\Omega))}\|\partial_t^2 v\|_{L^{\infty}(0,T;L^2(\Omega))}\|v\|_{L^{\infty}(0,T;L^{\infty}(\Omega))} \\
     &\lesssim \|\beta\|_{W^{2,\infty}(0,T;L^\infty(\Omega))}\|v\|^2_{X}\\
     &\lesssim \delta_0\|\beta\|_{W^{2,\infty}(0,T;L^\infty(\Omega))}\|v\|_X
	\end{split}
    \end{equation}
    for all $v\in X_\delta$, $0<\delta\leq \delta_0$.
    Here, we used \eqref{eq: Morrey embedding} and Poincar\'e's inequality. Similarly, the other Westervelt-type nonlinearity, appearing in the velocity potential formulation of the (nonlocal) JMGT equation, can be bounded as
    \begin{equation}
    \label{eq: estimate kappa}
    \begin{split}
        \|\partial_t(\kappa (\partial_t v)^2)\|_{L^{2}(\Omega_T)}&=\|(\partial_t\kappa)(\partial_t v)^2+2\kappa\partial_t v\partial_t^2 v\|_{L^{2}(\Omega_T)}\\
        &\lesssim \|\kappa\|_{W^{1,\infty}(0,T;L^\infty(\Omega))}\|v\|_{X}^2\\
        &\lesssim \delta_0\|\kappa\|_{W^{1,\infty}(0,T;L^\infty(\Omega))}\|v\|_{X}.
    \end{split}
    \end{equation}	
    Thus, as in the proof of Theorem \ref{wellnonlinear}, for sufficiently small $\delta_0>0$ and any $0<\delta\leq \delta_0$, we have a well-defined map $S_\delta\colon X_\delta\to X_\delta$, which sends every $v\in X_\delta$ to the unique solution $u_\delta\in X_\delta$ of problem \eqref{eq: sysmgtWtype well-posedness} with a frozen nonlinearity, that is, 
    \begin{equation}
        \label{eq: linearized sysmgtWtype well-posedness}
	\begin{cases}
		(\partial_t^3+\alpha \partial_t^2+b(-\Delta)^s\partial_t+c(-\Delta)^s)u=F+\mathcal{F}(v,\partial_t v,\partial_t^2 v) & {\rm in}\ \Omega_T, \\
		u=0,& {\rm in} \ (\Omega_e)_T,\\
        u(0)=u_0,\,\partial_tu(0)=u_1,\,\partial_t^2u(0)=u_2  & {\rm in}\ \Omega.
	\end{cases}
    \end{equation}
    Furthermore, one can check that the solution maps $S_\delta$ are strict contractions as long as $0<\delta\leq \delta_0$ and $\delta_0>0$ is sufficiently small. In fact, this can be seen by continuing as in Step 2 of the proof of Theorem \ref{wellnonlinear} and using the following estimates,
    \[
       \begin{split}
           \|\partial_t^2[\beta(v_1^2-v_2^2)]\|_{L^2(\Omega_T)}&=\|\partial_t^2[\beta(v_1-v_2)(v_1+v_2)]\|_{L^2(\Omega_T)}\\
           &\lesssim \|\beta\|_{W^{2,\infty}(0,T;L^{\infty}(\Omega))}\|v_1+v_2\|_X\|v_1-v_2\|_X\\
           &\lesssim \delta_0\|\beta\|_{W^{2,\infty}(0,T;L^{\infty}(\Omega))}\|v_1-v_2\|_X
       \end{split} 
    \]
    and 
    \[
     \begin{split}
           \|\partial_t[\kappa ((\partial_t v_1)^2-(\partial_t v_2)^2]\|_{L^{2}(\Omega_T)}&=\|\partial_t[\kappa (\partial_t v_1-\partial_t v_2)(\partial_t v_1+\partial_t v_2)]\|_{L^{2}(\Omega_T)}\\
           &\lesssim \|\kappa\|_{W^{1,\infty}(0,T;L^\infty(\Omega))}\|v_1+v_2\|_X\|v_1-v_2\|_X\\
           &\lesssim \delta_0\|\kappa\|_{W^{1,\infty}(0,T;L^\infty(\Omega))}\|v_1-v_2\|_{X}
    \end{split} 
    \]
    for all $v_1,v_2\in X_\delta$.
    Now, Banach's fixed point theorem guarantees the well-posedness of \eqref{eq: sysmgtWtype well-posedness} for $\varphi=0$ and the case of nonzero exterior condition can once again be established in the same manner as Proposition \ref{prop: nonlinearg with ext cond}. In this way, we arrive at the following result.
    
	\begin{theorem}[Non-homogeneous JMGT equations of Westervelt-type]
    \label{well-Wel}
		Let $\Omega\subset\R^n$ be a bounded Lipschitz domain, $T>0$ and $s>0$ with $2s>n$. Suppose that $(\beta,\kappa)$ obey \eqref{eq: Westervelt regularity} and let $\mathcal{F}$ be given by \eqref{eq: nonlinearity F}. Then one can find a constant $\delta_0>0$ so that for all $0<\delta\leq \delta_0$ and all $(u_0,u_1,u_2,F,\varphi)\in H^s(\R^n)\times H^s(\R^n)\times L^2(\R^n)\times L^2(0,T;\widetilde{L}^2(\Omega))\times W^{1,\infty}(0,T;\widetilde{H}^{2s}(\Omega_e))\cap W^{2,\infty}(0,T;L^2(\R^n))\cap H^{3}(0,T;L^2(\R^n))$ satisfying the compatibility condition \eqref{eq: compatibility} and the smallness condition
        \begin{equation}
        \label{eq: smallness on source F with ext cond beta, kappa}
        \begin{split}
           &\|u_0-\varphi(0)\|_{\widetilde{H}^s(\Omega)}+\|u_1-\partial_t\varphi(0)\|_{\widetilde{H}^s(\Omega)}+\|u_2-\partial_t^2\varphi(0)\|_{L^2(\Omega)}\\
           &\quad+\|F-\chi_\Omega (b(-\Delta)^s\partial_t\varphi+c(-\Delta)^s\varphi)\|_{L^2(\Omega_T)}\leq \frac{\delta}{2C_0},
           \end{split}
        \end{equation}
        there exists a unique (small) solution 
        \[
           u_\delta\in W^{1,\infty}(0,T;H^s(\R^n))\cap W^{2,\infty}(0,T;L^2(\R^n))
        \]
        of the problem \eqref{eq: sysmgtWtype well-posedness}. In fact, the solution $u_\delta$ is given by $u_\delta=v_\delta+\varphi$, where $v_\delta \in X_\delta$ is the unique solution of \eqref{eq: sysmgtWtype well-posedness} with initial conditions $u_0-\varphi(0), u_1-\partial_t\varphi(0),u_2-\partial_t^2\varphi(0)$ and the source term $F-\chi_\Omega (b(-\Delta)^s\partial_t\varphi+c(-\Delta)^s\varphi)$. We again have $u_{\delta_1}=u_{\delta_2}$ for $0<\delta_1\leq \delta_2\leq \delta_0$ as long as both solutions are well-defined and there holds an energy estimate similar to \eqref{eq: energy estimate nonlinear eq well-posedness}.
    \end{theorem}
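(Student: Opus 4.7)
The plan is to extend the fixed-point scheme developed in Theorem~\ref{wellnonlinear} and Proposition~\ref{prop: nonlinearg with ext cond} to Westervelt-type nonlinearities, exploiting the fact that both nonlinearities in \eqref{eq: nonlinearity F} are bilinear in $v$. First I would reduce to the case of zero exterior data by the substitution $u = v + \varphi$, exactly as in the proof of Corollary~\ref{wellu}. The compatibility condition \eqref{eq: compatibility} and the assumed regularity of $\varphi$ ensure that the shifted initial data are supported in $\Omega$ and that the modified source $\widetilde{F}\vcentcolon = F - \chi_\Omega(b(-\Delta)^s\partial_t\varphi + c(-\Delta)^s\varphi)$ belongs to $L^2(0,T;\widetilde{L}^2(\Omega))$. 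The transformed problem then fits the framework of an interior problem with zero exterior value and initial data $u_0-\varphi(0),\,u_1-\partial_t\varphi(0),\,u_2-\partial_t^2\varphi(0)$.

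Next I would set up a contraction on $X_\delta$. For each $v \in X_\delta$ define $S_\delta(v)\vcentcolon = u \in X$ as the unique solution of the linearized problem \eqref{eq: linearized sysmgtWtype well-posedness}, whose existence is guaranteed by Theorem~\ref{wellposednessv} once we know $\mathcal{F}(v,\partial_tv,\partial_t^2v)\in L^2(0,T;\widetilde{L}^2(\Omega))$. Here the supercritical assumption $2s>n$ is essential: the Morrey embedding $H^s(\R^n)\hookrightarrow L^\infty(\R^n)$ allows the products appearing in the expansion \eqref{eq: expansion beta} and in $\partial_t(\kappa(\partial_tv)^2)$ to be controlled in $L^2(\Omega_T)$, and the bounds \eqref{eq: estimate beta}--\eqref{eq: estimate kappa} yield
\begin{equation*}
\|\mathcal{F}(v,\partial_tv,\partial_t^2v)\|_{L^2(\Omega_T)} \lesssim \delta_0\,\|v\|_X
\end{equation*}
whenever $v \in X_\delta$ with $0<\delta\le \delta_0$. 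Combining this with the smallness hypothesis \eqref{eq: smallness on source F with ext cond beta, kappa} and the linear energy estimate \eqref{energyest} applied to \eqref{eq: linearized sysmgtWtype well-posedness} gives $\|S_\delta(v)\|_X \le \delta/2 + C\delta_0\delta$, so for $\delta_0$ small enough $S_\delta$ maps $X_\delta$ into itself.

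For the contraction property I would exploit bilinearity directly: $\beta(v_1^2-v_2^2) = \beta(v_1-v_2)(v_1+v_2)$ and, analogously, $\kappa\bigl((\partial_tv_1)^2-(\partial_tv_2)^2\bigr)=\kappa(\partial_tv_1-\partial_tv_2)(\partial_tv_1+\partial_tv_2)$. The same Morrey-based estimates applied to these factored differences deliver
\begin{equation*}
\|\mathcal{F}(v_1,\partial_tv_1,\partial_t^2v_1)-\mathcal{F}(v_2,\partial_tv_2,\partial_t^2v_2)\|_{L^2(\Omega_T)} \lesssim \delta_0\,\|v_1-v_2\|_X.
\end{equation*}
Since $S_\delta(v_1)-S_\delta(v_2)$ solves a homogeneous linear MGT problem with zero initial data and this difference as right-hand side, the energy estimate \eqref{energyest} makes $S_\delta$ a strict contraction on $X_\delta$ for $\delta_0$ sufficiently small. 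Banach's fixed-point theorem then produces the unique $v_\delta \in X_\delta$, and $u_\delta\vcentcolon = v_\delta+\varphi$ is the desired solution. The nested consistency $u_{\delta_1}=u_{\delta_2}$ for $\delta_1\le\delta_2\le\delta_0$ follows verbatim as in Step~3 of Theorem~\ref{wellnonlinear}, and the energy estimate analogous to \eqref{eq: energy estimate nonlinear eq well-posedness} is obtained by plugging the bound on $\mathcal{F}$ into \eqref{energyu}.

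The main obstacle is largely bookkeeping rather than conceptual: verifying that every component of $\widetilde{F}$ lies in $L^2(0,T;\widetilde{L}^2(\Omega))$ (which uses $\varphi \in W^{1,\infty}(0,T;\widetilde{H}^{2s}(\Omega_e))$ and the support condition in $\Omega_e$), and that the smallness radius $\delta_0$ can be chosen uniformly to absorb simultaneously the self-mapping and contraction constants as well as the factors $\|\beta\|_{W^{2,\infty}(0,T;L^\infty(\Omega))}$ and $\|\kappa\|_{W^{1,\infty}(0,T;L^\infty(\Omega))}$. The Morrey threshold $2s>n$ is what makes all of this uniform; below this threshold the nonlinearity would require a different functional framework, which is why the theorem is restricted to this range.
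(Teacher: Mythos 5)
Your proposal follows the same route as the paper: the Morrey embedding for $2s>n$, the bounds \eqref{eq: estimate beta}--\eqref{eq: estimate kappa} to show $S_\delta$ maps $X_\delta$ into itself, the bilinear factorizations $\beta(v_1^2-v_2^2)=\beta(v_1-v_2)(v_1+v_2)$ and $\kappa((\partial_t v_1)^2-(\partial_t v_2)^2)=\kappa(\partial_t v_1-\partial_t v_2)(\partial_t v_1+\partial_t v_2)$ for the contraction, Banach's fixed-point theorem, and the reduction to zero exterior data as in Proposition~\ref{prop: nonlinearg with ext cond}. This matches the paper's argument in Section~\ref{subsec: well-posedness westervelt} essentially verbatim, differing only in the order in which the exterior-data reduction and the contraction scheme are presented.
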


    \begin{remark}
    \label{rem: smallness in westervelt}
        We emphasize that the constants $\delta_0>0$ for the Westervelt-type nonlocal JMGT equations depend on $n,s,\alpha,b,c$, and $\Omega$ (see Remark~\ref{rem: dependence of energy constant}), on the constant in the Morrey embedding \eqref{eq: Morrey embedding}, as well as the norms $\|\beta\|_{W^{2,\infty}(0,T;L^{\infty}(\Omega))}$ and $\|\kappa\|_{W^{1,\infty}(0,T;L^{\infty}(\Omega))}$, respectively.
    \end{remark}
        
	\section{Runge approximation and uniqueness of linear perturbations}
    \label{sec: recovery of lin perturb}

    The purpose of this section is on the one hand to settle the foundations for the nonlinear problems treated in Sections \ref{sec: proof of theorem 1.5}--\ref{sec: recovery beta and kappa} and on the other hand to prove the unique determination of linear perturbations. We will see in the next section that after a first order linearization the inverse problem for the nonlinear PDE \eqref{sysmgt} is reduced to an inverse problem for the nonlocal MGT equation and so the determination of linear perturbations turns out to be an important preliminary step for the nonlinear problems as well.
    
    In Section \ref{subsec: rigorous def of DN maps} we first rigorously introduce the DN maps for the PDEs considered in this article and then prove in Section \ref{subsec: runge} a suitable Runge approximation property for the solutions to linear third order wave equations. Finally, Section \ref{sec: determ of q} contains the unique determination of linear perturbations. 

    \subsection{DN maps}
    \label{subsec: rigorous def of DN maps}
    
    Let us start by noticing that the well-posedness results of Section \ref{sec: well-posedness forward} allow us to define the DN maps for the linear and nonlinear problems as long as the exterior values are sufficiently small. To formulate the precise definitions, we need to introduce for any nonempty open set $W\subset\Omega_e$, $\delta>0$ and $q\in L^{\infty}(0,T;L^p(\Omega))$ with $p$ satisfying \eqref{conp} the following sets
    \begin{equation}
    \label{eq: def Y delta}
        Y^q_\delta(W)\vcentcolon =\left\{\psi\in C_c^{\infty}(W_T)\,;\, \|\psi\|_{W^{1,\infty}(0,T; H^{2s}(\R^n))}\leq \frac{\delta}{2(b+|c|)C_0}\right\},
    \end{equation}
    where $C_0>0$, the constant appearing in the energy estimate \eqref{eq: energy estimate nonlinear eq well-posedness}, depends continuously on $\|q\|_{L^{\infty}(0,T;L^p(\Omega))}$ and $b,c$ are the constants in front of $(-\Delta)^s\partial_t$ and $(-\Delta)^s$ in the (nonlocal) MGT equation. Observe that if $\varphi\in Y_\delta$ for some $\delta\leq \delta_0$, where $\delta_0>0$ is the constant from Proposition \ref{prop: nonlinearg with ext cond} and Theorem \ref{well-Wel}, respectively, and $u_0=u_1=u_2=F=0$, then $\varphi$ satisfies the smallness conditions \eqref{eq: smallness on source F with ext cond} and \eqref{eq: smallness on source F with ext cond beta, kappa}. Furthermore, if $\widetilde{q}\in L^{\infty}(0,T;L^p(\Omega))$ is another potential and $\widetilde{C}_0>0$ denotes the constant in the corresponding energy estimate \eqref{eq: energy estimate nonlinear eq well-posedness}, then we set
    \begin{equation}
    \label{eq: def Y delta intersection two potentials}
        Y^{q\cap \widetilde{q}}_{\delta}(W)\vcentcolon =\left\{\psi\in C_c^{\infty}(W_T)\,;\, \|\psi\|_{W^{1,\infty}(0,T; H^{2s}(\R^n))}\leq \frac{\delta}{2(b+|c|)\max(C_0,\widetilde{C}_0)}\right\}
    \end{equation}
    for any $\delta>0$. Now, using the results of Section \ref{sec: well-posedness forward} we can define the DN maps as follows:
    
    \begin{definition}
    \label{def: DN maps}
        Let $\Omega\subset\R^n$ be a bounded Lipschitz domain, $T>0$ and $s>0$. Suppose that $q\in L^{\infty}(0,T;L^p(\Omega))$ is real-valued with $p$ satisfying \eqref{conp} and assume the Carath\'eodory function $f\colon \Omega_T\times\R\to\R$ obeys the conditions of Theorem \ref{wellnonlinear}. Moreover, let $(\beta,\kappa)\in W^{2,\infty}(0,T;L^\infty(\Omega))\times W^{1,\infty}(0,T;L^\infty(\Omega))$ and denote by $\mathcal{F}$ the Westervelt-type nonlinearities \eqref{eq: nonlinearity F}.
        \begin{enumerate}[({DN}1)]
            \item\label{linear DN map} We define the DN map $\Lambda_q\colon C_c^{\infty}((\Omega_e)_T)\to\distr((\Omega_e)_T)$ related to 
            \begin{equation}
		    \label{eq: PDE for DN linear}
		\begin{cases}
			(\partial_t^3+\alpha\partial_t^2+b(-\Delta)^s\partial_t+c(-\Delta)^s+q)u=0\ &  {\rm in}\ \Omega_T,\\
			u=\varphi\ & {\rm in}\ (\Omega_e)_T, \\
			u(0)=\partial_t u(0)=\partial_t^2 u(0)=0 & {\rm in}\ \Omega
		\end{cases}
		\end{equation} 
        by
        \begin{equation}
        \label{eq: DN linear}
            \langle \Lambda_q \varphi,\rho\rangle\vcentcolon = \int_0^T \langle b(-\Delta)^{s/2}\partial_t u+c(-\Delta)^{s/2}u,(-\Delta)^{s/2}\rho\rangle_{L^2(\R^n)} \,dt,
        \end{equation}
        where $\varphi,\rho\in C_c^{\infty}((\Omega_e)_T)$ and $u$ denotes the unique solution of \eqref{eq: PDE for DN linear} (see Corollary \ref{wellu}).
            \item\label{nonlinear DN map 1} We define the DN map $\Lambda_{q,f}\colon Y^q_{\delta_0}(\Omega_e)\to\distr((\Omega_e)_T)$ related to 
            \begin{equation}
		    \label{eq: PDE for DN nonlinear 1}
		\begin{cases}
            (\partial_t^3+\alpha\partial_t^2+b(-\Delta)^s\partial_t+c(-\Delta)^s+q)u+f(u)=0 & \text{ in }\Omega_T,\\
		u=\varphi & \text{ in } (\Omega_e)_T, \\
		u(0)=\partial_tu(0)=\partial_t^2u(0)=0 & \text{ in } \Omega
        \end{cases}
		\end{equation} 
        by
        \begin{equation}
        \label{eq: DN nonlinear 1}
            \langle \Lambda_{q,f}\varphi,\rho\rangle\vcentcolon = \int_0^T \langle b(-\Delta)^{s/2}\partial_t u+c(-\Delta)^{s/2}u,(-\Delta)^{s/2}\rho\rangle_{L^2(\R^n)} \,dt,
        \end{equation}
        where $\varphi\in Y^q_{\delta_0}(\Omega_e)$, $\rho\in  C_c^{\infty}((\Omega_e)_T)$ and $u\in \varphi+X_{\delta_0}$ denotes the unique solution to \eqref{eq: PDE for DN nonlinear 1} (see Proposition \ref{prop: nonlinearg with ext cond}).
        \item\label{nonlinear DN map 2} If $2s>n$, then we define the DN map $\Lambda_\mathcal{F}\colon Y_{\delta_0}\vcentcolon = Y^0_{\delta_0}(\Omega_e)\to\distr((\Omega_e)_T)$ related to 
            \begin{equation}
		    \label{eq: PDE for DN nonlinear 2} 
		\begin{cases}
		(\partial_t^3+\alpha \partial_t^2+b(-\Delta)^s\partial_t+c(-\Delta)^s)u=\mathcal{F}(u,\partial_t u,\partial_t^2 u) & {\rm in}\ \Omega_T, \\
		u=\varphi,& {\rm in} \ (\Omega_e)_T,\\
        u(0)=\partial_tu(0)=\partial_t^2u(0)=0  & {\rm in}\ \Omega
	\end{cases}
		\end{equation} 
        by
        \begin{equation}
        \label{eq: DN nonlinear 2}
            \langle\Lambda_{\mathcal{F}}\varphi,\rho\rangle\vcentcolon = \int_0^T \langle b(-\Delta)^{s/2}\partial_t u+c(-\Delta)^{s/2}u,(-\Delta)^{s/2}\rho\rangle_{L^2(\R^n)} \,dt,
        \end{equation}
        where $\varphi\in Y_{\delta_0}(\Omega_e)$, $\rho\in  C_c^{\infty}((\Omega_e)_T)$ and $u\in \varphi+X_{\delta_0}$ denotes the unique solution to \eqref{eq: PDE for DN nonlinear 2} (see Theorem \ref{well-Wel}). Moreover, we set $\Lambda_\beta\vcentcolon =\Lambda_\mathcal{F}$ when $\mathcal{F}(u,\partial_tu,\partial_t^2u)=\partial_t^2(\beta u^2)$ and $\Lambda_\kappa\vcentcolon =\Lambda_\mathcal{F}$ when $\mathcal{F}(u,\partial_tu,\partial_t^2u)=\partial_t(\kappa(\partial_tu)^2)$.
        \end{enumerate}
    \end{definition}

    \begin{remark}
    \label{eq: interpretation of equal DN maps}
    In this remark we discuss what we mean by equality of DN maps for small exterior conditions.
    \begin{enumerate}[(a)]
        \item\label{domain of def for nonlin DN map 1} Suppose that $(q_1,f_1),(q_2,f_2)$ are two pairs of coefficients as in Definition \ref{def: DN maps} with constants $\delta_0^1,\delta_0^2>0$ and $C_0^1,C_0^2>0$. If we define $\delta_0=\min(\delta_0^1,\delta_0^2)$ and $C_0 =\max(C_0^1,C_0^2)$, then the DN maps $\Lambda_{q_1,f_1}$ and $\Lambda_{q_2,f_2}$ are well-defined on the set $Y^{q_1\cap q_2}_{\delta_0}(\Omega_e)\subset Y^{q_1}_{\delta_0^1}(\Omega_e)\cap Y^{q_2}_{\delta_0^2}(\Omega_e)$. 
        \item\label{domain of def for nonlin DN map 2} Suppose that $(\beta_1,\kappa_1),(\beta_2,\kappa_2)$ are two pair of coefficients as in Definition \ref{def: DN maps} with constants $\delta_0^1,\delta_0^2>0$ and $C_0^1,C_0^2>0$. Then $C_0^1=C_0^2$ and hence $\Lambda_{\beta_1}, \Lambda_{\beta_2}$ and $\Lambda_{\kappa_1},\Lambda_{\kappa_2}$, respectively, are defined on the same set, namely $Y_{\delta_0}(\Omega_e)$ with $\delta_0=\min(\delta_0^1,\delta_0^2)$.
        \item In view of \ref{domain of def for nonlin DN map 1}, we say that for some open sets $W_1,W_2\subset\Omega_e$ and coefficients $(q_1,f_1), (q_2,f_2)$ as above, there holds
        \begin{equation}
            \label{eq: equality of DN maps}
            \left.\Lambda_{q_1,f_1}\varphi\right|_{(W_2)_T}=\Lambda_{q_2,f_2}\varphi|_{(W_2)_T}
        \end{equation}
        for all small exterior conditions $\varphi$ support in $(W_1)_T$, if the identity \eqref{eq: equality of DN maps} holds for all $\varphi\in Y_{\delta_0}^{q_1\cap q_2}(W_1)$, where $\delta_0=\min(\delta_0^1,\delta_0^2)$. A similar convention is used for the DN maps $\Lambda_{\kappa}$ and $\Lambda_{\beta}$ (see \ref{domain of def for nonlin DN map 2}).
    \end{enumerate}
   \end{remark}

	\subsection{Runge approximation}
    \label{subsec: runge}
    
	Next, using the unique continuation property (UCP) of the fractional Laplacian (see Proposition~\ref{prop: basic facts on frac Lap}) together with a classical Hahn--Banach argument, we establish the Runge approximation property for solutions to the nonlocal MGT equation~\eqref{eq: PDE for DN linear}. 

The Runge approximation property plays a key role in the uniqueness proofs of the inverse problems considered in this work. Roughly speaking, it allows us to approximate arbitrary square-integrable functions in $\Omega_T$ by solutions generated from exterior data $\varphi \in C_c^\infty(W_T)$, for any open set $W \subset \Omega_e$. For instance, when combined with a suitable integral identity (Lemma~\ref{integral}), this property enables the identification of the potential $q$ in the MGT equation from the DN map $\Lambda_q$ (see Section~\ref{sec: determ of q}).

    Before proceeding, let us introduce the time-reversal of a function. For any (strongly) measurable map $h\colon (0,T)\to X$, where $X$ is a generic Banach space, we define the measurable map
    \begin{equation}
    \label{eq: time reversal invariance}
        h^{\star}(t)=h(T-t)
    \end{equation}
    and call $h^\star$ the time-reversal of $h$. Now, the Runge approximation result can be phrased as follows.
	\begin{proposition}[Runge approximation]
    \label{Runge}
		Let $\Omega\subset\R^n$ be a bounded Lipschitz domain, $T>0$ and $s\in\R_+\setminus\N$. If $W\subset\Omega_e$ is a nonemtpy open set and $q\in L^{\infty}(0,T;L^p(\Omega))$ is a real-valued potential with $2\leq p\leq \infty$ satisfying \eqref{conp}, then the Runge set
		\begin{equation}
        \label{eq: Runge set}
		\mathcal R_W:=\{u_{\varphi}-\varphi\,;\, \varphi\in C_c^\infty(W_T)\}
		\end{equation}
		is dense in $L^2(0,T;\widetilde{L}^2(\Omega))$. Here, $u_\varphi$ denotes the unique solution to the nonlocal MGT equation
		\begin{equation}
		\begin{cases}
			(\partial_t^3+\alpha\partial_t^2+b(-\Delta)^s\partial_t+c(-\Delta)^s+q)u=0 & {\rm in} \ \Omega_T, \\
			u=\varphi & {\rm in}\ (\Omega_e)_T,\\
			u(0)=\partial_tu(0)=\partial_t^2u(0)=0  & {\rm in}\ \Omega
		\end{cases}
		\end{equation}
        (see Corollary \ref{wellu}).
	\end{proposition}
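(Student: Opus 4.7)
The plan follows the standard Hahn--Banach/UCP scheme introduced in~\cite{TGMS} and adapted to nonlocal wave equations in~\cite{KLW,LTZ1,LTZ2,PZ1}. Since $L^2(0,T;\widetilde{L}^2(\Omega))$ is a Hilbert space, the density of $\mathcal{R}_W$ is equivalent to showing that any $F\in L^2(0,T;\widetilde{L}^2(\Omega))$ with
\begin{equation}
\int_0^T\!\!\int_\Omega (u_\varphi-\varphi)\,F\,dx\,dt=0\qquad\text{for every }\varphi\in C_c^\infty(W_T)
\end{equation}
must be zero. Because $F$ is supported in $\Omega_T$ while $\varphi$ vanishes there, this orthogonality reduces to $\int_0^T\langle u_\varphi,F\rangle_{L^2(\Omega)}\,dt=0$.

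Next I would introduce the adjoint (backward) problem
\begin{equation}
\begin{cases}
(-\partial_t^3+\alpha\partial_t^2-b(-\Delta)^s\partial_t+c(-\Delta)^s+q)w=F & \text{in }\Omega_T,\\
w=0 & \text{in }(\Omega_e)_T,\\
w(T)=\partial_t w(T)=\partial_t^2 w(T)=0 & \text{in }\Omega.
\end{cases}
\end{equation}
The time-reversal $\widetilde w(t):=w(T-t)$ transforms this into a forward nonlocal MGT equation with potential $q^\star$ and source $F^\star$, so Theorem~\ref{wellposednessv} provides a unique $\widetilde w$ in the class~\eqref{eq: regularity weak sol}, and therefore a unique $w$ with the same regularity. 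Writing $v_\varphi:=u_\varphi-\varphi\in W^{1,\infty}(0,T;\widetilde{H}^s(\Omega))\cap W^{2,\infty}(0,T;\widetilde{L}^2(\Omega))$, I would test the forward equation against $w$ and the adjoint equation against $v_\varphi$. The initial conditions on $v_\varphi$ and the terminal conditions on $w$ annihilate every temporal boundary term that arises from transferring $\partial_t^3$ and $\alpha\partial_t^2$, while the $L^2(\R^n)$ self-adjointness of $(-\Delta)^s$ makes the nonlocal terms symmetric. Using $Pv_\varphi=-P\varphi$ in $\Omega_T$, together with the observation that only the nonlocal pieces $b(-\Delta)^s\partial_t\varphi+c(-\Delta)^s\varphi$ of $P\varphi$ survive when paired with functions supported in $\Omega$ (since $\varphi\equiv 0$ there), a final integration by parts in time produces the duality identity
\begin{equation}
\int_0^T\!\!\int_\Omega u_\varphi F\,dx\,dt=\int_0^T\!\!\int_{W}\varphi\,\bigl[b(-\Delta)^s\partial_t w-c(-\Delta)^s w\bigr]\,dx\,dt.
\end{equation}

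The arbitrariness of $\varphi\in C_c^\infty(W_T)$ then forces $(-\Delta)^s(b\partial_t w-cw)=0$ in $W_T$; because $w$ is supported in $\overline\Omega$ we also have $b\partial_t w-cw=0$ in $W_T$, so the UCP of the fractional Laplacian (Proposition~\ref{prop: basic facts on frac Lap}\,\ref{UCP}), applied for a.e.~$t\in(0,T)$, yields $b\partial_t w-cw=0$ on $\R^n\times(0,T)$. Since $b>0$, this is a first order linear ODE in time whose terminal condition $w(\cdot,T)=0$ forces $w\equiv 0$; consequently $F=P^\star w=0$, contradicting $F\neq 0$.

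The main obstacle is the careful justification of the duality identity: the solutions $u_\varphi$ and $w$ supplied by Theorem~\ref{wellposednessv} and Corollary~\ref{wellu} are only in $W^{1,\infty}\cap W^{2,\infty}\cap H^3$, which is sufficient to give a meaning to the equation in weak form but leaves the three temporal boundary layers demanded by a third order operator, as well as the interaction with the nonlocal part, at the edge of legitimacy for a pointwise integration by parts. I would resolve this by invoking the parabolic regularization procedure prepared in Appendix~\ref{appendix: parabolic regularization}: approximate $u_\varphi$ and $w$ by smoother solutions for which the boundary evaluations are classical, perform the above calculation, and pass to the limit using the energy estimate~\eqref{energyest}. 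Once this identity is secured, the Hahn--Banach reduction, the self-adjointness of $(-\Delta)^s$, the UCP and the terminal-value ODE argument are all standard.
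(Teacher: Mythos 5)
Your proposal follows the same strategy as the paper's proof: the Hahn--Banach reduction, solving a backward adjoint MGT equation via time reversal, establishing the duality identity by testing the two equations against each other (with the third-order integration-by-parts justified by the parabolic regularization of Appendix~\ref{appendix: parabolic regularization}), invoking the UCP of $(-\Delta)^s$ on the combination $b\partial_t w - cw$, and finally the terminal-value ODE argument. Only a sign convention for the adjoint operator differs, and the closing phrase ``contradicting $F\neq 0$'' should simply read ``hence $F=0$,'' since the argument is direct rather than by contradiction.
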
 
    
	\begin{proof} The proof is similar to the one in \cite[Proposition 4.2]{PZ1} (see also \cite[Proposition 4.1]{LTZ2} or \cite[Theorem 3.1]{KLW}) dealing with second order in time viscous nonlocal wave equations.

  First, observe that $\mathcal R_W$ is a linear subspace of $L^2(0,T;\widetilde{L}^2(\Omega))$. To prove the desired density result, it therefore suffices, by the Hahn--Banach theorem, to show that any functional $F \in L^2(0,T;\widetilde{L}^2(\Omega))$ that vanishes on $\mathcal R_W$ must be identically zero. In other words, if
\[
\langle F, v \rangle = 0 \quad \text{for all } v \in \mathcal R_W,
\]
then necessarily $F \equiv 0$. Therefore, we may assume that $F\in L^2(0,T;\widetilde{L}^2(\Omega))$ satisfies
	$$
	\langle F,u_\varphi-\varphi\rangle=0,\quad \forall \varphi\in C_c^\infty(W_T).
	$$
	Furthermore, let us denote by $w_F$ the unique solution to the following backward problem 
	\begin{equation}
    \label{sysbackmgt}
	\begin{cases}
		(\partial_t^3-\alpha\partial_t^2+b(-\Delta)^s\partial_t-c(-\Delta)^s-q)w=F& {\rm in} \ \Omega_T, \\
		w=0& {\rm in} \ (\Omega_e)_T,\\
		w(T)=\partial_tw(T)=\partial_t^2w(T)=0& {\rm in} \ \Omega.
	\end{cases}
	\end{equation}
	Note that by writing $\mathcal{Q}=q^{\star}$ (see  \eqref{eq: time reversal invariance}) and applying a time-reversal $t\mapsto T-t$, we may deduce from Theorem \ref{wellposednessv} that the above backward PDE admits a unique solution $w_F$. 
	We next assert that we have
    \begin{equation}
    \label{eq: integration by parts second order derivative}
	\int_0^T\<\partial_t^2(u_\varphi-\varphi),w_F\>dt=\int_0^T\<u_\varphi,\partial_t^2 w_F\>dt, 
	\end{equation}
	\begin{equation}
    \label{eq: integration by parts damping}
	\int_0^T\<(-\Delta)^{s/2}(u_\varphi-\varphi),(-\Delta)^{s/2}\partial_tw_F\>dt=-\int_0^T\<(-\Delta)^{s/2}\partial_t(u_\varphi-\varphi),(-\Delta)^{s/2}w_F\>dt
	\end{equation}
    and 
    \begin{equation}
    \label{eq: integration by parts third order derivative}
	\int_0^T\<\partial_t^3(u_\varphi-\varphi),w_F\>dt=-\int_0^T\<u_\varphi-\varphi,\partial_t^3w_F\>dt. 
	\end{equation}
    The identities \eqref{eq: integration by parts second order derivative}--\eqref{eq: integration by parts damping} follow from the classical integration by parts formula and observing that $u_\varphi-\varphi,w_F\in H^1(0,T;\widetilde{H}^s(\Omega))\cap H^2(0,T;\widetilde{L}^2(\Omega))$. To justify formula \eqref{eq: integration by parts third order derivative}, one can first prove the identity for solutions of the associated parabolically regularized problem (i.e.~solutions to the same PDE but with an additional viscosity term $\eps (-\Delta)^s\partial_t^2$) and then passing to the limit $\eps\to 0$. A similar approach has been used in the articles \cite[Claim 4.2]{LTZ2} and \cite{PZ-KK} for second order nonlocal wave equations. The details of these arguments are presented in Appendix~\ref{appendix: parabolic regularization}. Theorem~\ref{thm: parabolic regularization} studies the unique solvability of parabolically regularized problems and the convergence properties of the solutions $u_\eps$ as $\eps\to 0$, while Proposition~\ref{prop: integration by parts formula} establishes the integration by parts formula \eqref{eq: integration by parts third order derivative}. Hence, we may calculate
	\begin{equation}
    \begin{split}
	0 &=\<F,u_\varphi-\varphi\>=\<(\partial_t^3-\alpha\partial_t^2+b(-\Delta)^s\partial_t-c(-\Delta)^s-q)w_F,u_\varphi-\varphi\> \\
	 &=-\<(\partial_t^3+\alpha\partial_t^2+b(-\Delta)^s\partial_t+c(-\Delta)^s+q)(u_\varphi-\varphi),w_F\> \\
	 &=\<b(-\Delta)^s\partial_t\varphi+c(-\Delta)^s\varphi,w_F\>=\<(-\Delta)^s\varphi,cw_F-b\partial_t w_F\>,\ \forall \varphi\in C_c^\infty(W_T). 
	\end{split}
    \end{equation}
    Let us note that in the second last equality sign, we have used that $u_\varphi-\varphi$ solves the nonlocal MGT equation with source term $F\vcentcolon = -\chi_\Omega(b(-\Delta)^s \partial_t \varphi+c(-\Delta)^s \varphi)$ (see~Proposition~\ref{prop: nonlinearg with ext cond}). Therefore, we have shown that $\widetilde w_F=cw_F-b\partial_t w_F\in L^2(0,T;\widetilde H^s(\Omega))$ satisfies
	$$(-\Delta)^s\widetilde w_F=\widetilde w_F=0\quad {\rm in}\ W_T.$$
	By the UCP of the fractional Laplacian (Proposition \ref{prop: basic facts on frac Lap}), we have $\widetilde w_F=0$ in $\R^n_T$, which shows that $w_F$ solves the ODE problem
	\[
        \begin{cases}
            \partial_tw =\frac{c}{b}w\text{ in }(0,T),\\
            w(T)=0.
        \end{cases}
    \]
	As $w_F\in H^1(0,T;\widetilde{H}^s(\Omega))$, we necessarily have $w_F=0$, which in turn implies $F=0$. This finishes the proof. 
    \end{proof}

    \begin{remark}
    Let us comment here about Runge approximation results for 2nd order in time nonlocal wave equations. The last author together with Y.-H.~Lin and T.~Tyni established in \cite{LTZ1} a $L^2(0,T;\widetilde H^s(\Omega))$ Runge approximation theorem for the nonlocal wave equations $(\partial_t^2+(-\Delta)^s+q)u=0$, in which the conditions for $q$ are the same as those considered in this manuscript. The main ingredient in the proof is the existence of so-called very weak solutions $u\in C([0,T];\widetilde{L}^2(\Omega))$. In \cite{PZ2}, the last author extended this approximation result to damped nonlocal wave equations $(\partial_t^2+\gamma \partial_t+(-\Delta)^s+q)u=0$. In the forthcoming work \cite{PZ-KK}, the $L^2(0,T;\widetilde{H}^s(\Omega))$ Runge approximation is strengthened and generalized to other types of nonlocal wave equations.
	\end{remark}

	\subsection{{Determining the coefficient \texorpdfstring{$q$}{q}}}
    \label{sec: determ of q}
In this subsection, we will prove the unique determination of the linear coefficient $q$ from the knowledge of the DN map. To achieve this, we first establish an integral identity which builds a bridge between the DN map and $q$. By the integral identity and the Runge approximation (Proposition \ref{Runge}), we can uniquely determine $q$.  
	\begin{lemma}
    \label{integral}
		{\rm (Integral identity)}\ Let $\Omega\subset\R^n$ be a bounded Lipschitz domain,   $T>0$ and $s>0$. Suppose that the potentials $q_1,q_2\in L^{\infty}(0,T;L^p(\Omega))$ are real valued and $p$ satisfies the condition \eqref{conp}. Then for all $\varphi_1,\varphi_2\in C_c^\infty((\Omega_e)_T)$, we have the integral identity
		\begin{equation}
        \label{eq: integral identity potential}
		\int_{\Omega_T}(q_1-q_2^{\star})(u_1-\varphi_1)(u_2-\varphi_2)^{\star} dxdt=\langle (\Lambda_{q_1}-\Lambda_{q_2^\star})\varphi_1,\varphi_2^{\star}\rangle,
		\end{equation}
		where $u_j$ is the unique solution to the problem
		\begin{equation}
        \label{sysfj}
		\begin{cases}
			(\partial_t^3+\alpha\partial_t^2+b(-\Delta)^s\partial_t+c(-\Delta)^s+q_j)u_j=0 & {\rm in} \ \Omega_T, \\
			u_j=\varphi_j & {\rm in} \ (\Omega_e)_T,\\
			u_j(0)=\partial_tu_j(0)=\partial_t^2u_j(0)=0 & {\rm in} \ \Omega
		\end{cases}
		\end{equation}
        for $j=1,2$ (see Corollary \ref{wellu}).
	\end{lemma}

     \begin{proof}
	We follow the same strategy as in \cite[Lemma 4.3]{PZ1} (see also \cite[Proposition 3.4]{PZ2}). Let $Q_1,Q_2\in L^{\infty}(0,T;L^p(\Omega))$ with $p$ satisfying the condition \eqref{conp}. Suppose that $w_1$ and $w_2^{\star}$ are the unique solutions to
	\begin{equation}
	\begin{cases}
		(\partial_t^3+\alpha\partial_t^2+b(-\Delta)^s\partial_t+c(-\Delta)^s+Q_1)w=0 & {\rm in} \ \Omega_T, \\
		w=\varphi_1 & {\rm in} \ (\Omega_e)_T,\\
		w(0)=\partial_tw(0)=\partial_t^2w(0)=0 & {\rm in} \ \Omega
	\end{cases}
	\end{equation}
	and
	\begin{equation}
	\begin{cases}
		(\partial_t^3-\alpha\partial_t^2+b(-\Delta)^s\partial_t-c(-\Delta)^s-Q_2)w=0 & {\rm in} \ \Omega_T, \\
		w=\varphi_2^{\star} & {\rm in} \ (\Omega_e)_T,\\
		w(T)=\partial_tw(T)=\partial_t^2w(T)=0 & {\rm in} \ \Omega,
	\end{cases}
	\end{equation}
    respectively (see Corollary \ref{wellu}).
	By construction $w_2^{\star}$ is the time-reversal of the unique solution $w_2$ to the problem
	\begin{equation}
	\begin{cases}
		(\partial_t^3+\alpha\partial_t^2+b(-\Delta)^s\partial_t+c(-\Delta)^s+Q_2^{\star})w=0 & {\rm in} \ \Omega_T, \\
		w=\varphi_2 & {\rm in} \ (\Omega_e)_T,\\
		w(0)=\partial_tw(0)=\partial_t^2w(0)=0 & {\rm in} \ \Omega
	\end{cases}
	\end{equation}
	(cf.,~e.g.~proof of Proposition \ref{Runge}). Next, we calculate
    \begin{equation}
    \label{f1f2}
        \begin{split}
            &\int_{\Omega_T}(Q_1-Q_2)(w_1-\varphi_1)(w_2-\varphi_2)^{\star} dxdt\\
            &=-\int_0^T \langle (\partial_t^3+\alpha\partial_t^2+b(-\Delta)^s\partial_t+c(-\Delta)^s)(w_1-\varphi_1),(w_2-\varphi_2)^\star \rangle \,dt\\
            &\quad -\int_0^T \langle (b(-\Delta)^s\partial_t+c(-\Delta)^s)\varphi_1,(w_2-\varphi_2)^\star\rangle\,dt\\
            &\quad -\int_0^T \langle (\partial_t^3-\alpha\partial_t^2+b(-\Delta)^s\partial_t-c(-\Delta)^s)(w_2-\varphi_2)^\star ,w_1-\varphi_1\rangle \,dt\\
            &\quad -\int_0^T \langle (b(-\Delta)^s\partial_t-c(-\Delta)^s)\varphi_2^\star,w_1-\varphi_1\rangle\,dt\\
            &=-\int_0^T \langle (b(-\Delta)^s\partial_t+c(-\Delta)^s)\varphi_1,(w_2-\varphi_2)^\star\rangle\,dt\\
             &\quad -\int_0^T \langle (b(-\Delta)^s\partial_t-c(-\Delta)^s)\varphi_2^\star,w_1-\varphi_1\rangle\,dt\\
            &=\int_0^T \langle (b(-\Delta)^s\partial_t-c(-\Delta)^s)(w_2^\star-\varphi_2^\star),\varphi_1\rangle\,dt\\
             &\quad +\int_0^T \langle (b(-\Delta)^s\partial_t+c(-\Delta)^s)(w_1-\varphi_1),\varphi_2^\star\rangle\,dt\\
             &=\int_0^T \langle (b(-\Delta)^s\partial_t-c(-\Delta)^s)w_2^\star,\varphi_1\rangle\,dt\\
             &\quad +\int_0^T \langle (b(-\Delta)^s\partial_t+c(-\Delta)^s)w_1,\varphi_2^\star\rangle\,dt\\
             &=\langle \Lambda_{Q_1}\varphi_1,\varphi_2^\star\rangle-\langle \Lambda_{Q_2^\star}\varphi_2,\varphi_1^\star\rangle.
        \end{split}
    \end{equation}
    In the first equality we used the observations that we made in the proof Corollary \ref{wellu} and the fact that $\varphi_1,\varphi_2$ are supported on $(\Omega_e)_T$. In the second equality, we applied the formulas \eqref{eq: integration by parts second order derivative}--\eqref{eq: integration by parts third order derivative} that remain valid in the present case. In the third equality, we performed again an integration by parts. In the fourth equality we used that the contributions that only contain $\varphi_1$ and $\varphi_2^\star$ compensate each other. Finally, in the fifth equality we made a change of variables for the first integral and used the definitions of the DN maps.

	If $Q_1=Q_2=q_j$ for $j=1,2$, then we see from the identity \eqref{f1f2} that 
    \begin{equation}
    \label{eq: adjoint DN map}
        \<\Lambda_{q_j}\varphi_1,\varphi_2^{\star}\>=\<\Lambda_{q_j^{\star}}\varphi_2,\varphi_1^{\star}\>.
    \end{equation}
	Thus, we deduce from \eqref{eq: adjoint DN map} and \eqref{f1f2} with $Q_1=q_1$ and $Q_2=q_2^\star$ that there holds
	\begin{equation}
    \begin{split}
	\int_{\Omega_T}(q_1-q_2^{\star})(u_1-\varphi_1)(u_2-\varphi_2)^{\star} dxdt &=
	\<\Lambda_{q_1}\varphi_1,\varphi_2^{\star}\>-\<\Lambda_{q_2}\varphi_2,\varphi_1^{\star}\> \\
	 &=\<\Lambda_{q_1}\varphi_1,\varphi_2^{\star}\>-\<\Lambda_{q_2^\star}\varphi_1,\varphi_2^{\star}\> \\
	 &=\<(\Lambda_{q_1}-\Lambda_{q_2^\star})\varphi_1,\varphi_2^{\star}\>, 
	\end{split}
    \end{equation}
	which completes the proof. 
    \end{proof}
	
	We are now in a position to prove the unique determination of $L^p$ potentials from the DN map.

    \begin{proposition}[Uniqueness of linear perturbations]
    \label{prop: uniqueness of linear perturbations}
        Let $\Omega\subset\R^n$ be a bounded Lipschitz domain,   $T>0$ and $s\in \R_+\setminus \N$. Suppose that the potentials $q_1,q_2\in L^{\infty}(0,T;L^p(\Omega))$ are real valued, $p$ satisfies the condition \eqref{conp} and $q_2$ is time-reversal invariant (i.e. $q_2=q_2^\star$). If $W_1,W_2\subset\Omega_e$ are two nonempty open sets and we have
        \begin{equation}
        \label{eq: equal dn maps linear}
            \left.\Lambda_{q_1}\varphi\right|_{(W_2)_T}=
            \left.\Lambda_{q_2} \varphi\right|_{(W_2)_T}
        \end{equation}
        for all $\varphi\in C_c^{\infty}((W_1)_T)$, then there holds $q_1=q_2$ in $\Omega_T$.
    \end{proposition}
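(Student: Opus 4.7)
The plan is to combine the integral identity from Lemma \ref{integral} with the $L^2(0,T;\widetilde L^2(\Omega))$ Runge approximation of Proposition \ref{Runge}, exploiting the time-reversal symmetry $q_2^\star=q_2$. First, since $q_2^\star = q_2$ implies $\Lambda_{q_2^\star}=\Lambda_{q_2}$, Lemma \ref{integral} specializes to
\begin{equation*}
\int_{\Omega_T}(q_1-q_2)(u_1-\varphi_1)(u_2-\varphi_2)^\star\,dxdt=\langle(\Lambda_{q_1}-\Lambda_{q_2})\varphi_1,\varphi_2^\star\rangle
\end{equation*}
for every $\varphi_1,\varphi_2\in C_c^\infty((\Omega_e)_T)$. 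Choosing $\varphi_1\in C_c^\infty((W_1)_T)$ and $\varphi_2\in C_c^\infty((W_2)_T)$ and noting that time-reversal preserves $C_c^\infty((W_2)_T)$, the hypothesis \eqref{eq: equal dn maps linear} forces the right-hand side to vanish.

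Next I would pass Runge approximation through one factor at a time. Fix $\varphi_2\in C_c^\infty((W_2)_T)$: the energy estimate in Corollary \ref{wellu} yields $(u_2-\varphi_2)^\star\in L^\infty(0,T;\widetilde H^s(\Omega))$, and Lemma \ref{lemma: potential} then guarantees $(q_1-q_2)(u_2-\varphi_2)^\star\in L^\infty(0,T;L^2(\Omega))\subset L^2(\Omega_T)$. Hence the linear functional $v\mapsto\int_{\Omega_T}(q_1-q_2)\,v\,(u_2-\varphi_2)^\star\,dxdt$ is continuous on $L^2(0,T;\widetilde L^2(\Omega))$, so Proposition \ref{Runge} applied with the potential $q_1$ lets me replace $u_1-\varphi_1$ by an arbitrary $v\in L^2(0,T;\widetilde L^2(\Omega))$. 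Restricting $v$ to $C_c^\infty(\Omega_T)$, the product $(q_1-q_2)v$ belongs to $L^\infty(0,T;L^p(\Omega))\subset L^2(\Omega_T)$ (using $p\geq 2$ and boundedness of $\Omega$), and a second application of Proposition \ref{Runge}—this time with the potential $q_2$ and against the time-reversed factor—extends the identity in the remaining factor, yielding
\begin{equation*}
\int_{\Omega_T}(q_1-q_2)\,v\,w\,dxdt=0\qquad\text{for all }v\in C_c^\infty(\Omega_T),\ w\in L^2(0,T;\widetilde L^2(\Omega)).
\end{equation*}

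Finally, taking $w=\chi_\Omega\in L^2(0,T;\widetilde L^2(\Omega))$ (admissible because $\Omega$ is bounded) reduces the identity to $\int_{\Omega_T}(q_1-q_2)v\,dxdt=0$ for every $v\in C_c^\infty(\Omega_T)$, and the fundamental lemma of the calculus of variations then forces $q_1=q_2$ almost everywhere in $\Omega_T$. The main subtlety is to make the two Runge extensions legal: since Proposition \ref{Runge} delivers density only in the $L^2(0,T;\widetilde L^2(\Omega))$ topology, the functionals being extended must be continuous there, which is precisely what the multiplier bound in Lemma \ref{lemma: potential} and the embedding $L^p(\Omega)\hookrightarrow L^2(\Omega)$ provide.
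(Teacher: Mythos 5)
Your proof is correct and follows the same strategy as the paper's: specialize the integral identity of Lemma~\ref{integral} using $q_2^\star=q_2$ and the hypothesis on the DN maps, then apply the $L^2(0,T;\widetilde L^2(\Omega))$ Runge approximation (Proposition~\ref{Runge}) twice, with Lemma~\ref{lemma: potential} supplying the continuity needed to pass to the limit in each factor. The only cosmetic differences from the paper's argument are the order in which the two solution factors are replaced and the final choice of test function ($w=\chi_\Omega$ rather than a second smooth compactly supported function).
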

	
	\begin{proof} 
    First, note that according to Lemma \ref{integral}, $q_2=q_2^\star$ and \eqref{eq: equal dn maps linear}, we have the following identity	\begin{equation}
    \label{eq: identity for detection of potentials}
	\int_{\Omega_T}(q_1-q_2)(u_1-\varphi_1)(u_2-\varphi_2)^{\star} dxdt=0
	\end{equation}
    for any $\varphi_j\in C_c^{\infty}((W_j)_T)$, $j=1,2$, where $u_j$ denotes the unique solution to \eqref{sysfj}. Using Proposition \ref{Runge} and Lemma \ref{lemma: potential}, we may deduce that 
	\[
        \int_{\Omega_T}(q_1-q_2)(u_1-\varphi_1)\eta dxdt=0
    \]
    for all $\eta\in C_c^{\infty}(\Omega_T)$ and all $\varphi_1\in C_c^{\infty}((W_1)_T)$. Using the same argument again, we get
    \[
        \int_{\Omega_T}(q_1-q_2)\psi\eta dxdt=0
    \]
    for all $\psi,\eta\in C_c^{\infty}(\Omega_T)$. This ensures that $q_1=q_2$ a.e. in $\Omega_T$.
	\end{proof}
	
	\section[Proof of Theorem~\ref{thmIP1}]{Proof of Theorem~\ref{thmIP1}: Polynomial-type nonlinearities}
    \label{sec: proof of theorem 1.5}

    In this section, we establish the proof of Theorem \ref{thmIP1} that relies on the method of higher order linearization and the UCP for the fractional Laplacian (Proposition \ref{prop: basic facts on frac Lap}). Let us remark that a higher order linearization was not necessary for second order nonlocal wave equations \cite{LTZ1,LTZ2}, while a second order linearization procedure has already been used for second order nonlocal viscous wave equations \cite{PZ1}.
    
More concretely, in Section \ref{sec: first order linearization}, we perform a first order linearization that allows us to recover the linear perturbation $q$ and afterwards, in Section \ref{sec: second order linearization}, we carry out a second order linearization yielding that $\partial_{\tau}^2 g(x,t,0)$ is uniquely determined by the DN map. Finally, in Section \ref{sec: higher order linearization}, we generalize the results of the first two paragraphs and demonstrate that the $N-$th order linearization, for $3\leq N\leq m$, allows us to recover $\partial_{\tau}^{N}g(x,t,0)$. We note that  the Runge approximation (Proposition \ref{Runge}) plays a key role in recovering $\{\partial_\tau^{\ell}g(x,t,0)\}_{\ell=1}^N$.   
    
	\subsection{First order linearization and recovery of \texorpdfstring{$q_j$}{potential}}
    \label{sec: first order linearization}

      We begin with some preparations on the first order linearization of the solutions and the related DN maps.
	
    We consider the following semilinear MGT equation
	\begin{equation}
	    \label{uj}
	\begin{cases}
		(\partial_t^3+\alpha\partial_t^2+b(-\Delta)^s\partial_t+c(-\Delta)^s+q_j)u_j^\varepsilon+g_j(u_j^\varepsilon)=0\ & {\rm in}\ \Omega_T,\\
		u^\varepsilon_j=\eps\cdot\varphi\ & {\rm in}\ (\Omega_e)_T, \\
		u^\varepsilon_j(0)=\partial_tu^\varepsilon_j(0)=\partial_t^2u^\varepsilon_j(0)=0\ & {\rm in}\ \Omega,
    	\end{cases}
	\end{equation}
    where the appearing quantities satisfy the assumptions of Theorem \ref{wellnonlinear}, the nonlinearities $g_j$, $j=1,2$, obey \ref{A1}--\ref{A3}, $\varphi=(\varphi_1,\ldots,\varphi_m)\in C_c^{\infty}((\Omega_e)_T)$ are given exterior conditions and $\eps=(\eps_1,\ldots,\eps_m)\in\R^m$ are some small parameters. Furthermore, $m\in \N_{\geq 2}$ is the constant that appears in \ref{A3}.

    From Proposition \ref{prop: nonlinearg with ext cond}, we deduce that for given $\varphi=(\varphi_1,\ldots,\varphi_m)\in C_c^{\infty}((\Omega_e)_T)$ the problem \eqref{uj} is uniquely solvable for sufficiently small $\eps$. If $\varepsilon=0$, then using $g_j(x,t,0)=0$ we see that $u_j^0=0$ is the unique solution to \eqref{uj}.
  
	The corresponding DN map associated with the above system reads
    \[
        \Lambda_{q_j,g_j}(\eps\cdot\varphi)=(b(-\Delta)^s\partial_tu_j^\varepsilon+c(-\Delta)^su_j^\varepsilon)|_{(\Omega_e)_T},\quad j=1,2.
    \]
		
    Next, let us introduce the following notation. If $Z$ is a Banach space, $\Theta\subset\R^m$ an open subset and $f\colon \Theta\to Z$ a given function, then we put
    \begin{equation}
    \label{eq: difference operator}
        \Delta_\theta f(\theta_0)=f(\theta_0+\theta)-f(\theta_0)
    \end{equation}
    for any $\theta_0\in\Theta$ and $\theta\in\R^m$ such that $\theta_0+\theta\in \Theta$. Furthermore, if $(e_j)_{1\leq j\leq m}$ denotes the canonical basis of $\R^m$, then we define
    \begin{equation}
    \label{eq: difference quotient operator}
        \delta^k_\eta f(\theta_0)=\frac{\Delta_{\eta e_k}f(\theta_0)}{\eta}
    \end{equation}
    for any $1\leq k\leq m$ as long as $\theta_0\in\Theta$ and $\eta\in\R$ satisfy $\theta_0+\eta e_k\in\Theta$.
    
    We have the following first order linearization result (cf.,~e.g.,~\cite{LaLin,KMSK,LTZ1,LTZ2,PZ1}). 
	\begin{lemma}\label{der1}
		Under the above assumptions, there exists a unique solution $v_{j,k}^\varepsilon$ to
        \begin{equation}
            \label{vj}
		\begin{cases}
			(\partial_t^3+\alpha\partial_t^2+b(-\Delta)^s\partial_t+c(-\Delta)^s+q_j)v+\partial_{\tau}g_j(u_j^\varepsilon)v=0\ & {\rm in}\ \Omega_T,\\
			v=\varphi_k\ & {\rm in}\ (\Omega_e)_T, \\
			v(0)=\partial_tv(0)=\partial_t^2v(0)=0\ & {\rm in}\ \Omega,
		\end{cases}
        \end{equation}
        when $\eps\in\R^m$ is sufficiently small. Furthermore, one has
             \begin{equation}
    \label{eq: derivative wrt eps 1}
        v_{j,k}^\varepsilon=\partial_{\varepsilon_k}u_j^\varepsilon \vcentcolon =\lim\limits_{\eta\to 0}\delta^k_{\eta}u_j^{\eps}
    \end{equation}
    in $W^{1,\infty}(0,T;H^s(\R^n))\cap W^{2,\infty}(0,T;L^2(\R^n))$.
	\end{lemma}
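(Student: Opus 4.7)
The plan is to prove the two assertions in sequence: first, unique solvability of \eqref{vj}, and second, the identification of $v_{j,k}^\varepsilon$ with the $\varepsilon_k$-derivative of $u_j^\varepsilon$.

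For the first claim, I would interpret \eqref{vj} as a non-homogeneous linear nonlocal MGT equation with effective potential
\[
Q_{j,\varepsilon}(x,t)\vcentcolon = q_j(x,t)+\partial_\tau g_j(x,t,u_j^\varepsilon(x,t))
\]
and apply Corollary~\ref{wellu}. The only nontrivial verification is that $Q_{j,\varepsilon}\in L^\infty(0,T;L^p(\Omega))$ for some $p$ obeying \eqref{conp}. Since $q_j$ already has this regularity, it suffices to place $|u_j^\varepsilon|^\gamma+|u_j^\varepsilon|^r$ into $L^\infty(0,T;L^p(\Omega))$ using \ref{A2}. Because $u_j^\varepsilon\in L^\infty(0,T;H^s(\R^n))$ by Proposition~\ref{prop: nonlinearg with ext cond}, a case-by-case argument---Sobolev embedding $H^s\hookrightarrow L^{2n/(n-2s)}$ when $2s<n$, Proposition~\ref{critical} when $2s=n$, and the Morrey embedding \eqref{eq: Morrey embedding} when $2s>n$---provides this bound, thanks to the compatibility of the exponent constraints \eqref{conp} and \eqref{conr}. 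Corollary~\ref{wellu} then delivers a unique $v_{j,k}^\varepsilon\in W^{1,\infty}(0,T;H^s(\R^n))\cap W^{2,\infty}(0,T;L^2(\R^n))$ together with the associated energy estimate.

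For the differentiation claim, fix a small $\eta\neq 0$ so that $u_j^{\varepsilon+\eta e_k}$ exists and set $w_\eta\vcentcolon = \delta^k_\eta u_j^\varepsilon-v_{j,k}^\varepsilon$. Subtracting the PDEs for $u_j^{\varepsilon+\eta e_k}$ and $u_j^\varepsilon$, dividing by $\eta$, and writing
\[
\frac{g_j(u_j^{\varepsilon+\eta e_k})-g_j(u_j^\varepsilon)}{\eta}=c_\eta\,\delta^k_\eta u_j^\varepsilon,\qquad c_\eta\vcentcolon =\int_0^1\partial_\tau g_j\bigl(u_j^\varepsilon+\sigma\,\Delta_{\eta e_k}u_j^\varepsilon\bigr)\,d\sigma,
\]
we see that $w_\eta$ solves the homogeneous-exterior problem
\[
\begin{cases}
(\partial_t^3+\alpha\partial_t^2+b(-\Delta)^s\partial_t+c(-\Delta)^s+q_j+c_\eta)w_\eta=(\partial_\tau g_j(u_j^\varepsilon)-c_\eta)v_{j,k}^\varepsilon & \text{in }\Omega_T,\\
w_\eta=0 & \text{in }(\Omega_e)_T,\\
w_\eta(0)=\partial_t w_\eta(0)=\partial_t^2 w_\eta(0)=0 & \text{in }\Omega.
\end{cases}
\]
The first step (applied to $u_j^\varepsilon+\sigma\,\Delta_{\eta e_k}u_j^\varepsilon$, uniformly bounded in $L^\infty(0,T;H^s(\R^n))$ by continuous dependence on $\varepsilon$) yields a uniform-in-$\eta$ bound of $c_\eta$ in $L^\infty(0,T;L^p(\Omega))$, so the energy estimate \eqref{energyest} gives
\[
\|w_\eta\|_{W^{1,\infty}(0,T;\widetilde H^s(\Omega))}+\|\partial_t^2 w_\eta\|_{L^\infty(0,T;\widetilde L^2(\Omega))}\lesssim \|(\partial_\tau g_j(u_j^\varepsilon)-c_\eta)\,v_{j,k}^\varepsilon\|_{L^2(\Omega_T)}.
\]
The right-hand side tends to $0$: continuous dependence gives $\Delta_{\eta e_k}u_j^\varepsilon\to 0$ in $W^{1,\infty}(0,T;H^s(\R^n))$ (a byproduct of the contraction argument in Proposition~\ref{prop: nonlinearg with ext cond}); continuity of the Nemytskii operator induced by $\partial_\tau g_j$ (Lemma~\ref{lemma: Nemytskii operators} with Remark~\ref{remark: more general caratheodory fcts}) then forces $c_\eta\to\partial_\tau g_j(u_j^\varepsilon)$ in $L^\infty(0,T;L^p(\Omega))$, and Lemma~\ref{lemma: potential} converts this into $L^2(\Omega_T)$-convergence of the source term to zero. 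This establishes \eqref{eq: derivative wrt eps 1}.

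The main technical obstacle I anticipate is the careful, case-by-case verification that the Nemytskii operator associated with $\partial_\tau g_j$ maps $L^\infty(0,T;H^s(\R^n))$ continuously into $L^\infty(0,T;L^p(\Omega))$ for admissible $(p,\gamma,r)$; this Sobolev/embedding bookkeeping underlies both the existence step and the convergence $c_\eta\to\partial_\tau g_j(u_j^\varepsilon)$. Once this is secured, the rest is standard: existence comes from Corollary~\ref{wellu} and the derivative identification from the energy estimate applied to $w_\eta$.
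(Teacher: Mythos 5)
Your existence step matches the paper's Step 1 exactly: interpret $\partial_\tau g_j(u_j^\varepsilon)$ as part of an effective potential $Q_{j,\varepsilon}$, verify $Q_{j,\varepsilon}\in L^\infty(0,T;L^p(\Omega))$ for $p$ as in \eqref{conp} via the case-by-case Sobolev/Ozawa bookkeeping, and invoke Corollary~\ref{wellu}. Your differentiation step is a mild but genuine simplification of the paper's: you move the coefficient $c_\eta$ to the left-hand side as a potential, so that the resulting source $(\partial_\tau g_j(u_j^\varepsilon)-c_\eta)v_{j,k}^\varepsilon$ does not contain the unknown $w_\eta$ at all. The paper instead keeps $c_\eta W_{j,k}^{\varepsilon,\eta}$ inside the source $G_{j,k}^{\varepsilon,\eta}$ and must absorb it by taking $\varepsilon$ and $\eta$ small (see \eqref{eq: smallness of RHS for diff quotient} and Step~4); your version only needs $c_\eta$ uniformly \emph{bounded} in $L^\infty(0,T;L^p(\Omega))$, which is cheaper. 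The paper actually uses your device (potential rather than source) in its second-order linearization Lemma~\ref{par2}, so this is a legitimate alternative that the authors themselves adopt later.

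There is one imprecision that you should repair. You cite Lemma~\ref{lemma: Nemytskii operators} to conclude that $c_\eta\to\partial_\tau g_j(u_j^\varepsilon)$ in $L^\infty(0,T;L^p(\Omega))$, but that lemma only establishes continuity of Nemytskii operators between $L^q$ spaces with \emph{finite} $q$; it is proved by dominated convergence, which does not transfer to $L^\infty$ in time, and indeed $L^\infty\to L^\infty$ Nemytskii continuity can fail without a uniform modulus of continuity for $\partial_\tau g_j$. The paper carefully only asserts convergence in $L^q(0,T;L^p(\Omega))$ for some finite $q\geq 2$ (see \eqref{eq: convergence of integral of derivative of g j} and the discussion around it). This weaker convergence is all you actually need: combined with $v_{j,k}^\varepsilon\in L^\infty(0,T;H^s(\mathbb R^n))$ and H\"older in time, it yields $\|(\partial_\tau g_j(u_j^\varepsilon)-c_\eta)v_{j,k}^\varepsilon\|_{L^2(\Omega_T)}\to 0$, and the rest of your argument goes through unchanged. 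You should also acknowledge that your appeal to ``continuous dependence as a byproduct of the contraction argument'' to get $\Delta_{\eta e_k}u_j^\varepsilon\to 0$ in $X$ compresses a nontrivial Gronwall estimate that the paper writes out explicitly in \eqref{eq: estimate difference of nonlin}--\eqref{eq: convergence of difference of U eps}; the claim is correct, but a complete proof should include this step.
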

    \begin{proof}
        We divide the proof into four steps. The first step establishes the existence of the unique solution $v_{j,k}^\eps$ and the remaining steps demonstrate formula \eqref{eq: derivative wrt eps 1}. \\
        
        \noindent\textit{Step 1.} In light of Corollary \ref{wellu}, the existence of $v_{j,k}^\eps$ follows once we have shown that 
        \begin{equation}
            \label{eq: regularity of partial g u eps}
            \partial_{\tau}g_j(u_j^\varepsilon)\in L^{\infty}(0,T;L^p(\Omega)),    
        \end{equation}
        for some $p$ satisfying the constraints \eqref{conp}. To this end, recall that $g_j$ obeys 
        \begin{equation}
        \label{eq: estimate for der of gj}
        \begin{split}
            |\partial_{\tau} g_j(x,t,\tau)|&\lesssim |\tau|^\gamma+|\tau|^r\lesssim 1+|\tau|^r\\
        \end{split}
        \end{equation}
        for a.e.~$(x,t)\in\Omega_T$, where $0<\gamma\leq r$ and $r$ satisfies \eqref{conr}.

        If $2s<n$, then the estimate \eqref{eq: estimate for der of gj} and Lemma \ref{lemma: Nemytskii operators} guarantee that 
        \begin{equation}
        \label{eq: continuity prop partial tau gj}
            \partial_{\tau}g_j(u_j^\varepsilon)\in L^{q/r}(0,T;L^{\frac{2n}{r(n-2s)}}(\Omega))\hookrightarrow L^{q/r}(0,T;L^{n/s}(\Omega))
        \end{equation}
        for any $q>r$ and $\partial_\tau g_j$ is continuous from $L^q(0,T;L^{\frac{2n}{n-2s}}(\Omega))$ to $L^{q/r}(0,T;L^{n/s}(\Omega))$. So, passing to the limit $q\to\infty$ in \eqref{eq: boundedness Nemytskii} gives \eqref{eq: regularity of partial g u eps} with $p=n/s$. In the range $2s\geq n$, the result can be established in a similar way.\\
        
        \noindent \textit{Step 2.} To prove the convergence result, observe that \eqref{eq: derivative wrt eps 1} is equivalent to
        \begin{equation}
        \label{eq: derivative wrt eps 1 reformulated}
            V_{j,k}^\eps=\lim_{\eta\to 0}\delta^k_{\eta}U_{j}^\eps\text{ in }X,
        \end{equation}
        where $V_{j,k}^\eps=v_{j,k}^\eps-\varphi_k$ and $U_{j}^\eps=u_j^\eps -\eps\cdot\varphi$ solve the related homogeneous PDEs, namely, $V_{j,k}^\eps\in X$ solves
        \begin{equation}
        \label{eq: PDE for V j k}
            \begin{cases}
			(\partial_t^3+\alpha\partial_t^2+b(-\Delta)^s\partial_t+c(-\Delta)^s+q_j)V+\partial_{\tau}g_j(U_j^\varepsilon)V=F_k\ & {\rm in}\ \Omega_T,\\
			V=0\ & {\rm in}\ (\Omega_e)_T, \\
			V(0)=\partial_tV(0)=\partial_t^2V(0)=0\ & {\rm in}\ \Omega
		\end{cases}
        \end{equation}
        and $U_j^{\rho}\in X$, for small $\rho\in\R^m$, solves
        \begin{equation}
        \label{eq: PDE for U j rho}
            \begin{cases}
		(\partial_t^3+\alpha\partial_t^2+b(-\Delta)^s\partial_t+c(-\Delta)^s+q_j)U+g_j(U)=\rho\cdot F\ & {\rm in}\ \Omega_T,\\
		U=0\ & {\rm in}\ (\Omega_e)_T, \\
		U(0)=\partial_t U(0)=\partial_t^2U(0)=0\ & {\rm in}\ \Omega,
    	\end{cases}
        \end{equation}
        where we put
        \begin{equation}
        \label{eq: sources}
            F=(F_1,\ldots,F_m)\text{ and }F_\ell\vcentcolon =-\chi_\Omega (b(-\Delta)^s\partial_t+c(-\Delta)^s)\varphi_\ell\text{ for }1\leq \ell\leq m.
        \end{equation}
        
        \noindent\textit{Step 3.} From \eqref{eq: PDE for V j k}--\eqref{eq: PDE for U j rho}, we infer that $W_{j,k}^{\eps,\eta}\vcentcolon = V_{j,k}^\eps-\delta_{\eta}^k U_{j}^\eps$ satisfies
         \begin{equation}
        \label{eq: PDE for W j k}
            \begin{cases}
		(\partial_t^3+\alpha\partial_t^2+b(-\Delta)^s\partial_t+c(-\Delta)^s+q_j)W=-G_{j,k}^{\eps,\eta} \ & {\rm in}\ \Omega_T,\\
		W=0\ & {\rm in}\ (\Omega_e)_T, \\
		W(0)=\partial_t W(0)=\partial_t^2W(0)=0\ & {\rm in}\ \Omega,
    	\end{cases}
        \end{equation}
        where
        \begin{equation}
        \label{eq: G jk eps}
        \begin{split}
            G_{j,k}^{\eps,\eta} &\vcentcolon = \partial_\tau g_j(U_j^\eps)V_{j,k}^\eps -\delta_{\eta}^k g_j(U_j^\eps)\\
            &=\partial_\tau g_j(U_j^\eps)V_{j,k}^\eps-\left(\int_0^1 \partial_\tau g_j(U_j^\eps+\sigma\Delta_{\eta e_k} U_j^{\eps})\,d\sigma\right) \delta_{\eta}^k U_j^{\eps}\\
            &=\left(\int_0^1 \partial_\tau g_j(U_j^\eps+\sigma\Delta_{\eta e_k} U_j^{\eps})\,d\sigma\right) (V_{j,k}^\eps-\delta_{\eta}^k U_j^{\eps})\\
            &\quad +\left(\partial_\tau g_j(U_j^\eps)-\int_0^1 \partial_\tau g_j(U_j^\eps+\sigma\Delta_{\eta e_k} U_j^{\eps})\,d\sigma\right)V_{j,k}^\eps
        \end{split}
        \end{equation}
        (cf.~\eqref{eq: PDE for difference of U j}). We claim that the last line converges to zero in $L^2(0,T;\widetilde{L}^2(\Omega))$ as $\eta\to 0$, which in turn follows from H\"older's inequality and Lemma \ref{lemma: potential} if we show that 
        \begin{equation}
        \label{eq: convergence of integral of derivative of g j}
            \int_0^1 \partial_\tau g_j(U_j^\eps+\sigma\Delta_{\eta e_k} U_j^{\eps})\,d\sigma\to \partial_\tau g_j(U_j^\eps) \text{ in }L^q(0,T; L^p(\Omega))
        \end{equation}
        as $\eta\to 0$, for some $q\geq 2$ and $p\geq 2$ satisfying \eqref{conp}. To see this, we consider the PDE for $\Delta_{\eta e_k}U_j^\eps$, that is,
        \begin{equation}
               \label{eq: PDE for difference of U j}
            \begin{cases}
		(\partial_t^3+\alpha\partial_t^2+b(-\Delta)^s\partial_t+c(-\Delta)^s+q_j)\Delta_{\eta e_k}U=\eta F_k -\Delta_{\eta e_k}g_j(U_j^\eps)\ & {\rm in}\ \Omega_T,\\
		U=0\ & {\rm in}\ (\Omega_e)_T, \\
		U(0)=\partial_t U(0)=\partial_t^2U(0)=0\ & {\rm in}\ \Omega.
    	\end{cases}
        \end{equation}
        Next, let us note that \eqref{eq: estimate for der of gj} ensures the estimate 
    \begin{equation}
        \begin{split}
            |g_j(x,t,\tau_1)-g_j(x,t,\tau_2)| &=\Big|\int_{\tau_1}^{\tau_2}\partial_\tau g_j(x,t,\sigma)d\sigma\Big| \\
	 &\lesssim (|\tau_1|^\gamma+|\tau_1|^r+|\tau_2|^\gamma+|\tau_2|^r)|\tau_1-\tau_2|
        \end{split}
    \end{equation}
    for all $\tau_1,\tau_2\in\R$. Hence, we obtain
    \begin{equation}
    \begin{split}
        \|\Delta_{\eta e_k} g_j(U_j^\eps)\|_{L^2(\Omega)}&\lesssim \|(|U_j^{\eps+\eta e_k}|^\gamma +|U_j^\eps |^\gamma )\Delta_{\eta e_k}U_j^{\eps}\|_{L^2(\Omega)}\\
        &\quad +\|(|U_j^{\eps+\eta e_k}|^r+|U_j^\eps |^r)\Delta_{\eta e_k}U_j^{\eps}\|_{L^2(\Omega)}
    \end{split}
    \end{equation}
    for a.e.~$0<t<T$. Using \cite[eq.~(3.26)]{LTZ2} and the fact that $0<\gamma\leq r$ satisfy \eqref{conr}, we can conclude that
    \[
    \begin{split}
         &\|\Delta_{\eta e_k} g_j(U_j^\eps)\|_{L^2(\Omega_T)}\\
         &\lesssim \sup_{\rho=\eps,\eps+\eta e_k}(\|U_j^{\rho}\|_{L^{\infty}(0,T;\widetilde{H}^s(\Omega))}^\gamma+\|U_j^{\rho}\|_{L^{\infty}(0,T;\widetilde{H}^s(\Omega))}^r)\|\Delta_{\eta e_k}U_j^{\eps}\|_{L^{\infty}(0,T;\widetilde{H}^s(\Omega))}.
    \end{split}
    \]
    By construction of $U_j^\rho$ (with $\rho$ small), we know that $\|U_j^{\rho}\|_X\lesssim |\rho|$ (see~Theorem \ref{wellnonlinear}) and hence we may deduce that
    \begin{equation}
    \label{eq: estimate difference of nonlin}
        \|\Delta_{\eta e_k} g_j(U_j^\eps)\|_{L^2(\Omega_T)}\lesssim (|\eps|^\gamma+|\eta|^\gamma)\|\Delta_{\eta e_k}U_j^{\eps}\|_{L^{\infty}(0,T;\widetilde{H}^s(\Omega))}
    \end{equation}
    for $\eps$ and $\eta$ small. Combining this with the energy estimate for \eqref{eq: PDE for difference of U j} (see \eqref{energyest}), we deduce that there holds
    \begin{equation}
        \begin{split}
		 &\|\partial_t^2 \Delta_{\eta e_k}U_j^{\eps}\|_{L^{\infty}(0,T;L^2(\Omega))}+\|\Delta_{\eta e_k}U_j^{\eps}\|_{W^{1,\infty}(0,T;\widetilde H^s(\Omega))} \\
		 &\lesssim |\eta|\|F_k\|_{L^2(\Omega_T)}+\|\Delta_{\eta e_k} g_j(U_j^\eps)\|_{L^2(\Omega_T)}\\
         &\lesssim |\eta|\|F_k\|_{L^2(\Omega_T)} +(|\eps|^\gamma+|\eta|^\gamma)\|\Delta_{\eta e_k}U_j^{\eps}\|_{L^{\infty}(0,T;\widetilde{H}^s(\Omega))}
        \end{split}
    \end{equation}
    Thus, if $|\eps|^\gamma+|\eta|^\gamma$ is sufficiently small, then we get
    \[
        \|\partial_t^2 \Delta_{\eta e_k}U_j^{\eps}\|_{L^{\infty}(0,T;L^2(\Omega))}+\|\Delta_{\eta e_k}U_j^{\eps}\|_{W^{1,\infty}(0,T;\widetilde H^s(\Omega))} \lesssim |\eta|\|F_k\|_{L^2(\Omega_T)}.
    \]
    This in turn ensures 
    \begin{equation}
    \label{eq: convergence of difference of U eps}
        \Delta_{\eta e_k}U_j^{\eps}\to 0\text{ in }X
    \end{equation}
    as $\eta\to 0$ for small $\eps$. Arguing as in Step 1 (cf.~Lemma \ref{lemma: Nemytskii operators}), we may conclude the convergence \eqref{eq: convergence of integral of derivative of g j}. \\

    \noindent \textit{Step 4.} In this final step, we show that $W_{j,k}^{\eps,\eta}\to 0$ in $X$ as $\eta\to 0$. First, let us observe that by \eqref{eq: convergence of integral of derivative of g j} and Lemma \ref{lemma: potential} the expression in the second last line of \eqref{eq: G jk eps} can be estimated as
    \[
    \begin{split}
          &\left\|\left(\int_0^1 \partial_\tau g_j(U_j^\eps+\sigma\Delta_{\eta e_k} U_j^{\eps})\,d\sigma\right) (V_{j,k}^\eps-\delta_{\eta}^k U_j^{\eps})\right\|_{L^2(\Omega_T)}\\
          &\lesssim \left\|\int_0^1 \partial_\tau g_j(U_j^\eps+\sigma\Delta_{\eta e_k} U_j^{\eps})\,d\sigma\right\|_{L^q(0,T;L^p(\Omega))}\|V_{j,k}^\eps-\delta_{\eta}^k U_j^{\eps}\|_{L^{\infty}(0,T;\widetilde{H}^s(\Omega))}\\
          & \lesssim (\|\partial_\tau g_j(U_j^\eps)\|_{L^q(0,T;L^p(\Omega))}+\delta)\|V_{j,k}^\eps-\delta_{\eta}^k U_j^{\eps}\|_{L^{\infty}(0,T;\widetilde{H}^s(\Omega))}
    \end{split}
    \]
    for any $\delta>0$, $\eta$ sufficiently small and appropriate exponents $q,p\geq 2$. Equation \eqref{eq: estimate for der of gj} and the estimate $\|U_j^{\rho}\|_X\lesssim |\rho|$, which holds for small $\rho$, ensure that
    \begin{equation}
    \label{eq: smallness of RHS for diff quotient}
      \begin{split}
           &\left\|\left(\int_0^1 \partial_\tau g_j(U_j^\eps+\sigma\Delta_{\eta e_k} U_j^{\eps})\,d\sigma\right) (V_{j,k}^\eps-\delta_{\eta}^k U_j^{\eps})\right\|_{L^2(\Omega_T)}\\
           &\lesssim (|\eps|^{\gamma}+\delta)\|V_{j,k}^\eps-\delta_{\eta}^k U_j^{\eps}\|_{L^{\infty}(0,T;\widetilde{H}^s(\Omega))}.
      \end{split}  
    \end{equation}
    For example, in the case $2s<n$ and $\gamma\geq 1/p=s/n$, we use \eqref{eq: estimate for der of gj}, $0<\gamma\leq r\leq \frac{2s}{n-2s}$ and Sobolev's embedding theorem to deduce
    \begin{equation}
    \label{eq: special case}
    \begin{split}
        \|\partial_\tau g_j(U_j^\eps)\|_{L^{n/s}(\Omega)}&\lesssim \sup_{\mu=\gamma,r}\||U_j^\eps|^
        \mu\|_{L^{n/s}(\Omega)}\lesssim\sup_{\mu=\gamma,r}\|U_j^\eps\|^{\mu}_{L^{\mu n/s}(\Omega)}\\
        &\lesssim \sup_{\mu=\gamma,r}\|U_j^\eps\|^{\mu}_{L^{\frac{2n}{n-2s}}(\Omega)}\lesssim |\eps|^{\gamma}.
    \end{split}
    \end{equation}
    For the remaining cases in the range $2s<n$, one can adapt the estimates in \cite[p.~10]{LTZ2} to our setting, which once more yield \eqref{eq: special case}. 
    
    Therefore, by taking $\delta$, $\eps$ and $\eta$, appearing in \eqref{eq: smallness of RHS for diff quotient}, sufficiently small, we can deduce from the energy estimate for \eqref{eq: PDE for W j k} and Step 3 that $\|W_{j,k}^{\eps,\eta}\|_X\to 0$ as $\eta\to 0$ as long as $\eps$ is small enough. This establishes \eqref{eq: derivative wrt eps 1 reformulated} and we can conclude the proof.
     \end{proof}
	
	From the previous lemma we immediately get the following result on the first order linearization of the DN map:
	\begin{lemma}\label{linear1DtN}
		For all $1\leq j\leq 2$, $1\leq k\leq m$, sufficiently small $\eps$, $\varphi=(\varphi_1,\ldots,\varphi_m)\in C_c^{\infty}((\Omega_e)_T)$ and $\psi\in C_c^{\infty}((\Omega_e)_T)$ we have
       \begin{equation}
       \label{eq: first order linearization of DN map}
           \partial_{\eps_k}\langle \Lambda_{q_j,g_j}(\eps\cdot \varphi),\psi\rangle=\langle \Lambda^k_{q_j,\partial_{\tau}g_j}\varphi,\psi\rangle,
       \end{equation}
       where $\Lambda_{q_j,\partial_\tau g_j}^k$ is given by
       \begin{equation}
       \label{eq: first linearization of DN map}
        \langle \Lambda^k_{q_j,\partial_{\tau}g_j}\varphi,\psi\rangle\vcentcolon =\int_{0}^T \langle b(-\Delta)^{s/2}\partial_t \partial_{\eps_k} u_j^{\eps}+c(-\Delta)^{s/2}\partial_{\eps_k} u_j^{\eps},(-\Delta)^{s/2}\psi\rangle_{L^2(\R^n)} \,dt
       \end{equation}
       and so coincides with the DN map of \eqref{vj} (see~\ref{linear DN map}).
	\end{lemma}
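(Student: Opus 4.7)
The plan is to obtain the claimed identity by a direct differentiation of the definition of $\Lambda_{q_j,g_j}$, using Lemma~\ref{der1} to justify passing to the limit in the difference quotient. More precisely, by Definition~\ref{def: DN maps}\ref{nonlinear DN map 1}, we have
\begin{equation*}
\langle \Lambda_{q_j,g_j}(\eps\cdot\varphi),\psi\rangle=\int_0^T\langle b(-\Delta)^{s/2}\partial_t u_j^\eps+c(-\Delta)^{s/2} u_j^\eps,(-\Delta)^{s/2}\psi\rangle_{L^2(\R^n)}\,dt,
\end{equation*}
where $u_j^\eps$ denotes the unique small solution to \eqref{uj}. First, I would form the difference quotient $\delta^k_\eta \langle \Lambda_{q_j,g_j}(\eps\cdot\varphi),\psi\rangle$ and, by linearity of the integral and the $L^2(\R^n)$ inner product, bring the operator $\delta^k_\eta$ inside, so that
\begin{equation*}
\delta^k_\eta \langle \Lambda_{q_j,g_j}(\eps\cdot\varphi),\psi\rangle=\int_0^T\langle b(-\Delta)^{s/2}\partial_t(\delta^k_\eta u_j^\eps)+c(-\Delta)^{s/2}(\delta^k_\eta u_j^\eps),(-\Delta)^{s/2}\psi\rangle_{L^2(\R^n)}\,dt.
\end{equation*}

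Next, I would invoke Lemma~\ref{der1}, which asserts that for sufficiently small $\eps$ one has $\delta^k_\eta u_j^\eps\to v^\eps_{j,k}=\partial_{\eps_k}u_j^\eps$ in $W^{1,\infty}(0,T;H^s(\R^n))\cap W^{2,\infty}(0,T;L^2(\R^n))$ as $\eta\to 0$. Since $(-\Delta)^{s/2}\colon H^s(\R^n)\to L^2(\R^n)$ is bounded by Proposition~\ref{prop: basic facts on frac Lap}\ref{mapping properties}, this implies convergence of both $(-\Delta)^{s/2}(\delta^k_\eta u_j^\eps)$ and $(-\Delta)^{s/2}\partial_t(\delta^k_\eta u_j^\eps)$ to their natural limits in $L^\infty(0,T;L^2(\R^n))$. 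Combined with the fact that $\psi\in C_c^\infty((\Omega_e)_T)$ implies $(-\Delta)^{s/2}\psi\in L^1(0,T;L^2(\R^n))$, a simple application of the dominated convergence theorem (or Cauchy--Schwarz) lets me pass to the limit $\eta\to 0$ inside the time integral.

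Finally, the resulting limit identity is exactly
\begin{equation*}
\partial_{\eps_k}\langle \Lambda_{q_j,g_j}(\eps\cdot\varphi),\psi\rangle=\int_0^T\langle b(-\Delta)^{s/2}\partial_t\partial_{\eps_k}u_j^\eps+c(-\Delta)^{s/2}\partial_{\eps_k}u_j^\eps,(-\Delta)^{s/2}\psi\rangle_{L^2(\R^n)}\,dt,
\end{equation*}
which by \eqref{eq: first linearization of DN map} coincides with $\langle \Lambda^k_{q_j,\partial_\tau g_j}\varphi,\psi\rangle$. There is essentially no hard step here since the main technical work---differentiability of the solution map and the identification of $\partial_{\eps_k}u_j^\eps$ as the unique solution to the linearized problem \eqref{vj}---has already been carried out in Lemma~\ref{der1}; the only verification required is that the regularity provided by Lemma~\ref{der1} is strong enough to commute with $(-\Delta)^{s/2}$ and the time integration against the smooth test function $\psi$, which is immediate from the mapping properties of the fractional Laplacian.
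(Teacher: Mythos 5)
Your proof is correct and is precisely the argument that the paper leaves implicit: the paper states after Lemma~\ref{der1} that Lemma~\ref{linear1DtN} follows ``immediately,'' without writing out the difference-quotient computation, the interchange with the integral, or the use of the boundedness of $(-\Delta)^{s/2}\colon H^s(\R^n)\to L^2(\R^n)$, all of which you carry out explicitly. The only (minor) point worth making explicit is that the $W^{1,\infty}(0,T;H^s(\R^n))$ convergence from Lemma~\ref{der1} is what yields the uniform-in-time $L^2(\R^n)$ convergence of both $(-\Delta)^{s/2}(\delta^k_\eta u_j^\eps)$ and $(-\Delta)^{s/2}\partial_t(\delta^k_\eta u_j^\eps)$, so the passage to the limit in the time integral against $(-\Delta)^{s/2}\psi$ is justified by Cauchy--Schwarz alone; you already say this.
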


    Next, let us note that for $\eps=0$ problem \eqref{vj} reduces to 
    \begin{equation}
        \label{vjj}
	\begin{cases}
		(\partial_t^3+\alpha\partial_t^2+b(-\Delta)^s\partial_t+c(-\Delta)^s+q_j)v=0\ & {\rm in}\ \Omega_T,\\
		v=\varphi_k\ & {\rm in}\ (\Omega_e)_T, \\
		v(0)=\partial_t v(0)=\partial_t^2 v(0)=0\ & {\rm in}\ \Omega,
	\end{cases}
    \end{equation}
	where we used $u_j^0=0$ and $\partial_\tau g_j(0)=0$. Henceforth, we assume that $W_1,W_2\subset\Omega_e$ are two nonempty open sets, the potential $q_2$ is time-reversal invariant and $s>0$ is not an integer. Additionally, we assume that the DN maps satisfy
    \begin{equation}
    \label{eq: equality of nonlin DN maps}
        \langle \Lambda_{q_1,g_1}\phi,\psi\rangle=\langle\Lambda_{q_2,g_2}\phi,\psi\rangle
    \end{equation}
    for all sufficiently small $\phi\in C_c^{\infty}((W_1)_T)$ and $\psi\in C_c^{\infty}((W_2)_T)$. By Lemma \ref{linear1DtN}, we know that \eqref{eq: equality of nonlin DN maps} ensures that
    \begin{equation}
    \label{eq: DN map linearized problem}
        \langle \Lambda_{q_1} \varphi_k,\psi\rangle=\langle \Lambda_{q_2} \varphi_k,\psi\rangle
    \end{equation}
    for all $\varphi_k\in C_c^{\infty}((W_1)_T)$ and $\psi\in C_c^{\infty}((W_2)_T)$. Here, we are using the linearity of the problem \eqref{vjj} to see that all test functions $\varphi_k\in C_c^{\infty}((W_1)_T)$ can be used in \eqref{eq: DN map linearized problem} and not only small exterior conditions. As demonstrated in Proposition \ref{prop: uniqueness of linear perturbations}, this ensures
    \begin{equation}
    \label{eq: same potentials}
        q\vcentcolon = q_1=q_2.
    \end{equation}
    Because solutions to \eqref{vj} are unique, we get
    \begin{equation}
    \label{eq: equality of first derivatives}
        \partial_{\varepsilon_k}u^\varepsilon|_{\varepsilon=0}\vcentcolon=\partial_{\varepsilon_k}u_1^\varepsilon|_{\varepsilon=0}=\partial_{\varepsilon_k}u_2^\varepsilon|_{\varepsilon=0}
    \end{equation}
    for any $1\leq k\leq m$.

	\subsection{Second order linearization and recovery of \texorpdfstring{$\partial_{\tau}^2g_j(0)$}{second der g} }
    \label{sec: second order linearization}
    
    By formal differentiation of \eqref{vj} with respect to $\eps_\ell$, we may expect that $v_{j,k,\ell}^\eps=\partial^2_{\eps_k \eps_\ell}u_j^\eps$ solves
    \begin{equation}
        \label{vj2}
	\begin{cases}
		(\partial_t^3+\alpha\partial_t^2+b(-\Delta)^s\partial_t+c(-\Delta)^s+q+\partial_{\tau}g_j(u_j^\varepsilon))v=-\partial_{\tau}^2g_j(u_j^\varepsilon)v_{j,k}^\eps v_{j,\ell}^\eps & \text{ in } \Omega_T,\\
		v=0 & \text{ in } (\Omega_e)_T, \\
		v(0)=\partial_tv(0)=\partial_t^2v(0)=0 & \text{ in } \Omega.
	\end{cases}
    \end{equation}
	Similarly to Lemma \ref{der1}, we prove that $\partial_{\varepsilon_k\varepsilon_\ell }^2u_j^\varepsilon$ is well-defined and coincides with the unique solution of \eqref{vj2}.               
	\begin{lemma}\label{par2}
		Under the above assumptions, \eqref{vj2} has a unique solution $v_{j,k, \ell}^\varepsilon$ for any sufficiently small $\eps\in\R^m$, $1\leq k,\ell\leq m$ and $j=1,2$. Furthermore, one has
             \begin{equation}
    \label{eq: derivative wrt eps 2}
        v_{j,k,\ell}^\varepsilon=\partial^2_{\varepsilon_k\eps_\ell}u_j^\varepsilon \vcentcolon =\lim\limits_{\eta\to 0}\delta^{\ell}_{\eta}\partial_{\eps_k}u_j^{\eps}\text{ in }X.
    \end{equation}
	\end{lemma}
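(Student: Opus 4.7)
The plan is to mirror the four-step proof of Lemma \ref{der1}, now at second order. I first establish well-posedness of \eqref{vj2} via Corollary \ref{wellu}, and then identify its unique solution as the $X$-limit of the difference quotients $\delta^\ell_\eta v_{j,k}^\eps = \delta^\ell_\eta \partial_{\eps_k} u_j^\eps$ as $\eta\to 0$.

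For existence, the zeroth-order coefficient $q+\partial_\tau g_j(u_j^\eps)$ sits in $L^\infty(0,T;L^p(\Omega))$ for admissible $p$ by Step~1 of Lemma \ref{der1}. The new ingredient is to place the source $-\partial_\tau^2 g_j(u_j^\eps)\,v_{j,k}^\eps\,v_{j,\ell}^\eps$ in $L^2(0,T;\widetilde L^2(\Omega))$. Using the bound $|\partial_\tau^2 g_j(x,t,\tau)|\lesssim 1+|\tau|^k$ from \ref{A3}, a double H\"older estimate combined with the Sobolev embedding of $\widetilde H^s(\Omega)$ and the smallness $\|u_j^\eps\|_X,\|v_{j,k}^\eps\|_X,\|v_{j,\ell}^\eps\|_X \lesssim |\eps|$ (from Theorem \ref{wellnonlinear} and Lemma \ref{der1}) yields the required integrability; the constraints $m\le n/(n-2s)$ and $k\le \min(n/((n-2s)m),\,2s/(n-2s))$ in \ref{A3} are precisely calibrated so that this pairing closes. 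Corollary \ref{wellu} then delivers the unique $v_{j,k,\ell}^\eps\in X$.

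For the convergence, I set $W^{\eps,\eta} \vcentcolon = v_{j,k,\ell}^\eps - \delta^\ell_\eta v_{j,k}^\eps$. Subtracting the PDEs \eqref{vj} for $v_{j,k}^{\eps+\eta e_\ell}$ and $v_{j,k}^{\eps}$, dividing by $\eta$, applying the discrete Leibniz rule, and comparing with \eqref{vj2}, I find that $W^{\eps,\eta}$ solves a zero-exterior, zero-initial nonlocal MGT equation with potential $q+\partial_\tau g_j(u_j^\eps)$ and source
\[
G^{\eps,\eta} = \bigl[\delta^\ell_\eta \partial_\tau g_j(u_j^\eps) - \partial_\tau^2 g_j(u_j^\eps)\,v_{j,\ell}^\eps\bigr]\,v_{j,k}^{\eps+\eta e_\ell} + \partial_\tau^2 g_j(u_j^\eps)\,v_{j,\ell}^\eps\,\bigl[v_{j,k}^{\eps+\eta e_\ell} - v_{j,k}^{\eps}\bigr].
\]
Writing $\delta^\ell_\eta \partial_\tau g_j(u_j^\eps) = \int_0^1 \partial_\tau^2 g_j(u_j^\eps + \sigma \Delta_{\eta e_\ell}u_j^\eps)\,d\sigma \cdot \delta^\ell_\eta u_j^\eps$ and recalling $\delta^\ell_\eta u_j^\eps \to v_{j,\ell}^\eps$ in $X$ (Lemma \ref{der1}), a dominated-convergence argument in mixed Lebesgue spaces (analogous to \eqref{eq: convergence of integral of derivative of g j}) drives the first bracket to zero in the relevant $L^q(0,T;L^p(\Omega))$-norm. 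The second bracket $v_{j,k}^{\eps+\eta e_\ell} - v_{j,k}^\eps$ tends to zero in $X$ by Corollary \ref{wellu} applied to the difference of the two instances of \eqref{vj}. Combining these facts with the energy estimate \eqref{energyest} yields $\|W^{\eps,\eta}\|_X \to 0$, which is \eqref{eq: derivative wrt eps 2}.

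The main technical obstacle, as in Lemma \ref{der1}, is to control the convergence $\delta^\ell_\eta \partial_\tau g_j(u_j^\eps) \to \partial_\tau^2 g_j(u_j^\eps)\,v_{j,\ell}^\eps$ in a Lebesgue-Bochner norm compatible with the subsequent H\"older pairing against $v_{j,k}^{\eps+\eta e_\ell}$. This will require careful bookkeeping of Lebesgue exponents, mimicking the delicate estimates \eqref{eq: estimate difference of nonlin} and \eqref{eq: special case}; the exponent regime imposed in \ref{A3} is precisely what keeps all of these estimates stable under one more differentiation of the nonlinearity.
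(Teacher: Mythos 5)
Your proposal is correct and follows the same two-step template as the paper's proof: well-posedness of \eqref{vj2} via an $L^{2}(\Omega_T)$ bound on the quadratic source using \ref{A3} together with H\"older and Sobolev, then convergence of the difference quotient via the energy estimate for the equation satisfied by $W^{\eps,\eta}$. The only departure is a dual bookkeeping choice in Step~2: the paper freezes the potential at the shifted value $q+\partial_\tau g_j(U_j^{\eps+\eta e_\ell})$, which produces a potential-mismatch term $(\partial_\tau g_j(U_j^\eps)-\partial_\tau g_j(U_j^{\eps+\eta e_\ell}))v^\eps_{j,k,\ell}$ in the source and forces a remark that the energy-estimate constant is uniform in $\eta$ (via \eqref{eq: special case}); you freeze it at the unshifted $q+\partial_\tau g_j(u_j^\eps)$ (so there is nothing to say about uniformity of the constant), at the price that your second bracket carries $v_{j,k}^{\eps+\eta e_\ell}-v_{j,k}^\eps$, whose vanishing in $X$ is an additional small lemma. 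That lemma does hold — it follows from the linear energy estimate applied to the difference of the two instances of \eqref{vj} combined with Nemytskii continuity of $\partial_\tau g_j$ (so it is a bit more than an invocation of Corollary~\ref{wellu} alone) — and the rest of the argument is identical to the paper's, including the dominated-convergence treatment of $\int_0^1\partial_\tau^2 g_j(u_j^\eps+\sigma\Delta_{\eta e_\ell}u_j^\eps)\,d\sigma$ and the exponent bookkeeping dictated by \ref{A3}.
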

	\begin{proof} 
    The proof is established in two steps. The first step once again proves the unique solvability of \eqref{vj2} and the second step establishes formula \eqref{eq: derivative wrt eps 2}.\\

    \noindent \textit{Step 1.} By Corollary \ref{wellu} and \eqref{eq: regularity of partial g u eps}, it suffices to show that 
    \begin{equation}
    \label{eq: L2 integrability of source 2nd lin}
        \partial^2_\tau g_j(u_j^{\eps})v_{j,k}^{\eps}v_{j,\ell}^\eps\in L^2(0,T;\widetilde{L}^2(\Omega)).
    \end{equation}
    Before proceeding, let us recall that we suppose 
    \begin{equation}
       |\partial^2_\tau g(x,t,\tau)|\lesssim \begin{cases}
                1,& \text{when }m=2,\\
                1+|\tau|^k,& \text{when }m\geq 3
            \end{cases}
     \end{equation}
        for a.e.~$(x,t)\in\Omega_T$, where $m\geq 2$ and $k\geq 0$. Furthermore, for $2s<n$, we assume that $(m,k)\in \N_{\geq 2}\times \R_{\geq 0}$ fulfill the constraints
        \begin{equation}
        \label{eq: restriction on m,k}
            m\leq \frac{n}{n-2s}\text{ and } k\leq \min\left(\frac{n}{(n-2s)m},\frac{2s}{n-2s}\right).
        \end{equation}
    Without loss of generality, we may assume that $k>0$, as otherwise \eqref{eq: L2 integrability of source 2nd lin} is trivially valid. Additionally, it follows directly from Sobolev's embedding theorem that \eqref{eq: L2 integrability of source 2nd lin} holds in the case $2s\geq n$ and so we only need to consider the case $2s<n$.
    
    If $1\leq p\leq \infty$ satisfies
    \begin{equation}
    \label{eq: Hölder 2nd lin}
       \frac{1}{p}+\frac{n-2s}{n}\leq\frac{1}{2}\text{ and } \partial^2_\tau g_j(u_j^{\eps})\in L^{\infty}(0,T;L^p(\Omega)),
    \end{equation}
    then H\"older's inequality together with Sobolev's embedding theorem yields
    \begin{equation}
    \label{eq: Holder estimate for partial 2 gj}
        \|\partial^2_\tau g_j(u_j^{\eps})v_{j,k}^{\eps}v_{j,\ell}^\eps\|_{L^2(\Omega)}\leq \|\partial^2_\tau g_j(u_j^{\eps})\|_{L^p(\Omega)}\|v_{j,k}^{\eps}\|_{H^s(\R^n)}\|v_{j,\ell}^{\eps}\|_{H^s(\R^n)}
    \end{equation}
    for a.e.~$0<t<T$. For $m=2$, we can clearly choose $p=\infty$ (see~\eqref{eq: restriction on m,k}). In the case $m\geq 3$ we want to choose
    \begin{equation}
    \label{eq: choice of p}
        p=\frac{2n}{k(n-2s)}.
    \end{equation}
    The reason for this choice is twofold. On the one hand, it implies that
    \[
        \frac{1}{p}+\frac{n-2s}{n}=\frac{k(n-2s)}{2n}+\frac{n-2s}{n}\leq \frac{1}{2m}+\frac{1}{m}\leq \frac{1}{6}+\frac{1}{3}=\frac{1}{2}.
    \]
    On the other hand, by Sobolev's embedding theorem and Lemma \ref{lemma: Nemytskii operators}, the condition \eqref{eq: choice of p} guarantees $\partial^2_\tau g_j(u_j^{\eps})\in L^{\infty}(0,T;L^p(\Omega))$ (see~\eqref{eq: boundedness Nemytskii}). In fact, we have the estimate
    \begin{equation}
    \label{eq: estimate partial squared gj}
        \|\partial^2_\tau g_j(u_j^{\eps})\|_{L^{\infty}(0,T;L^p(\Omega))}\lesssim 1+\|u_\eps\|^k_{L^{\infty}(0,T;H^s(\R^n))}.
    \end{equation}
    Combining the aforementioned results, we arrive at the conclusion \eqref{eq: L2 integrability of source 2nd lin} and furthermore there holds
    \begin{equation}
    \label{eq: estimate source 2nd linearization}
    \begin{split}
        &\|\partial^2_\tau g_j(u_j^{\eps})v_{j,k}^{\eps}v_{j,\ell}^\eps\|_{L^2(\Omega_T)}\lesssim \|\partial^2_\tau g_j(u_j^{\eps})v_{j,k}^{\eps}v_{j,\ell}^\eps\|_{L^{\infty}(0,T;L^2(\Omega))}\\
        &\lesssim (1+\|u_\eps\|^k_{L^{\infty}(0,T;H^s(\R^n))})\|v_{j,k}^{\eps}\|_{L^{\infty}(0,T;H^s(\R^n))}\|v_{j,\ell}^{\eps}\|_{L^{\infty}(0,T;H^s(\R^n))}
    \end{split}
    \end{equation}
    (see~\eqref{eq: Holder estimate for partial 2 gj} and \eqref{eq: estimate partial squared gj}).\\
    
    \noindent \textit{Step 2.} First of all, notice that we have
    \begin{equation}
    \label{eq: product difference quotient}
        \delta_{\eta}^k (fg)(\theta_0)=f(\theta_0+\eta e_k)\delta_{\eta}^k g(\theta_0)+g(\theta_0)\delta_{\eta}^k f(\theta_0)
    \end{equation}
    for all functions $f,g\colon \Theta\to Z$, which map some open set $\Theta\subset\R^m$ to a Banach space $Z$ consisting of functions taking values in a commutative algebra, and $\theta_0,\theta_0+\eta e_k\in\Theta$ for some $\eta\neq 0$, $1\leq k\leq m$. In the sequel, we will refer to \eqref{eq: product difference quotient} as the product formula. A straightforward calculation, using \eqref{vj}, \eqref{eq: same potentials}, \eqref{vj2} and \eqref{eq: product difference quotient}, shows that 
    \[
    w^{\eps,\eta}_{j,k,\ell}=v^{\eps}_{j,k,\ell}-\delta^{\ell}_{\eta}v^{\eps}_{j,k}=v^{\eps}_{j,k,\ell}-\delta^{\ell}_{\eta}V^{\eps}_{j,k}\in X
    \]
    solves
    \begin{equation}
         \label{eq: PDE for W j k ell}
            \begin{cases}
		(\partial_t^3+\alpha\partial_t^2+b(-\Delta)^s\partial_t+c(-\Delta)^s+q+\partial_\tau g_j(U_j^{\eps+\eta e_{\ell}}))w=-G_{j,k,\ell}^{\eps,\eta} \ & {\rm in}\ \Omega_T,\\
		w=0\ & {\rm in}\ (\Omega_e)_T, \\
		w(0)=\partial_t w(0)=\partial_t^2w(0)=0\ & {\rm in}\ \Omega,
    	\end{cases}
    \end{equation}
    where
    \begin{equation}
    \label{eq: G jkl}
    \begin{split}
        G_{j,k,\ell}^{\eps,\eta}&\vcentcolon =(\partial_\tau g_j(U_j^{\eps})-\partial_\tau g_j(U_j^{\eps+\eta e_{\ell}}))v^{\eps}_{j,k,\ell}\\
        &\quad +\partial_{\tau}^2 g_j(U_j^{\eps})V^{\eps}_{j,k}V^{\eps}_{j,\ell}-(\delta^{\ell}_\eta \partial_\tau g_j(U_j^{\eps}))V^{\eps}_{j,k}\\
            &=(\partial_\tau g_j(U_j^{\eps})-\partial_\tau g_j(U_j^{\eps+\eta e_{\ell}}))v^{\eps}_{j,k,\ell}\\
            &\quad +\partial_{\tau}^2 g_j(U_j^{\eps})V^{\eps}_{j,k}V^{\eps}_{j,\ell}-\left(\int_0^1 \partial^2_\tau g_j(U_j^\eps+\sigma\Delta_{\eta e_\ell } U_j^{\eps})\,d\sigma\right) V^{\eps}_{j,k}\delta_{\eta}^{\ell} U_j^{\eps}.
    \end{split}
    \end{equation} 
    In \eqref{eq: PDE for W j k ell} and \eqref{eq: G jkl}, we have once again used the notation $V_{j,k}^\eps=v_{j,k}^\eps-\varphi_k$ and $U_{j}^\eps=u_j^\eps -\eps\cdot\varphi$, which solve by construction the PDEs \eqref{eq: PDE for V j k}--\eqref{eq: PDE for U j rho}. The energy estimate for \eqref{eq: PDE for W j k ell} implies that it suffices to show the convergence $G^{\eps,\eta}_{j,k,\ell}\to 0$ in $L^2(\Omega_T)$ as $\eta\to 0$ to conclude \eqref{eq: derivative wrt eps 2}. Here, we make use of the fact that the constant $C>0$ in the energy estimate can be chosen independently of $\eta>0$. In fact, according to Theorem \ref{wellposednessv} the constant $C>0$ in the energy estimate depends increasingly on $\|\partial_\tau g_j(U_j^{\eps+\eta e_{\ell}})\|_{L^{\infty}(0,T;L^p(\Omega))}$, with $p$ satisfying \eqref{conp}, and a uniform estimate of this norm for small $\eta$ is provided by \eqref{eq: special case}. 
    
    Next, let us explain the convergence of $G^{\eps,\eta}_{j,k,\ell}\to 0$ in $L^2(\Omega_T)$ as $\eta\to 0$:
    \begin{enumerate}[(a)]
        \item The $L^2(\Omega_T)$ convergence of the second last line in \eqref{eq: G jkl} is ensured by Lemma \ref{lemma: potential}, the continuity property \eqref{eq: continuity prop partial tau gj} of $\partial_\tau g_j$ and \eqref{eq: convergence of difference of U eps}.
        \item The $L^2(\Omega_T)$ convergence of the last line in \eqref{eq: G jkl} follows from \eqref{eq: convergence of difference of U eps}, \eqref{eq: derivative wrt eps 1 reformulated}, Step 1 and Lemma \ref{lemma: Nemytskii operators}. In fact, we may write
    \begin{equation}
    \label{eq: Gjkl second term goes to zero}
    \begin{split}
         &\partial_{\tau}^2 g_j(U_j^{\eps})V^{\eps}_{j,k}V^{\eps}_{j,\ell}-\left(\int_0^1 \partial^2_\tau g_j(U_j^\eps+\sigma\Delta_{\eta e_\ell } U_j^{\eps})\,d\sigma\right) V^{\eps}_{j,k}\delta_{\eta}^{\ell} U_j^{\eps}\\
         &\quad =\left(\partial_{\tau}^2 g_j(U_j^{\eps})-\int_0^1 \partial^2_\tau g_j(U_j^\eps+\sigma\Delta_{\eta e_\ell } U_j^{\eps})\,d\sigma\right)V^{\eps}_{j,\ell}V^{\eps}_{j,k}\\
         &\quad \quad +\left(\int_0^1 \partial^2_\tau g_j(U_j^\eps+\sigma\Delta_{\eta e_\ell } U_j^{\eps})\,d\sigma\right) (V^{\eps}_{j,\ell}-\delta_{\eta}^{\ell} U_j^{\eps})V^{\eps}_{j,k}
    \end{split}
    \end{equation}
    The third line goes to zero in $L^2(\Omega_T)$, because of \eqref{eq: Holder estimate for partial 2 gj}, \eqref{eq: estimate partial squared gj}, \eqref{eq: derivative wrt eps 1 reformulated} and \eqref{eq: convergence of difference of U eps}. Moreover, from \eqref{eq: convergence of difference of U eps}, \eqref{eq: Holder estimate for partial 2 gj}, Lemma \ref{lemma: Nemytskii operators} and the dominated convergence theorem we infer that the second line converges to zero in $L^2(\Omega_T)$ as $\eta\to 0$.
    \end{enumerate}
    This finishes the proof.
    \end{proof}
	
	Using Lemmas \ref{linear1DtN} \& \ref{par2}, we obtain the following result.
	\begin{lemma}\label{DtN2}
		Suppose that the assumptions of Sections \ref{sec: first order linearization} and \ref{sec: second order linearization} hold. Then for all $1\leq j\leq 2$, $1\leq k,\ell\leq m$, sufficiently small $\eps$, $\varphi=(\varphi_1,\ldots,\varphi_m)\in C_c^{\infty}((\Omega_e)_T)$ and $\psi\in C_c^{\infty}((\Omega_e)_T)$ we have
       \begin{equation}
       \label{eq: second order linearization of DN map}
           \partial_{\eps_k\eps_\ell}^2\langle \Lambda_{q_j,g_j}(\eps\cdot \varphi),\psi\rangle=\partial_{\eps_\ell}\langle \Lambda^k_{q_j,\partial_\tau g_j}\varphi,\psi\rangle=\langle \Lambda^{k,\ell}_{q_j,\partial^2_{\tau}g_j}\varphi,\psi\rangle,
       \end{equation}
       where $\Lambda^{k,\ell}_{q_j,\partial^2_\tau g_j}$ is given by
       \begin{equation}
       \label{eq: second linearization of DN map}
        \langle \Lambda^{k,\ell}_{q_j,\partial^2_{\tau}g_j}\varphi,\psi\rangle\vcentcolon =\int_{0}^T \langle b(-\Delta)^{s/2}\partial_t \partial^2_{\eps_k \eps_\ell} u_j^{\eps}+c(-\Delta)^{s/2}\partial^2_{\eps_k \eps_\ell}  u_j^{\eps},(-\Delta)^{s/2}\psi\rangle_{L^2(\R^n)} \,dt.
       \end{equation}
	\end{lemma}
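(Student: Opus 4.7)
The plan is to bootstrap from the first-order linearization formula in Lemma \ref{linear1DtN} by applying the difference quotient $\delta^\ell_\eta$ in $\eps_\ell$ to both sides of \eqref{eq: first order linearization of DN map} and then passing to the limit $\eta \to 0$. The key input is the $X$-convergence of second-order difference quotients provided by Lemma \ref{par2}, which guarantees that the pointwise $\eps$-derivative in the claimed formula \eqref{eq: second order linearization of DN map} agrees with the Fréchet-type derivative obtained by differentiating the right-hand side of \eqref{eq: first linearization of DN map}.

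More concretely, fix sufficiently small $\eps \in \R^m$ and $1 \le k,\ell \le m$, and write $\eta \ne 0$ small enough so that $\eps + \eta e_\ell$ still lies in the range where all the previously established results apply. Applying $\delta^\ell_\eta$ to \eqref{eq: first order linearization of DN map} and using the linearity of the duality pairing gives
\begin{equation}
\delta^\ell_\eta \partial_{\eps_k}\langle \Lambda_{q_j,g_j}(\eps\cdot\varphi),\psi\rangle = \int_0^T \Big\langle b(-\Delta)^{s/2}\partial_t \big(\delta^\ell_\eta \partial_{\eps_k} u_j^{\eps}\big) + c(-\Delta)^{s/2}\big(\delta^\ell_\eta \partial_{\eps_k} u_j^{\eps}\big),\,(-\Delta)^{s/2}\psi\Big\rangle_{L^2(\R^n)}\,dt.
\end{equation}
By Lemma \ref{par2}, $\delta^\ell_\eta \partial_{\eps_k} u_j^{\eps} \to \partial^2_{\eps_k\eps_\ell} u_j^{\eps} = v_{j,k,\ell}^\eps$ in $X = W^{1,\infty}(0,T;\widetilde{H}^s(\Omega))\cap W^{2,\infty}(0,T;\widetilde{L}^2(\Omega))$ as $\eta \to 0$. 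Since $\psi \in C_c^\infty((\Omega_e)_T)$, the map $(-\Delta)^{s/2}\psi \in L^2(0,T;L^2(\R^n))$, and the continuity of $(-\Delta)^{s/2}\colon \widetilde{H}^s(\Omega) \to L^2(\R^n)$ together with Cauchy--Schwarz lets us pass to the limit inside the integral, yielding the right-hand side of \eqref{eq: second linearization of DN map}. In particular, the limit on the left-hand side exists and equals $\partial^2_{\eps_k\eps_\ell}\langle \Lambda_{q_j,g_j}(\eps\cdot\varphi),\psi\rangle$.

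For the middle equality in \eqref{eq: second order linearization of DN map}, one simply observes that $\langle \Lambda^k_{q_j,\partial_\tau g_j}\varphi,\psi\rangle$ depends on $\eps$ only through $\partial_{\eps_k} u_j^\eps$ appearing in \eqref{eq: first linearization of DN map}, so the same difference quotient argument (using Lemma \ref{par2} once more) produces $\partial_{\eps_\ell}\langle \Lambda^k_{q_j,\partial_\tau g_j}\varphi,\psi\rangle = \langle \Lambda^{k,\ell}_{q_j,\partial^2_\tau g_j}\varphi,\psi\rangle$.

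I do not expect any significant obstacle here, since all the heavy lifting has been done in Lemmas \ref{der1} and \ref{par2}. The only mildly delicate point is to verify that the differentiation under the duality pairing in $t$ is legitimate, but this is immediate from the $X$-convergence of $\delta^\ell_\eta \partial_{\eps_k} u_j^{\eps}$ combined with the boundedness of the linear functional $w \mapsto \int_0^T \langle b(-\Delta)^{s/2}\partial_t w + c(-\Delta)^{s/2}w,\,(-\Delta)^{s/2}\psi\rangle_{L^2(\R^n)}\,dt$ on $X$.
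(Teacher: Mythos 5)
Your proposal is correct and matches what the paper does: the paper states Lemma \ref{DtN2} follows from Lemmas \ref{linear1DtN} and \ref{par2} without elaboration, and your argument is exactly the natural fleshing-out, namely apply $\delta^\ell_\eta$ to \eqref{eq: first order linearization of DN map}, invoke the $X$-convergence $\delta^\ell_\eta\partial_{\eps_k}u_j^\eps\to\partial^2_{\eps_k\eps_\ell}u_j^\eps$ from Lemma \ref{par2}, and pass to the limit under the bounded duality pairing. No gaps.
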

	
	With the above preparations, we can uniquely determine the term $\partial_{\tau}^2g(0)$ from the (partial) DN map $\Lambda_{q_j,g_j}$. 

    In fact, Lemma \ref{DtN2} and equation \eqref{eq: equality of nonlin DN maps} demonstrate that $w_\eps=\partial_{\varepsilon_k\varepsilon_\ell}^2u_1^\varepsilon-\partial_{\varepsilon_k\varepsilon_\ell}^2u_2^\varepsilon$ satisfies 
    \[
        (b\partial_tw_\eps+cw_\eps)|_{(W_2)_T}=(-\Delta)^s(b\partial_tw_\eps+cw_\eps)|_{(W_2)_T}=0.
    \]
    So, the UCP for the fractional Laplacian guarantees that
    \[
        \begin{cases}
            \partial_t w_\eps=-\frac{c}{b}w_\eps\text{ in }\R^n_T,\\
            w_\eps(0)=0\text{ in }\R^n.
        \end{cases}
    \]
    Hence, we have $w_\eps=0$ in $\R^n_T$. Taking the limit $\eps\to 0$ of \eqref{vj2} reveals that
    \begin{equation}
    \label{eq: equal second order derv}
        (\partial_{\tau}^2g_1(0)-\partial_{\tau}^2g_2(0))(\partial_{\varepsilon_k}u^\varepsilon|_{\varepsilon=0})(\partial_{\varepsilon_{\ell}}u^\varepsilon|_{\varepsilon=0})=0
    \end{equation}
    in $L^2(\Omega_T)$. Here, we once again used Lemma \ref{lemma: Nemytskii operators} and the explanations of Step 1 in the proof of Lemma \ref{par2}. Since $\partial_{\varepsilon_k}u^\varepsilon|_{\varepsilon=0}$ is the unique solution of \eqref{vjj} and $\partial_{\tau}^2g_j(0)\in L^{\infty}(\Omega_T)$, we infer from the Runge approximation (Proposition \ref{Runge}) and $q=q_1=q_2$ that 
    \begin{equation}
    \label{eq: equality of second der at origin}
        \partial_{\tau}^2g_1(0)=\partial_{\tau}^2g_2(0)\text{ in }\Omega_T.
    \end{equation}
	
	\subsection{Higher order linearization and recovery of \texorpdfstring{$\partial_{\tau}^{N} g_j(0)$}{partial g} }
    \label{sec: higher order linearization}
	
	Throughout this section, we assume that $3\leq N\leq m$. Let us start by noting that Faà de Bruno's formula shows that $\partial^N_{\varepsilon_{k_1}\cdots\varepsilon_{k_N}}g_j(u_j^\varepsilon)$ can be computed as
	\begin{equation}
	    \label{parml}
	\begin{split}
	    \partial^N_{\varepsilon_{k_1}\cdots\varepsilon_{k_N}}g_j(u_j^\varepsilon)&=\sum_{\pi=\{\pi^1,\ldots,\pi^M\}\in\Pi_N}\partial^M_\tau g_j(u_j^{\eps})\prod_{\ell=1}^M \partial^{N_{\ell}}_{\eps_{k_{\pi^{\ell}_1}}\cdots \,\eps_{k_{\pi^{\ell}_{N_{\ell}}}}}u_j^{\eps}\\
        &=\partial_\tau g_j(u_j^\eps)\partial^N_{\varepsilon_{k_1}\cdots\varepsilon_{k_N}} u_j^{\eps}\\
        &\quad+\sum_{\pi=\{\pi^1,\ldots,\pi^M\}\in\Pi'_N}\partial^M_\tau g_j(u_j^{\eps})\prod_{{\ell}=1}^M \partial^{N_{\ell}}_{\eps_{k_{\pi^{\ell}_1}}\cdots \,\eps_{k_{\pi^{\ell}_{N_{\ell}}}}}u_j^{\eps}
	\end{split}
	\end{equation}
    (see, for example,~\cite{hardy2006combinatorics}).
    Here, $\Pi_N$ denotes the set of all partitions $\pi=\{\pi^1,\ldots,\pi^M\}$ of the set $\{1,\ldots,N\}$ and we have put $\Pi'_N=\Pi_N\setminus \{\{1,\ldots,N\}\}$. In the above identity, we decomposed the $\ell$-th block of the partition $\pi=\{\pi^1,\ldots,\pi^M\}$ as $\pi^{\ell}=\{\pi^{\ell}_1,\ldots,\pi^{\ell}_{N_{\ell}}\}$, where the natural numbers $1\leq N_1,\ldots,N_M\leq N$ satisfy $\sum_{{\ell}=1}^M N_{\ell}=N$. To shorten some expressions appearing in the text below, we introduce the following abbreviation:
    \begin{equation}
    \label{eq: def of slashed notation}
        \slashed{\partial}^{N-1}_{\eps}g_j(u_j^\varepsilon)\vcentcolon = \sum_{\pi=\{\pi^1,\ldots,\pi^M\}\in\Pi'_N}\partial^M_\tau g_j(u_j^{\eps})\prod_{{\ell}=1}^M \partial^{N_{\ell}}_{\eps_{k_{\pi^{\ell}_1}}\cdots \,\eps_{k_{\pi^{\ell}_{N_{\ell}}}}}u_j^{\eps}.
    \end{equation}
    Hence, if we formally take the $N$-th order derivative $\partial^{N}_{\eps_{k_1},\ldots,\eps_{k_N}}$ of the PDE \eqref{vj2}, then the function $v^{\eps}_{j,k_1,\ldots,k_N}\vcentcolon = \partial^{N}_{\eps_{k_1}\cdots\,\eps_{k_N}}u_j^{\eps}$ should solve
	\begin{equation}
	    \label{vjl}
	\begin{cases}
		(\partial_t^3+\alpha\partial_t^2+b(-\Delta)^s\partial_t+c(-\Delta)^s+q+\partial_\tau g_j(u_j^\eps))v =-\slashed{\partial}^{N-1}_{\eps}g_j(u_j^\varepsilon)\ & {\rm in}\ \Omega_T,\\
		v=0\ & {\rm in}\ (\Omega_e)_T, \\
		v(0)=\partial_tv(0)=\partial_t^2 v(0)=0\ & {\rm in}\ \Omega.
	\end{cases}
	\end{equation}
In a similar spirit as in the cases $N=1$ and $N=2$, we can prove the following lemma.

    \begin{lemma}
    \label{lemma: sol for higher order lin}
        Under the above assumptions, the PDE \eqref{vjl} has a unique solution $v_{j,k_1, \ldots,k_N}^\varepsilon\in X$ for any sufficiently small $\eps\in\R^m$, $1\leq k_1,\ldots,k_N\leq m$ and $j=1,2$. Furthermore, we have
             \begin{equation}
    \label{eq: derivative wrt eps N}
        v_{j,k_1, \ldots,k_N}^\varepsilon=\partial^{N}_{\eps_{k_1}\cdots\,\eps_{k_N}}u_j^{\eps} \vcentcolon =\lim\limits_{\eta\to 0}\delta^{k_N}_{\eta}\partial^{N-1}_{\eps_{k_1}\cdots\,\eps_{k_{N-1}}}u_j^{\eps}\text{ in }X.
    \end{equation}
    \end{lemma}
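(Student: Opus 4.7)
The plan is to argue by induction on $N$, with the base cases $N=1$ and $N=2$ being precisely Lemmas~\ref{der1} and \ref{par2}. Assume the assertion holds for all orders $1\le N'\le N-1$, so that the functions $v^{\eps}_{j,k_1,\ldots,k_{N'}}=\partial^{N'}_{\eps_{k_1}\cdots\eps_{k_{N'}}}u_j^{\eps}$ are well-defined elements of $X$ and coincide with the unique solution of the analogue of \eqref{vjl} at order $N'$. We then split the proof into two steps mirroring the structure of the proof of Lemma~\ref{par2}.

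\textbf{Step 1: Unique solvability of \eqref{vjl}.} Invoking Corollary~\ref{wellu} together with the $L^{\infty}(0,T;L^p(\Omega))$ regularity of $\partial_\tau g_j(u_j^{\eps})$ (cf.~\eqref{eq: regularity of partial g u eps}), it suffices to verify that the source term $\slashed{\partial}^{N-1}_{\eps}g_j(u_j^{\eps})$ defined in \eqref{eq: def of slashed notation} belongs to $L^{2}(0,T;\widetilde{L}^{2}(\Omega))$. Each summand has the form $\partial^{M}_{\tau}g_j(u_j^{\eps})\prod_{\ell=1}^{M}\partial^{N_\ell}_{\eps}u_j^{\eps}$ with $2\le M\le N\le m$ and $N_1+\cdots+N_M=N$, $N_\ell<N$, so each factor $\partial^{N_\ell}_{\eps}u_j^{\eps}$ is controlled in $X$ by the induction hypothesis. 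In the range $2s>n$ the Morrey embedding \eqref{eq: Morrey embedding} trivializes the estimate; in the critical case $2s=n$ one applies Proposition~\ref{critical} in each factor; in the subcritical range $2s<n$ the growth condition \eqref{condg} together with the Sobolev embedding $\widetilde{H}^{s}(\Omega)\hookrightarrow L^{2n/(n-2s)}(\Omega)$ and H\"older's inequality yield the bound, where the compatibility is guaranteed by the constraints $m\le n/(n-2s)$ and $k\le n/((n-2s)m)$ imposed in \ref{A3}. For the critical case $M=m$, the uniform bound $|\partial^m_\tau g|\lesssim 1$ from \eqref{condg} is what is needed.

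\textbf{Step 2: Convergence of the difference quotient.} Set
\[
w^{\eps,\eta}_{j,k_1,\ldots,k_N}\vcentcolon= v^{\eps}_{j,k_1,\ldots,k_N}-\delta^{k_N}_{\eta}v^{\eps}_{j,k_1,\ldots,k_{N-1}}\in X.
\]
By the induction hypothesis, $v^{\eps}_{j,k_1,\ldots,k_{N-1}}$ and $v^{\eps+\eta e_{k_N}}_{j,k_1,\ldots,k_{N-1}}$ solve the order-$(N-1)$ version of \eqref{vjl} at the parameters $\eps$ and $\eps+\eta e_{k_N}$, respectively. Subtracting these two PDEs, dividing by $\eta$, and comparing with \eqref{vjl}, we find that $w^{\eps,\eta}_{j,k_1,\ldots,k_N}$ satisfies a third order nonlocal wave equation with zero initial and exterior data and a source $G^{\eps,\eta}_{j,k_1,\ldots,k_N}$. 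The energy estimate of Theorem~\ref{wellposednessv} then reduces the problem to showing $G^{\eps,\eta}_{j,k_1,\ldots,k_N}\to 0$ in $L^{2}(0,T;\widetilde{L}^{2}(\Omega))$ as $\eta\to 0$, where the constant in the energy estimate is uniform in $\eta$ thanks to the uniform control \eqref{eq: special case} on $\|\partial_\tau g_j(u_j^{\eps+\eta e_{k_N}})\|_{L^{\infty}(0,T;L^p(\Omega))}$ for small $\eta$.

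To analyze $G^{\eps,\eta}_{j,k_1,\ldots,k_N}$ I will apply the product rule \eqref{eq: product difference quotient} for the difference operator $\delta^{k_N}_{\eta}$ to each product $\prod_{\ell=1}^{M}\partial^{N_\ell}_{\eps}u_j^{\eps}$ appearing in $\slashed{\partial}^{N-2}_{\eps}g_j(u_j^{\eps})$. This produces, for each partition, a sum of terms in which either one factor $\partial^{N_\ell}_{\eps}u_j^{\eps}$ is replaced by its difference quotient $\delta^{k_N}_{\eta}\partial^{N_\ell}_{\eps}u_j^{\eps}$ (which converges in $X$ to the corresponding $(N_\ell+1)$-th derivative, by the induction hypothesis since $N_\ell+1\le N$), or the factor $\partial^{M-1}_{\tau}g_j$ is transformed via the mean value representation $\delta^{k_N}_{\eta}\partial^{M-1}_{\tau}g_j(u_j^{\eps})=\bigl(\int_0^1\partial^{M}_{\tau}g_j(u_j^{\eps}+\sigma\Delta_{\eta e_{k_N}}u_j^{\eps})\,d\sigma\bigr)\delta^{k_N}_{\eta}u_j^{\eps}$ and the integral is driven to $\partial^{M}_{\tau}g_j(u_j^{\eps})$ via Lemma~\ref{lemma: Nemytskii operators} combined with $\Delta_{\eta e_{k_N}}u_j^{\eps}\to 0$ in $X$ (cf.~\eqref{eq: convergence of difference of U eps}). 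After rearrangement, the new terms exactly reproduce, in the limit $\eta\to 0$, the expression $\slashed{\partial}^{N-1}_{\eps}g_j(u_j^{\eps})$ prescribed by Fa\`a di Bruno's formula \eqref{parml}, while the remaining error terms vanish.

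\textbf{Main obstacle.} The principal difficulty is bookkeeping: the number of terms produced by Fa\`a di Bruno's formula and by iterated applications of the product rule grows rapidly with $N$, and each of them must be shown to converge in $L^2(\Omega_T)$ using the correct multilinear H\"older/Sobolev estimate, whose admissibility rests delicately on the inequalities $m\le n/(n-2s)$ and $k\le \min\bigl(n/((n-2s)m),2s/(n-2s)\bigr)$ in \ref{A3}. A clean way to organize this is to prove, inductively along with \eqref{eq: derivative wrt eps N}, the multilinear estimate
\[
\bigl\|\partial^{M}_{\tau}g_j(u_j^{\eps})\prod_{\ell=1}^{M}\partial^{N_\ell}_{\eps}u_j^{\eps}\bigr\|_{L^{2}(\Omega_T)}\lesssim \bigl(1+\|u_j^{\eps}\|_{X}^{k}\bigr)\prod_{\ell=1}^{M}\|\partial^{N_\ell}_{\eps}u_j^{\eps}\|_{X},
\]
valid for all partitions with $\sum N_\ell\le m$, which simultaneously underwrites Step~1 and the continuity arguments in Step~2.
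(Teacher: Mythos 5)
Your proposal is correct and follows essentially the same route as the paper's proof: induction on $N$ with base cases Lemmas~\ref{der1} and \ref{par2}, Step~1 establishing $L^2$-integrability of the Fa\`a di Bruno source via the multilinear H\"older/Sobolev estimate under \ref{A3}, and Step~2 reducing the difference-quotient convergence to $G^{\eps,\eta}_{j,k_1,\ldots,k_N}\to 0$ in $L^2(\Omega_T)$ via the energy estimate and the product rule for $\delta^{k_N}_\eta$. The single piece you defer to ``bookkeeping'' is precisely what the paper makes explicit in \eqref{eq: decomp N th order deriv}: the bijection splitting each $\pi\in\Pi'_N$ according to whether $\{N\}$ is a singleton block (producing a partition in $\Pi_{N-1}$ with an extra factor $\partial^{\widetilde M+1}_\tau g_j\,\partial_{\eps_{k_N}}U_j^\eps$) or $N$ is absorbed into a larger block (producing a partition in $\Pi'_{N-1}$ with one derivative index promoted by one); writing out this identity is what makes the matching with the Leibniz expansion of $\delta^{k_N}_\eta\slashed{\partial}^{N-2}_\eps g_j(U_j^\eps)$ algebraically transparent, so you should record it rather than merely assert the cancellation.
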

    \begin{proof}
        We prove the conclusion of Lemma \ref{lemma: sol for higher order lin} in two steps, both of which are based on an induction argument. In view of Lemma \ref{der1} and \ref{par2}, we may assume that the assertions of Lemma \ref{lemma: sol for higher order lin} hold for all $0\leq \ell \leq N-1$. \\

    \noindent \textit{Step 1.} First of all, let us observe that by Corollary \ref{wellu} and \eqref{eq: regularity of partial g u eps}, it suffices to show that $\slashed{\partial}^{N-1}_{\eps}g_j(u_j^\varepsilon)\in L^2(\Omega_T)$, or equivalently, 
    \begin{equation}
    \label{eq: L2 integrability of source Nth lin}
        \partial^M_\tau g_j(u_j^{\eps})\prod_{{\ell}=1}^M \partial^{N_{\ell}}_{\eps_{k_{\pi^{\ell}_1}}\cdots \,\eps_{k_{\pi^{\ell}_{N_{\ell}}}}}u_j^{\eps}\in L^2(\Omega_T)
    \end{equation}
   for all $\pi=\{\pi^1,\ldots,\pi^M\}\in\Pi'_N$, where $2\leq M\leq N$ and $1\leq N_{\ell}\leq N-1$. 
   
   Next, let us recall that our assumptions on the nonlinearity $g_j$ include the following condition: There exist $m\in \N_{\geq 2}$ and $k\geq 0$ such that
       \begin{equation}
        \label{cond higher order case}
            |\partial_{\tau}^{\ell} g_j(x,t,\tau)|\lesssim\begin{cases}
                 1+|\tau|^k, &\text{ when }\ell\leq m-1,\\
                1, &\text{ when }\ell = m
            \end{cases}
        \end{equation}
        for a.e.~$(x,t)\in\Omega_T$ and $\tau\in\mathbb R$. Moreover, if $2s<n$, then the constants $m,k$ need to fulfill the restrictions
        \begin{equation}
        \label{eq: restrictions on m and k higher order}
            m\leq \frac{n}{n-2s} \text{ and } k\leq \min\left(\frac{n}{(n-2s)m},\frac{2s}{n-2s}\right).
        \end{equation}
        Observe that in the range $2s\geq n$, the Sobolev embedding and \eqref{cond higher order case} demonstrate the inclusion \eqref{eq: L2 integrability of source Nth lin}. Hence, it remains to prove the assertion in the case $2s<n$ in which we may assume without loss of generality that $k>0$.

        Suppose for the time being that there exists $1\leq p\leq \infty$ such that
    \begin{equation}
    \label{eq: Hölder Mth lin}
       \frac{1}{p^{*}}\vcentcolon =\frac{1}{p}+M\frac{n-2s}{2n}\leq\frac{1}{2}\text{ and } \partial^M_\tau g_j(u_j^{\eps})\in L^{\infty}(0,T;L^p(\Omega)),
    \end{equation}
    then H\"older's and Sobolev's inequality ensure the estimate
    \begin{equation}
    \label{eq: Holder estimate for partial M gj}
    \begin{split}
         &\Bigg\|\partial^M_\tau g_j(u_j^{\eps})\prod_{{\ell}=1}^M \partial^{N_{\ell}}_{\eps_{k_{\pi^{\ell}_1}}\cdots \,\eps_{k_{\pi^{\ell}_{N_{\ell}}}}}u_j^{\eps}\Bigg\|_{L^2(\Omega_T)}\\
         &\lesssim \Bigg\|\partial^M_\tau g_j(u_j^{\eps})\prod_{{\ell}=1}^M \partial^{N_{\ell}}_{\eps_{k_{\pi^{\ell}_1}}\cdots \,\eps_{k_{\pi^{\ell}_{N_{\ell}}}}}u_j^{\eps}\Bigg\|_{L^{\infty}(0,T;L^{p^{*}}(\Omega))}\\
        &\quad\lesssim \big\|\partial^M_\tau g_j(u_j^{\eps})\big\|_{L^{\infty}(0,T;L^p(\Omega))}\prod_{{\ell}=1}^M\Bigg\|\partial^{N_{\ell}}_{\eps_{k_{\pi^{\ell}_1}}\cdots \,\eps_{k_{\pi^{\ell}_{N_{\ell}}}}}u_j^{\eps}\Bigg\|_{L^{\infty}(0,T;H^s(\R^n))}
    \end{split}
    \end{equation}
    for a.e.~$0<t<T$. For $M= m$, we can take $p=\infty$. In the case $M\leq m-1$ we put
    \begin{equation}
    \label{eq: choice of p higher}
        p=\frac{2n}{k(n-2s)}\geq 2
    \end{equation}
     (see~\eqref{eq: restrictions on m and k higher order}). On the one hand, this choice implies
    \[
        \frac{1}{p}+M\frac{n-2s}{2n}\leq \frac{k(n-2s)}{2n}+M\frac{n-2s}{2n}\leq \frac{1}{2m}+\frac{M}{2 m}= \frac{M+1}{2m}\leq \frac{1}{2}.
    \]
    On the other hand, by Sobolev's embedding theorem and Lemma \ref{lemma: Nemytskii operators}, the condition \eqref{eq: choice of p higher} guarantees $\partial^M_\tau g_j(u_j^{\eps})\in L^{\infty}(0,T;L^p(\Omega))$ (see~\eqref{eq: boundedness Nemytskii}). From \eqref{cond higher order case}, $2\leq M\leq m$, \eqref{eq: choice of p higher} and Sobolev's embedding theorem the estimate
    \begin{equation}
    \begin{split}
         \|\partial^M_\tau g_j(u_j^{\eps})\|_{L^{\infty}(0,T;L^p(\Omega))}\lesssim 1+\|u_\eps\|^k_{L^{\infty}(0,T;H^s(\R^n))},
    \end{split}
    \end{equation}
    where we choose $p$ as explained above. Therefore, we may deduce \eqref{eq: L2 integrability of source Nth lin}, with the explicit bound
    \begin{equation}
    \label{eq: estimate source Nth linearization}
    \begin{split}
        &\Bigg\|\partial^M_\tau g_j(u_j^{\eps})\prod_{{\ell}=1}^M \partial^{N_{\ell}}_{\eps_{k_{\pi^{\ell}_1}}\cdots \,\eps_{k_{\pi^{\ell}_{N_{\ell}}}}}u_j^{\eps}\Bigg\|_{L^2(\Omega_T)}\\
        &\lesssim (1+\|u_\eps\|^k_{L^{\infty}(0,T;H^s(\R^n))})\prod_{{\ell}=1}^M\Bigg\|\partial^{N_{\ell}}_{\eps_{k_{\pi^{\ell}_1}}\cdots \,\eps_{k_{\pi^{\ell}_{N_{\ell}}}}}u_j^{\eps}\Bigg\|_{L^{\infty}(0,T;H^s(\R^n))}<\infty.
    \end{split}
    \end{equation}
    Hence, we have shown that the problem \eqref{vjl} has a unique solution $v^{\eps}_{j,k_1,\ldots,k_N}\in X$. \\
        
 \noindent \textit{Step 2.} In this step, we establish the convergence \eqref{eq: derivative wrt eps N}.
 Next, let us note that \[
w^{\eps,\eta}_{j,k_1,\ldots,k_N}=v^{\eps}_{j,k_1,\ldots,k_N}-\delta^{k_N}_{\eta}v^{\eps}_{j,k_1,\ldots,k_{N-1}}\in X
 \] 
 solves
    \begin{equation}
         \label{eq: PDE for W j k ell Nth order}
            \begin{cases}
		(\partial_t^3+\alpha\partial_t^2+b(-\Delta)^s\partial_t+c(-\Delta)^s+q+\partial_\tau g_j(U_j^{\eps+\eta e_{k_N}}))w=-G_{j,k_1,\ldots,k_N}^{\eps,\eta}  & {\rm in}\, \Omega_T,\\
		w=0 & {\rm in}\, (\Omega_e)_T, \\
		w(0)=\partial_t w(0)=\partial_t^2w(0)=0 & {\rm in}\, \Omega,
    	\end{cases}
    \end{equation}
    where
    \begin{equation}
    \label{eq: G jkl Nth order}
    \begin{split}
        G_{j,k_1,\ldots,k_N}^{\eps,\eta} &\vcentcolon =(\partial_\tau g_j(U_j^{\eps})-\partial_\tau g_j(U_j^{\eps+\eta e_{k_N}}))v_{j,k_1,\ldots,k_N}^{\eps}\\
        &\quad + \slashed{\partial}^{N-1}_{\eps}g_j(U_j^\varepsilon)-\delta_{\eta}^{k_N}\slashed{\partial}^{N-2}_{\eps}g_j(U_j^\varepsilon)\\
        &\quad -(\delta_{\eta}^{k_N}\partial_{\tau}g_j(U_j^{\eps}))v_{j,k_1,\ldots,k_{N-1}}^{\eps}. 
    \end{split}
    \end{equation} 
    Due to the energy estimate for \eqref{eq: PDE for W j k ell Nth order}, it suffices to show that $G^{\eps,\eta}_{j,k,\ell}\to 0$ in $L^2(\Omega_T)$ as $\eta\to 0$ to conclude \eqref{eq: derivative wrt eps 2}. By Step 2 of the proof of Lemma \ref{par2}, we already know that the first line of \eqref{eq: G jkl Nth order} goes to zero in $L^2(\Omega_T)$ as $\eta\to 0$ and hence we only need to prove that the last line of \eqref{eq: G jkl Nth order} goes to zero in $L^2(\Omega_T)$.\\

    To this end, we record one possible generalization of \eqref{eq: product difference quotient}. Let $K\in\N$ and suppose that $(f_k)_{1\leq k\leq K}$ is a family of maps from an open subset $\Theta\subset \R^m$ into a Banach space $Z$, consisting of functions taking values in some (commutative) algebra. If $\theta_0,\theta_0+\eta e_{\ell}\in\Theta$ for $\eta\neq 0$ and $1\leq \ell\leq m$, then we have
    \begin{equation}
    \label{eq: K th order product rule}
        \delta^{\ell}_{\eta}\left(\prod_{k=1}^K f_k\right)(\theta_0)=\sum_{k=1}^K\left(\prod_{j=1}^{k-1}f_j(\theta_0)\right)\delta_{\eta}^{\ell}f_k(\theta_0)\left(\prod_{j=k+1}^{K}f_j(\theta_0+\eta e_{\ell})\right),
    \end{equation}
    where we use the convention that the product over an empty index set is equal to one. Now, using \eqref{eq: def of slashed notation} and the product rules \eqref{eq: product difference quotient} and \eqref{eq: K th order product rule}, we can rewrite the last line of \eqref{eq: G jkl Nth order} as follows:
    \begin{equation}
    \label{eq: new form of last line of G Nth order}
    \begin{split}
            &\slashed{\partial}^{N-1}_{\eps}g_j(U_j^\varepsilon)-\delta_{\eta}^{k_N}\slashed{\partial}^{N-2}_{\eps}g_j(U_j^\varepsilon)-(\delta_{\eta}^{k_N}\partial_{\tau}g_j(U_j^{\eps}))\partial_{\eps_{k_1}\cdots \eps_{k_{N-1}}}^{N-1}U_j^{\eps}\\
            &=\sum_{\pi=\{\pi^1,\ldots,\pi^M\}\in\Pi'_N}\partial^M_\tau g_j(U_j^{\eps})\prod_{{\ell}=1}^M \partial^{N_{\ell}}_{\eps_{k_{\pi^{\ell}_1}}\cdots \,\eps_{k_{\pi^{\ell}_{N_{\ell}}}}}U_j^{\eps}\\
            &\quad -\delta_{\eta}^{k_N}\sum_{\widetilde{\pi}=\{\widetilde{\pi}^1,\ldots,\widetilde{\pi}^{\widetilde{M}}\}\in\Pi'_{N-1}}\partial^{\widetilde{M}}_{\tau} g_j(U_j^{\eps})\prod_{{\ell}=1}^{\widetilde{M}} \partial^{\widetilde{N}_{\ell}}_{\eps_{k_{\widetilde{\pi}^{\ell}_1}}\cdots \,\eps_{k_{\widetilde{\pi}^{\ell}_{\widetilde{N}_{\ell}}}}} U_j^{\eps}\\
            &\quad -(\delta_{\eta}^{k_N}\partial_{\tau}g_j(U_j^{\eps}))\partial_{\eps_{k_1}\cdots \eps_{k_{N-1}}}^{N-1}U_j^{\eps}\\
            &=\sum_{\pi=\{\pi^1,\ldots,\pi^M\}\in\Pi'_N}\partial^M_\tau g_j(U_j^{\eps})\prod_{{\ell}=1}^M \partial^{N_{\ell}}_{\eps_{k_{\pi^{\ell}_1}}\cdots \,\eps_{k_{\pi^{\ell}_{N_{\ell}}}}}U_j^{\eps}\\
            &\quad -\sum_{\widetilde{\pi}=\{\widetilde{\pi}^1,\ldots,\widetilde{\pi}^{\widetilde{M}}\}\in\Pi'_{N-1}}\partial^{\widetilde{M}}_{\tau} g_j(U_j^{\eps+\eta e_{k_N}})\delta_{\eta}^{k_N}\prod_{{\ell}=1}^{\widetilde{M}} \partial^{\widetilde{N}_{\ell}}_{\eps_{k_{\widetilde{\pi}^{\ell}_1}}\cdots \,\eps_{k_{\widetilde{\pi}^{\ell}_{\widetilde{N}_{\ell}}}}} U_j^{\eps}\\
            &\quad -\sum_{\widetilde{\pi}=\{\widetilde{\pi}^1,\ldots,\widetilde{\pi}^{\widetilde{M}}\}\in\Pi'_{N-1}}(\delta^{k_N}_{\eta}\partial^{\widetilde{M}}_{\tau} g_j(U_j^{\eps}))\prod_{{\ell}=1}^{\widetilde{M}} \partial^{\widetilde{N}_{\ell}}_{\eps_{k_{\widetilde{\pi}^{\ell}_1}}\cdots \,\eps_{k_{\widetilde{\pi}^{\ell}_{\widetilde{N}_{\ell}}}}} U_j^{\eps}\\
            &\quad -(\delta_{\eta}^{k_N}\partial_{\tau}g_j(U_j^{\eps}))\partial_{\eps_{k_1}\cdots \eps_{k_{N-1}}}^{N-1}U_j^{\eps}\\
            &=\sum_{\pi=\{\pi^1,\ldots,\pi^M\}\in\Pi'_N}\partial^M_\tau g_j(U_j^{\eps})\prod_{{\ell}=1}^M \partial^{N_{\ell}}_{\eps_{k_{\pi^{\ell}_1}}\cdots \,\eps_{k_{\pi^{\ell}_{N_{\ell}}}}}U_j^{\eps}\\
            &\quad -\sum_{\widetilde{\pi}=\{\widetilde{\pi}^1,\ldots,\widetilde{\pi}^{\widetilde{M}}\}\in\Pi_{N-1}}(\delta^{k_N}_{\eta}\partial^{\widetilde{M}}_{\tau} g_j(U_j^{\eps}))\prod_{{\ell}=1}^{\widetilde{M}} \partial^{\widetilde{N}_{\ell}}_{\eps_{k_{\widetilde{\pi}^{\ell}_1}}\cdots \,\eps_{k_{\widetilde{\pi}^{\ell}_{\widetilde{N}_{\ell}}}}} U_j^{\eps}\\
            &\quad -\sum_{\widetilde{\pi}=\{\widetilde{\pi}^1,\ldots,\widetilde{\pi}^{\widetilde{M}}\}\in\Pi'_{N-1}}\partial^{\widetilde{M}}_{\tau} g_j(U_j^{\eps+\eta e_{k_N}})\delta_{\eta}^{k_N}\prod_{{\ell}=1}^{\widetilde{M}} \partial^{\widetilde{N}_{\ell}}_{\eps_{k_{\widetilde{\pi}^{\ell}_1}}\cdots \,\eps_{k_{\widetilde{\pi}^{\ell}_{\widetilde{N}_{\ell}}}}} U_j^{\eps}\\
            &=\sum_{\pi=\{\pi^1,\ldots,\pi^M\}\in\Pi'_N}\partial^M_\tau g_j(U_j^{\eps})\prod_{{\ell}=1}^M \partial^{N_{\ell}}_{\eps_{k_{\pi^{\ell}_1}}\cdots \,\eps_{k_{\pi^{\ell}_{N_{\ell}}}}}U_j^{\eps}\\
            &\quad -\sum_{\widetilde{\pi}=\{\widetilde{\pi}^1,\ldots,\widetilde{\pi}^{\widetilde{M}}\}\in\Pi_{N-1}}(\delta^{k_N}_{\eta}\partial^{\widetilde{M}}_{\tau} g_j(U_j^{\eps}))\prod_{{\ell}=1}^{\widetilde{M}} \partial^{\widetilde{N}_{\ell}}_{\eps_{k_{\widetilde{\pi}^{\ell}_1}}\cdots \,\eps_{k_{\widetilde{\pi}^{\ell}_{\widetilde{N}_{\ell}}}}} U_j^{\eps}\\
            &\quad -\sum_{\widetilde{\pi}=\{\widetilde{\pi}^1,\ldots,\widetilde{\pi}^{\widetilde{M}}\}\in\Pi'_{N-1}}\partial^{\widetilde{M}}_{\tau} g_j(U_j^{\eps+\eta e_{k_N}})\sum_{L=1}^{\widetilde{M}}\left(\prod_{{\ell}=1}^{L-1} \partial^{\widetilde{N}_{\ell}}_{\eps_{k_{\widetilde{\pi}^{\ell}_1}}\cdots \,\eps_{k_{\widetilde{\pi}^{\ell}_{\widetilde{N}_{\ell}}}}} U_j^{\eps}\right)\\
            &\quad \hspace{3cm}\quad \cdot\delta_{\eta}^{k_N}\partial^{\widetilde{N}_{L}}_{\eps_{k_{\widetilde{\pi}^{L}_1}}\cdots \,\eps_{k_{\widetilde{\pi}^{L}_{\widetilde{N}_{L}}}}} U_j^{\eps}\left(\prod_{{\ell}=L+1}^{\widetilde{M}} \partial^{\widetilde{N}_{\ell}}_{\eps_{k_{\widetilde{\pi}^{\ell}_1}}\cdots \,\eps_{k_{\widetilde{\pi}^{\ell}_{\widetilde{N}_{\ell}}}}} U_j^{\eps+\eta e_{k_N}}\right).
        \end{split}
    \end{equation}
    To proceed, we observe that in every partition $\pi=(\pi^1,\ldots,\pi^M)\in \Pi'_N$ either $\pi^M=\{N\}$ and $\widetilde{\pi}=\{\pi^1,\ldots,\pi^{M-1}\}$ is a partition of $\{1,\ldots,N-1\}$ or $N\in \pi^{\ell}$ for some $1\leq \ell\leq M$ and $|\pi^\ell|\geq 2$. Hence, we may write
    \begin{equation}
    \label{eq: decomp N th order deriv}
        \begin{split}
            &\sum_{\pi=\{\pi^1,\ldots,\pi^M\}\in\Pi'_N}\partial^M_\tau g_j(U_j^{\eps})\prod_{{\ell}=1}^M \partial^{N_{\ell}}_{\eps_{k_{\pi^{\ell}_1}}\cdots \,\eps_{k_{\pi^{\ell}_{N_{\ell}}}}}U_j^{\eps}\\
            &=\sum_{\widetilde{\pi}=\{\widetilde{\pi}^1,\ldots,\widetilde{\pi}^{\widetilde{M}}\}\in\Pi_{N-1}}\partial^{\widetilde{M}+1}_{\tau} g_j(U_j^{\eps})\partial_{\eps_{k_N}}U_j^{\eps}\prod_{{\ell}=1}^{\widetilde{M}} \partial^{\widetilde{N}_{\ell}}_{\eps_{k_{\widetilde{\pi}^{\ell}_1}}\cdots \,\eps_{k_{\widetilde{\pi}^{\ell}_{\widetilde{N}_{\ell}}}}}U_j^{\eps}\\
            &\quad +\sum_{\widetilde{\pi}=\{\widetilde{\pi}^1,\ldots,\widetilde{\pi}^{\widetilde{M}}\}\in\Pi'_{N-1}}\partial^{\widetilde{M}}_{\tau} g_j(U_j^{\eps})\sum_{L=1}^{\widetilde{M}}\left(\prod_{{\ell}=1}^{L-1} \partial^{\widetilde{N}_{\ell}}_{\eps_{k_{\widetilde{\pi}^{\ell}_1}}\cdots \,\eps_{k_{\widetilde{\pi}^{\ell}_{\widetilde{N}_{\ell}}}}} U_j^{\eps}\right)\partial^{\widetilde{N}_{L}+1}_{\eps_{k_{\widetilde{\pi}^{L}_1}}\cdots \,\eps_{k_{\widetilde{\pi}^{L}_{\widetilde{N}_{L}}}}\,\eps_{k_N}} U_j^{\eps}\\
            &\hspace{4cm}\quad\cdot\left(\prod_{{\ell}=L+1}^{\widetilde{M}} \partial^{\widetilde{N}_{\ell}}_{\eps_{k_{\widetilde{\pi}^{\ell}_1}}\cdots \,\eps_{k_{\widetilde{\pi}^{\ell}_{\widetilde{N}_{\ell}}}}} U_j^{\eps}\right).
        \end{split}
    \end{equation}
    Therefore, we obtain
        \begin{equation}
        \label{eq: L2 conv Nth order}
            \begin{split}
                &\slashed{\partial}^{N-1}_{\eps}g_j(U_j^\varepsilon)-\delta_{\eta}^{k_N}\slashed{\partial}^{N-2}_{\eps}g_j(U_j^\varepsilon)-(\delta_{\eta}^{k_N}\partial_{\tau}g_j(U_j^{\eps}))\partial_{\eps_{k_1}\cdots \eps_{k_{N-1}}}^{N-1}U_j^{\eps}\\
                &=\sum_{\widetilde{\pi}=\{\widetilde{\pi}^1,\ldots,\widetilde{\pi}^{\widetilde{M}}\}\in\Pi_{N-1}}\partial^{\widetilde{M}+1}_{\tau} g_j(U_j^{\eps})\partial_{\eps_{k_N}}U_j^{\eps}\prod_{{\ell}=1}^{\widetilde{M}} \partial^{\widetilde{N}_{\ell}}_{\eps_{k_{\widetilde{\pi}^{\ell}_1}}\cdots \,\eps_{k_{\widetilde{\pi}^{\ell}_{\widetilde{N}_{\ell}}}}}U_j^{\eps}\\
                &\quad -\sum_{\widetilde{\pi}=\{\widetilde{\pi}^1,\ldots,\widetilde{\pi}^{\widetilde{M}}\}\in\Pi_{N-1}}(\delta^{k_N}_{\eta}\partial^{\widetilde{M}}_{\tau} g_j(U_j^{\eps}))\prod_{{\ell}=1}^{\widetilde{M}} \partial^{\widetilde{N}_{\ell}}_{\eps_{k_{\widetilde{\pi}^{\ell}_1}}\cdots \,\eps_{k_{\widetilde{\pi}^{\ell}_{\widetilde{N}_{\ell}}}}} U_j^{\eps}\\
                &\quad +\sum_{\widetilde{\pi}=\{\widetilde{\pi}^1,\ldots,\widetilde{\pi}^{\widetilde{M}}\}\in\Pi'_{N-1}}\partial^{\widetilde{M}}_{\tau} g_j(U_j^{\eps})\sum_{L=1}^{\widetilde{M}}\left(\prod_{{\ell}=1}^{L-1} \partial^{\widetilde{N}_{\ell}}_{\eps_{k_{\widetilde{\pi}^{\ell}_1}}\cdots \,\eps_{k_{\widetilde{\pi}^{\ell}_{\widetilde{N}_{\ell}}}}} U_j^{\eps}\right)\partial^{\widetilde{N}_{L}+1}_{\eps_{k_{\widetilde{\pi}^{L}_1}}\cdots \,\eps_{k_{\widetilde{\pi}^{L}_{\widetilde{N}_{L}}}}\,\eps_{k_N}} U_j^{\eps}\\
            &\hspace{4cm}\quad\cdot\left(\prod_{{\ell}=L+1}^{\widetilde{M}} \partial^{\widetilde{N}_{\ell}}_{\eps_{k_{\widetilde{\pi}^{\ell}_1}}\cdots \,\eps_{k_{\widetilde{\pi}^{\ell}_{\widetilde{N}_{\ell}}}}} U_j^{\eps}\right)\\
            &\quad -\sum_{\widetilde{\pi}=\{\widetilde{\pi}^1,\ldots,\widetilde{\pi}^{\widetilde{M}}\}\in\Pi'_{N-1}}\partial^{\widetilde{M}}_{\tau} g_j(U_j^{\eps+\eta e_{k_N}})\sum_{L=1}^{\widetilde{M}}\left(\prod_{{\ell}=1}^{L-1} \partial^{\widetilde{N}_{\ell}}_{\eps_{k_{\widetilde{\pi}^{\ell}_1}}\cdots \,\eps_{k_{\widetilde{\pi}^{\ell}_{\widetilde{N}_{\ell}}}}} U_j^{\eps}\right)\\
            &\quad \hspace{3cm}\quad \cdot\delta_{\eta}^{k_N}\partial^{\widetilde{N}_{L}}_{\eps_{k_{\widetilde{\pi}^{L}_1}}\cdots \,\eps_{k_{\widetilde{\pi}^{L}_{\widetilde{N}_{L}}}}} U_j^{\eps}\left(\prod_{{\ell}=L+1}^{\widetilde{M}} \partial^{\widetilde{N}_{\ell}}_{\eps_{k_{\widetilde{\pi}^{\ell}_1}}\cdots \,\eps_{k_{\widetilde{\pi}^{\ell}_{\widetilde{N}_{\ell}}}}} U_j^{\eps+\eta e_{k_N}}\right).
            \end{split}
        \end{equation}
        Now, one can follow exactly the same arguments as in the $N=2$ case to demonstrate the convergence 
        \[
            \slashed{\partial}^{N-1}_{\eps}g_j(U_j^\varepsilon)-\delta_{\eta}^{k_N}\slashed{\partial}^{N-2}_{\eps}g_j(U_j^\varepsilon)-(\delta_{\eta}^{k_N}\partial_{\tau}g_j(U_j^{\eps}))\partial_{\eps_{k_1}\cdots \eps_{k_{N-1}}}^{N-1}U_j^{\eps}\to 0\text{ in }L^2(\Omega_T)
        \]
        as $\eta\to 0$. To see that the difference of the terms in the second and third line of \eqref{eq: L2 conv Nth order} vanish in the limit $\eta\to 0$, one can use the same computation as in \eqref{eq: G jkl} and \eqref{eq: Gjkl second term goes to zero}. The difference of the fourth and fifth lines of \eqref{eq: L2 conv Nth order} vanishes in the limit $\eta\to 0$, because the derivatives appearing in these terms are at most of order $N-2$ and hence the result follows by the induction hypothesis. This completes the proof.
    \end{proof}

    Using Lemma \ref{DtN2} and \ref{lemma: sol for higher order lin}, we may inductively prove the following result: 
	
	\begin{lemma}
    \label{DtN3}
		Suppose that the assumptions of Sections \ref{sec: first order linearization} - \ref{sec: second order linearization} hold and let $3\leq N\leq m$. Then for all $1\leq j\leq 2$, $1\leq k_1,\ldots,k_N\leq m$, sufficiently small $\eps\in\R^m$, $\varphi=(\varphi_1,\ldots,\varphi_m)\in C_c^{\infty}((\Omega_e)_T)$ and $\psi\in C_c^{\infty}((\Omega_e)_T)$ we have
       \begin{equation}
       \label{eq: Nth order linearization of DN map}
           \partial_{\eps_{k_1}\cdots \eps_{k_N}}^N\langle \Lambda_{q_j,g_j}(\eps\cdot \varphi),\psi\rangle=\partial_{\eps_{k_N}}\langle \Lambda^{k_1,\ldots,k_{N-1}}_{q_j,\partial^{N-1}_\tau g_j}\varphi,\psi\rangle=\langle \Lambda^{k_1,\ldots,k_N}_{q_j,\partial^N_{\tau}g_j}\varphi,\psi\rangle,
       \end{equation}
       where, for all $2\leq M\leq N$, we use the notation
       \begin{equation}
       \label{eq: Nth order linearized DN map}
       \begin{split}
           \langle \Lambda^{k_1,\ldots,k_M}_{q_j,\partial^M_{\tau}g_j}\varphi,\psi\rangle&\vcentcolon = b\int_0^T \langle (-\Delta)^{s/2}\partial_t \partial^M_{\eps_{k_1}\cdots\,\eps_{k_M}} u_j^{\eps},(-\Delta)^{s/2}\psi\rangle_{L^2(\R^n)} \,dt\\
           &\quad +c\int_0^T \langle (-\Delta)^{s/2}\partial^M_{\eps_{k_1}\cdots\,\eps_{k_M}} u_j^{\eps},(-\Delta)^{s/2}\psi\rangle_{L^2(\R^n)} \,dt.
       \end{split}
       \end{equation}
	\end{lemma}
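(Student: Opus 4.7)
The plan is to proceed by induction on $N$, treating $N=1$ (Lemma~\ref{linear1DtN}) and $N=2$ (Lemma~\ref{DtN2}) as the base cases. Suppose the claim \eqref{eq: Nth order linearization of DN map} has been proved up to order $N-1$, so that in particular
\begin{equation}
\partial^{N-1}_{\eps_{k_1}\cdots\eps_{k_{N-1}}}\langle \Lambda_{q_j,g_j}(\eps\cdot\varphi),\psi\rangle=\langle \Lambda^{k_1,\ldots,k_{N-1}}_{q_j,\partial^{N-1}_{\tau}g_j}\varphi,\psi\rangle
\end{equation}
for all sufficiently small $\eps\in\R^m$. To go from $N-1$ to $N$ it then suffices to show that the right hand side is differentiable with respect to $\eps_{k_N}$ with derivative $\langle \Lambda^{k_1,\ldots,k_{N}}_{q_j,\partial^{N}_{\tau}g_j}\varphi,\psi\rangle$.

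Unpacking the definition \eqref{eq: Nth order linearized DN map} with $M=N-1$, the pairing $\langle \Lambda^{k_1,\ldots,k_{N-1}}_{q_j,\partial^{N-1}_{\tau}g_j}\varphi,\psi\rangle$ is a continuous linear functional of $\partial^{N-1}_{\eps_{k_1}\cdots\eps_{k_{N-1}}}u_j^{\eps}\in X$, namely of the form $v\mapsto \int_0^T\langle b(-\Delta)^{s/2}\partial_t v+c(-\Delta)^{s/2}v,(-\Delta)^{s/2}\psi\rangle_{L^2(\R^n)}\,dt$; by the Cauchy--Schwarz inequality this functional is bounded by $C\|\psi\|_{H^{2s}(\R^n)}\|v\|_{W^{1,\infty}(0,T;\widetilde H^s(\Omega))}$. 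Lemma~\ref{lemma: sol for higher order lin} supplies exactly the convergence needed to differentiate through this functional: we have
\begin{equation}
\delta^{k_N}_{\eta}\partial^{N-1}_{\eps_{k_1}\cdots\eps_{k_{N-1}}}u_j^{\eps}\longrightarrow \partial^{N}_{\eps_{k_1}\cdots\eps_{k_N}}u_j^{\eps}\qquad\text{in }X
\end{equation}
as $\eta\to 0$, and in particular in $W^{1,\infty}(0,T;\widetilde H^s(\Omega))$. Forming the difference quotient $\delta^{k_N}_{\eta}$ of the $(N-1)$-th order identity above and passing to the limit $\eta\to 0$ using continuity of the functional therefore yields
\begin{equation}
\partial_{\eps_{k_N}}\langle \Lambda^{k_1,\ldots,k_{N-1}}_{q_j,\partial^{N-1}_{\tau}g_j}\varphi,\psi\rangle=\int_0^T\langle b(-\Delta)^{s/2}\partial_t v_{j,k_1,\ldots,k_N}^{\eps}+c(-\Delta)^{s/2}v_{j,k_1,\ldots,k_N}^{\eps},(-\Delta)^{s/2}\psi\rangle_{L^2(\R^n)}\,dt,
\end{equation}
which, by definition, is precisely $\langle \Lambda^{k_1,\ldots,k_{N}}_{q_j,\partial^{N}_{\tau}g_j}\varphi,\psi\rangle$. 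Composing with the inductive hypothesis proves the first equality in \eqref{eq: Nth order linearization of DN map}, and the second equality is just the definition of $\Lambda^{k_1,\ldots,k_N}_{q_j,\partial^{N}_{\tau}g_j}$.

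The genuine work in this chain has already been absorbed into Lemma~\ref{lemma: sol for higher order lin}: the only subtle point is that the difference quotient limit defining $\partial^{N}_{\eps_{k_1}\cdots\eps_{k_N}}u_j^{\eps}$ takes place in the topology of $X$, which is strong enough to commute with the time integral and with the fractional Laplacians appearing in the DN pairing. Consequently, the remaining obstacle is conceptually mild; one only has to observe the continuity of the relevant linear functional on $X$ and invoke the $X$-convergence provided by Lemma~\ref{lemma: sol for higher order lin} to interchange $\lim_{\eta\to 0}$ with the integral over $(0,T)$ and with the spatial duality pairing. This closes the induction and establishes \eqref{eq: Nth order linearization of DN map} for all $3\leq N\leq m$.
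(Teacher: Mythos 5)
Your proposal is correct and follows essentially the same route the paper intends: the paper simply states that the lemma follows inductively from Lemma~\ref{DtN2} and Lemma~\ref{lemma: sol for higher order lin}, and your argument spells out precisely that induction, passing the $X$-convergence $\delta^{k_N}_{\eta}\partial^{N-1}_{\eps_{k_1}\cdots\eps_{k_{N-1}}}u_j^{\eps}\to\partial^{N}_{\eps_{k_1}\cdots\eps_{k_N}}u_j^{\eps}$ through the bounded linear functional $v\mapsto\int_0^T\langle b(-\Delta)^{s/2}\partial_t v+c(-\Delta)^{s/2}v,(-\Delta)^{s/2}\psi\rangle_{L^2(\R^n)}\,dt$ on $W^{1,\infty}(0,T;\widetilde H^s(\Omega))$.
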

	
	We have now established all the necessary results to uniquely recover the higher order derivatives $\partial_{\tau}^N g_j(0)$, $3\leq N\leq m$. We use once more an induction argument, and hence we may assume that 
	\begin{equation}
	    \label{induc}
	\partial_{\tau}^M g_1(0)=\partial_{\tau}^M g_2(0)
	\end{equation}
	for all $1\leq M \leq N-1$. Now, using condition \eqref{eq: equality of nonlin DN maps}, Lemma \ref{DtN3} and the UCP, we can demonstrate that for all $1\leq M\leq N$ and indices $1
    \leq k_1,\ldots,k_M\leq m$ we have
    \begin{equation}
    \label{eq: equal Mth order derivatives}
        \partial^M_{\eps_{k_1}\cdots \,\eps_{k_M}}u^{\eps}_1=\partial^M_{\eps_{k_1}\cdots \,\eps_{k_M}}u^{\eps}_2
    \end{equation}
    Here, $u_j^{\eps}$ is supposed to have exterior condition $\eps\cdot\varphi$, whereupon $\eps\in\R^m$ has a sufficiently small modulus and $\varphi=(\varphi_1,\ldots,\varphi_m)\in C^{\infty}((W_1)_T)$. We refer the interested reader to Section \ref{sec: second order linearization} for the same argument as $N=2$. Now, we combine \eqref{eq: equal Mth order derivatives} with Lemma \ref{lemma: Nemytskii operators}, Step 1 of the proof of Lemma \ref{lemma: sol for higher order lin} and \eqref{eq: def of slashed notation} to deduce that in the limit $\eps\to 0$ we get
    \begin{equation}
    \label{eq: equal slashed der}
        \slashed{\partial}^{N-1}g_1(0)=\lim_{\eps\to 0}\slashed{\partial}^{N-1}g_1(u_1^{\eps})=\lim_{\eps\to 0}\slashed{\partial}^{N-1}g_2(u_2^{\eps})=\slashed{\partial}^{N-1}g_2(0)\text{ in }L^2(\Omega_T),
    \end{equation}
    where 
    \[
        \slashed{\partial}^{N-1}g_j(0)=\sum_{\pi=\{\pi^1,\ldots,\pi^M\}\in\Pi'_N}\partial^M_\tau g_j(0)\prod_{{\ell}=1}^M \partial^{N_{\ell}}_{\eps_{k_{\pi^{\ell}_1}}\cdots \,\eps_{k_{\pi^{\ell}_{N_{\ell}}}}}u_j^{\eps}\Big|_{\eps=0}
    \]
    for $j=1,2$. Taking into account \eqref{eq: equal Mth order derivatives} and \eqref{induc}, we obtain
    \begin{equation}
    \label{eq: final identity Nth order linearization}
        (\partial^N_\tau g_1(0)-\partial^N_\tau g_2(0))\prod_{\ell=1}^N \partial_{\eps_{k_{\ell}}}u_1^{\eps}\big|_{\eps=0}=0\text{ in }L^2(\Omega_T).
    \end{equation}
    Next, we recall several useful facts: 
    \begin{enumerate}[(A)]
        \item For any $1\leq \ell\leq N$, the function
    \[
    v_{k_{\ell}}\vcentcolon =\partial_{\eps_{k_{\ell}}}u_1^{\eps}\big|_{\eps=0}
    \]
    is the unique solution to the linear PDE
    \begin{equation}
    \label{eq: PDE for Runge argument final}
    \begin{cases}
			(\partial_t^3+\alpha\partial_t^2+b(-\Delta)^s\partial_t+c(-\Delta)^s+q)v=0\ & {\rm in}\ \Omega_T,\\
			v=\varphi_{k_{\ell}}\ & {\rm in}\ (\Omega_e)_T, \\
			v(0)=\partial_tv(0)=\partial_t^2v(0)=0\ & {\rm in}\ \Omega.
		\end{cases}
    \end{equation}
    \item The constants $m\in \N_{\geq 2}$ and $k\geq 0$ are chosen in such a way that 
       \begin{equation}
        \label{cond higher order case final proof}
            |\partial_{\tau}^{M} g_j(x,t,\tau)|\lesssim\begin{cases}
                 1+|\tau|^k, &\text{ when }M\leq m-1,\\
                1, &\text{ when }M = m
            \end{cases}
        \end{equation}
        for a.e.~$(x,t)\in\Omega_T$, $\tau\in\mathbb R$ and, in the range $2s<n$, they obey the constraints
        \begin{equation}
        \label{eq: restrictions on m and k higher order final proof}
            m\leq \frac{n}{n-2s} \text{ and } k\leq \min\left(\frac{n}{(n-2s)m},\frac{2s}{n-2s}\right).
        \end{equation}
    \end{enumerate}
    Next, we want to demonstrate that \eqref{eq: final identity Nth order linearization} implies 
    \begin{equation}
        \label{eq: equal Nth order derivative}
	   \partial_{\tau}^N g_1(0)=\partial_{\tau}^N g_2(0).
    \end{equation}
    To this end, we distinguish several cases:\\

    \noindent\textit{Supercritical range: $2s> n$.} We start by integrating the identity \eqref{eq: final identity Nth order linearization} over $\Omega_T$, which yields
    \begin{equation}
    \label{eq: integral identity Nth term}
         \int_{\Omega_T}(\partial^N_\tau g_1(0)-\partial^N_\tau g_2(0))\prod_{\ell=1}^N v_{k_{\ell}}\,dxdt=0.
    \end{equation}
    Next, assume that $\psi_1,\ldots,\psi_m\in C_c^{\infty}(\Omega_T)$ and let us choose according to the Runge approximation result (Proposition~\ref{Runge}) for each $1\leq k \leq m$ a sequence of exterior values $(\varphi_{k}^{n})_{n\in \N}\in C_c^{\infty}((W_1)_T)$ such that the corresponding solutions $v_k^n$, $n\in\N$, of \eqref{eq: PDE for Runge argument final} satisfy
    \begin{equation}
    \label{eq: Runge approximation final}
        v_k^n\to \psi_k\text{ in }L^2(\Omega_T)
    \end{equation}
    as $n\to\infty$. Thus, we may replace $v_{k_{\ell}}$ in \eqref{eq: integral identity Nth term} by $v_{k_{\ell}}^{n_{\ell}}$, $n_{\ell}\in\N$, to obtain
     \begin{equation}
    \label{eq: second integral identity Nth term}
         \int_{\Omega_T}(\partial^N_\tau g_1(0)-\partial^N_\tau g_2(0))\prod_{\ell=1}^N v^{n_{\ell}}_{k_{\ell}}\,dxdt=0.
    \end{equation} 
    Next, we notice that Sobolev's lemma guarantees $H^s(\R^n)\hookrightarrow L^{\infty}(\R^n)$ for $2s>n$ and \eqref{cond higher order case final proof} implies $\partial^N_\tau g_j(0)\in L^{\infty}(\Omega_T)$ for all $1\leq N\leq m$. Therefore, H\"older's inequality and \eqref{eq: Runge approximation final} show that we can pass to limit $n_N\to \infty$ in \eqref{eq: second integral identity Nth term}, yielding
    \[
    \int_{\Omega_T}(\partial^N_\tau g_1(0)-\partial^N_\tau g_2(0))\left(\prod_{\ell=1}^{N-1} v^{n_{\ell}}_{k_{\ell}}\right)\psi_{k_N}\,dxdt=0.
    \]
    Hence, through a simple iteration we obtain
    \[
        \int_{\Omega_T}(\partial^N_\tau g_1(0)-\partial^N_\tau g_2(0))\prod_{\ell=1}^{N} \psi_{k_{\ell}}\,dxdt=0.
    \]
    By standard arguments this identity can be used to derive the desired conclusion \eqref{eq: equal Nth order derivative}.
    \\

    \noindent\textit{Critical range: $2s=n$.} In the critical range $2s=n$, we can argue exactly in the same way as in the supercritical case, but this time we just use the embedding $H^s(\R^n)\hookrightarrow L^r(\R^n)$ for any $2\leq r<\infty$ (see Proposition \ref{critical}). \\

    \noindent\textit{Subcritical range: $2s< n$.} Let us start by noting that for all $1\leq M\leq m$ there holds
    \[
        \frac{n-2s}{2n}\leq \frac{1}{p_M}\vcentcolon = M\frac{n-2s}{2n}\leq \frac{1}{2}.
    \]
    Thus, if we use the Sobolev embedding $H^s(\R^n)\hookrightarrow L^{\frac{2n}{n-2s}}(\R^n)$, then we can still repeat the arguments of the supercritical case and again conclude the validity of \eqref{eq: equal Nth order derivative}.
    
\section{Proof of Theorem~\ref{thm: polyhom nonlinearities}: Polyhomogeneous nonlinearities} 
\label{sec: polyhom}

In this section, we prove Theorem~\ref{thm: polyhom nonlinearities} that investigates the unique determination of polyhomogeneous nonlinearities of finite order from the related DN map. Hence, for $j=1,2$, we assume that the nonlinearity $g_j$ is given by \eqref{power} with coefficients $(\alpha^{(j)}_k)_{1\leq k\leq L}\subset  L^{\infty}(\Omega_T)$ and exponents $(r_k)_{1\leq k\leq L}\subset (0,\infty)$ satisfying \ref{A4}. The proof, as those presented in \cite[Section 4.2]{LTZ2} and \cite[Section 5.2]{LTZ1}, relies on the first order linearization of the PDE. We also mention the works \cite{griesmaier2022inverse,kow2025inverse}, which apply implicit linearization techniques to study inverse problems for nonlinear PDEs. In contrast to \cite[Section 5.2]{LTZ1}, we deal in this manuscript with a finite sum of homogeneous functions in $\tau$, which allows us to iteratively lower the order of the polyhomogeneous nonlinearity. In this way, we can reduce the number of unknown quantities from $L$ to one, namely $\alpha_L$. The determination of $\alpha_L$ is established through techniques that were already introduced in \cite[Section 4.2]{LTZ2}. 

The proof proceeds as follows. We first analyze the asymptotic expansion of solutions to the semilinear nonlocal MGT equation with respect to a small parameter $\varepsilon$ appearing in the exterior data $\varepsilon \varphi$ (see Claim~6.1). This allows us to apply a first-order linearization argument and to establish the unique determination of the linear potential $q$.

Next, exploiting the structure of the polyhomogeneous nonlinearity together with the Runge approximation property (Proposition~4.3), we recover $\alpha_1$ in $\Omega_T$, and then determine $\alpha_k$ for $k=2,\dots,L-1$ inductively. Finally, using the homogeneity of the last term $\alpha_L |\tau|^{r_L}\tau$ and the argument developed in \cite[Section~4.2]{LTZ2}, we uniquely determine $\alpha_L$.

Before turning our attention to the proof of Theorem~\ref{thm: polyhom nonlinearities}, let us show that any polyhomogeneous nonlinearity $g$ of order $L$, with coefficients $(\alpha_k)_{1\le k\leq L}\subset  L^{\infty}(\Omega_T)$ and exponents $(r_k)_{1\le k\leq L}\subset (0,\infty)$, satisfies the properties \ref{prop f}--\ref{prop f growth} of Theorem \ref{wellnonlinear}. First of all, note that for any $r> 0$, we have
\[
    \frac{d}{d\tau} |\tau|^r \tau=(r+1)|\tau|^r,
\]
which implies
\begin{equation}
\label{eq: growth cond poly}
    |\partial_\tau g|=\left|\sum_{k=1}^L (r_k+1)\alpha_k|\tau|^{r_k}\right|\lesssim \sum_{k=1}^L |\tau|^{r_k}\lesssim |\tau|^{r_1}+|\tau|^{r_L}
\end{equation}
for a.e.~$(x,t)\in \Omega_T$, $\tau\in \R$ and $j=1,2$. This immediately shows that $g_j$ satisfies the conditions \ref{prop f}--\ref{prop f growth} of Theorem \ref{wellnonlinear}. Therefore, we can apply all the results of Section \ref{sec: Well-posedness semilinear problem} to polyhomogeneous nonlinearities of finite order.

\begin{proof}[Proof of Theorem~\ref{thm: polyhom nonlinearities}]

    Here we only consider the case $2s<n$, as the proof can be adapted to the cases $2s\geq n$ without further complications. Let us denote by $u_j^{\eps}$ the unique solution to 
    \begin{equation}
	    \label{eq: PDE poly}
	\begin{cases}
		(\partial_t^3+\alpha\partial_t^2+b(-\Delta)^s\partial_t+c(-\Delta)^s+q_j)u_j^\varepsilon+g_j(u_j^\varepsilon)=0\ & {\rm in}\ \Omega_T,\\
		u^\varepsilon_j=\eps\varphi\ & {\rm in}\ (\Omega_e)_T, \\
		u^\varepsilon_j(0)=\partial_tu^\varepsilon_j(0)=\partial_t^2u^\varepsilon_j(0)=0\ & {\rm in}\ \Omega,
    	\end{cases}
	\end{equation}
    where $\varphi\in C_c^{\infty}((W_1)_T)$, $\eps>0$ is a small positive constant and $j=1,2$. The exterior data $\varphi$ will be fixed later. Using the same arguments as at the end of Section~\ref{sec: second order linearization}, we can use \eqref{eq: equal DN maps poly} and the UCP for the fractional Laplacian to obtain
    \[
        u_\eps\vcentcolon = u_1^{\eps}=u_2^{\eps}.
    \]
    Hence, \eqref{eq: PDE poly} implies that 
    \begin{equation}
    \label{eq: equal potential plus DN map}
        q_1 u_\eps+g_1(u_\eps)=q_2 u_\eps+ g_2(u_\eps)\text{ in }\Omega_T.
    \end{equation}
    Let $v_j$, $j=1,2$, be the unique solution to the problem 
\begin{equation}
\label{eq: linear PDE polyhom}
    \begin{cases}
		(\partial_t^3+\alpha\partial_t^2+b(-\Delta)^s\partial_t+c(-\Delta)^s+q_j)v=0\ & {\rm in}\ \Omega_T,\\
		v=\varphi\ & {\rm in}\ (\Omega_e)_T, \\
		v(0)=\partial_t v(0)=\partial_t^2 v(0)=0\ & {\rm in}\ \Omega.
	\end{cases}
\end{equation}
Next, we establish the following.
\begin{claim}
\label{first linearization poly}
    For any $\varphi\in C_c^{\infty}((W_1)_T)$ and all sufficiently small $0<\eps\leq 1$, we have
    \begin{equation}
    \label{eq: decay estiamte remainder poly}
       \|\partial_t^2(v_j-\eps^{-1}u_\eps)\|_{L^{\infty}(0,T;L^2(\Omega))}+\|v_j-\eps^{-1}u_\eps\|_{W^{1,\infty}(0,T;\widetilde{H}^s(\Omega))}\lesssim \eps^{r_1}.
    \end{equation}
\end{claim}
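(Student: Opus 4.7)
The plan is to turn this into a standard residual estimate for a first-order linearization. Define $w_j \vcentcolon= v_j - \eps^{-1}u_\eps$. Since $u_\eps$ carries exterior data $\eps\varphi$, the rescaling $\eps^{-1}u_\eps$ has exterior data $\varphi$, so $w_j$ vanishes on $(\Omega_e)_T$; the zero initial conditions of both $v_j$ and $u_\eps$ transfer to $w_j$ as well. Subtracting \eqref{eq: linear PDE polyhom} from $\eps^{-1}$ times \eqref{eq: PDE poly} shows that $w_j$ solves the linear MGT equation
\begin{equation}
(\partial_t^3+\alpha\partial_t^2+b(-\Delta)^s\partial_t+c(-\Delta)^s+q_j)w_j=\eps^{-1}g_j(u_\eps)\quad \text{in } \Omega_T,
\end{equation}
with trivial exterior and initial data. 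Applying the energy estimate \eqref{energyest} of Theorem~\ref{wellposednessv} to $w_j$ reduces matters to bounding $\eps^{-1}\|g_j(u_\eps)\|_{L^2(\Omega_T)}$ by $\eps^{r_1}$.

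Next I will bound the right-hand side using the polyhomogeneous structure \eqref{power}. Since $(\alpha_k^{(j)})_{1\le k\le L}\subset L^\infty(\Omega_T)$, we have the pointwise estimate $|g_j(u_\eps)|\lesssim \sum_{k=1}^L |u_\eps|^{r_k+1}$. By assumption~\ref{A4}, each exponent satisfies $r_k\le r_L\le \min(1,\tfrac{2s}{n-2s})$, which yields $r_k+1\le \tfrac{n}{n-2s}$ in the range $2s<n$, so $2(r_k+1)\le \tfrac{2n}{n-2s}$. Sobolev's embedding $\widetilde{H}^s(\Omega)\hookrightarrow L^{2n/(n-2s)}(\R^n)$ (and, in the critical/supercritical regimes, Proposition~\ref{critical} or the Morrey embedding) therefore gives
\begin{equation}
\|g_j(u_\eps)(t)\|_{L^2(\Omega)}\lesssim \sum_{k=1}^L \|u_\eps(t)\|_{\widetilde{H}^s(\Omega)}^{r_k+1},
\end{equation}
and after taking the supremum in $t$,
\begin{equation}
\|g_j(u_\eps)\|_{L^2(\Omega_T)}\lesssim \sum_{k=1}^L \|u_\eps\|_{L^\infty(0,T;\widetilde{H}^s(\Omega))}^{r_k+1}.
\end{equation}

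To close the argument I will use the smallness of $u_\eps$. Since $\eps\varphi$ obeys the smallness condition of Proposition~\ref{prop: nonlinearg with ext cond} for all small $\eps>0$, the energy estimate \eqref{eq: energy estimate nonlinear eq well-posedness} together with an absorption argument gives $\|u_\eps\|_X\lesssim \eps$. Substituting yields
\begin{equation}
\eps^{-1}\|g_j(u_\eps)\|_{L^2(\Omega_T)}\lesssim \eps^{-1}\sum_{k=1}^L \eps^{r_k+1}=\sum_{k=1}^L \eps^{r_k}\lesssim \eps^{r_1}
\end{equation}
for $0<\eps\le 1$, since $r_1$ is the smallest exponent. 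Combined with the energy estimate for $w_j$, this yields \eqref{eq: decay estiamte remainder poly}.

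The only delicate point is the Sobolev bookkeeping, i.e.~confirming that every power $r_k+1$ lies within the range allowed by the embedding. This is exactly where the structural restriction $r_L\le 1$ in \ref{A4} is used; without it, the residual $\eps^{-1}g_j(u_\eps)$ could not be controlled in $L^2(\Omega_T)$ using solely the $L^2(\Omega_T)$ Runge approximation framework, and one would be forced to work with very weak solutions as in \cite{LTZ1}.
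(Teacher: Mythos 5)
Your argument is correct and follows the paper's own proof almost verbatim: subtract the linear equation \eqref{eq: linear PDE polyhom} from the rescaled nonlinear one, apply the linear energy estimate of Theorem~\ref{wellposednessv} to the difference, bound the residual $\eps^{-1}\|g_j(u_\eps)\|_{L^2(\Omega_T)}$ via the growth of $\partial_\tau g_j$, Sobolev embedding, and the smallness bound from Theorem~\ref{wellnonlinear}, and then exploit $r_1\leq r_k$ to extract $\eps^{r_1}$. Two small points to tidy. First, $u_\eps$ carries exterior data $\eps\varphi$ and hence lies in $H^s(\R^n)$, not in $\widetilde H^s(\Omega)$ or $X$, so the expressions $\|u_\eps(t)\|_{\widetilde H^s(\Omega)}$ and $\|u_\eps\|_X$ are undefined as written; the repair (which is what the paper does) is to note that $\varphi=0$ in $\Omega$, so $g_j(u_\eps)=g_j(u_\eps-\eps\varphi)$ as Nemytskii operators on $\Omega_T$, and then run the Sobolev and smallness estimates on $u_\eps-\eps\varphi\in X_{\delta}$. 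Second, your closing remark misattributes the role of $r_L\leq 1$: Claim~\ref{first linearization poly} only requires the Sobolev constraint $r_L\leq \tfrac{2s}{n-2s}$ from \eqref{conr} to get $g_j(u_\eps)\in L^2(\Omega_T)$; the restriction $r_L\leq 1$ is needed later, in the determination of $\alpha_L$ via \eqref{eq: last term}, so that the $(r_L+1)$-homogeneous Nemytskii map acts continuously into a space compatible with the $L^2$ Runge approximation.
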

\begin{proof}
    First, note that
    \[
        R^j_{\eps}\vcentcolon = \eps^{-1}u_\eps -v_j
    \]
    is the unique solution of
    \begin{equation}
        \label{eq: remainder PDE polyhom}
    \begin{cases}
		(\partial_t^3+\alpha\partial_t^2+b(-\Delta)^s\partial_t+c(-\Delta)^s+q_j)R=-\eps^{-1}g_j(u_\eps)\ & {\rm in}\ \Omega_T,\\
		R=0 \ & {\rm in}\ (\Omega_e)_T, \\
		R(0)=\partial_t R(0)=\partial_t^2 R(0)=0\ & {\rm in}\ \Omega
	\end{cases}
    \end{equation}
    for $j=1,2$. From Theorem~\ref{wellposednessv}, we deduce the following energy estimate
    \begin{equation}
    \label{eq: energy estimate R}
    \begin{split}
        &\|\partial_t^2 R^j_{\eps} \|_{L^{\infty}(0,T;L^2(\Omega))}+\|R^j_{\eps}\|_{W^{1,\infty}(0,T;\widetilde H^s(\Omega))} \lesssim  
        \eps^{-1}\|g_j(u_\eps)\|_{L^2(\Omega_T)}.
    \end{split}
    \end{equation}
    Furthermore, we infer from Theorem \ref{wellnonlinear} that $u_\eps$ obeys
    \begin{equation}
    \label{eq: bound for u eps poly}
        \|u_\eps-\eps\varphi\|_X\lesssim \eps
    \end{equation}
    (cf.,~e.g.,~proof of Lemma \ref{der1}). By the fundamental theorem of calculus and \eqref{eq: growth cond poly}, we get
    \[
        |g_j(x,t,\tau)|=\left|\int_0^{\tau} \partial_\tau g_j(x,t,s)\,ds\right|\lesssim |\tau|^{r_1+1}+|\tau|^{r_L+1}
    \]
    for $(x,t,\tau) \in\Omega_T\times \R$ and $j=1,2$. Combining this with Sobolev's embedding $\widetilde{H}^s(\Omega)\hookrightarrow L^{\frac{2n}{n-2s}}(\Omega)$, \eqref{eq: bound for u eps poly} and using the fact that condition \ref{A4} implies
\begin{equation}
\label{eq: restriction on exp poly}
    1\leq  \frac{r_k+1}{r_1+1}\leq \frac{r_{k'}+1}{r_1+1}\leq r_{k'}+1\leq \frac{n}{n-2s}
\end{equation}
for any $1\leq k\leq k'\leq L$, we deduce that
    \[
    \begin{split}
        \|g_j(u_\eps)\|_{L^2(\Omega_T)}&=\|g_j(u_\eps-\eps\varphi)\|_{L^2(\Omega_T)}\\
        &\lesssim \|u_\eps-\eps\varphi\|_{L^{\infty}(0,T;\widetilde{H}^s(\Omega))}^{r_1+1}+\|u_\eps-\eps\varphi\|_{L^{\infty}(0,T;\widetilde{H}^s(\Omega))}^{r_L+1}\\
        &\lesssim \eps^{r_1+1}+\eps^{r_L+1}\lesssim \eps^{r_1+1}
    \end{split}
    \]
    for sufficiently small $0<\eps\leq 1$. Inserting this into \eqref{eq: energy estimate R}, we get the desired decay estimate \eqref{eq: decay estiamte remainder poly}.
\end{proof}
    Hence, through Claim~\ref{first linearization poly} and \ref{linear DN map}--\ref{nonlinear DN map 1} we get the convergence
    \begin{equation}
        \label{eq: equal limits for DNs}
        \langle\Lambda_{q_j} \varphi,\psi\rangle=\lim_{\eps\to 0}\eps^{-1}\langle \Lambda_{q_j,g_j} \eps \varphi,\psi\rangle
    \end{equation}
    for any $\psi\in C_c^{\infty}((W_2)_T)$ and $j=1,2$. So, with the help of \eqref{eq: equal DN maps poly} and Proposition~\ref{prop: uniqueness of linear perturbations}, we get
    \begin{equation}
    \label{eq: equal potentials and linear functions}
        q\vcentcolon = q_1=q_2\text{ in }\Omega_T
    \end{equation}
    and
    \[
        v\vcentcolon = v_1=v_2\text{ and }R_\eps\vcentcolon = R^1_{\eps}=R^2_{\eps}.
    \]
    Therefore, by \eqref{eq: equal potential plus DN map} we get
    \begin{equation}
    \label{eq: equality poly nonlin}
        g_1(u_\eps)=g_2(u_\eps)\text{ in }\Omega_T.
    \end{equation}
Using our assumptions on $g_j$, we deduce that
\begin{equation}
\label{eq: equality poly nonlin 2}
    (\alpha^{(1)}_{1}-\alpha^{(2)}_{1})|u_\eps|^{r_1}u_\eps=P^{(2)}_{1}(u_\eps)-P^{(1)}_{1}(u_\eps)\text{ in }\Omega_T.
\end{equation}
The functions $P^{(j)}_\ell$, which appear on the right hand side of \eqref{eq: equality poly nonlin 2} for $\ell=1$, are given by
\[
    P^{(j)}_{\ell}(x,t,\tau)\vcentcolon =\sum_{k=\ell+1}^{L}\alpha^{(j)}_{k}(x,t)|\tau|^{r_k}\tau
\]
for all $1\leq \ell \leq L-1$, $j=1,2$ and, as usual, we suppress the explicit dependence of $P^{(j)}_\ell$, $j=1,2$, $1\leq \ell\leq L-1$, in \eqref{eq: equality poly nonlin 2} on $(x,t)\in\Omega_T$, because they are regarded as Nemytskii operators. Suppose for the moment that $L\geq 2$. Due to \eqref{eq: equality poly nonlin 2}, we have 
\[
\begin{split}
    |\alpha^{(1)}_{1}-\alpha^{(2)}_{1}||u_{\eps}|^{r_1+1}&=|P^{(1)}_{1}(u_\eps)-P^{(2)}_{1}(u_\eps)|\leq \sum_{k=2}^{L}|\alpha^{(1)}_k-\alpha^{(2)}_k||u_\eps|^{r_k+1}.
\end{split}
\]
This in turn guarantees that
\[
    |\alpha^{(1)}_{1}-\alpha^{(2)}_{1}|^{\frac{1}{r_1+1}}|u_{\eps}|\leq \sum_{k=2}^{L}|\alpha^{(1)}_k-\alpha^{(2)}_k|^{\frac{1}{r_1+1}}|u_\eps|^{\frac{r_k+1}{r_1+1}},
\]
which yields
\[
\begin{split}
  |\alpha^{(1)}_1-\alpha^{(2)}_1|^{\frac{1}{r_1+1}} &=|\alpha^{(1)}_1-\alpha^{(2)}_1|^{\frac{1}{r_1+1}}|1-\eps^{-1}u_\eps+\eps^{-1}u_\eps| \\
 &\leq |\alpha^{(1)}_1-\alpha^{(2)}_1|^{\frac{1}{r_1+1}}|1-\eps^{-1}u_\eps|+\eps^{-1}|\alpha^{(1)}_1-\alpha^{(2)}_1|^{\frac{1}{r_1+1}}|u_\eps| \\
 &\leq |\alpha^{(1)}_1-\alpha^{(2)}_1|^{\frac{1}{r_1+1}}|1-\eps^{-1}u_\eps|+\eps^{-1}\sum_{k=2}^L|\alpha^{(1)}_k-\alpha^{(2)}_k|^{\frac{1}{r_1+1}}|u_\eps|^{\frac{r_k+1}{r_1+1}}.   
\end{split}
\]
Using Sobolev's embedding $\widetilde{H}^s(\Omega)\hookrightarrow L^{\frac{2n}{n-2s}}(\Omega)$, H\"older's inequality and \eqref{eq: restriction on exp poly}, we can conclude that
\begin{equation}
\label{alpha}
    \begin{split}
        \int_{\Omega_T}|\alpha^{(1)}_{1}-\alpha^{(2)}_{1}|^{\frac{1}{r_1+1}}dxdt &\leq \||\alpha^{(1)}_{1}-\alpha^{(2)}_{1}|^{\frac{1}{r_1+1}}\|_{L^2(\Omega_T)}\|\chi_{\Omega}-\eps^{-1}u_\eps\|_{L^2(\Omega_T)} \\
 &\quad+ \eps^{-1}\sum_{k=2}^L\||\alpha^{(1)}_{k}-\alpha^{(2)}_{k}|^{\frac{1}{r_1+1}}\|_{L^2(\Omega_T)}\|u_\eps\|^{\frac{r_k+1}{r_1+1}}_{L^{2\frac{r_k+1}{r_1+1}}(\Omega_T)} \\
 &\leq \||\alpha^{(1)}_{1}-\alpha^{(2)}_{1}|^{\frac{1}{r_1+1}}\|_{L^2(\Omega_T)}\|\chi_{\Omega}-\eps^{-1}u_\eps\|_{L^2(\Omega_T)} \\
 &\quad+C\eps^{-1}\sum_{k=2}^L\|\alpha^{(1)}_{k}-\alpha^{(2)}_{k}|^{\frac{1}{r_1+1}}\|_{L^2(\Omega_T)}\|u_\eps-\eps\varphi\|^{\frac{r_k+1}{r_1+1}}_{L^\infty(0,T;\widetilde H^s(\Omega))}\\
 &\lesssim \|\chi_{\Omega}-\eps^{-1}u_\eps\|_{L^2(\Omega_T)}+\eps^{-1}\sum_{k=2}^L \|u_\eps-\eps\varphi\|^{\frac{r_k+1}{r_1+1}}_{L^\infty(0,T;\widetilde H^s(\Omega))},
    \end{split}
\end{equation}
where the constant appearing in the last estimate only depends on $\Omega,T,n,s,(r_k)_{1\leq k\leq L}$ and
\[
    \mathfrak{a}\vcentcolon = \max_{j=1,2}\max_{1\leq k\leq L}\|\alpha_k^{(j)}\|_{L^{\infty}(\Omega_T)}.
\]
For any given $\delta>0$, we can choose by the Runge approximation (Proposition \ref{Runge}) an exterior condition $\varphi\in C_c^\infty((W_1)_T)$ such that the associated solution $v$ of the linear problem \eqref{eq: linear PDE polyhom} satisfies
\[
    \|\chi_{\Omega}-v\|_{L^2(\Omega_T)}\leq \delta/2.
\]
Thus, Claim \ref{first linearization poly} yields
\[
    \|\chi_{\Omega}-\eps^{-1}u_{\eps}\|_{L^2(\Omega_T)}\leq \|\chi_{\Omega}-v\|_{L^2(\Omega_T)}+\|v-\eps^{-1}u_{\eps}\|_{L^2(\Omega_T)} \leq \delta
\]
for all sufficiently small $0<\eps\leq 1$. Therefore, it follows from \eqref{alpha} and \eqref{eq: bound for u eps poly} that
\begin{equation}
\label{eq: equation for det first coeff}
\begin{split}
    \int_{\Omega_T}|\alpha^{(1)}_1-\alpha^{(2)}_1|^{\frac{1}{r_1+1}}dxdt &\lesssim \delta+\sum_{k=2}^{L}\eps^{\frac{r_k-r_1}{r_1+1}}\lesssim \delta +\eps^{\frac{r_2-r_1}{r_1+1}}
\end{split}
\end{equation}
for any sufficiently small $0<\eps\leq 1$, where the constant in \eqref{eq: equation for det first coeff} depends only on $\Omega,T,n,s,(r_k)_{1\leq k\leq L}$ and $\mathfrak{a}$. The previous estimate ensures that
\[
    \alpha_1\vcentcolon =\alpha^{(1)}_1=\alpha^{(2)}_1\text{ in }\Omega_T,
\]
since the right hand side of \eqref{eq: equation for det first coeff} can be made arbitrarily small.\\

Now, we may deduce from \eqref{eq: equality poly nonlin 2} that 
\begin{equation}
\label{eq: equality poly nonlin 3}
     (\alpha^{(1)}_{2}-\alpha^{(2)}_{2})|u_\eps|^{r_2}u_\eps=P^{(2)}_{2}(u_\eps)-P^{(1)}_{2}(u_\eps)\text{ in }\Omega_T.
\end{equation}
Arguing as in the case $k=1$, we get
\[
\begin{split}
    \int_{\Omega_T}|\alpha^{(1)}_{2}-\alpha^{(2)}_{2}|^{\frac{1}{r_2+1}}dxdt&\lesssim \|\chi_{\Omega}-\eps^{-1}u_\eps\|_{L^2(\Omega_T)}+\eps^{-1}\sum_{k=3}^L \|u_\eps-\eps\varphi\|^{\frac{r_k+1}{r_2+1}}_{L^\infty(0,T;\widetilde H^s(\Omega))}\\
    &\lesssim \delta+\eps^{\frac{r_3-r_2}{r_2+1}}
\end{split}
\]
for any $\delta>0$ and all sufficiently small $\eps>0$. Therefore, we can again conclude that
\begin{equation}
\label{eq: equality poly nonlin 4}
     \alpha_2\vcentcolon =\alpha^{(1)}_{2}=\alpha^{(2)}_{2}\text{ in }\Omega_T.
\end{equation}
Thus, we obtain inductively that
\begin{equation}
\label{eq: equality poly nonlin 5}
     \alpha_k\vcentcolon = \alpha^{(1)}_{k}=\alpha^{(2)}_{k}\text{ in }\Omega_T
\end{equation}
for all $3\leq k\leq L-1$. In the last step, we get the identity
\begin{equation}
\label{eq: last term}
    g^{(L)}(u_\eps)\vcentcolon = (\alpha_L^{(1)}-\alpha_L^{(2)})|u_\eps|^{r_L}u_\eps=0\text{ in }\Omega_T,
\end{equation}
where $g^{(L)}$ is the following $(r_L+1)$-homogeneous function: 
\[
    g^{(L)}(x,t,\tau)\vcentcolon = (\alpha_L^{(1)}-\alpha_L^{(2)})|\tau|^{r_L}\tau.
\]
So, it remains to show that this ensures
\begin{equation}
\label{eq: equality of last terms}
    \alpha_L^{(1)}=\alpha_L^{(2)}\text{ in }\Omega_T,
\end{equation}
which is also the implication we need to establish in the case $L=1$. To this end, we multiply \eqref{eq: equality poly nonlin 5} by $\eps^{-(r_L+1)}$ to get
\[
    g^{(L)}(\eps^{-1}u_{\eps})=0\text{ in }\Omega_T.
\] 
By Lemma \ref{lemma: Nemytskii operators} the function $g^{(L)}$ is continuous as a map from $L^q(0,T; L^p(\Omega))$ to $L^\frac{q}{r_L+1}(0,T; L^\frac{p}{r_L+1}(\Omega))$, where $p=\frac{2n}{n-2s}>\frac{n}{n-2s}\geq r_L+1$ and $q> 1$ is assumed to satisfy $q> r_L+1$. Hence, Claim~\ref{first linearization poly} implies
\[
     g^{(L)}(v) = (\alpha_L^{(1)}-\alpha_L^{(2)})|v|^{r_L}v=0\text{ in }\Omega_T.
\]
Now, we may follow the argument in \cite[Section 4.2]{LTZ2} to conclude that \eqref{eq: equality poly nonlin 5} is true.

This finishes the proof of Theorem~\ref{thm: polyhom nonlinearities}. 
	
\end{proof}

	\section{Proof of Theorem~\ref{thmIP2}: Westervelt-type nonlinearities}
    \label{sec: recovery beta and kappa}
  
In this section, our objective is to uniquely recover the Westervelt parameters $\beta$ and $\kappa$ from the DN maps $\Lambda_\beta$ and $\Lambda_\kappa$, respectively. We are not as detailed as in the previous sections as the strategy is very similar to the case of homogeneous nonlinearities (see~Section~\ref{sec: polyhom} and  \cite[Section 4.2]{LTZ2}). More specifically, we first prove the asymptotics of solutions to the nonlocal JMGT equation with respect to the small parameter $\varepsilon$. Then we can apply the first order linearization to deduce a (integral) identity related to the nonlinear coefficient $\beta$ (resp. $\kappa$). Lastly, we apply the Runge approximation (Proposition \ref{Runge}) to prove the unique determination of $\beta$ (resp. $\kappa$).  

Before proving this theorem,  we highlight that for nonlocal JMGT equations with Westervelt-type nonlinearities, we always assume that $2s>n$ and the Westervelt parameters $(\beta,\kappa)$ satisfy
    \begin{equation}
        \label{eq: Westervelt regularity inverse}
        (\beta,\kappa)\in W^{2,\infty}(0,T;L^\infty(\Omega))\times W^{1,\infty}(0,T;L^\infty(\Omega)).
    \end{equation}

\begin{proof}[Proof of Theorem~\ref{thmIP2}]
	
	\noindent \ref{Westervelt}: For any sufficiently small $\eps>0$, $\varphi\in C_c^{\infty}((W_1)_T)$ and $j=1,2$, we denote by $u_{j,\varphi}^{\eps}$ the unique solution to
	\begin{equation}
	    \label{sysbeta}
	\begin{cases}
		(\partial_t^3+\alpha\partial_t^2+b(-\Delta)^s\partial_t+c(-\Delta)^s)u=\partial_t^2(\beta_j u^2) & \text{ in } \Omega_T,\\
		u=\eps\varphi & \text{ in }(\Omega_e)_T, \\
		u(0)=\partial_t u(0)=\partial_t^2 u(0)=0  & \text{ in }\Omega
	\end{cases}
	\end{equation}
    (see Theorem~\ref{well-Wel}). As in the case of polyhomogeneous nonlinearities, \eqref{eq: equal DN Westervelt 1} and the UCP demonstrate that $u^{\eps}_{\varphi} \vcentcolon = u^{\eps}_{1,\varphi}=u^{\eps}_{2,\varphi}$. Therefore, as $u^{\eps}_{\varphi}$ solves \eqref{sysbeta}, we have
    \[
        \partial_t^2((\beta_1-\beta_2) (u^{\eps}_{\varphi})^2)=0\text{ in }\Omega_T.
    \]
    Since $u^{\eps}_{\varphi}\in W^{2,\infty}(0,T;L^2(\R^n))$ satisfies $\partial^{\ell}_t u^{\eps}_{\varphi}(0)=0$ in $\Omega$ for all $0\leq \ell\leq 2$, we obtain 
    \begin{equation}
    \label{eq: beta identity}
        (\beta_1-\beta_2) (u^{\eps}_{\varphi})^2=0\text{ in }\Omega_T.
    \end{equation}
    Furthermore, observe that we have the asymptotics
    \begin{equation}
    \label{eq: linearization}
        u^{\eps}_{\varphi}-\eps v=o(\eps)\text{ in } X=W^{1,\infty}(0,T;\widetilde{H}^s(\Omega))\cap W^{2,\infty}(0,T;\widetilde{L}^2(\Omega))
    \end{equation} 
    as $\eps\to 0$, where $v$ denotes the unique solution to
	\begin{equation}
     \label{eq: 1st lin Westervelt}
	\begin{cases}
		(\partial_t^3+\alpha\partial_t^2+b(-\Delta)^s\partial_t+c(-\Delta)^s)v= 0& \text{ in } \Omega_T,\\
		v=\varphi  & \text{ in } (\Omega_e)_T, \\
		v(0)=\partial_t v(0)=\partial_t^2 v(0)=0  & \text{ in } \Omega
	\end{cases}
	\end{equation}
    (see, for example, Claim~\ref{first linearization poly}). Now, \eqref{eq: beta identity} and \eqref{eq: linearization} yield
    \begin{equation}
        (\beta_1-\beta_2) v^2=0\text{ in }\Omega_T
    \end{equation}
    and hence by polarization we get
    \[
        (\beta_1-\beta_2)v_1 v_2=0\text{ in }\Omega_T,
    \]
    where $v_j$ solves \eqref{eq: 1st lin Westervelt} and has exterior condition $\varphi_j$ for $j=1,2$. Thus, the Runge approximation property (Proposition~\ref{Runge}) shows that 
    \[
        \beta_1=\beta_2\text{ in }\Omega_T.
    \]

\noindent \ref{Westervelt 2}: For nonlocal JMGT equations with Westervelt-type nonlinearities of the form $\partial_t(\kappa(\partial_tu)^2)$, we start similarly as for the Westervelt-type nonlinearities having the structure $\partial_t^2(\beta u^2)$. More precisely, we use first the equality of DN maps \eqref{eq: equal DN Westervelt 2}, the UCP and the JMGT equations to deduce that
\begin{equation}
\label{eq: first identity westervelt 2}
\partial_t[(\kappa_1-\kappa_2)(\partial_t u^{\eps}_{\varphi})^2]=0\text{ in }\Omega_T.
\end{equation}
Here, $\eps>0$ is a small parameter, $\varphi\in C_c^{\infty}((W_1)_T)$ is a given exterior condition and $u^{\eps}_{\varphi}\vcentcolon = u^{\eps}_{1,\varphi}=u^{\eps}_{2,\varphi}$, in which $u^{\eps}_{j,\varphi}$ denotes the unique solution to
\begin{equation}
    \label{eq: westervelt inverse prob}
	\begin{cases}
		(\partial_t^3+\alpha\partial_t^2+b(-\Delta)^s\partial_t+c(-\Delta)^s)u=\partial_t(\kappa_j(\partial_tu)^2) & \text{ in } \Omega_T,\\
		u=\eps\varphi & \text{ in }(\Omega_e)_T, \\
		u(0)=\partial_t u(0)=\partial_t^2 u(0)=0  & \text{ in }\Omega
	\end{cases}
	\end{equation}
that is provided by Theorem~\ref{well-Wel}. Once again, if we integrate \eqref{eq: first identity westervelt 2}, then we get
\begin{equation}
(\kappa_1-\kappa_2)(\partial_t u^{\eps}_{\varphi})^2=0\text{ in }\Omega_T.
\end{equation}
As in the previous case, we have the asymptotics \eqref{eq: linearization}, which yields
\[
     (\kappa_1-\kappa_2)(\partial_t v)^2=0\text{ in }\Omega_T
\]
and thus by polarization we obtain
\begin{equation}
\label{eq: almost final id for kappa}
    (\kappa_1-\kappa_2)\partial_t v_1\partial_t v_2=0,
\end{equation}
where $v$ and $v_j$, $j=1,2$, solve \eqref{eq: 1st lin Westervelt} with exterior conditions $\varphi$  and $\varphi_j$, respectively. Next, let $\psi_1,\psi_2\in C_c^{\infty}(\Omega\times (0,T])$. According to the Runge approximation (Proposition~\ref{Runge}), we may choose sequences $(\varphi_j^k)_{k\in\N}\subset C_c^{\infty}((W_1)_T)$, $j=1,2$, such that the related solutions $v_j^k$, $k\in\N$, $j=1,2$, of \eqref{eq: 1st lin Westervelt} satisfy $v_j^k|_{\Omega_T}\to \psi_j$ in $L^2(\Omega_T)$ for $j=1,2$. By an integration by parts, we immediately see that 
\begin{equation}
\label{eq: runge time der}
\begin{split}
    \lim_{k\to\infty}\int_{\Omega_T}(\partial_t v_j^k)w\,dxdt&=- \lim_{k\to\infty}\int_{\Omega_T} v_j^k\partial_t w\,dxdt=-\int_{\Omega_T}\psi_j\partial_t w\,dxdt\\
    &=\int_{\Omega_T}(\partial_t \psi_j)w\,dxdt,
\end{split}
\end{equation}
for any $w\in H^1(0,T;\widetilde{L}^2(\Omega))$ and $w(T)=0$. Note that the conditions $v_j^k(0)=\psi_j(0)=w(T)=0$ ensure that we do not get boundary terms in the integration by parts formulas. Thus, from \eqref{eq: almost final id for kappa} and \eqref{eq: runge time der}, we may deduce that
\[
    \begin{split}
        0&=\lim_{\ell\to\infty}\lim_{k\to\infty}\int_{\Omega_T}(\kappa_1-\kappa_2)(\partial_t v_1^k)(\partial_t v^{\ell}_2)\eta\,dxdt\\
        &=\lim_{\ell\to\infty}\int_{\Omega_T}(\kappa_1-\kappa_2)(\partial_t \psi_1)(\partial_t v^{\ell}_2)\eta\,dxdt\\
        &=\int_{\Omega_T}(\kappa_1-\kappa_2)(\partial_t \psi_1)(\partial_t \psi_2)\eta\,dxdt
    \end{split}
\]
for any $\eta\in C_c^{\infty}((0,T))$. Next, by inserting
\[
    \psi_j(x,t)=\int_0^t \Psi_j(x,\sigma)\,d\sigma\in C_c^{\infty}(\Omega\times (0,T]),
\]
for some functions $\Psi_1,\Psi_2\in C_c^{\infty}(\Omega_T)$, into the above formula, we get
\[
    0=\int_{\Omega_T}(\kappa_1-\kappa_2)\Psi_1\Psi_2\eta\,dxdt.
\]
This clearly implies that $\kappa_1=\kappa_2$ in $\Omega_T$ and hence we can conclude the proof.
\end{proof}	
	
\section{Conclusion and future work}	
\label{sec: concluding remarks}

In this paper, we have investigated the Calderón problem for semilinear nonlocal MGT equations as well as for nonlocal JMGT equations of Westervelt type. For the semilinear MGT case, our main tool was a higher-order linearization method, whereas for polyhomogeneous nonlinearities and JMGT equations such an approach was not required. We emphasize that our results for semilinear MGT equations remain valid for second-order in time nonlocal wave equations arising from the same class of nonlinearities. Moreover, recalling that in the vanishing relaxation-time limit $\tau \to 0$ the (J)MGT equation converges formally to a second-order nonlocal wave equation (see Section~\ref{sec: intro}), the present work may be viewed as a natural extension of the results in \cite{KMSK,PZ1,LTZ2,LTZ1}.

As observed in the recovery of polynomial-type nonlinearities, the main analytical difficulties arise in higher space dimensions. This is largely due to the apparent inaccessibility of uniform $L^\infty_{t,x}$-bounds for solutions of \eqref{sysmgt} in high dimensions. It is therefore natural to ask whether the methods of Section~\ref{sec: proof of theorem 1.5} can be refined so as to extract additional information about \( g \) from the DN map beyond the derivatives $(\partial_\tau^{\ell} g(x,t,0))_{2 \le \ell \le m}$. One possible direction is to linearize the PDE around nontrivial background solutions rather than around the zero solution. A strategy of this type was employed in \cite{Johansson} to recover low-regularity coefficients in semilinear Schrödinger equations. For reasons of conciseness, we do not pursue this here, but this remains an intriguing direction for future research.

Other natural directions for future research include identifying the optimal class of recoverable polynomial-type nonlinearities $g$, and establishing stability estimates for the inverse problem within suitable subclasses of potentials and nonlinearities (see, for example, \cite{KLW,MN,ARMS}).
In particular, our forthcoming work will develop stable recovery results for low-regularity potentials in nonlocal wave equations.

Advances on these questions would substantially deepen the current understanding of inverse problems for nonlinear nonlocal wave models, and would further clarify the interplay between nonlinear wave dynamics and spatial nonlocality.

Finally, the next two subsections address the following topics:
\begin{enumerate}[(i)]
    \item the one-dimensional measurement problem for nonlocal MGT equations, and
    \item nonlocal generalizations of the JMGT equation of Kuznetsov type (see \eqref{eq: JMGT eq}). 
\end{enumerate}

\subsection{Unique determination results by one-dimensional measurements}
\label{IP-one dimensional measurement}

In this section, we address the one-dimensional measurement problem for the semilinear nonlocal MGT equation.

\medskip
\noindent
\textit{Polynomial-type nonlinearities.}
Assume that the linear potential $q \in L^p(\Omega)\cap C(\Omega)$, with $p$ obeying \eqref{conp}, and that the nonlinearity $g\colon \Omega\times \mathbb{R} \to \mathbb{R}$ is time-independent and satisfies assumptions \ref{A1}--\ref{A3}, as well as 
$\partial_\tau^k g(x,0)\in C(\overline{\Omega})$ for all $1\le k \le m$.
Our goal is to show that $q$ and the derivatives $\{\partial_\tau^k g(x,0)\}_{k=1}^m$ can be uniquely determined from one-dimensional measurements of the form
\[
    \left.\Lambda_{q,g}(\varepsilon\varphi)\right|_{(W_2)_T},
\]
for all sufficiently small $\varepsilon$ and a fixed nonzero $\varphi \in C_c^\infty((W_1)_T)$.

Accordingly, we assume that for $j=1,2$ the potentials $q_j$ and nonlinearities $g_j$ satisfy the above conditions and that the identity
\begin{equation}
\label{eq:DN-qg-one-dim}
    \left.\Lambda_{q_1,g_1}(\varepsilon\varphi)\right|_{(W_2)_T}
    = \left.\Lambda_{q_2,g_2}(\varepsilon\varphi)\right|_{(W_2)_T}
    \quad\text{in } (W_2)_T
\end{equation}
holds for all sufficiently small $\varepsilon$. Arguing as in Section~\ref{sec: first order linearization}, Lemma~\ref{linear1DtN} yields
\begin{equation}
\label{eq: equality linear DN maps}
    \left.\Lambda_{q_1}\right|_{(W_2)_T}\varphi = \left.\Lambda_{q_2} \varphi\right|_{(W_2)_T}
    \quad\text{in } (W_2)_T,
\end{equation}
where $\Lambda_{q_j}$ denotes the DN map associated with the linear problem
\begin{equation}
\label{eq: linear problem sec 8}
\begin{cases}
(\partial_t^3 + \alpha \partial_t^2 
    + b(-\Delta)^s \partial_t 
    + c(-\Delta)^s + q_j)v = 0 & \text{in } \Omega_T,\\[1mm]
v = \varphi & \text{in } (\Omega_e)_T,\\
v(0)=\partial_t v(0)=\partial_t^2 v(0)=0 & \text{in } \Omega.
\end{cases}
\end{equation}
By \eqref{eq: equality linear DN maps}, the equivalence of DN maps \eqref{eq: equivalence DN maps}, and the UCP, we obtain
\[
    v \vcentcolon = v_1 = v_2.
\]
Subtracting the equations for $v_1$ and $v_2$ gives
\begin{equation}
\label{eq:q1-q2 v}
    (q_1 - q_2) v = 0
    \quad\text{in } \Omega_T.
\end{equation}
If $q_1(x_0)\neq q_2(x_0)$ for some $x_0\in\Omega$, then continuity of $q_j$ yields the existence of a ball $B_\delta(x_0)$ where $q_1\neq q_2$, and hence $v=0$ in $(B_\delta(x_0))_T$.  
By \eqref{eq: linear problem sec 8} and the UCP, we deduce that $v\equiv 0$ in $\mathbb{R}^n_T$, contradicting the fact that $\varphi$ is nonzero.  
Thus, $q_1(x_0) = q_2(x_0)$.
Since $x_0\in\Omega$ is arbitrary, we conclude that $q_1=q_2$ in $\Omega$.
The same argument shows that if \eqref{eq:DN-qg-one-dim} (or \eqref{eq: equality linear DN maps}) holds for two time-dependent potentials $q_1,q_2 \in L^\infty(0,T;L^p(\Omega))\cap C(\Omega_T)$, then
\[
    q_1 = q_2 \quad \text{in }\operatorname{supp}\varphi.
\]

To recover the higher derivatives $\partial_\tau^k g(x,0)$ for $k=2,\dots,m$, we apply the higher-order linearization method.  
For instance, using the argument of Section~\ref{sec: second order linearization} with $\varphi_1=\varphi_2=\varphi$, Lemma~\ref{DtN2} and \eqref{eq: equal second order derv} give
\[
    (\partial_{\tau}^2 g_1(0) - \partial_{\tau}^2 g_2(0))\, v^2 = 0 
    \quad\text{in }\Omega_T,
\]
where $v$ solves \eqref{eq: linear problem sec 8}.  
The higher derivatives follow similarly (see Section~\ref{sec: higher order linearization}).

\medskip
\noindent
\textit{Polyhomogeneous nonlinearities.}
We fix again a nonzero exterior condition $\varphi \in C_c^\infty((W_1)_T)$ and assume that \eqref{eq:DN-qg-one-dim} holds for all sufficiently small $\varepsilon$.  
Here, we suppose that $q_1, q_2 \in L^p(\Omega)\cap C(\Omega)$, with $p$ obeying \eqref{conp}, and let the two polyhomogeneous nonlinearities be given by  
\[
    g_j(x,\tau)
    = \sum_{k=1}^L \alpha_k^{(j)}(x)\, |\tau|^{r_k}\tau,
    \qquad j=1,2,
\]
where $\alpha_k^{(j)} \in C(\overline{\Omega})$, $0 < r_1 < \cdots < r_L \le 1$, and we suppose that the exponents $r_k$ satisfy the condition~\eqref{conr}.

By the proof of Theorem~\ref{thm: polyhom nonlinearities}--in particular, by \eqref{eq: equal limits for DNs}--we already know that
\[
    \langle \Lambda_{q_j}\varphi, \psi \rangle
    = \lim_{\varepsilon\to 0} 
      \varepsilon^{-1}
      \langle \Lambda_{q_j,g_j} (\varepsilon\varphi), \psi \rangle
\]
for all $\psi \in C_c^\infty((W_2)_T)$ and $j=1,2$.  
Combining this with \eqref{eq:DN-qg-one-dim} and the argument above yields
\[
    q \vcentcolon= q_1 = q_2
    \quad\text{in }\Omega.
\]
Thus, \eqref{eq: equal potential plus DN map} implies that
\[
    g_1(u_\varepsilon) = g_2(u_\varepsilon)
    \quad\text{in }\Omega_T,
\]
where $u_\varepsilon = u_\varepsilon^{(1)} = u_\varepsilon^{(2)}$, and $u_{\eps}^{(j)}$ denotes the unique solution to
\[
\begin{cases}
(\partial_t^3+\alpha\partial_t^2+b(-\Delta)^s\partial_t 
 + c(-\Delta)^s+q)u + g_j(u)=0 & \text{in }\Omega_T,\\
u = \varepsilon\varphi & \text{in }(\Omega_e)_T,\\
u(0)=\partial_t u(0)=\partial_t^2 u(0)=0 & \text{in }\Omega
\end{cases}
\]
for $j=1,2$. Note that the equality $u_\varepsilon^{(1)} = u_\varepsilon^{(2)}$ follows again from \eqref{eq:DN-qg-one-dim} together with the UCP.

Combining Claim~\ref{first linearization poly} with \cite[Eq.~(5.21)]{LTZ1}, we obtain
\[
    (\alpha_1^{(1)} - \alpha_1^{(2)})\, |v|^{r_1+1} v = 0
    \quad\text{in }\Omega_T,
\]
where $v$ solves \eqref{eq: linear problem sec 8}.  
Following the strategy above, we obtain that $\alpha_1^{(1)}=\alpha_1^{(2)}$ in $\Omega$.  
An induction argument identical to that in \cite[Proof of Theorem~1.5]{LTZ1} then shows that
\[
    \alpha_k^{(1)} = \alpha_k^{(2)}
    \qquad\text{for all } 2\le k\le L.
\]
Hence, both the potential $q_j$ and the full finite-order polyhomogeneous nonlinearity $g_j$ are uniquely determined by one-dimensional measurements.

\subsection{Nonlocal JMGT equations of Kuznetsov-type}
\label{subsec:nonlocal-gradient}

The aim of this section is to discuss nonlocal generalizations of the JMGT equation of Kuznetsov-type, that is, of the evolution equation
\begin{equation}
\label{eq:non-local-JMGT}
\left(\tau\partial_t^3+\partial_t^2-b\Delta\partial_t-c^2\Delta\right)u
    =\partial_t\left(\frac{1}{c^2}\frac{B}{2A}(\partial_tu)^2+|\nabla u|^2\right)
\end{equation}
(see~Eq.~\eqref{eq: JMGT eq}). As explained in Section~\ref{sec: intro}, neglecting local nonlinear effects in \eqref{eq:non-local-JMGT} leads to the Westervelt-type equation
\begin{equation}
\label{eq: Westervelt-type sec 8}
    \left(\tau\partial_t^3+\partial_t^2-b\Delta\partial_t-c^2\Delta\right)u
    =\partial_t\bigl(\kappa (\partial_tu)^2\bigr)
\end{equation}
(in the velocity potential formulation), and further dropping all quadratic terms yields the MGT equation
\begin{equation}
\label{eq: MGT eq sec 8}
    \left(\tau\partial_t^3+\partial_t^2-b\Delta\partial_t-c^2\Delta\right)u=0
\end{equation}
(see \eqref{eq: westervelt JMGT eq} and \eqref{eq: MGT eq}). In the present article we have studied inverse problems for nonlocal analogues of \eqref{eq: Westervelt-type sec 8} and for semilinear perturbations of \eqref{eq: MGT eq sec 8}. To fully extend the Kuznetsov-type JMGT model to a nonlocal setting, it remains to replace the local gradient $\nabla u$ in \eqref{eq:non-local-JMGT} by a suitable nonlocal gradient. We therefore introduce two natural, but non-equivalent, notions of $s$-fractional gradients. For simplicity, we here restrict our discussions to the cases $0<s<1$.
\begin{definition}
\label{def:fra-gradient-1}
Let $0<s<1$. For $u\in H^s(\mathbb{R}^n)$, the \emph{fractional gradient of order $s$} of $u$ is the map
\[
\nabla^s u\colon \mathbb{R}^{2n}\to\mathbb{R}^n
\]
defined by
\begin{equation}
\nabla^s u(x,y)\vcentcolon=
    \sqrt{\frac{C_{n,s}}{2}}\,
    \frac{u(x)-u(y)}{|x-y|^{\frac n2+s+1}}(x-y),
\end{equation}
where $C_{n,s}=\frac{4^s\Gamma(\frac n2+s)}{\pi^{\frac n2}|\Gamma(-s)|}>0$.
\end{definition}
By \cite[Proposition~3.4]{EPV} and the continuity of the fractional Laplacian, we obtain
\begin{equation}
\|\nabla^s u\|_{L^2(\mathbb{R}^{2n};\mathbb{R}^n)}
    =\|(-\Delta)^{s/2} u\|_{L^2(\mathbb{R}^n)}
    \le \|u\|_{H^s(\mathbb{R}^n)}
\end{equation}
for all $u\in H^s(\mathbb{R}^n)$. Hence
\[
    \nabla^s\colon H^s(\mathbb{R}^n)\to L^{2}(\mathbb{R}^{2n};\mathbb{R}^n)
\]
is a bounded linear operator, and we can define the \emph{fractional divergence}
\[
    \Div_s\colon L^2(\mathbb{R}^{2n};\mathbb{R}^n)\to H^{-s}(\mathbb{R}^n)
\]
as its formal adjoint, that is,
\[
    \langle \Div_s u,v\rangle_{H^{-s}(
    \mathbb{R}^n)\times H^s(\mathbb{R}^n)}
    =\langle u,\nabla^s v\rangle_{L^2(\mathbb{R}^{2n};\mathbb{R}^n)}
\]
for all $u\in L^{2}(\mathbb{R}^{2n};\mathbb{R}^n)$ and $v\in H^s(\mathbb{R}^n)$. Combining \cite[Proposition~3.4]{EPV} with the polarization identity yields
\begin{equation}
\label{eq: div grad equal frac lap}
    \begin{split}
        \langle \Div_s(\nabla^s u),v\rangle_{H^{-s}(\mathbb{R}^n)\times H^s(\mathbb{R}^n)}
            &=\langle \nabla^s u,\nabla^s v\rangle_{L^{2}(\mathbb{R}^{2n};\mathbb{R}^n)}\\
            &=\langle (-\Delta)^{s/2}u,(-\Delta)^{s/2}v\rangle_{L^2(\mathbb{R}^n)}\\
            &=\langle (-\Delta)^s u,v\rangle_{H^{-s}(\mathbb{R}^n)\times H^s(\mathbb{R}^n)}
    \end{split}
\end{equation}
for all $u,v\in H^s(\mathbb{R}^n)$. Moreover, $\nabla^s$ satisfies the product rule
\begin{equation}
\label{eq: product rule frac grad}
    \nabla^s(uv)(x,y)
        =v(y)\,\nabla^s u(x,y)+u(x)\,\nabla^s v(x,y)
\end{equation}
for all $u,v\in H^s(\mathbb{R}^n)$ and $x,y\in\mathbb{R}^n$.
For further properties of $\nabla^s$ and $\Div_s$ we refer to \cite{du2012analysis,JRPZ,RZ-low-reg,CRTZ,KLZ}. 
The works \cite{JRPZ,RZ-low-reg,CRTZ} analyze the \emph{inverse fractional conductivity problem}, which consists in recovering a uniformly elliptic conductivity $\gamma\colon\mathbb{R}^n\to\mathbb{R}$ in
\[
    \Div_s(\Theta_{\gamma}\nabla^s u)=0\quad\text{in }\Omega,
\]
where $\Theta_{\gamma}(x,y)=\gamma^{1/2}(x)\gamma^{1/2}(y)$, from the associated DN map $\Lambda_\gamma$. 
The article \cite{KLZ} studies the Calderón problem for a weighted fractional $p$-Laplacian and can be viewed as a nonlinear extension of the inverse fractional conductivity problem.

We next introduce another notion of fractional gradient, the \emph{Riesz $s$-fractional gradient} $\text{D}^s$, which--unlike $\nabla^s$--associates to each $u\colon\mathbb{R}^n\to\mathbb{R}$ a vector field $\text{D}^s u\colon\mathbb{R}^n\to\mathbb{R}^n$ (see, e.g., \cite[p.~246]{ponce2016elliptic}, \cite{almi2025rieszfractionalgradientfunctionals,du2012analysis}).
\begin{definition}
\label{def:fra-gradient-2}
Let $0<s<1$. For $u\in C^1_c(\mathbb{R}^n)$, the \emph{Riesz fractional gradient of order $s$} of $u$ is the vector field
\[
    \text{D}^s u\colon \mathbb{R}^n\to \mathbb{R}^n
\]
defined by
\begin{equation}
\text{D}^s u(x)\vcentcolon= \mu_s\int_{\mathbb{R}^n}\frac{u(x)-u(y)}{|x-y|^{n+s+1}}(x-y)\,dy,
\end{equation}
where $\mu_s= \frac{2^s \Gamma\left(\frac{n+s+1}{2}\right)}{\pi^{\frac{n}{2}}\Gamma\left(\frac{1-s}{2}\right)}>0$. 
For $v\in C^1_c(\mathbb{R}^n;\mathbb{R}^n)$, the \emph{Riesz fractional divergence of order $s$} of $v$ is the scalar field
\[
\text{Div}_s v\colon\mathbb{R}^n\to\mathbb{R}
\]
given by
\[
\text{Div}_s v(x)\vcentcolon = \mu_s\int_{\mathbb{R}^n}\frac{v(x)-v(y)}{|x-y|^{n+s+1}}\cdot (x-y)\,dy.
\]
\end{definition}
These operators satisfy the integration-by-parts identity
\begin{equation}
\label{eq: integration by parts formula}
\int_{\mathbb{R}^n}u\,\text{Div}_s v\,dx
    =-\int_{\mathbb{R}^n}\text{D}^s u\cdot v\,dx
\end{equation}
for all $u\in C^1_c(\mathbb{R}^n)$ and $v\in C^1_c(\mathbb{R}^n;\mathbb{R}^n)$ (see \cite[Proposition~2.3]{almi2025rieszfractionalgradientfunctionals}). Furthermore, one has
\begin{equation}
\label{eq: rewriting fractional operators}
    \begin{split}
        \text{D}^s u(x)&=\nabla (-\Delta)^{-(1-s)/2}u (x)
                  =(-\Delta)^{-(1-s)/2}\nabla u(x),\\
        \text{Div}_s v(x)&=\Div(-\Delta)^{-(1-s)/2}v(x)
                  =(-\Delta)^{-(1-s)/2}\Div v(x)
    \end{split}
\end{equation}
for $u\in C^1_c(\mathbb{R}^n)$ and $v\in C^1_c(\mathbb{R}^n;\mathbb{R}^n)$; see \cite[Proposition~2.9]{almi2025rieszfractionalgradientfunctionals}. Here $(-\Delta)^{-\alpha/2}$, $0<\alpha<n$, denotes the \emph{Riesz potential},
\begin{equation}
\label{eq: Riesz potential}
    (-\Delta)^{-\alpha/2}f(x)\vcentcolon 
    = \frac{1}{c_{n,\alpha}}\int_{\mathbb{R}^n}\frac{f(y)}{|x-y|^{n-\alpha}}\,dy,
\end{equation}
defined for $f\in L^p(\mathbb{R}^n)$ with $1\leq p<n/\alpha$ and
$c_{n,\alpha}=2^{\alpha}\pi^{\frac{n}{2}}\frac{\Gamma\left(\frac{\alpha}{2}\right)}{\Gamma\left(\frac{n-\alpha}{2}\right)}$.
Our notation emphasizes that $(-\Delta)^{-\alpha/2}$ is the inverse of $(-\Delta)^{\alpha/2}$ in the sense of Fourier multipliers, with symbol $|\xi|^{-\alpha}$.

The representation \eqref{eq: rewriting fractional operators} has several consequences. For instance, the Hardy--Littlewood--Sobolev inequality for the Riesz potential yields $L^{p^\ast}$–$L^p$ estimates for $\text{D}^s$ and $\text{Div}_s$ (see \cite[Proposition~2.2]{almi2025rieszfractionalgradientfunctionals} for interpolation-type $L^p$ estimates). Moreover, for $f\in C_c^\infty(\mathbb{R}^n)$ we have
\[
    \lim_{\alpha\to 0}(-\Delta)^{-\alpha/2}f(x)=f(x),
\]
which implies
\[
    \text{D}^s u(x)\to \nabla u(x)\quad\text{and}\quad \text{Div}_s v(x)\to \Div v(x)
\]
as $s\to 1$. In analogy with \eqref{eq: div grad equal frac lap} for $(\nabla^s,\Div_s)$, one also has
\[
    \text{Div}_s(\text{D}^s u)(x)=(-\Delta)^s u(x)
\]
for all $u\in C^{\infty}_c(\mathbb{R}^n)$.

Unlike $\nabla^s$, the Riesz fractional gradient $\text{D}^s$ does not satisfy an exact product rule. Nevertheless, for all $u,v\in C^1_c(\mathbb{R}^n)$ and $w\in C^1_c(\mathbb{R}^n;\mathbb{R}^n)$ one has
\[
   \begin{split}
       \text{D}^s(uv)(x)
           &=u(x)\text{D}^s v(x)+v(x)\text{D}^s u(x)+\text{D}^s_{\mathrm{NL}}(u,v)(x),\\
       \text{Div}_s(uw)(x)
           &=w(x)\cdot \text{D}^s u(x)+u(x)\text{Div}_s w(x)
             +\text{Div}_{s,\mathrm{NL}}(w,u)(x)
   \end{split}
\]
for all $x\in\mathbb{R}^n$, where
\[
\begin{split}
    \text{D}^s_{\mathrm{NL}}(u,v)(x)
        &\vcentcolon = -\mu_s\int_{\mathbb{R}^n}
            \frac{(u(x)-u(y))(v(x)-v(y))}
                 {|x-y|^{n+s+1}}(x-y)\,dy,\\[2mm]
    \text{Div}_{s,\mathrm{NL}}(w,u)(x)
        &\vcentcolon = -\mu_s\int_{\mathbb{R}^n}
            \frac{(u(x)-u(y))\,(w(x)-w(y))}
                 {|x-y|^{n+s+1}}\cdot (x-y)\,dy
\end{split}
\]
(see \cite[Proposition~2.6]{almi2025rieszfractionalgradientfunctionals}).

We conclude this section with several natural nonlocal Kuznetsov-type JMGT models that we plan to investigate in future work. As in the present article, the principal part is given by
\[
L^s_{\tau,\alpha,b,c}u\vcentcolon=\tau\partial_t^3 u+\alpha\partial_t^2 u
+b(-\Delta)^s\partial_t u+c\,(-\Delta)^s u,
\]
while the nonlinearity is taken in the form
\[
    \mathcal{G}(u,\partial_t u, \partial_t^2 u)
        =\partial_t\bigl(\mathcal{F}(\partial_t u)+\mathcal{N}(u)\bigr),
    \qquad \mathcal{F}(\partial_t u)=\kappa(\partial_t u)^2
\]
(see \eqref{sysmgtWtype}). For the spatial part $\mathcal{N}(u)$ we may consider, for instance,
\begin{enumerate}[{(N}1)]
    \item\label{N1} $\displaystyle \mathcal{N}(u)(x)
        =\int_{\mathbb{R}^n}\gamma(x,y)\,|\nabla^s u(x,y)|^2\,dy,$
    \item\label{N2} $\displaystyle \mathcal{N}(u)(x)
        =\theta(x)\,|\text{D}^s u(x)|^2,$
\end{enumerate}
for suitable coefficients $\gamma$ and $\theta$. Note that, upon integration in $x\in\mathbb{R}^n$ and taking the square root, the nonlinearity in \ref{N1} yields a weighted variant of the Gagliardo–Slobodeckij seminorm
\[
    [u]_{W^{s,2}(\mathbb{R}^n)}
        \vcentcolon =\left(\int_{\mathbb{R}^{2n}}
        \frac{|u(x)-u(y)|^2}{|x-y|^{n+2s}}\,dx\,dy\right)^{1/2}.
\]
Moreover, by \eqref{eq: equivalent norm Hs} and \cite[Proposition~3.4]{EPV}, the quantity
\[
    \|u\|_{W^{s,2}(\mathbb{R}^n)}
        \vcentcolon =
        \Bigl(\|u\|_{L^2(\mathbb{R}^n)}^2
              +[u]_{W^{s,2}(\mathbb{R}^n)}^2\Bigr)^{1/2}
\]
defines an equivalent norm on $H^s(\mathbb{R}^n)$. The inverse problem of determining the nonlinear coefficients $\gamma$ and $\theta$ from the associated DN data is highly nontrivial and, in our view, very natural. We refer the interested reader also to the forthcoming work \cite{PZ-KK} for related results.

\medskip

\appendix

\section{Method of parabolic regularization and its applications}
\label{appendix: parabolic regularization}

In this section, we carry out the parabolic regularization of third order nonlocal wave equations and show that in the vanishing viscosity limit $(\eps\to 0)$ the solutions $u_\eps$ converge to the unique solution of the original problem. Furthermore, we establish an integration by parts formula for third order derivatives.
    
    \subsection{Parabolic regularization}
    
    The main result of this section is the following, which will be established similarly as the corresponding assertions for second order nonlocal wave equations and to that effect we closely follow the strategy presented in \cite{PZ-KK}.

    \begin{theorem}[Parabolic regularization]
    \label{thm: parabolic regularization}
        Let $\Omega\subset\R^n$ be a bounded Lipschitz domain, $T>0$ and $s\in\R_+$. Suppose that $q\in L^{\infty}(0,T;L^p(\Omega))$ is a real-valued potential with $2\leq p\leq \infty$ satisfying \eqref{conp} and 
        \[
            (u_0,u_1,u_2,F)\in \widetilde{H}^s(\Omega)\times \widetilde{H}^s(\Omega)\times \widetilde{L}^2(\Omega)\times L^2(0,T;\widetilde{L}^2(\Omega)).
        \]
        Under the above assumptions, for any $\eps>0$ there exists a unique solution
         \begin{equation}
        \label{eq: regularity weak sol parabolic reg}
            u_\eps\in W^{1,\infty}(0,T;\widetilde{H}^s(\Omega))\cap W^{2,\infty}(0,T;\widetilde{L}^2(\Omega))\cap H^2(0,T;\widetilde{H}^s(\Omega))
        \end{equation}
        of the parabolically regularized problem
        \begin{equation}
    \label{parabolically regularized linear MGT without ext cord}
	\begin{cases}
		(\partial_t^3+\eps (-\Delta)^s\partial_t^2+\alpha\partial_t^2+b(-\Delta)^s\partial_t+c(-\Delta)^s+q)u=F\ & {\rm in}\ \Omega_T,\\
		u=0\ & {\rm in}\ (\Omega_e)_T, \\
		u(0)=u_0, \partial_t u(0)=u_1, \partial_t^2 u(0)=u_2\ & {\rm in}\ \Omega.
	\end{cases}
	\end{equation}
     If we denote by $u$ the unique solution of \eqref{linear MGT without ext cord} (Theorem \ref{wellposednessv}), then we see that the family $(u_\eps)_{\eps>0}$ obeys the following convergence properties:
        \begin{enumerate}[{(C}1)]
              \item\label{conv 1: par} $u_\eps\weakstar u$ in $L^{\infty}(0,T;\widetilde{H}^s(\Omega))$,
            \item\label{conv 2: par}  $\partial_t u_\eps\weakstar \partial_t u$ in $L^{\infty}(0,T;\widetilde{H}^s(\Omega))$,
            \item\label{conv 3: par}  $\partial_t^2 u_\eps\weakstar \partial_t^2 u$ in $L^{\infty}(0,T;\widetilde{L}^2(\Omega))$
            \item\label{conv 4: par}  $\partial_t^3 u_\eps\weakstar \partial_t^3 u$ in $L^{2}(0,T;H^{-s}(\Omega))$
        \end{enumerate}
        as $\eps\to 0$.
    \end{theorem}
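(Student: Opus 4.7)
The plan is to extend the Galerkin approximation scheme employed in Theorem \ref{wellposednessv} to the parabolically regularized problem \eqref{parabolically regularized linear MGT without ext cord}, derive energy estimates that are uniform in $\eps$, and then pass to the limit $\eps \to 0$. For each fixed $\eps > 0$, I would use the same orthonormal basis $(w_m)_{m\in\N}$ of $\widetilde{H}^s(\Omega)$ and the finite-dimensional subspaces $\widetilde{H}^s_m$ as in Step 1 of the proof of Theorem \ref{wellposednessv}. The only structural change in the approximate problem \eqref{eq: approximate problem 2} is the addition of the term $\eps B_m \partial_t^2 c_m$, so the same Banach fixed point argument produces approximate solutions $u_m^\eps$ with the regularity \eqref{eq: regularity of coeff um}.

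Testing the approximate equation with $\partial_t^2 u_m^\eps$ and integrating over $[0,t]$ yields, compared to \eqref{eq: first energy estimate for existence}, an additional positive contribution
\[
    \eps \int_0^t \|(-\Delta)^{s/2}\partial_t^2 u_m^\eps\|_{L^2(\R^n)}^2\,d\tau
\]
on the left-hand side. Keeping this term produces an $\eps$-dependent estimate that yields $\partial_t^2 u_\eps \in L^2(0,T;\widetilde{H}^s(\Omega))$, whereas dropping it produces an estimate identical to \eqref{eq: fifth energy estimate for existence} and thus uniform in $\eps$. Combining these with Gronwall's inequality and the weak-$*$ compactness/identification arguments from Step 3 of Theorem \ref{wellposednessv}, I would construct solutions $u_\eps$ with the regularity \eqref{eq: regularity weak sol parabolic reg}. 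Uniqueness for fixed $\eps>0$ follows by repeating the Ladyzhenskaya-type argument of Step 4: when testing the homogeneous PDE with the function $\psi$ from \eqref{eq: def of psi}, the new viscosity term produces $\eps\int_0^{t_0}\langle (-\Delta)^{s/2}\partial_t^2 u,(-\Delta)^{s/2}\psi\rangle_{L^2(\R^n)}\,dt$, which after integration by parts in time reduces, via $\partial_t\psi=-\varphi$ and $\partial_t^2\psi = \chi_{[0,t_0]}u$, to quantities that can either be absorbed into the left-hand side or handled within the subsequent Gronwall loop.

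To pass to the limit $\eps \to 0$, the uniform energy estimates provide weak-$*$ convergent subsequences so that $u_\eps \weakstar \tilde u$, $\partial_t u_\eps \weakstar \partial_t \tilde u$ and $\partial_t^2 u_\eps \weakstar \partial_t^2 \tilde u$ in the spaces asserted in \ref{conv 1: par}--\ref{conv 3: par}. The crucial point is to verify that the limit $\tilde u$ satisfies the weak formulation \eqref{weakint} of the original problem \eqref{linear MGT without ext cord}. For every $\varphi \in C_c^\infty(\Omega \times [0,T))$, the only nonstandard contribution is
\[
    \eps \int_0^T \langle (-\Delta)^{s/2}\partial_t^2 u_\eps, (-\Delta)^{s/2}\varphi\rangle_{L^2(\R^n)}\,dt,
\]
which, after integrating by parts once in time, equals
\[
    -\eps\int_0^T \langle (-\Delta)^{s/2}\partial_t u_\eps, (-\Delta)^{s/2}\partial_t \varphi\rangle_{L^2(\R^n)}\,dt - \eps\langle (-\Delta)^{s/2}u_1,(-\Delta)^{s/2}\varphi(0)\rangle_{L^2(\R^n)}.
\]
By the uniform $L^\infty(0,T;\widetilde{H}^s(\Omega))$-bound on $\partial_t u_\eps$, both terms vanish as $\eps\to 0$. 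Hence $\tilde u$ is a weak solution of \eqref{linear MGT without ext cord}, and the uniqueness assertion of Theorem \ref{wellposednessv} forces $\tilde u = u$, so the convergence holds for the full family by a standard subsequence argument.

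The main obstacle will be to obtain the weak-$*$ convergence \ref{conv 4: par} for the third-order time derivative, since $\partial_t^3 u_\eps$ is not directly controlled uniformly in $\eps$. The plan is to read off from the equation that
\[
    \partial_t^3 u_\eps = F - \eps (-\Delta)^s\partial_t^2 u_\eps - \alpha \partial_t^2 u_\eps - b(-\Delta)^s \partial_t u_\eps - c(-\Delta)^s u_\eps - q u_\eps,
\]
and to exploit the factorization $\eps(-\Delta)^s\partial_t^2 u_\eps = \eps^{1/2}\cdot \eps^{1/2}(-\Delta)^s\partial_t^2 u_\eps$ together with the $\eps$-dependent bound $\eps^{1/2}\|(-\Delta)^{s/2}\partial_t^2 u_\eps\|_{L^2(0,T;L^2(\R^n))}\lesssim 1$: this shows that the viscosity term tends to zero in $L^2(0,T;H^{-s}(\Omega))$, while the remaining terms on the right-hand side are uniformly bounded in $L^2(0,T;H^{-s}(\Omega))$ by Lemma \ref{lemma: potential} and Proposition \ref{prop: basic facts on frac Lap}\ref{mapping properties}. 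Weak-$*$ compactness then yields a limit which must coincide with $\partial_t^3 u$ by the identification argument above, thereby establishing \ref{conv 4: par}.
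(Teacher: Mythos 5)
Your existence construction via Galerkin approximation and the derivation of the $\eps$-uniform energy estimates match the paper's Steps~1 and~3, and your argument for \ref{conv 4: par} via the $\eps^{1/2}$-factorization of the viscosity term is precisely what the paper uses. The genuine difference is in the uniqueness argument for fixed $\eps>0$. You propose to repeat the Ladyzhenskaya-type argument of Step~4 of Theorem~\ref{wellposednessv}, introducing the auxiliary functions $\varphi,\psi$ from \eqref{eq: def of phi}--\eqref{eq: def of psi} and showing that the extra term $\eps\int_0^{t_0}\langle(-\Delta)^{s/2}\partial_t^2u,(-\Delta)^{s/2}\psi\rangle\,dt$ reduces, after two integrations by parts using $u(0)=\partial_tu(0)=0$ and $\varphi(t_0)=\psi(t_0)=0$, to the nonnegative quantity $\eps\int_0^{t_0}\|(-\Delta)^{s/2}u\|_{L^2(\R^n)}^2\,dt$; this is correct and closes the Gronwall loop. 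The paper, however, takes a more direct route: since the regularized solution has the additional regularity $\partial_t^2u_\eps\in L^2(0,T;\widetilde{H}^s(\Omega))$ and hence $\partial_t^3u_\eps\in L^2(0,T;H^{-s}(\Omega))$, the equation can be paired directly against $\partial_t^2u_\eps$, yielding an energy identity that is then closed by absorbing the cross-term $-c\langle(-\Delta)^{s/2}u,(-\Delta)^{s/2}\partial_t^2u\rangle$ into the viscosity term via Young's inequality with an $\eps$-dependent constant and applying Gronwall. The paper's route is shorter precisely because the parabolic regularization restores the regularity that makes the direct test $\partial_t^2u_\eps$ admissible; the Ladyzhenskaya trick was invented to circumvent the lack of this regularity in the unregularized case. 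Your proof is not wrong, but for the regularized problem the auxiliary functions $\varphi,\psi$ are unnecessary overhead — though, as a benefit, your variant would also cover potential viscosity-free generalizations where the direct test fails.
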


    \begin{proof} For the sake of readability, we divide the proof of Theorem \ref{thm: parabolic regularization} into several steps. As some parts of the proof are very similar to the one of Theorem \ref{wellposednessv}, we are sometimes rather brief.\\
    
        \noindent\textbf{Step 1.} \textit{Existence of $u_\eps$.} We again use the Galerkin method to show the existence of a solution $u_\eps$ to the problem \eqref{parabolically regularized linear MGT without ext cord} and hence we use the same notation as in the proof of Theorem \ref{wellposednessv}. To shorten the notation, we write in this step $u$ for the solution $u_\eps$. From Step 1 of the proof of Theorem \ref{wellposednessv}, we know that there exist sequences $(u^m_{j})_{m\in\N}\subset\widetilde{H}^s(\Omega)$, $0\leq j\leq 2$, such that
        \begin{equation}
        \label{eq: convergence intial cond reg}
            u^m_{j}\in\widetilde{H}^s_m \text{ for }0\leq j\leq 2\text{ and }\begin{cases}
                u^m_0\to u_0\text{ in }\widetilde{H}^s(\Omega),\\
                 u^m_1\to u_1\text{ in }\widetilde{H}^s(\Omega),\\
                  u^m_2\to u_2\text{ in }\widetilde{L}^2(\Omega)
            \end{cases}
        \end{equation}
        as $m\to\infty$. Next, we set
        \[
            u_m=\sum_{j=1}^m c_m^jw_j\in\widetilde{H}^s_m,
        \]
        and look for coefficients $(c_m^j)_{1\leq j\leq m}$ such that $u_m\in \widetilde{H}^s_m$ solves
        \begin{equation}
        \label{eq: approximate problem par reg}
        \begin{cases}
            \langle \partial_t^3 u_m(t),w_j\rangle_{L^2(\Omega)}+\eps\langle (-\Delta)^s \partial_t^2 u_m(t),w_j\rangle+\langle L_{\alpha,b,c,q}u_m(t),w_j\rangle=\langle F(t),w_j\rangle_{L^2(\Omega)},\\
            u_m(0)=u^m_0,\,\partial_t u_m(0)=u^m_1,\,\partial_t^2 u_m(0)=u^m_2
        \end{cases}
        \end{equation}
        for all $1\leq j\leq m$ and $0<t<T$. The above ODE problem can be solved exactly in the same way as problem \eqref{eq: approximate problem}. Hence, we may deduce the existence of a solution $u_m\in C^2([0,T];\widetilde{H}^s_m)\cap H^3(0,T;\widetilde{H}^s_m)$ to \eqref{eq: approximate problem par reg}.
        
        If we multiply \eqref{eq: approximate problem par reg} by  $\partial_t^2 c^j_m$ and sum $j$ over $\{1,\ldots,m\}$, then we arrive at the identity
        \[
        \begin{split}
            &\partial_t\frac{\|\partial_t^2 u_m(\tau)\|_{L^2(\Omega)}^2}{2}+\eps\| (-\Delta)^{s/2}\partial_t^2 u_m(\tau)\|_{L^2(\R^n)}^2+\langle L_{\alpha,b,c,q}u_m(\tau),\partial_t^2 u_m(\tau)\rangle\\
            &\quad =\langle F(\tau),\partial_t^2 u_m(\tau)\rangle_{L^2(\Omega)}
        \end{split}
        \]
        for all $0<\tau<T$. By integrating the previous identity over $[0,t]$, we get the estimate
        \begin{equation}
        \label{eq: prelim estimate for Gronwall}
                \begin{split}
             &\frac{1}{2}\|\partial_t^2 u_m(t)\|_{L^2(\Omega)}^2+\frac{b}{4}\left(\|(-\Delta)^{s/2}\partial_t u_m(t)\|_{L^2(\R^n)}^2+\|(-\Delta)^{s/2} u_m(t)\|_{L^2(\R^n)}^2\right)\\
             &+\eps\| (-\Delta)^{s/2}\partial_t^2 u_m\|_{L^2(\R^n_t)}^2\\
             &\leq C(\|(-\Delta)^{s/2}u_0\|_{L^2(\R^n)}^2+\|(-\Delta)^{s/2}u_1\|_{L^2(\R^n)}^2+\|u_2\|_{L^2(\Omega)}^2+\|F\|_{L^2(0,T;L^2(\Omega))}^2)\\
             &\quad +C\int_0^t(\|(-\Delta)^{s/2}u_m\|_{L^2(\R^n)}^2+\|(-\Delta)^{s/2}\partial_t u_m\|_{L^2(\R^n)}^2+\|\partial_t^2 u_m\|_{L^2(\Omega)}^2)\,d\tau
        \end{split}
        \end{equation}
        for any $0<t<T$ and some $C>0$ independent of $m\in\N$. For more details on the derivation of the above estimate, we refer to Step 2 in the proof of Theorem \ref{wellposednessv}. Hence, Gronwall's inequality ensures that we have
        \begin{equation}
        \label{eq: Gronwall estimate}
         \begin{split}
            &\|\partial_t^2 u_m(t)\|_{L^2(\Omega)}^2+\|(-\Delta)^{s/2}\partial_t u_m(t)\|_{L^2(\R^n)}^2+\|(-\Delta)^{s/2} u_m(t)\|_{L^2(\R^n)}^2\\
            &\leq C(\|(-\Delta)^{s/2}u_0\|_{L^2(\R^n)}^2+\|(-\Delta)^{s/2}u_1\|_{L^2(\R^n)}^2+\|u_2\|_{L^2(\Omega)}^2+\|F\|_{L^2(0,T;L^2(\Omega))}^2)
        \end{split}
        \end{equation}
        for all $0\leq t\leq T$ and large $m\in\N$. Combining \eqref{eq: Gronwall estimate} with \eqref{eq: prelim estimate for Gronwall}, we deduce that
        \begin{equation}
        \label{eq: uniform estimate in eps and m}
            \eps \| (-\Delta)^{s/2}\partial_t^2 u_m\|_{L^2(\R^n_T)}^2\leq C
        \end{equation}
        for some $C>0$ independent of $m$. Therefore, we have shown that the solution $u_m$ of problem \eqref{eq: approximate problem par reg} shares the following properties:
        \begin{enumerate}[{(B}1)]
            \item $u_m$ is uniformly bounded in $W^{1,\infty}(0,T;\widetilde{H}^s(\Omega))$,
            \item $\partial_t^2 u_m$ is uniformly bounded in $L^{\infty}(0,T;\widetilde{L}^2(\Omega))$ and $L^2(0,T;\widetilde{H}^s(\Omega))$. 
        \end{enumerate}
        Thus, we may extract a subsequence, still denoted by $(u_m)$, and a function
        \[
          u\in W^{1,\infty}(0,T;\widetilde{H}^s(\Omega))\cap W^{2,\infty}(0,T;\widetilde{L}^2(\Omega))\cap H^2(0,T;\widetilde{H}^s(\Omega))
        \]
        such that
        \begin{enumerate}[{(c}1)]
            \item $u_m\weakstar u$ in $L^{\infty}(0,T;\widetilde{H}^s(\Omega))$,
            \item $\partial_t u_m\weakstar \partial_t u$ in $L^{\infty}(0,T;\widetilde{H}^s(\Omega))$,
            \item $\partial_t^2 u_m\weakstar \partial_t^2 u$ in $L^{\infty}(0,T;\widetilde{L}^2(\Omega))$
            \item and $\partial_t^2 u_m \weak \partial_t^2 u$ in $L^2(0,T;\widetilde{H}^s(\Omega))$
        \end{enumerate}
        as $m\to\infty$. Using these convergence properties, it can easily be deduced from \eqref{eq: approximate problem par reg} that the found function $u$ solves \eqref{parabolically regularized linear MGT without ext cord}, i.e. for any $v\in \widetilde{H}^s(\Omega)$ there holds
        \begin{equation}
        \label{eq: weak for par reg problem}
            \begin{split}
                &\partial_t \langle \partial_t^2 u,v\rangle_{L^2(\Omega)}+\eps \langle (-\Delta)^{s/2}\partial_t^2 u,(-\Delta)^{s/2}v\rangle_{L^2(\R^n)}+\alpha\langle \partial_t^2 u,v\rangle_{L^2(\Omega)}\\
                &+b\langle (-\Delta)^{s/2} \partial_t u,(-\Delta)^{s/2}v\rangle_{L^2(\R^n)}+c\langle (-\Delta)^{s/2} u,(-\Delta)^{s/2}v\rangle_{L^2(\R^n)}\\
                &+\langle q u,v\rangle_{L^2(\Omega)}=\langle F,v\rangle_{L^2(\Omega)}
            \end{split}
        \end{equation}
        in the sense of distributions on $(0,T)$ and one has
        \[
            u(0)=u_0\text{ in }\widetilde{H}^s(\Omega),\,\partial_t u(0)=u_1\text{ in }\widetilde{H}^s(\Omega)\text{ and }\partial_t^2 u(0)=u_2\text{ in }\widetilde{L}^2(\Omega)
        \]
        (cf.~e.g.~\cite[p.~564-566]{DautrayLionsVol5}). Furthermore, as shown in the next step, the function $u$ is unique and so the whole sequence $(u_m)_{m\in\N}$ converges to $u$ and not only a particular subsequence of $(u_m)_{m\in\N}$. \\

        \noindent\textbf{Step 2.} \textit{Uniqueness of $u_\eps$.} Note that by linearity of the problem it suffices to show that whenever $u=u_\eps$, satisfying \eqref{eq: regularity weak sol parabolic reg}, solves \eqref{parabolically regularized linear MGT without ext cord} for $u_0=u_1=u_2=F=0$, then $u=0$. Moreover, observe that \eqref{parabolically regularized linear MGT without ext cord}, \eqref{eq: regularity weak sol parabolic reg} and an approximation argument ensure that $\partial_t^3 u\in L^2(0,T;H^{-s}(\Omega))$ and \eqref{parabolically regularized linear MGT without ext cord} holds in the sense of $L^2(0,T;H^{-s}(\Omega))$. Taking into account $\partial_t^2 u\in L^2(0,T;\widetilde{H}^s(\Omega))$, $\partial_t^3u\in L^2(0,T;H^{-s}(\Omega))$ and the integration by parts formula, then we get from \eqref{parabolically regularized linear MGT without ext cord} the identity
        \[
        \begin{split}
            \frac{\|\partial_t^2 u(t)\|_{L^2(\Omega)}^2}{2}&=\int_0^t\langle \partial_t^3 u,\partial_t^2 u\rangle d\tau\\
            &=-\eps \|(-\Delta)^{s/2}\partial_t^2 u\|_{L^2(\R^n_t)}^2-\alpha \|\partial_t^2 u\|_{L^2(\Omega_t)}^2-\frac{b}{2}\|(-\Delta)^{s/2}\partial_t u(t)\|_{L^2(\R^n)}^2\\
            &\quad -c\langle (-\Delta)^{s/2} u,(-\Delta)^{s/2}\partial_t^2 u\rangle_{L^2(\R^n_t)}+\langle qu,\partial_t^2 u\rangle_{L^2(\Omega_t)}.
        \end{split}
        \]
        Hence, we get 
        \begin{equation}
        \label{eq: energy estimate for uniqueness}
        \begin{split}
            &\frac{\|\partial_t^2 u(t)\|_{L^2(\Omega)}^2}{2}+\frac{b}{2}(\|(-\Delta)^{s/2}\partial_t u(t)\|_{L^2(\R^n)}^2+\|(-\Delta)^{s/2}u(t)\|_{L^2(\R^n)}^2)\\
            &+\frac{\eps}{2} \|(-\Delta)^{s/2}\partial_t^2 u\|_{L^2(\R^n_t)}^2\lesssim \int_0^t(\|\partial_t^2 u\|_{L^2(\Omega)}^2+\|(-\Delta)^{s/2}u\|_{L^2(\R^n)}^2)d\tau.
        \end{split}
        \end{equation}
        Here, we used Lemma \ref{lemma: potential} and the bound
        \[
          \begin{split}
              -c\langle (-\Delta)^{s/2} u,(-\Delta)^{s/2}\partial_t^2 u\rangle_{L^2(\R^n_t)}&\leq C_\eps \|(-\Delta)^{s/2} u\|_{L^2(\R^n_t)}^2+\frac{\eps}{2}\|(-\Delta)^{s/2}\partial_t^2 u\|_{L^2(\R^n_t)}^2.
          \end{split} 
        \]
        Thanks to \eqref{eq: energy estimate for uniqueness} and Gronwall's inequality we may conclude that $u=0$. Hence, the solutions $u_\eps$, constructed in Step 1, are the unique solutions of \eqref{parabolically regularized linear MGT without ext cord}.\\
        
        \noindent\textbf{Step 3.} \textit{Convergence properties of $u_\eps$.} First of all, by repeating the arguments of Step 2, we notice that the solution $u_\eps$ of \eqref{parabolically regularized linear MGT without ext cord} satisfies
        \begin{equation}
        \label{eq: almost final energy estimate for convergence of u eps}
        \begin{split}
            &\frac{\|\partial_t^2 u_\eps(t)\|_{L^2(\Omega)}^2}{2}+\frac{b}{2}\|(-\Delta)^{s/2}\partial_t u_\eps(t)\|_{L^2(\R^n)}^2+\eps \|(-\Delta)^{s/2}\partial_t^2 u_\eps\|_{L^2(\R^n_t)}^2\\
            &= \frac{\|\partial_t^2 u_2\|_{L^2(\Omega)}^2}{2}+\frac{b}{2}\|(-\Delta)^{s/2}u_1\|_{L^2(\R^n)}^2-c\langle (-\Delta)^{s/2} u_\eps,(-\Delta)^{s/2}\partial_t^2 u_\eps\rangle_{L^2(\R^n_t)}\\
            &\quad -\alpha \|\partial_t^2 u_\eps\|_{L^2(\Omega_t)}^2+\langle qu_\eps,\partial_t^2 u_\eps\rangle_{L^2(\Omega_t)}+\langle F,\partial_t^2 u_\eps\rangle_{L^2(\Omega_t)}\\
            &\leq \frac{\|\partial_t^2 u_2\|_{L^2(\Omega)}^2}{2}+\frac{b}{2}\|(-\Delta)^{s/2}u_1\|_{L^2(\R^n)}^2+\|F\|_{L^2(\Omega_T)}^2\\
            &\quad +C\int_0^t (\|(-\Delta)^{s/2}u_\eps\|_{L^2(\R^n)}^2+\|\partial_t^2 u_\eps\|_{L^2(\Omega)}^2)\,d\tau\\
            &\quad -c\langle (-\Delta)^{s/2} u_\eps,(-\Delta)^{s/2}\partial_t^2 u_\eps\rangle_{L^2(\R^n_t)}.
        \end{split}
        \end{equation}
        As $u_\eps\in H^2(0,T;\widetilde{H}^s(\Omega))$, we may apply the integration by parts formula to write
        \[
        \begin{split}
            -c\langle (-\Delta)^{s/2} u_\eps,(-\Delta)^{s/2}\partial_t^2 u_\eps\rangle_{L^2(\R^n_t)}&=-c\langle (-\Delta)^{s/2} u_\eps(t),(-\Delta)^{s/2}\partial_t u_\eps(t)\rangle_{L^2(\R^n)}\\
            &\quad +c\langle (-\Delta)^{s/2} u_0,(-\Delta)^{s/2} u_1\rangle_{L^2(\R^n)}\\
            &\quad +c\int_0^t\|(-\Delta)^{s/2}\partial_t u_\eps\|_{L^2(\R^n)}^2\,d\tau.
        \end{split}
        \]
        Using $u_\eps \in C^1([0,T];\widetilde{H}^s(\Omega))$, we can perform the computations in \eqref{eq: estimate product} for $u_\eps$ and get
        \[
        \begin{split}
         -c\langle (-\Delta)^{s/2} u_\eps,(-\Delta)^{s/2}\partial_t^2 u_\eps\rangle_{L^2(\R^n_t)}&\leq C(\|(-\Delta)^{s/2}u_0\|_{L^2(\R^n)}^2+ \|(-\Delta)^{s/2}u_1\|_{L^2(\R^n)}^2)\\
         &\quad +C\int_0^t\|(-\Delta)^{s/2}\partial_t u_\eps\|_{L^2(\R^n)}^2\,d\tau\\
         &\quad +\eta \|(-\Delta)^{s/2}\partial_t u_\eps(t)\|_{L^2(\R^n)}^2
         \end{split}
        \]
        for any $\eta>0$. If we select $\eta=b/4$, then we get from \eqref{eq: almost final energy estimate for convergence of u eps} the bound
        \begin{equation}
        \label{eq: uniform bound for u eps}
        \begin{split}
             &\frac{\|\partial_t^2 u_\eps(t)\|_{L^2(\Omega)}^2}{2}+\frac{b}{4}\|(-\Delta)^{s/2}\partial_t u_\eps(t)\|_{L^2(\R^n)}^2+\eps \|(-\Delta)^{s/2}\partial_t^2 u_\eps\|_{L^2(\R^n_t)}^2\\
             &\lesssim \|(-\Delta)^{s/2}u_0\|_{L^2(\R^n)}^2+ \|(-\Delta)^{s/2}u_1\|_{L^2(\R^n)}^2+\|\partial_t^2 u_2\|_{L^2(\Omega)}^2+\|F\|_{L^2(\Omega_T)}^2\\
             &\quad +\int_0^t (\|(-\Delta)^{s/2}u_\eps\|_{L^2(\R^n)}^2+\|\partial_t^2 u_\eps\|_{L^2(\Omega)}^2)\,d\tau.
        \end{split}
        \end{equation}
        Thus, Gronwall's inequality and the usual compactness arguments demonstrate the existence of a function 
        \[
            u\in W^{1,\infty}(0,T;\widetilde{H}^s(\Omega))\cap W^{2,\infty}(0,T;\widetilde{L}^2(\Omega))  
        \]
        such that
        \begin{enumerate}[(i)]
            \item\label{i u eps conv} $u_\eps\weakstar u$ in $L^{\infty}(0,T;\widetilde{H}^s(\Omega))$,
            \item\label{ii u eps conv} $\partial_t u_\eps\weakstar \partial_t u$ in $L^{\infty}(0,T;\widetilde{H}^s(\Omega))$,
            \item\label{iii u eps conv} $\partial_t^2 u_\eps\weakstar \partial_t^2 u$ in $L^{\infty}(0,T;\widetilde{L}^2(\Omega))$,
            \item\label{iv u eps conv} and $\eps^{1/2}\partial_t^2 u_\eps$ is uniformly bounded in $L^2(0,T;\widetilde{H}^s(\Omega))$.
        \end{enumerate}
        Here the convergence results a priori only hold for a suitable subsequence, but which we still denote by $u_\eps$. Using these convergence results, it is straightforward to deduce that $u$ solves \eqref{linear MGT without ext cord} (cf.~\eqref{eq: weak for par reg problem} and Definition~\ref{def: weak sol}). As the limit $u$ is the unique solution of \eqref{linear MGT without ext cord}, it is independent of the subsequence and hence the whole sequence $(u_\eps)_{\eps>0}$ satisfies the above convergence properties. The last convergence assertion \ref{conv 4: par} follows from the parabolically regularized problem and \ref{i u eps conv}--\ref{iv u eps conv}. This completes the proof.
    \end{proof}

    \subsection{Integration by parts formula for third order derivatives}

   Using Theorem~\ref{thm: parabolic regularization}, we can now demonstrate the integration by parts formula that we used in the proof of the Runge approximation theorem (Proposition~\ref{Runge}).

    \begin{proposition}
    \label{prop: integration by parts formula}
         Let $\Omega\subset\R^n$ be a bounded Lipschitz domain, $T>0$ and $s\in\R_+$. Suppose that $q_1,q_2\in L^{\infty}(0,T;L^p(\Omega))$ are real-valued potentials with $2\leq p\leq \infty$ satisfying \eqref{conp} and $F,G\in L^2(\Omega_T)$. Furthermore, assume that $q_2$ is time-reversal invariant and that
         \[
            u,v\in  W^{1,\infty}(0,T;\widetilde{H}^s(\Omega))\cap W^{2,\infty}(0,T;\widetilde{L}^2(\Omega))
         \]
         are the unique solutions of
         \[
         \begin{cases}
		(\partial_t^3+\alpha\partial_t^2+b(-\Delta)^s\partial_t+c(-\Delta)^s+q_1)u=F\ & {\rm in}\ \Omega_T,\\
		u=0\ & {\rm in}\ (\Omega_e)_T, \\
		u(0)=\partial_t u(0)=\partial_t^2 u(0)=0\ & {\rm in}\ \Omega.
	\end{cases}
         \]
         and
         \[
         \begin{cases}
		(\partial_t^3-\alpha\partial_t^2+b(-\Delta)^s\partial_t-c(-\Delta)^s-q_2)v=G\ & {\rm in}\ \Omega_T,\\
		v=0\ & {\rm in}\ (\Omega_e)_T, \\
		v(T)=\partial_t v(T)=\partial_t^2 v(T)=0\ & {\rm in}\ \Omega
	\end{cases}
         \]
         (see~Theorem~\ref{wellposednessv}). Then we have the following integration by parts formula:
        \begin{equation}
        \label{eq: integration by parts appendix}
            \int_0^T \langle \partial_t^3 u,v\rangle\,dt=-\int_0^T \langle u,\partial_t^3 v\rangle \,dt.
        \end{equation}
    \end{proposition}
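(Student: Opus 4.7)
The plan is to exploit the parabolic regularization from Theorem~\ref{thm: parabolic regularization} in order to work with solutions smooth enough for the classical integration by parts formula, and then to pass to the vanishing viscosity limit $\eps \to 0$. Let $u_\eps$ denote the regularization of the forward solution $u$ provided by Theorem~\ref{thm: parabolic regularization}. The backward problem for $v$ may be reduced to a forward one via the time reversal $w(\tau) := v(T-\tau)$; since $q_2 = q_2^{\star}$, the function $w$ solves a forward MGT equation with source $-G^{\star}$ and vanishing initial data, so Theorem~\ref{thm: parabolic regularization} applies and, upon reverting the change of variables, yields a regularized sequence $v_\eps$ in the same functional class as $u_\eps$, with $v_\eps(T) = \partial_t v_\eps(T) = \partial_t^2 v_\eps(T) = 0$ and converging to $v$ in the senses of \ref{conv 1: par}--\ref{conv 4: par}.

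For the regularized pair $(u_\eps, v_\eps)$, the viscous term forces $\partial_t^3 u_\eps, \partial_t^3 v_\eps \in L^2(0,T;H^{-s}(\Omega))$; combined with $\partial_t^2 u_\eps \in L^2(0,T;\widetilde H^s(\Omega))$, the Lions--Magenes trace theorem yields $\partial_t^2 u_\eps \in C([0,T]; \widetilde L^2(\Omega))$ with $\partial_t^2 u_\eps(0) = 0$, and analogously for the first and zeroth order traces, and for the terminal traces of $v_\eps$. Integrating by parts three times, all six boundary terms vanish (at each step one of the two factors is zero at the relevant endpoint), giving the chain
\begin{equation*}
\int_0^T \langle \partial_t^3 u_\eps, v_\eps\rangle\, dt = -\int_0^T \langle \partial_t^2 u_\eps, \partial_t v_\eps\rangle\, dt = \int_0^T \langle \partial_t u_\eps, \partial_t^2 v_\eps\rangle\, dt = -\int_0^T \langle u_\eps, \partial_t^3 v_\eps\rangle\, dt.
\end{equation*}

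To pass to the limit $\eps \to 0$, observe that the two middle integrals each involve one factor converging weakly-$*$ and one converging strongly: Theorem~\ref{thm: parabolic regularization} provides $\partial_t^2 u_\eps \weakstar \partial_t^2 u$ and $\partial_t^2 v_\eps \weakstar \partial_t^2 v$ in $L^\infty(0,T;\widetilde L^2(\Omega))$, while a standard Aubin--Lions argument (uniform bounds in $L^\infty(0,T;\widetilde H^s(\Omega))$, uniform Lipschitz control into $\widetilde L^2(\Omega)$, and compactness of the embedding $\widetilde H^s(\Omega) \hookrightarrow \widetilde L^2(\Omega)$ for the bounded $\Omega$) yields $u_\eps, \partial_t u_\eps \to u, \partial_t u$ and $v_\eps, \partial_t v_\eps \to v, \partial_t v$ strongly in $C([0,T]; \widetilde L^2(\Omega))$. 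Therefore the two middle expressions converge, yielding $-\int_0^T \langle \partial_t^2 u, \partial_t v\rangle\, dt = \int_0^T \langle \partial_t u, \partial_t^2 v\rangle\, dt$. To identify these common limits with the outer expressions in \eqref{eq: integration by parts appendix}, I would apply the same parabolic regularization trick once more but only to a single factor: regularizing only $u$ and testing against the fixed $v$, the weak-$*$ convergence of $\partial_t^3 u_\eps$ in $L^2(0,T; H^{-s}(\Omega))$ and of $\partial_t^2 u_\eps$ in $L^\infty(0,T; \widetilde L^2(\Omega))$ against the fixed $v$ and $\partial_t v$ produces $\int_0^T \langle \partial_t^3 u, v\rangle\, dt = -\int_0^T \langle \partial_t^2 u, \partial_t v\rangle\, dt$, and a symmetric regularization of $v$ against the fixed $u$ produces $-\int_0^T \langle u, \partial_t^3 v\rangle\, dt = \int_0^T \langle \partial_t u, \partial_t^2 v\rangle\, dt$; chaining everything together gives \eqref{eq: integration by parts appendix}.

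The main obstacle is precisely this last identification step: the pairing $\int_0^T \langle \partial_t^3 u, v\rangle\, dt$ on the original (non-regularized) solutions lives only in the $L^2(0,T; H^{-s})$--$L^2(0,T; \widetilde H^s)$ duality, so one must carefully verify that the Lions--Magenes trace of $\partial_t^2 u$ in $C([0,T]; H^{-s}(\Omega))$ coincides with the zero initial datum encoded weakly in Definition~\ref{def: weak sol}. This is precisely the point at which the approximation by the smoother $u_\eps$, whose endpoint values are literal, is essential, since the zero traces of $\partial_t^2 u_\eps(0)$ and $v_\eps(T)$ propagate to the limit through the combination of strong and weak-$*$ convergences described above.
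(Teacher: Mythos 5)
Your proposal is correct and follows the same high-level plan as the paper: regularize both solutions via Theorem~\ref{thm: parabolic regularization}, integrate by parts three times on the smoother pair where all boundary terms vanish at one endpoint or the other, and pass to the limit $\eps\to 0$. The paper's own proof dispatches the limit step in one sentence, but as you correctly point out, the identity $\int_0^T\langle\partial_t^3 u_\eps,v_\eps\rangle\,dt=-\int_0^T\langle u_\eps,\partial_t^3 v_\eps\rangle\,dt$ pairs two weakly convergent sequences in dual spaces, so some additional argument is needed. Your resolution --- first obtaining $-\int_0^T\langle\partial_t^2 u,\partial_t v\rangle\,dt=\int_0^T\langle\partial_t u,\partial_t^2 v\rangle\,dt$ and then identifying the outer pairings by regularizing only one factor at a time, so that the other factor is a fixed strong test function --- is a legitimate and careful way to fill this gap. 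One small simplification: the middle identity does not actually require Aubin--Lions compactness. Since $\partial_t u$ and $\partial_t v$ both lie in $W^{1,\infty}(0,T;\widetilde L^2(\Omega))$, the product $\langle\partial_t u,\partial_t v\rangle_{L^2(\Omega)}$ is absolutely continuous on $[0,T]$ and the classical rule gives $\frac{d}{dt}\langle\partial_t u,\partial_t v\rangle=\langle\partial_t^2 u,\partial_t v\rangle+\langle\partial_t u,\partial_t^2 v\rangle$; integrating and using $\partial_t u(0)=0$ and $\partial_t v(T)=0$ yields the middle identity directly on the non-regularized solutions. The single-factor regularization is, however, genuinely needed for the outer identities, precisely because $\partial_t^2 u$ fails to lie in $L^2(0,T;\widetilde H^s(\Omega))$ for the unregularized solution, so the standard Lions--Magenes integration by parts does not apply there; your proof identifies and handles this correctly.
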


    \begin{proof}
        We start by recalling that we have $v=w^\star$, where $w$ is the unique solution of 
        \[
        \begin{cases}
		(\partial_t^3+\alpha\partial_t^2+b(-\Delta)^s\partial_t+c(-\Delta)^s+q_2)w=-G^\star\ & {\rm in}\ \Omega_T,\\
		w=0\ & {\rm in}\ (\Omega_e)_T, \\
		w(0)=\partial_t w(0)=\partial_t^2 w(0)=0\ & {\rm in}\ \Omega.
	\end{cases}
        \]
        By Theorem \ref{thm: parabolic regularization} for any $\eps>0$ there exist 
        \[
            u_\eps,w_\eps\in W^{1,\infty}(0,T;\widetilde{H}^s(\Omega))\cap W^{2,\infty}(0,T;\widetilde{L}^2(\Omega))\cap H^2(0,T;\widetilde{H}^s(\Omega))
        \]
        solving the associated parabolically regularized problems and having the convergence properties \ref{conv 1: par}--\ref{conv 4: par}. Moreover, let us set $v_\eps=w^\star_\eps$. Next, notice that the integration by parts formula yields
        \[
             \int_0^T \langle \partial_t^3 u_\eps,v_\eps\rangle\,dt=-\int_0^T \langle u_\eps,\partial_t^3 v_\eps\rangle \,dt
        \]
        for any $\eps>0$. Thanks to the convergence properties of $u_\eps$ and $v_\eps$, we may pass to the limit $\eps\to 0$ in the previous identity and obtain \eqref{eq: integration by parts appendix}.
    \end{proof}

\medskip

\noindent{\bf Acknowledgments}  
\begin{itemize}
    \item S. Fu is supported by the  Fundamental Research Funds for the Central Universities, NPU, under grant number D5000250416, and the key program of the National Natural Science Foundation of China under grant number 62433020.
    \item Y. Yu is supported by the National Natural Science Foundation of China under grant number 12401579.

    \item P. Zimmermann is supported by the Swiss National Science Foundation (SNSF), under the grant number 214500.
\end{itemize}

  \section*{Statements and Declarations}
	
	\subsection*{Data availability statement}
	No datasets were generated or analyzed during the current study.
	
	\subsection*{Conflict of Interests} Hereby, we declare that there is no conflict of interest.

\bibliography{refs} 
	
	\bibliographystyle{alpha}
	
\end{document}